\definecolor{linkcolor}{rgb}{0,0,0.6}
\def\MYTITLE{On the cohomology of the ramified PEL unitary RZ space of signature $(1,n-1)$}
\title{On the cohomology of the ramified PEL unitary Rapoport-Zink space of signature $(1,n-1)$}
\author{J.Muller}
\renewcommand\tableofcontents{%
  \null\hfill\textbf{\Large\contentsname}\hfill\null\par
  \@mkboth{\MakeUppercase\contentsname}{\MakeUppercase\contentsname}%
  \@starttoc{toc}%
}
\newcommand{\zarabic}[1]{\ifnum\value{#1}=0 \else.\arabic{#1}\fi}
\titlespacing{\paragraph}{0pt}{10pt}{.1em}[]
\begin{document}

\fontsize{12pt}{17pt}\selectfont

\setlength{\parindent}{2em} 
\setlength{\parskip}{0.5em}

\newtheorem*{theo}{Theorem}
\newtheorem*{prop}{Proposition}
\newtheorem*{lem}{Lemma}
\newtheorem*{corol}{Corollary}
\newtheorem*{conj}{Conjecture}

\theoremstyle{remark}
\newtheorem*{rk}{Remark}
\newtheorem*{rks}{Remarks}
\newtheorem*{ex}{Example}

\theoremstyle{definition}
\newtheorem*{defi}{Definition}
\newtheorem*{notation}{Notation}

\newcommand{\lcal}{\mathcal{L}}


\maketitle

\begin{center}

\parbox{15cm}{\small
\textbf{Abstract} : \it In this paper, we study the cohomology of the ramified PEL unitary Rapoport-Zink space of signature $(1,n-1)$ by using the Bruhat-Tits stratification on its special fiber. As such, we apply the same method that we developped for the unramified case in two previous papers. More precisely, we first investigate the cohomology of a given closed Bruhat-Tits stratum. It is isomorphic to a generalized Deligne-Lusztig variety which is in general not smooth, and is associated to a finite group of symplectic similitudes. We determine the weights of the Frobenius and most of the unipotent representations occuring in its cohomology. This computation involves the spectral sequence associated to a stratification by classical Deligne-Lusztig varieties, which are parabolically induced from Coxeter varieties of smaller symplectic groups. In particular, all the unipotent representations contribute to only two cuspidal series. Then, we introduce the analytical tubes of the closed Bruhat-Tits strata, which give an open cover of the generic fiber of the Rapoport-Zink space. Using the associated \v{C}ech spectral sequence, we prove that certain cohomology groups of the Rapoport-Zink space at hyperspecial level fail to be admissible if $n$ is large enough. Eventually, when $n=2$ in the split case, when $n=3$ and when $n=4$ in the non-split case, we give a complete description of the cohomology of the supersingular locus of the associated Shimura variety at hyperspecial level, in terms of automorphic representations. In particular, certain automorphic representations occur with a multiplicity depending on $p$. Thus, our computations in the ramified case recover all the main features of the unramified case, despite new technical difficulties due to the closed Bruhat-Tits strata not being smooth.}

\vspace{0.5cm}
\end{center}

\tableofcontents

\vspace{1.5cm}

\noindent \textbf{\textsc{Introduction:}} Rapoport-Zink spaces are moduli spaces classifying the deformations of some $p$-divisible group $\mathbb X$ equipped with additional structures, called a framing object. It is a formal scheme $\mathcal M = \mathcal M_{\mathbb X}$ to which one can associate a projective system $\mathcal M_{\infty} = (\mathcal M_{K})_{K}$ of analytic spaces called the Rapoport-Zink tower. It is equipped with compatible actions of two $p$-adic groups $G = G(\mathbb Q_p)$ and $J = J(\mathbb Q_p)$, with $J$ being an inner form of some Levi complement of $G$. The cohomology of the tower $\mathcal M_{\infty}$ with coefficients in $\overline{\mathbb Q_{\ell}}$ for $\ell \not = p$ is naturally a representation of $G\times J \times W$ where $W$ is the Weil group of the underlying reflex field $E$, which is a $p$-adic local field. These cohomology groups are believed to play a role in local Langlands correspondence. So far, relatively little is known on the cohomology of $\mathcal M_{\infty}$ in general. It has been computed entirely in the Lubin-Tate case in \cite{boyer2}, whose results have been used in \cite{dat} to deduce the case of the Drinfeld space and compute the action of the monodrony. Both the Lubin-Tate and the Drinfeld cases correspond to Rapoport-Zink spaces of EL type, and their particuliar geometry allowed explicit computations. Aside from this, the Kottwitz conjecture describes the part of the cohomology of $\mathcal M_{\infty}$ which is supercuspidal both for $G$ and $J$. This has first been proved in the Lubin-Tate case in \cite{boyer99} and \cite{harris}. It has been generalized to all unramified Rapoport-Zink spaces of EL type in \cite{fargues} and \cite{shin}, and it has been proved more recently in the case of the unramified PEL unitary Rapoport-Zink space with signature $(r,n-r)$ with $n$ odd in \cite{nguyen} and \cite{nguyenmeli}.\\
One would like to obtain more information on the cohomology of general Rapoport-Zink spaces outside of the supercuspidal part, but this may remain out of reach unless we have a good understanding of the geometry of $\mathcal M$. In \cite{GHNhodge} and \cite{GHNcoxeter}, the authors determined the complete list of all choices of the framing object $\mathbb X$ so that the resulting Rapoport-Zink space $\mathcal M$ exhibits a Bruhat-Tits stratification on its reduced special fiber $\mathcal M_{\mathrm{red}}$. The resulting Bruhat-Tits strata are naturally isomorphic to Deligne-Lusztig varieties which, in the most favorable cases, are of Coxeter type. Classical Deligne-Lusztig theory is a field of mathematics giving a classification of all the irreducible complex representations of finite groups of Lie type. In their foundational paper \cite{dl}, the authors use the cohomology of Deligne-Lusztig varieties to define new induction and restriction functors, allowing them to build all such representations.\\
The present paper is a contribution to a program aiming at making use of Deligne-Lusztig theory in order to access new information on the cohomology of the Rapoport-Zink space. In \cite{mullerBT} and \cite{muller}, we explored this idea with the unramified PEL unitary Rapoport-Zink space of signature $(1,n-1)$. Historically, this Rapoport-Zink space was the first on which the Bruhat-Tits stratification has been built and studied in \cite{vw1} and \cite{vw2}. In this case, the closed Bruhat-Tits strata are projective and smooth, and their cohomology was entirely computable. We used it to study the cohomology of the maximal level of the tower, ie. the cohomology of the Berkovich generic fiber $\mathcal M^{\mathrm{an}}$, as a representation of $J\times W$. We proved that some of these cohomology groups fail to be $J$-admissible for any $n\geq 3$, and we used our results to entirely compute the cohomology of the supersingular locus of the corresponding Shimura variety when $n=3$ or $4$. In the present paper, we now consider the ramified case for which the Bruhat-Tits stratification was described in \cite{RTW}. We reach very similar results despite new technical difficulties arising from the fact that the closed Bruhat-Tits strata are not smooth anymore, so that our method fails to encapsulate a certain part of its cohomology. Let us explain this in more details.\\

\noindent In the ramified case, the reflex field $E$ is a quadratic ramified extension of $\mathbb Q_p$ with $p>2$. When $n$ is odd there is only one choice of framing object $\mathbb X$, but when $n$ is even there are two choices $\mathbb X^+$ and $\mathbb X^-$. The group $J$ is isomorphic to the group of unitary similitudes of a non-degenerate $E/\mathbb Q_p$-hermitian space $C$ of dimension $n$, which is closely related to the Dieudonné isocristal of $\mathbb X$. If $n$ is even, the hermitian space $C$ is split if $\mathbb X = \mathbb X^{+}$ and non-split if $\mathbb X = \mathbb X^+$. The group $J$ is quasi-split if and only if $n$ is odd or $n$ is even with $C$ split. In \cite{RTW}, the authors build the Bruhat-Tits stratification $\mathcal M_{\mathrm{red}} = \bigsqcup_{\Lambda} \mathcal M_{\Lambda}^{\circ}$ where $\Lambda$ runs over the set of vertex lattices $\mathcal L$ in $C$ and where $\mathcal M_{\Lambda}^{\circ}$ is isomorphic to the Coxeter variety associated to the finite group of symplectic similitudes $\mathrm{GSp}(2\theta,\mathbb F_p)$, where $0 \leq t(\Lambda) := 2\theta \leq n$ is the type of the vertex lattice $\Lambda$. The set of vertex lattices $\mathcal L$ forms a polysimplicial complex which is closely related to the Bruhat-Tits building of $J$, giving a good combinatorial description of the incidence relations of the strata. Let $\mathcal M_{\Lambda} := \overline{\mathcal M_{\Lambda}^{\circ}}$ denote the closure of a Bruhat-Tits stratum. Then $\mathcal M_{\Lambda}$ is a projective normal variety over $\mathbb F_p$ which is not smooth as soon as $\theta \geq 2$. It is isomorphic to the projective closure $S_{\theta}$ of a generalized Deligne-Lusztig variety for $\mathrm{GSp}(2\theta,\mathbb F_p)$, the kind of which does not fall under the scope of classical Deligne-Lusztig theory. However, the variety $S_{\theta}$ admits a stratification 
$$S_{\theta} = \bigsqcup_{\theta'=0}^{\theta} X_{I_{\theta'}}(w_{\theta'})$$
(notations of \ref{Stratification}) such that the closure of a stratum $X_{I_{\theta'}}(w_{\theta'})$ is the union of all the smaller strata $X_{I_{t}}(w_{t})$ for $0\leq t \leq \theta'$. It turns out that each stratum $X_{I_{\theta'}}(w_{\theta'})$ is a classical Deligne-Lusztig variety, which is parabolically induced from the Coxeter variety attached to the smaller group of unitary similitudes $\mathrm{GSp}(2\theta',\mathbb F_p)$. In \cite{cox}, Lusztig has computed the cohomology of the Coxeter varieties for all classical groups. Using the combinatorical description of irreducible unipotent representations of symplectic groups in terms of Lusztig's notion of symbols, one may compute through parabolic induction the cohomology of any stratum $X_{I_{\theta'}}(w_{\theta'})$. The stratification then induces a $\mathrm{GSp}(2\theta,\mathbb F_p)\times \langle F \rangle$-equivariant spectral sequence 
$$E_1^{a,b} = \mathrm H_c^{a+b}(X_{I_a}(w_a),\overline{\mathbb Q_{\ell}}) \implies \mathrm H_c^{a+b}(S_{\theta},\overline{\mathbb Q_{\ell}}),$$
where $F$ denotes the geometric Frobenius action. The study of the weights of the Frobenius on the terms $E_1^{a,b}$ shows that the sequence degenerates on the second page, and it offers substantial information on the cohomology of $S_{\theta}$. \\
More precisely, $S_{\theta}$ has dimension $\theta$, and for $0\leq k \leq 2\theta$ the weights of the Frobenius on the cohomology group $\mathrm H_c^k(S_{\theta},\overline{\mathbb Q_{\ell}})$ form a subset of $\{p^i,-p^{j+1}\}$ for $k - \min(k,\theta) \leq i \leq k - \lceil k/2 \rceil$ and for $k - \min(k,\theta) \leq j \leq k - \lceil k/2 \rceil - 1$. Among other things, if $i,j > k-\min(k,\theta)$ then we determine the eigenspaces of the Frobenius $\mathrm H_c^k(S_{\theta},\overline{\mathbb Q_{\ell}})_{p^i}$ and $\mathrm H_c^k(S_{\theta},\overline{\mathbb Q_{\ell}})_{-p^{j+1}}$ explicitely up to at most four irreducible representations of $\mathrm{GSp}(2\theta,\mathbb F_p)$. We refer to \ref{CohomologyS} and \ref{CohomologyT} for the detailed results, and to \ref{ClassificationUnip} Theorem for the notations regarding unipotent representations in terms of symbols, as it would be too long to fit this introduction.\\
In particuliar, we note that the cohomology of $S_{\theta}$ is entirely determined if $\theta \leq 1$ since $S_0$ is a point and $S_1 \simeq \mathbb P^1$. For $\theta \geq 2$ the variety $S_{\theta}$ has singularities and for $\theta \geq 3$ the action of the Frobenius on the cohomology is not pure (for $\theta = 2$ the non-purity is undetermined). All irreducible representations of $\mathrm{GSp}(2\theta,\mathbb F_p)$ occuring in an eigenspace of $F$ for an eigenvalue of the form $p^i$ belong to the unipotent principal series, whereas those corresponding to an eigenvalue of the form $-p^{j+1}$ belong to the cuspidal series determined by the unique cuspidal unipotent representation of $\mathrm{GSp}(4,\mathbb F_p)$.\\

\noindent We then introduce the analytical tube $U_{\Lambda} \subset \mathcal M^{\mathrm{an}}$ of any closed Bruhat-Tits stratum $\mathcal M_{\Lambda}$. As we work at hyperspecial level, the associated integral model of the Shimura variety is smooth so that the nearby cycles are trivial. It allows us to identify the cohomology of $U_{\Lambda}$ and of $\mathcal M_{\Lambda}$. Let $\mathcal L^{\mathrm{max}}$ denote the subset of vertex lattices $\Lambda$ whose type $t(\Lambda)$ is maximal, ie. it is equal to 
$$t_{\mathrm{max}} = \begin{cases}
n-1 & \text{if } n \text{ is odd,}\\
n & \text{if } n \text{ is even and } C \text{ is split,}\\
n-2 & \text{if } n \text{ is even and } C \text{ is non-split.}
\end{cases}$$
We also write $t_{\mathrm{max}} = 2\theta_{\mathrm{max}}$. Let $\{\Lambda_0,\ldots ,\Lambda_{\theta_{\mathrm{max}}}\}$ be a maximal simplex in $\mathcal L$ such that $t(\Lambda_{\theta}) = 2\theta$ for all $0\leq \theta \leq \theta_{\mathrm{max}}$. Let $J_{\theta}$ denote the fixator in $J$ of the vertex lattice $\Lambda_{\theta}$. Then the $J_{\theta}$'s are maximal compact subgroups of $J$, and any maximal compact subgroup of $J$ is conjugate to one of the $J_{\theta}$'s. The collection $\{U_{\Lambda}\}_{\Lambda\in \mathcal L^{\mathrm{max}}}$ forms an open cover of the generic fiber $\mathcal M^{\mathrm{an}}$, from which we deduce the existence of a $(J\times W)$-equivariant spectral sequence (with the notations of \ref{SpectralSequenceGeneric} and \ref{SpectralSequenceGenericBis}) 
$$E_1^{a,b} = \bigoplus_{\theta = 0}^{\theta_{\mathrm{max}}} \left(\mathrm{c-Ind}_{J_{\theta}}^J \, \mathrm H_c^b(U_{\Lambda_{\theta}},\overline{\mathbb Q_{\ell}})\right)^{k_{-a+1,\theta}} \implies \mathrm H_c^{a+b}(\mathcal M^{\mathrm{an}},\overline{\mathbb Q_{\ell}}).$$
Since the non-zero terms $E_1^{a,b}$ are located in a finite strip, this sequence eventually degenerates. By looking carefully at the distribution of the Frobenius weights among the different terms, we are able to determine two terms which remain untouched in the deeper pages of the sequence. It leads to the following statement, with $\tau := (\pi\mathrm{id}, \mathrm{Frob}) \in J\times W$ where $\pi$ is a uniformizer in $E$ and $\mathrm{Frob}$ is the geometric Frobenius.

\begin{theo}[\ref{MinimalEigenvalue1} and \ref{MinimalEigenvalue2}]
There is a $J\times W$-equivariant monomorphism 
$$\mathrm{c-Ind}_{J_{\theta_{\mathrm{max}}}}^{J} \, \mathbf 1 \hookrightarrow \mathrm H_c^{2(n-1-\theta_{\mathrm{max}})}(\mathcal M^{\mathrm{an}}),$$
where on the left-hand side the inertia acts trivially and $\tau$ acts like multiplication by the scalar $p^{n-1-\theta_{\mathrm{max}}}$.\\
Assume that $n\geq 5$ or that $n=4$ with $C$ split. There is a $J\times W$-equivariant monomorphism 
$$\mathrm{c-Ind}_{J_{\theta_{\mathrm{max}}}}^{J} \, \rho_{\theta_{\mathrm{max}}} \hookrightarrow \mathrm H_c^{2(n-\theta_{\mathrm{max}})}(\mathcal M^{\mathrm{an}}),$$
where on the left-hand side the inertia acts trivially and $\tau$ acts like multiplication by the scalar $-p^{n-\theta_{\mathrm{max}}}$.
\end{theo}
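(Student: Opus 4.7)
The plan is to exploit the $(J\times W)$-equivariant spectral sequence of \ref{SpectralSequenceGeneric} and \ref{SpectralSequenceGenericBis}, combined with the detailed Frobenius weight analysis on the closed Bruhat-Tits strata $S_\theta$ recalled in \ref{CohomologyS} and \ref{CohomologyT}. The strategy is to locate, at the $E_1$-page, a specific term carrying $\mathrm{c-Ind}_{J_{\theta_{\mathrm{max}}}}^J \mathbf{1}$ (respectively $\mathrm{c-Ind}_{J_{\theta_{\mathrm{max}}}}^J \rho_{\theta_{\mathrm{max}}}$) with a Frobenius eigenvalue that is extremal on its total-degree diagonal, so that no nontrivial differential can have this term as source or target, forcing it to survive to $E_\infty$ and embed into the cohomology of $\mathcal M^{\mathrm{an}}$.

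The first step is to unwind $\mathrm H_c^b(U_{\Lambda_\theta})$ as a representation of $J_\theta$ and of Frobenius. Since the integral model is smooth at hyperspecial level with trivial nearby cycles, Poincaré duality on the Berkovich tube identifies
$$\mathrm H_c^k(U_{\Lambda_\theta})\simeq \mathrm H^{k-2(n-1-\theta)}(S_\theta)\bigl(-(n-1-\theta)\bigr).$$
Applied to $\theta = \theta_{\mathrm{max}}$, the trivial representation $\mathbf{1}\subset \mathrm H^0(S_{\theta_{\mathrm{max}}})$ is shifted to a $\mathbf{1}$-isotypic summand of $\mathrm H_c^{2(n-1-\theta_{\mathrm{max}})}(U_{\Lambda_{\theta_{\mathrm{max}}}})$ on which Frobenius acts by $p^{n-1-\theta_{\mathrm{max}}}$; similarly, the cuspidal eigenspace in $\mathrm H_c^2(S_{\theta_{\mathrm{max}}})$ corresponding to the eigenvalue $-p$ (which, under the hypothesis $\theta_{\mathrm{max}}\geq 2$ equivalent to $n\geq 5$ or $n=4$ with $C$ split, is known by \ref{CohomologyT} to realize $\rho_{\theta_{\mathrm{max}}}$) is shifted to a $\rho_{\theta_{\mathrm{max}}}$-isotypic summand of $\mathrm H_c^{2(n-\theta_{\mathrm{max}})}(U_{\Lambda_{\theta_{\mathrm{max}}}})$ on which Frobenius acts by $-p^{n-\theta_{\mathrm{max}}}$. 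Feeding these pieces into the spectral sequence, and combining with an explicit computation of the central character of each representation at $\pi\mathrm{id}$, produces the announced scalar action of $\tau$.

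The core combinatorial step is then to verify that, for every $\theta<\theta_{\mathrm{max}}$ and every bidegree $(a,b)$ on the same total-degree diagonal, no contribution from $\mathrm H_c^b(U_{\Lambda_\theta})$ can carry the same Frobenius eigenvalue. Plugging $k = b+2(n-1-\theta)$ into the weight intervals
$$k-\min(k,\theta)\leq i\leq k-\lceil k/2\rceil,\qquad k-\min(k,\theta)\leq j\leq k-\lceil k/2\rceil -1$$
of \ref{CohomologyS}, and twisting by $-(n-1-\theta)$, one obtains the possible Frobenius eigenvalues on the tube. A case analysis sweeping $\theta$ and $b$ over the diagonal shows that $p^{n-1-\theta_{\mathrm{max}}}$ is the strict minimum of all eigenvalues of the form $p^\bullet$ on the diagonal $a+b=2(n-1-\theta_{\mathrm{max}})$, and similarly that $-p^{n-\theta_{\mathrm{max}}}$ is the minimum-absolute-value eigenvalue of the form $-p^{\bullet+1}$ on the diagonal $a+b=2(n-\theta_{\mathrm{max}})$, provided $\theta_{\mathrm{max}}\geq 2$. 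This bookkeeping is the main technical obstacle, and it is exactly where the lower bound hypothesis on $n$ in the second statement enters, since for small $n$ the cuspidal representation $\rho_{\theta_{\mathrm{max}}}$ either does not exist or its eigenvalue coincides with contributions from lower strata.

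Once extremality is secured, the spectral sequence differentials, which are $W$-equivariant, cannot interact with the distinguished isotypic piece: it is neither hit nor sourced by any $d_r$, so it persists to $E_\infty$ as a subrepresentation of the graded piece of $\mathrm H_c^{2(n-1-\theta_{\mathrm{max}})}(\mathcal M^{\mathrm{an}})$ (resp. $\mathrm H_c^{2(n-\theta_{\mathrm{max}})}(\mathcal M^{\mathrm{an}})$). Because no other graded piece carries this Frobenius eigenvalue, the $\tau$-isotypic subspace of the full cohomology projects isomorphically onto this graded piece, yielding a canonical splitting and hence the required $(J\times W)$-equivariant monomorphism.
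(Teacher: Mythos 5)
Your overall strategy agrees with the paper's: locate the term $E_1^{0,2(n-1-\theta_{\mathrm{max}})}$ (resp.\ $E_1^{0,2(n-\theta_{\mathrm{max}})}$) in the \v{C}ech spectral sequence of \ref{SpectralSequenceGenericBis}, identify it via \ref{TrivialNearbyCycles} and the cohomology of $S_{\theta_{\mathrm{max}}}$, show that its Frobenius eigenvalue cannot occur in any other $E_1^{a,b}$ on the same diagonal, and conclude by Frobenius-equivariance of the differentials. This extremality argument is exactly the paper's, and your discussion of the weight bounds with $b-\lceil b/2\rceil \geq n-1-\theta > n-1-\theta_{\mathrm{max}}$ for $\theta < \theta_{\mathrm{max}}$ is also the paper's.

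However, there is a gap in the way you express the relationship between $\mathrm H_c^{\bullet}(U_{\Lambda_\theta})$ and the cohomology of $S_\theta$. You write
\begin{equation*}
\mathrm H_c^k(U_{\Lambda_\theta})\;\simeq\; \mathrm H^{k-2(n-1-\theta)}(S_\theta)\bigl(-(n-1-\theta)\bigr),
\end{equation*}
which tacitly uses Poincar\'e duality on the $\theta$-dimensional variety $S_\theta$. But $S_\theta$ is singular for $\theta \geq 2$ (precisely the range relevant for the second statement), so this naive duality fails. The correct statement, as in \ref{TrivialNearbyCycles}, is obtained by applying Poincar\'e duality only on the \emph{smooth} tube $U_\Lambda$, yielding $\mathrm H_c^b(U_{\Lambda}) \simeq \mathrm H_c^{2(n-1)-b}(\mathcal M_{\Lambda})^{\vee}(n-1)$. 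For the first monomorphism the discrepancy is invisible, since both formulae reduce to the trivial one-dimensional representation. For the second, your formula sends you to $\mathrm H_c^2(S_{\theta_{\mathrm{max}}})_{-p}$, whereas the correct identification requires $\mathrm H_c^{2\theta_{\mathrm{max}}-2}(S_{\theta_{\mathrm{max}}})^{\vee}(n-1)$ and hence the eigenspace $\mathrm H_c^{2\theta_{\mathrm{max}}-2}(S_{\theta_{\mathrm{max}}})_{-p^{\theta_{\mathrm{max}}-1}}$. By \ref{CohomologyT}(3) the latter is \emph{exactly} $\rho_{\theta_{\mathrm{max}}}$, whereas \ref{CohomologyT}(5) only gives a monomorphism $\rho_{\theta_{\mathrm{max}}} \hookrightarrow \mathrm H_c^2(S_{\theta_{\mathrm{max}}})_{-p}$ when $\theta_{\mathrm{max}} > 2$, with no control on the cokernel. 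So your proof identifies the wrong cohomological degree of $S_{\theta_{\mathrm{max}}}$, and the group you examine is not pinned down by the cited results. You also need to invoke the self-duality of unipotent representations of classical groups (the remark after \ref{TrivialNearbyCycles}) to pass from $\rho_{\theta_{\mathrm{max}}}^{\vee}$ back to $\rho_{\theta_{\mathrm{max}}}$; this step is implicit in your write-up and should be made explicit.
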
 

\noindent Here, $\mathbf 1$ denotes the trivial representation and $\rho_{\theta_{\mathrm{max}}}$ is the inflation of a certain irreducible unipotent representation of the finite reductive quotient $J_{\theta_{\mathrm{max}}} / J_{\theta_{\mathrm{max}}}^+$ defined in \ref{MinimalEigenvalue2}. Using type theory, one may study the behaviour of such compactly induced representations to deduce the following proposition. Here, if $V$ is any smooth representation of $J$ and $\chi$ is any smooth character of the center $\mathrm{Z}(J) \simeq E^{\times}$, we denote by $V_{\chi}$ the largest quotient of $V$ on which $\mathrm{Z}(J)$ acts through $\chi$.

\begin{prop}[\ref{IrreducibleSubquotients} and \ref{MinimalEigenvalue2}]
Let $\chi$ be an unramified character of $E^{\times}$. 
\begin{enumerate}[label=\upshape (\arabic*), topsep = 0pt]
\item If $n=1$ or $n=2$ with $C$ non-split, we have $\theta_{\mathrm{max}} = 0$ and all irreducible subquotients of $V := \mathrm{c-Ind}_{J_0}^{J} \, \mathbf 1$ are supercuspidal. In particular $V_{\chi}$ is irreducible supercuspidal.
\item If $n\geq 3$ or if $n=2$ with $C$ split, then no irreducible subquotient of $V := \mathrm{c-Ind}_{J_{\theta_{\mathrm{max}}}}^{J} \, \mathbf 1$ is supercuspidal. In this case, $V_{\chi}$ does not contain any non-zero admissible subrepresentation of $J$. 
\item If $n=4$ with $C$ split, if $n=5$ or if $n=6$ with $\mathbb X$ non-split, we have $\theta_{\mathrm{max}} = 2$ and all irreducible subquotients of $V := \mathrm{c-Ind}_{J_2}^{J} \, \rho_2$ are supercuspidal. In particular $V_{\chi}$ is irreducible supercuspidal.
\item If $n\geq 7$ or if $n=6$ with $C$ split, then no irreducible subquotient of $V := \mathrm{c-Ind}_{J_{\theta_{\mathrm{max}}}}^{J} \, \rho_{\theta_{\mathrm{max}}}$ is supercuspidal. In this case, $V_{\chi}$ does not contain any non-zero admissible subrepresentation of $J$.
\end{enumerate}
\end{prop}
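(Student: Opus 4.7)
The plan is to split the analysis into the four cases according to the size of $J$ and the nature of $\rho_{\theta_{\mathrm{max}}}$, using depth-zero type theory in the spirit of Moy-Prasad and Morris as the main tool. Let $J_{\theta_{\mathrm{max}}}^+$ denote the pro-unipotent radical of $J_{\theta_{\mathrm{max}}}$, so that the finite reductive quotient $J_{\theta_{\mathrm{max}}}/J_{\theta_{\mathrm{max}}}^+$ is the symplectic similitude group attached to the vertex lattice $\Lambda_{\theta_{\mathrm{max}}}$; in all cases, the inducing representation is inflated from this finite quotient.

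For cases (1) and (3) (supercuspidality), I would first treat case (1), where the group $J$ is compact modulo its center: for $n=1$ it is essentially $E^{\times}$, and for $n=2$ with $C$ non-split the associated unitary group is anisotropic. Every irreducible smooth representation of such a group is finite-dimensional modulo center and is supercuspidal by absence of proper parabolic subgroups, and $V_{\chi}$ reduces to the one-dimensional character $\chi$ itself. In case (3), the key observation is that $\rho_2$ is, by its construction in \ref{MinimalEigenvalue2}, the unique cuspidal unipotent representation of $\mathrm{GSp}(4,\mathbb F_p)$. By the depth-zero theory of Moy-Prasad and Morris, the pair $(J_2,\rho_2)$ is then a type for a union of supercuspidal Bernstein blocks, and $\mathrm{c-Ind}_{J_2}^J \rho_2$ is a direct sum of progenerators of these blocks; fixing the central character via $\chi$ cuts out a single irreducible supercuspidal representation.

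For cases (2) and (4) (non-supercuspidality and non-admissibility), I would first show that the inflating representation is \emph{not} cuspidal on the finite reductive quotient: in case (2) the trivial character on $\mathrm{GSp}(2\theta_{\mathrm{max}},\mathbb F_p)$ is cuspidal only when $\theta_{\mathrm{max}}=0$, and in case (4) the description of $\rho_{\theta_{\mathrm{max}}}$ in terms of Lusztig symbols from \ref{ClassificationUnip} forces it to lie in a proper Harish-Chandra series as soon as $\theta_{\mathrm{max}}\geq 3$. Depth-zero Bernstein theory then places $V$ in a finite union of Bernstein blocks associated with proper Levi subgroups, none of which contains a supercuspidal representation, so no irreducible subquotient of $V_{\chi}$ is supercuspidal. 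To rule out admissible subrepresentations of $V_{\chi}$, I would write $\bar J := J/\mathrm{Z}(J)$ and observe via the Iwasawa decomposition that the double coset space $J_{\theta_{\mathrm{max}}} \backslash J / \mathrm{Z}(J) J_{\theta_{\mathrm{max}}}$ is infinite, so that the space of $J_{\theta_{\mathrm{max}}}^+$-invariants in $V_{\chi}$ is infinite-dimensional. Any admissible subrepresentation $\pi \hookrightarrow V_{\chi}$ would then have to have non-zero Jacquet module with respect to a proper parabolic $P=MN$ attached to the proper Harish-Chandra series above; by exactness of the Jacquet functor, $r_N(\pi)$ embeds into $r_N(V_{\chi})$, and Mackey theory computes this latter Jacquet module as a compact induction on $M$ from a compact open subgroup with a similarly non-cuspidal inducing representation. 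An induction on the semisimple $\mathbb Q_p$-rank of $J$ then yields the contradiction.

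The main obstacle is the non-admissibility statement in cases (2) and (4). While the supercuspidality in cases (1) and (3) follows from by-now classical depth-zero type theory, controlling the socle of a compactly induced representation after fixing the central character requires the careful inductive analysis of Jacquet functors along proper parabolics described above, which relies on identifying the base case of the induction (the anisotropic situation treated in case (1)) with sufficient care. Some case-by-case verification of the non-cuspidality of $\rho_{\theta_{\mathrm{max}}}$ in case (4) via the symbol combinatorics from \ref{ClassificationUnip} should also be expected.
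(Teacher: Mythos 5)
Your overall framework — depth-zero type theory in the style of Moy--Prasad and Morris, with the dichotomy governed by whether the inducing representation is cuspidal on the finite reductive quotient $\mathcal J_{\theta_{\mathrm{max}}}$ — is the same one the paper uses (the paper cites the summary in \cite{muller} Section 4.2, which in turn rests on \cite{bk}, \cite{bushnell}, \cite{morris}). For cases (1) and (3) your reasoning is essentially sound, with two small imprecisions: for $n=2$ with $C$ non-split, $V_{\chi}\simeq \sigma_0\otimes\chi$ with $\sigma_0 = \mathrm{c\text{-}Ind}_{\mathrm N_J(J_0)}^J\,\mathbf 1$, which need not be one-dimensional; and the cuspidality test must be run on the full finite quotient $\mathcal J_{\theta_{\mathrm{max}}}\simeq \mathrm{G}(\mathrm{GSp}(2\theta_{\mathrm{max}},\mathbb F_p)\times\mathrm{GO}(V^1_{\Lambda_{\theta_{\mathrm{max}}}}))$, not only on the $\mathrm{GSp}$ factor — this works in the cases you list precisely because the orthogonal factor is then anisotropic, so that the trivial representation of that factor is cuspidal as well.

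The genuine gap is in the non-admissibility argument for cases (2) and (4). First, the two halves of your argument do not connect: the infiniteness of the space of $J_{\theta_{\mathrm{max}}}^+$-invariants shows that $V_{\chi}$ itself is not admissible, which is weaker than the assertion that $V_{\chi}$ contains no non-zero admissible \emph{sub}representation. Second, the Jacquet--Mackey induction you propose does not close. After a single Jacquet restriction to a parabolic $P=MN$ whose Levi matches the cuspidal support of the inducing representation, the piece of $r_N(V_{\chi})$ you are led to is a compact induction on $M$ whose inducing representation is now \emph{cuspidal} on the relevant finite quotient — that is precisely what cuspidal support means. So the inductive hypothesis ("the inducing representation is non-cuspidal") fails at the very next step and the chain does not yield a contradiction; and the ostensible base case (your case (1)) is the supercuspidal situation, which gives no contradiction either. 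What the paper actually packages via \cite{muller} 4.2 and \cite{bushnell} is a Hecke-algebra argument: in the non-cuspidal cases the type $(J_{\theta_{\mathrm{max}}}^{\circ},\rho)$ produces an affine Hecke algebra of positive rank, and one shows that the progenerator $V_{\chi}$, viewed as a module over this algebra with the central character fixed, admits no non-zero finite-length (hence no admissible) submodule. You would need to replace your Jacquet-functor induction with this structural input.
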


\noindent In particular, we obtain the following corollary.

\begin{corol}
Let $\chi$ be any unramified character of $E^{\times}$. \\
If $n\geq 3$ or $n=2$ with $C$ split then $\mathrm H_c^{2(n-1-\theta_{\mathrm{max}})}(\mathcal M^{\mathrm{an}})_{\chi}$ is not $J$-admissible.\\
If $n\geq 7$ or $n=6$ with $C$ split then $\mathrm H_c^{2(n-\theta_{\mathrm{max}})}(\mathcal M^{\mathrm{an}})_{\chi}$ is not $J$-admissible.
\end{corol}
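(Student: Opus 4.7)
The corollary is a direct consequence of the preceding theorem and proposition. Set $V := \mathrm{c-Ind}_{J_{\theta_{\mathrm{max}}}}^{J} \mathbf{1}$ and $V' := \mathrm{c-Ind}_{J_{\theta_{\mathrm{max}}}}^{J} \rho_{\theta_{\mathrm{max}}}$. The theorem provides $J \times W$-equivariant monomorphisms $V \hookrightarrow \mathrm H_c^{2(n-1-\theta_{\mathrm{max}})}(\mathcal M^{\mathrm{an}})$ and $V' \hookrightarrow \mathrm H_c^{2(n-\theta_{\mathrm{max}})}(\mathcal M^{\mathrm{an}})$, each with $\tau = (\pi\mathrm{id}, \mathrm{Frob})$ acting as an explicit scalar on the left. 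To treat an arbitrary unramified character $\chi$ of $E^{\times}$, I would first twist the $W$-action by an unramified character $\psi$ so that the $\tau$-scalar becomes $\chi(\pi)$; this is possible because $\tau$ couples the central element $\pi\mathrm{id}$ with $\mathrm{Frob}$, and twisting by $\psi$ leaves the underlying $J$-structure untouched.

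Passing to $\chi$-coinvariants on both sides of the first embedding induces a $J$-equivariant map $V_{\chi} \to \mathrm H_c^{2(n-1-\theta_{\mathrm{max}})}(\mathcal M^{\mathrm{an}})_{\chi}$ with non-zero image $Q$; non-vanishing can be verified by taking invariants under the pro-$p$-radical of $J_{\theta_{\mathrm{max}}}$, which yields an explicit non-trivial function space inside $V_{\chi}$ mapping nontrivially into the cohomology. If the target were $J$-admissible, then $Q$, as a subrepresentation, would be admissible. But $Q$ is simultaneously a non-zero smooth quotient of $V_{\chi}$ with central character $\chi$, so $V_{\chi}$ would admit a non-zero admissible quotient. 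Part (2) of the proposition rules this out: its proof shows that every irreducible subquotient of $V$ is non-supercuspidal and arises as a parabolic induction from a proper Levi subgroup of $J$, forbidding any admissible constituent---subrepresentation or quotient---inside $V_{\chi}$. This contradicts the non-vanishing of $Q$ and establishes the first claim. The second claim follows identically, replacing $V$ by $V'$ and appealing to part (4) of the proposition.

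The main obstacle is the last step: reconciling the literal statement of the proposition, which asserts only the absence of admissible sub\emph{representations} of $V_{\chi}$, with the absence of admissible \emph{quotients} that the above argument actually needs. I expect this gap is closed by revisiting the type-theoretic machinery underlying parts (2) and (4), as the non-supercuspidality of every irreducible subquotient of $V$ eliminates admissible subquotients of either kind; failing that, one may pass to the smooth dual, where admissible quotients of $V_{\chi}$ correspond to admissible subrepresentations of $\widetilde{V_{\chi}}$, to which the proposition applies directly after a parallel analysis.
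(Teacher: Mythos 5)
Your strategy matches the paper's: combine the embedding
$$V := \mathrm{c-Ind}_{J_{\theta_{\mathrm{max}}}}^J \mathbf 1 \hookrightarrow H := \mathrm H_c^{2(n-1-\theta_{\mathrm{max}})}(\mathcal M^{\mathrm{an}})$$
from \ref{MinimalEigenvalue1} with \ref{IrreducibleSubquotients}. But the gap you flag at the end is real, and neither repair you sketch is actually carried out. The proposition forbids admissible \emph{subrepresentations} of $V_\chi$, whereas your argument only produces the image $Q$ as a \emph{quotient} of $V_\chi$, and passing to ``admissible quotients'' is not a formal consequence of the stated proposition. Your justification for $Q\neq 0$ (pro-unipotent-radical invariants inside $V$) also does not rule out $V\subset H(\chi)$, so the non-vanishing is left unestablished. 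The preliminary twist of the $W$-action by $\psi$ is superfluous: the claim concerns $H_\chi$ only as a $J$-module, and the $W$-action enters only through the isolated-eigenvalue argument below.

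Both gaps close at once if one observes that $V$ is a $J\times W$-\emph{direct summand} of $H$, not merely a subspace. Indeed $\tau$ acts on $V$ by the scalar $\lambda=p^{n-1-\theta_{\mathrm{max}}}$, and the argument in \ref{MinimalEigenvalue1} shows that this eigenvalue occurs in no other term of the spectral sequence. Since $\tau$ acts semisimply on every $E_1^{a,b}$ (on each summand $\mathrm H_c^b(U_\Lambda)\simeq \mathrm H_c^{2(n-1)-b}(\mathcal M_\Lambda)^\vee(n-1)$ it acts through $F$ by \ref{TrivialNearbyCycles}, and $F$ is semisimple by \ref{FrobeniusEigenvalues}), $\tau$ is semisimple on every graded piece of the finite filtration on $H$. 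The inertia acts trivially, so $\tau$ commutes with both $J$ and $W$, and the polynomial $\prod_{\mu\neq\lambda}\bigl((\tau-\mu)/(\lambda-\mu)\bigr)^N$ (product over the remaining eigenvalues, $N$ the filtration length) defines a $J\times W$-equivariant idempotent $H\twoheadrightarrow V$. Hence $H=V\oplus\ker$ and $H_\chi=V_\chi\oplus(\ker)_\chi$, so $V_\chi$ embeds as a $J$-\emph{sub}representation of $H_\chi$; it is non-zero for unramified $\chi$ because $\mathrm Z(J)\cap J_{\theta_{\mathrm{max}}}=\mathcal O_E^\times$ acts trivially on $\mathbf 1$. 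If $H_\chi$ were admissible, $V_\chi$ would be a non-zero admissible subrepresentation of itself, contradicting \ref{IrreducibleSubquotients}\,(2). The second assertion is identical with $\rho_{\theta_{\mathrm{max}}}$ in place of $\mathbf 1$ and \ref{MinimalEigenvalue2}\,(2) in place of \ref{IrreducibleSubquotients}\,(2).
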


\noindent This non-admissibility result was already observed in the unramified case, but it does not happen in the Lubin-Tate nor the Drinfeld cases.\\

\noindent Lastly, we introduce the PEL unitary Shimura variety $\mathrm{S}_{K^p}$ of signature $(1,n-1)$ at a ramified place, which is a smooth quasi-projective scheme over $\mathcal O_E$. It is given by a Shimura datum denoted $(\mathbb G,X)$. Let $\overline{S}^{\mathrm{ss}} := \varprojlim \overline{S}^{\mathrm{ss}}_{K^p}$ denote the supersingular locus in its special fiber. Via $p$-adic uniformization, the geometry of the supersingular locus is closely related to the geometry of $\mathcal M_{\mathrm{red}}$. At the level of cohomology, the following $(\mathbb G(\mathbb A_f^p)\times W)$-equivariant spectral sequence has been constructed in \cite{fargues}
$$F_2^{a,b} = \bigoplus_{\Pi\in\mathcal A_{\xi}(I)} \mathrm{Ext}_{J}^a \left (\mathrm H_c^{2(n-1)-b}(\mathcal M^{\mathrm{an}}, \overline{\mathbb Q_{\ell}})(1-n), \Pi_p\right) \otimes \Pi^p \implies \mathrm H_c^{a+b}(\overline{S}^{\mathrm{ss}},\mathcal L_{\xi}),$$
where $I$ is a certain inner form of $\mathbb G$ such that $I_{\mathbb A_f^p} = \mathbb G_{\mathbb A_f^p}$ and $I_{\mathbb Q_p} = J$, $\xi$ is a finite dimensional irreducible algebraic $\overline{\mathbb Q_{\ell}}$-representation of $\mathbb G$ of weight $w(\xi)\in\mathbb Z$, $\mathcal L_{\xi}$ is the associated local system on the Shimura variety $\mathrm S_{K^p}$, $\mathcal A_{\xi}(I)$ is the space of all automorphic representations of $I(\mathbb A)$ of type $\xi$ at infinity, and $\mathrm H_c^{\bullet}(\overline{S}^{\mathrm{ss}},\mathcal L_{\xi}) := \varinjlim_{K^p} \mathrm H_c^{\bullet}(\overline{S}^{\mathrm{ss}}_{K^p},\mathcal L_{\xi})$. The semisimple rank of $J$ is $\theta_{\mathrm{max}}$ so that $F_2^{a,b} = 0$ as soon as $a\geq \theta_{\mathrm{max}} +1$. In particular, if $\theta_{\mathrm{max}} \leq 1$ then all the differentials are zero so that the spectral sequence already degenerates on the second page. Using our knowledge on the cohomology of the Rapoport-Zink space, one can compute all the non-zero terms $F_2^{a,b}$. We deduce the following automorphic description of the cohomology of the supersingular locus.\\
Let $X^{\mathrm{un}}(J)$ denote the set of unramified characters of $J$. The subgroup of $J$ generated by all its open compact subgroups is denoted by $J^{\circ}$. It corresponds to all the elements of $J$ whose factor of similitude is a unit. Let us fix a square root $\pi_{\ell}$ of $p$ in $\overline{\mathbb Q_{\ell}}$. If $\Pi\in \mathcal A_{\xi}(I)$, we define $\delta_{\Pi_p} := \omega_{\Pi_p}(\pi^{-1}\cdot\mathrm{id})\pi_{\ell}^{-w(\xi)} \in \overline{\mathbb Q_{\ell}}^{\times}$ where $\omega_{\Pi_p}$ is the central character of $\Pi_p$, and $\pi^{-1}\cdot\mathrm{id}$ lies in the center of $J$. For any isomorphism $\iota:\overline{\mathbb Q_{\ell}} \simeq \mathbb C$ we have $|\iota(\delta_{\Pi_p})| = 1$. Eventually, if $x\in \overline{\mathbb Q_{\ell}}^{\times}$, we denote by $\overline{\mathbb Q_{\ell}}[x]$ the $1$-dimensional representation of the Weil group $W$ where the inertia acts trivially and $\mathrm{Frob}$ acts like multiplication by the scalar $x$.

\begin{theo}[\ref{CohomologySupersingular}]
There are $G(\mathbb A_f^p) \times W$-equivariant isomorphisms
\begin{align*}
\mathrm{H}^{0}_c(\overline{S}^{\mathrm{ss}}, \overline{\mathcal L_{\xi}}) & \simeq \bigoplus_{\substack{\Pi\in\mathcal A_{\xi}(I) \\ \Pi_p \in X^{\mathrm{un}}(J)}} \Pi^p \otimes \overline{\mathbb Q_{\ell}}[\delta_{\Pi_p}\pi_{\ell}^{w(\xi)}], \\
\mathrm{H}^{2}_c(\overline{S}^{\mathrm{ss}}, \overline{\mathcal L_{\xi}}) & \simeq \bigoplus_{\substack{\Pi\in\mathcal A_{\xi}(I) \\ \Pi_p^{J_1}\not = 0}} \Pi^p \otimes \overline{\mathbb Q_{\ell}}[\delta_{\Pi_p}\pi_{\ell}^{w(\xi)+2}].
\end{align*}
Moreover, there exists a $G(\mathbb A_f^p)\times W$-subspace $V \subset \mathrm{H}^{1}_c(\overline{S}^{\mathrm{ss}}, \overline{\mathcal L_{\xi}})$ such that 
$$V \simeq \bigoplus_{\Pi\in\mathcal A_{\xi}(I)} d(\Pi_p)\Pi^p \otimes \overline{\mathbb Q_{\ell}}[\delta_{\Pi_p}\pi_{\ell}^{w(\xi)}],$$
and with quotient space isomorphic to 
$$\mathrm{H}^{1}_c(\overline{S}^{\mathrm{ss}}, \overline{\mathcal L_{\xi}})/V \simeq \bigoplus_{\substack{\Pi\in\mathcal A_{\xi}(I) \\ \Pi_p^{J_1}\not = 0\\ \dim(\Pi_p) > 1}} (\nu-1-d(\Pi_p))\Pi^p \otimes \overline{\mathbb Q_{\ell}}[\delta_{\Pi_p}\pi_{\ell}^{w(\xi)}] \oplus \bigoplus_{\substack{\Pi\in\mathcal A_{\xi}(I) \\ \Pi_p \in X^{\mathrm{un}}(J)}} (\nu-d(\Pi_p))\Pi^p \otimes \overline{\mathbb Q_{\ell}}[\delta_{\Pi_p}\pi_{\ell}^{w(\xi)}],$$
where $\nu \in \mathbb Z_{\geq 0}$ is a multiplicity given by 
$$
\nu = \begin{cases}
1 & \text{if } n = 2 \text{ and } C \text{ is split},\\
p & \text{if } n = 3,\\
p^2 & \text{if } n = 4 \text{ and } C \text{ is non-split},
\end{cases}
$$ 
and $d(\Pi_p) := \dim\mathrm{Ext}_{J}^1(\mathrm{c-Ind}_{J^{\circ}}^J\,\mathbf 1,\Pi_p)$.
\end{theo}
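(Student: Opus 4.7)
The plan is to feed the local cohomological results obtained on $\mathcal M^{\mathrm{an}}$ into the Rapoport-Zink spectral sequence of Fargues recalled in the introduction. The key observation is that each of the three cases listed satisfies $\theta_{\mathrm{max}}=1$ (namely $t_{\mathrm{max}}=2$ for $n=2$ with $C$ split, for $n=3$, and for $n=4$ with $C$ non-split), so that the $F_2$-page is supported in the columns $a\in\{0,1\}$. Every higher differential $d_r$ with $r\geq 2$ would land in a trivial column, hence the spectral sequence degenerates at the second page and for each $k\in\{0,1,2\}$ one gets a $\mathbb G(\mathbb A_f^p)\times W$-equivariant short exact sequence
$$0 \to F_2^{1,k-1} \to \mathrm H_c^k(\overline S^{\mathrm{ss}},\mathcal L_\xi) \to F_2^{0,k} \to 0,$$
with the convention $F_2^{1,-1}=0$. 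This already gives the subspace-quotient structure of $V\subset \mathrm H_c^1$ that appears in the statement, with $V = F_2^{1,0}$ and the quotient equal to $F_2^{0,1}$, while the lines for $k=0$ and $k=2$ collapse to a single term.

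Second, I would compute $\mathrm H_c^\bullet(\mathcal M^{\mathrm{an}})$ entirely in these low-rank cases. Since $\theta_{\mathrm{max}}=1$ each closed Bruhat-Tits stratum is either a point or a $\mathbb P^1$, so the general cohomological results of the introduction reduce to a full computation on each $\mathcal M_\Lambda$. The \v{C}ech sequence associated with the tubular cover $\{U_\Lambda\}_{\Lambda \in \mathcal L^{\mathrm{max}}}$ is correspondingly short and can be handled directly, yielding a description of each $\mathrm H_c^k(\mathcal M^{\mathrm{an}})$ as a finite direct sum of modules $\mathrm{c-Ind}_{J_\theta}^J\, \rho$ endowed with an explicit inertia-trivial Weil action. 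The two monomorphisms of the previous theorem identify the pieces with $\rho = \mathbf 1$ and $\theta = \theta_{\mathrm{max}}$ in the top two degrees, so they will reappear in the statement as the $\mathrm H_c^0$ and $\mathrm H_c^2$ contributions (after accounting for the Tate twist $(1-n)$).

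Third, I would translate each $F_2^{a,b}$ into local invariants of $\Pi_p$. By Frobenius reciprocity, $\mathrm{Hom}_J(\mathrm{c-Ind}_{J_\theta}^J\, \mathbf 1, \Pi_p) = \Pi_p^{J_\theta}$, which produces the conditions $\Pi_p\in X^{\mathrm{un}}(J)$ and $\Pi_p^{J_1}\neq 0$ appearing in the theorem, together with the exact shift of the Weil action on each side. The $\mathrm{Ext}_J^1$-terms are handled by rewriting the input cohomology in terms of parahorically compactly induced trivial representations and using that $J^\circ$ is generated by the parahorics $J_\theta$; this identifies the Ext-term with the invariant $d(\Pi_p)$ of the statement. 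Sorting the $\Pi$'s according to whether $\Pi_p$ is an unramified character, of dimension $>1$ with non-zero $J_1$-fixed vectors, supercuspidal, or otherwise, assembles everything into the stated decomposition.

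The main obstacle is the computation of the multiplicity $\nu$, which depends on $p$ as $1$, $p$, or $p^2$. It arises from a precise dimension count in the middle degree, reducing to counting fixed vectors (or solving a small Ext problem) for a representation of the finite reductive quotient $J_{\theta_{\mathrm{max}}}/J_{\theta_{\mathrm{max}}}^+$, a small symplectic similitude group over $\mathbb F_p$. Verifying that $\nu$ takes exactly the values $1$, $p$, $p^2$ in the three cases, and that the extension structure of $\mathrm H_c^1$ really decomposes as claimed, is the technically hardest step; it is also the obstruction to extending the theorem to larger $n$, where the singularities of $S_\theta$ described in the introduction prevent a complete determination of $\mathrm H_c^\bullet(\mathcal M^{\mathrm{an}})$.
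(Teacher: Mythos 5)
Your high-level framing is sound (Fargues spectral sequence, $\theta_{\mathrm{max}}=1$ forcing degeneration at $F_2$, Frobenius reciprocity to translate terms into $\Pi_p$-invariants), but you have glossed over two steps that are the actual content of the proof and cannot be dismissed as routine. First, for $k=2$ the short exact sequence reads $0\to F_2^{1,1}\to \mathrm H_c^2\to F_2^{0,2}\to 0$, which does \emph{not} collapse for free: the paper must separately prove that $F_2^{1,1}=0$ \emph{and} that the eigenspaces of $\mathrm{Frob}$ of complex modulus $p^{w(\xi)/2}$ inside $F_2^{0,2}$ vanish (Proposition \ref{EigenvaluesOnH2}). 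The argument is geometric: stratify $\overline{S}^{\mathrm{ss}}_{K^p}$ by Bruhat--Tits strata, observe that $\mathrm H_c^2$ is supported on the $1$-dimensional stratum and that each such stratum is a smooth projective $\mathbb P^1$, so purity of the local system forces all eigenvalues of $\mathrm{Frob}$ on $\mathrm H_c^2(\overline{S}^{\mathrm{ss}})$ to have modulus $p^{1+w(\xi)/2}$. Without this, both the isomorphism for $\mathrm H_c^2$ and the computation of $F_2^{0,1}$ (which later uses $\mathrm{Ext}_J^1(E_2^{-1,b},\Pi_p)=0$, a reformulation of $F_2^{1,1}=0$) are unjustified.

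Second, your description of where $\nu$ comes from is off the mark. You say it reduces to counting fixed vectors or solving a small $\mathrm{Ext}$ problem for $J_{\theta_{\mathrm{max}}}/J_{\theta_{\mathrm{max}}}^+$, but in the paper $\nu$ appears as $k_{2,0}-s$, an alternating sum of binomial coefficients $k_{i,0}=\binom{\#N(\Lambda_0)}{i}$ coming from the combinatorics of the \v{C}ech nerve of the open cover $\{U_\Lambda\}_{\Lambda\in\mathcal L^{\mathrm{max}}}$; the collapse of the alternating sum to $\#N(\Lambda_0)-1$ is a binomial identity, and $\#N(\Lambda_0)=2,p+1,p^2+1$ counts isotropic lines in the orthogonal space $V^1_{\Lambda_0}$ over $\mathbb F_p$. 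Carrying this out also requires the vanishing lemma $\mathrm{Ext}_J^1(\mathrm{Ker}(\varphi_i),\pi)=\mathrm{Ext}_J^1(\mathrm{Im}(\varphi_i),\pi)=0$ (Lemma \ref{VanishingExt1}, resting on the observation that the \v{C}ech sequence is compactly induced from $J^{\circ}$) so that long exact $\mathrm{Ext}$ sequences break into short ones, and it requires the local representation-theoretic equivalence $\Pi_p^{J_0}\not=0\iff\Pi_p^{J_1}\not=0$ (proved via Aubert--Zelevinski duality and the fact that $J_0,J_1$ are both special maximal compact), neither of which appears in your outline. In short: the skeleton is right, but the proposal is missing the purity argument, the $\mathrm{Ext}$-vanishing lemma, the spherical-vector dichotomy, and mislocates the source of $\nu$.
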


\noindent The apparition of the multiplicities depending on $p$ in the group $\mathrm H_c^1$ is the main feature of this computation. Such multiplicities already appeared in the unramified case. If the Shimura variety is of Kottwitz-Harris-Taylor type, the cohomology of the whole special fiber $\overline{\mathrm S} := \varinjlim_{K^p} \overline{\mathrm S_{K^p}}$ has been computed in \cite{boyer}. In particular, no such multiplicities appear in the cohomology of $\overline{\mathrm S}$. Therefore, such multiplicities are also expected to appear in the cohomology of other non basic Newton strata of the special fiber.\\

\noindent \textbf{\textsc{Organisation of the paper:}} In the first section, we prove all the statements regarding the cohomology of the closed Bruhat-Tits strata by using Deligne-Lusztig theory only. Contrary to the introduction, we work over a general finite field $\mathbb F_q$ of characteristic $p$. However, only the case $q=p$ will be relevant in the context of the Rapoport-Zink space. Also, we work with the usual symplectic group $\mathrm{Sp}$ instead of the group of symplectic similitudes $\mathrm{GSp}$, because the associated Deligne-Lusztig varieties are the same in virtue of \ref{SameDLVarieties} and \ref{SameUnipotent}.  We recall the general definition of Deligne-Lusztig varieties, and we explain the combinatorics of symbols applied to the classification of unipotent representations of finite symplectic groups. We then translate Lusztig's computation of the cohomology of the Coxeter varieties in \cite{cox} in terms of symbols, and we finally proceed to investigate the cohomology of a closed Bruhat-Tits stratum.\\
In section $2$, we introduce the Rapoport-Zink space and recall the results from \cite{RTW} where the Bruhat-Tits stratification on the special fiber is built. We detail the combinatorics of vertex lattices, and we give a formula for the number of strata contained in or containing a fixed given stratum. Eventually, we also recall the $p$-adic uniformization of the supersingular locus of the associated Shimura variety.\\
In section $3$, we move the Bruhat-Tits stratification to the generic fiber $\mathcal M^{\mathrm{an}}$ by considering the analytical tubes $U_{\Lambda}$, and we study the associated \v{C}ech spectral sequence. This section contains all our results on the cohomology of the Rapoport-Zink space.\\
In the last section, we use the $p$-adic uniformization and our knowledge acquired so far on the cohomology of the Rapoport-Zink space, in order to compute the cohomology of the supersingular locus for small values of $n$.  

\noindent \textbf{\textsc{Notations:}} Throughout the paper, we fix an integer $n\geq 1$ and an odd prime number $p$. If $k$ is a perfect field of characteristic $p$, we denote by $W(k)$ the ring of Witt vectors and by $W(k)_{\mathbb Q}$ its fraction field, which is an unramified extension of $\mathbb Q_p$. We denote by $\sigma_k: x \mapsto x^p$ the Frobenius of $\mathrm{Aut}(k/\mathbb F_p)$, and we use the same notation for its lift to $\mathrm{Aut}(W(k)_{\mathbb Q}/\mathbb Q_p)$. If $k'/k$ is a perfect field extension then $(\sigma_{k'})_{|k} = \sigma_k$, so we can remove the subscript and write $\sigma$ unambiguously instead. If $q = p^e$ is a power of $p$, we write $\mathbb F_{q}$ for the field with $q$ elements. We fix an algebraic closure $\mathbb F$ of $\mathbb F_p$.\\
We fix $\epsilon \in \mathbb Z_{p}^{\times}$ such that $-\epsilon$ is not a square in $\mathbb Z_{p}$. We define $E_1 := \mathbb Q_p[\sqrt{-p}]$ and $E_2 := \mathbb Q_p[\sqrt{\epsilon p}]$. Any quadratic ramified extension of $\mathbb Q_p$ is isomorphic to either $E_1$ or $E_2$. We will denote by $E$ either $E_1$ or $E_2$ with uniformizer $\pi$ equal to $\sqrt{-p}$ or $\sqrt{\epsilon p}$ respectively. In both cases $\pi^2$ is a uniformizer in $\mathbb Z_{p}$. We write  $\mathcal O_E$ for the ring of integers and $\kappa(E) = \mathbb F_p$ for the residue field. Let $\overline{\,\cdot\,} \in \mathrm{Gal}(E/\mathbb Q_p)$ be the non-trivial Galois involution, so that $\overline{\pi} = -\pi$. 

\noindent \textbf{\textsc{Acknowledgement:}} This paper is part of a PhD thesis under the supervision of Pascal Boyer and Naoki Imai. I am grateful for their wise guidance throughout the research.

\section{On the cohomology of a closed Bruhat-Tits stratum}

\subsection{The closed Deligne-Lusztig variety isomorphic to a closed Bruhat-Tits stratum}

\paragraph{}\label{DefDLVariety}Let $q$ be a power of $p$ and let $\mathbf G$ be a connected reductive group over $\mathbb F$, together with a split $\mathbb F_q$-structure given by a geometric Frobenius morphism $F$. For $\mathbf H$ any $F$-stable subgroup of $\mathbf G$, we write $H := \mathbf H^F$ for its group of $\mathbb F_q$-rational points. Let $(\mathbf T,\mathbf B)$ be a pair consisting of a maximal $F$-stable torus $\mathbf T$ contained in an $F$-stable Borel subgroup $\mathbf B$. Let $(\mathbf W,\mathbf S)$ be the associated Coxeter system, where $\mathbf W = \mathrm{N}_{\mathbf G}(\mathbf T)/\mathbf T$. Since the $\mathbb F_q$-structure on $\mathbf G$ is split, the Frobenius $F$ acts trivially on $\mathbf W$. For $I\subset \mathbf S$, let $\mathbf P_I, \mathbf U_I, \mathbf L_I$ be respectively the standard parabolic subgroup of type $I$, its unipotent radical and its unique Levi complement containing $\mathbf T$. Let $\mathbf W_I$ be the subgroup of $\mathbf W$ generated by $I$.\\ 
For $\mathbf P$ any parabolic subgroup of $\mathbf G$, the associated \textbf{generalized parabolic Deligne-Lusztig variety} is
$$X_{\mathbf P} := \{g\mathbf P\in \mathbf G/\mathbf P\,|\,g^{-1}F(g)\in \mathbf P F(\mathbf P)\}.$$
We say that the variety is \textbf{classical} (as opposed to generalized) when in addition the parabolic subgroup $\mathbf P$ contains an $F$-stable Levi complement. Note that $\mathbf P$ itself needs not be $F$-stable.\\
We may give an equivalent definition using the Coxeter system $(\mathbf W,\mathbf S)$. For $I\subset \mathbf S$, let ${}^I\mathbf W^{I}$ be the set of elements $w\in \mathbf W$ which are $I$-reduced-$I$. For $w\in\, ^I\mathbf W^{I}$, the associated generalized parabolic Deligne-Lusztig variety is
$$X_I(w)  := \{g\mathbf P_I\in \mathbf G/\mathbf P_I \,|\, g^{-1}F(g)\in \mathbf P_IwF(\mathbf P_I)\}.$$
The variety $X_I(w)$ is classical when $w^{-1}Iw = I$, and it is defined over $\mathbb F_q$. The dimension is given by $\dim X_I(w) = l(w)$ where $l(w)$ denotes the length of $w$ with respect to $\mathbf S$.

\paragraph{}\label{SameDLVarieties}Let $\mathbf G$ and $\mathbf G'$ be two reductive connected group over $\mathbb F$ both equipped with an $\mathbb F_q$-structure. We denote by $F$ and $F'$ the respective Frobenius morphisms. Let $f: \mathbf G\rightarrow \mathbf G'$ be an $\mathbb F_q$-isotypy, that is a homomorphism defined over $\mathbb F_q$ whose kernel is contained in the center of $\mathbf G$ and whose image contains the derived subgroup of $\mathbf G'$. Then, according to \cite{dm} proof of Proposition 11.3.8, we have $\mathbf G' = f(\mathbf G)\mathrm{Z}(\mathbf G')^{0}$, where $\mathrm{Z}(\mathbf G')^{0}$ is the connected component of unity of the center of $\mathbf G'$. Thus intersecting with $f(\mathbf G)$ defines a bijection between parabolic subgroups of $\mathbf G'$ and those of $f(\mathbf G)$. Let $\mathbf P$ be a parabolic subgroup of $\mathbf G$ and let $\mathbf P' = f(\mathbf P)\mathrm{Z}(\mathbf G')^{0}$ be the corresponding parabolic of $\mathbf G'$. Then the map $g\mathbf P \mapsto f(g\mathbf P)$ induces an isomorphism $f:X_{\mathbf P} \xrightarrow{\sim} X_{\mathbf P'}$ which is compatible with the actions of $G$ and $G'$ via $f$. Therefore $\mathbf G$ and $\mathbf G'$ generate the same Deligne-Lusztig varieties.

\paragraph{}\label{FiniteHermitianSpace} Let $\theta\geq 0$ and let $V$ be a $2\theta$-dimensional $\mathbb F_{q}$-vector space equipped with a non-degenerate symplectic form $(\cdot,\cdot):V\times V \to \mathbb F_{q}$. Fix a basis $(e_1,\ldots,e_{2\theta})$ in which $(\cdot,\cdot)$ is described by the matrix 
$$\begin{pmatrix}
0 & A_{\theta} \\
-A_{\theta} & 0
\end{pmatrix},$$
where $A_{\theta}$ denotes the matrix having $1$ on the anti-diagonal and $0$ everywhere else. If $k$ is a perfect field extension of $\mathbb F_q$, let $V_k := V \otimes_{\mathbb F_q} k$ denote the scalar extension to $k$ equipped with its induced $k$-symplectic form $(\cdot,\cdot)$. Let $\tau:V_k \xrightarrow{\sim} V_k$ denote the map $\mathrm{id}\otimes \sigma$. If $U \subset V_k$, let $U^{\perp}$ denote its orthogonal.\\
We consider the finite symplectic group $\mathrm{Sp}(V,(\cdot,\cdot)) \simeq \mathrm{Sp}(2\theta,\mathbb F_q)$. It can be identified with $G = \mathbf G^F$ where $\mathbf G$ is the symplectic group $\mathrm{Sp}(V_{\mathbb F},(\cdot,\cdot)) \simeq \mathrm{Sp}(2\theta,\mathbb F)$ and $F$ is the Frobenius raising the entries of a matrix to their $q$-th power. Let $\mathbf T \subset \mathbf G$ be the maximal torus of diagonal symplectic matrices and let $\mathbf B \subset \mathbf G$ be the Borel subgroup of upper-triangular symplectic matrices. The Weyl system of $(\mathbf T,\mathbf B)$ is identified with $(W_{\theta},\mathbf S)$ where $W_{\theta}$ is the finite Coxeter group of type $B_{\theta}$ and $\mathbf S = \{s_1,\ldots ,s_{\theta}\}$ is the set of simple reflexions. They satisfy the following relations 
\begin{align*}
s_{\theta}s_{\theta-1}s_{\theta}s_{\theta-1}  & = s_{\theta-1}s_{\theta}s_{\theta-1}s_{\theta}, & s_is_{i-1}s_i & = s_{i-1}s_is_{i-1}, & & \forall \; 2\leq i \leq \theta-1, \\
s_is_j & = s_js_i, & & & & \forall\; |i-j| \geq 2.
\end{align*} 
Concretely, the simple reflexion $s_i$ acts on $V$ by exchanging $e_i$ and $e_{i+1}$ as well as $e_{2\theta-i}$ and $e_{2\theta-i+1}$ for $1\leq i \leq \theta-1$, whereas $s_{\theta}$ exchanges $e_{\theta}$ and $e_{\theta+1}$. The Frobenius $F$ acts trivially on $W_{\theta}$.

\paragraph{}\label{IsomorphismDLVariety} We define the following subset of $\mathbf S$ 
$$I:=\{s_1,\ldots ,s_{\theta-1}\} = \mathbf S\setminus \{s_{\theta}\}.$$
We consider the generalized Deligne-Lusztig variety $X_{I}(s_{\theta})$. Since $s_{\theta}s_{\theta-1}s_{\theta} \not \in I$, it is not a classical Deligne-Lusztig variety. Let $S_{\theta} := \overline{X_{I}(s_{\theta})}$ be its closure in $\mathbf G/\mathbf P_I$. This normal projective variety occurs as a closed Bruhat-Tits stratum in the special fiber of the ramified unitary PEL Rapoport-Zink space of signature $(1,n-1)$, as established in \cite{RTW}. In loc. cit. the authors describe the geometry of $S_{\theta}$. We summarize their analysis.

\begin{prop}[\cite{RTW} 5.3, 5.4]
Let $k$ be a perfect field extension of $\mathbb F_q$. The $k$-rational points of $S_{\theta}$ are given by
$$S_{\theta}(k) \simeq \{U \subset V_k \,|\, U^{\perp} = U \text{ and }U \overset{\leq 1}{\subset} U+\tau(U)\},$$
where $\overset{\leq 1}{\subset}$ denotes an inclusion of subspaces with index at most $1$. There is a decomposition 
$$S_{\theta} = X_{I}(\mathrm{id}) \sqcup X_{I}(s_{\theta}),$$
where $X_{I}(\mathrm{id})$ is closed and of dimension $0$, and $X_{I}(s_{\theta})$ is open, dense of dimension $\theta$. They correspond respectively to points $U$ having $U = \tau(U)$ and $U \subsetneq U + \tau(U)$.\\
If $\theta\geq 2$ then $S_{\theta}$ is singular at the points of $X_{I}(\mathrm{id})$. When $\theta=1$, we have $S_1 \simeq \mathbb P^1$.
\end{prop}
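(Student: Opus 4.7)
The plan is to identify $\mathbf G/\mathbf P_I$ with the Lagrangian Grassmannian of $V$ and translate the generalized Deligne-Lusztig conditions into relative-position conditions between a Lagrangian $U$ and its Frobenius twist $\tau(U)$. The parabolic $\mathbf P_I$ attached to $I = \mathbf S \setminus \{s_\theta\}$ is the Siegel parabolic stabilizing the standard Lagrangian $U_0 := \mathrm{span}(e_1,\ldots,e_\theta)$, and the map $g\mathbf P_I \mapsto gU_0$ provides a bijection between $(\mathbf G/\mathbf P_I)(k)$ and the set of Lagrangians of $V_k$. Because $F$ acts on matrices as the entry-wise $q$-th power Frobenius, one has $F(g) U_0 = \tau(g U_0)$ for every $g$, so the condition $g^{-1}F(g) \in \mathbf P_I w \mathbf P_I$ translates into the relative position of $U := g U_0$ and $\tau(U)$ being $w$. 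For $w = \mathrm{id}$, this forces $U = \tau(U)$, so $X_I(\mathrm{id})$ is the finite set of $\mathbb F_q$-rational Lagrangians; for $w = s_\theta$, $U$ and $\tau(U)$ share a codimension-one subspace, equivalently $\dim(U + \tau(U)) = \theta + 1$.

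For the closure statement, I would argue that $S_\theta = \overline{X_I(s_\theta)}$ coincides with the locus $\{U \in \mathrm{LGr}(V_k) : U \overset{\leq 1}{\subset} U + \tau(U)\}$. The $\mathbf P_I$-orbits on $\mathbf G/\mathbf P_I$ are indexed by $W_I \backslash W / W_I$, and the only double coset strictly below $W_I s_\theta W_I$ in the induced Bruhat order is the trivial one, since any $u \leq s_\theta$ in $W$ is either $\mathrm{id}$ or $s_\theta$. Pulling back along the graph of Frobenius then yields $S_\theta = X_I(\mathrm{id}) \sqcup X_I(s_\theta)$ with the described characterization of the strata. The dimension of $X_I(s_\theta)$ equals that of $\mathbf P_I s_\theta \mathbf P_I / \mathbf P_I$, which by a standard Bruhat argument is $\ell(w_{\max}) - \ell(w_0^I)$ with $w_{\max}$ the longest element of $W_I s_\theta W_I$; a direct computation in the Coxeter system of type $B_\theta$ yields $\theta$, consistent with the known dimension of the open Coxeter variety $\mathcal M_\Lambda^\circ$. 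The locus $X_I(\mathrm{id})$ consists of the finitely many $F$-fixed points of $\mathbf G/\mathbf P_I$, hence is $0$-dimensional.

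The case $\theta = 1$ is immediate: $\mathrm{Sp}(2) \simeq \mathrm{SL}_2$, the Lagrangian Grassmannian is $\mathbb P^1$, and the codimension-one condition is vacuous, so $S_1 = \mathbb P^1$. For $\theta \geq 2$, the main obstacle is the non-smoothness claim at any point $U \in X_I(\mathrm{id})$. I would establish it by an explicit local model: parametrize a neighborhood of $U$ in the Lagrangian Grassmannian by symmetric matrices $M \in \mathrm{Sym}^2(U^\vee)$, so the ambient local dimension is $\theta(\theta+1)/2$. In this coordinate the condition $U_M \overset{\leq 1}{\subset} U_M + \tau(U_M)$ becomes $\mathrm{rank}(M - \tau(M)) \leq 1$, namely the vanishing of all $2\times 2$ minors of the symmetric matrix $M - \tau(M)$. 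At $M = 0$ each such minor vanishes to order at least two, so the Zariski tangent space of $S_\theta$ at $U$ coincides with the ambient tangent space of dimension $\theta(\theta+1)/2 > \theta$, forcing $S_\theta$ to be singular there. Setting up this local model cleanly and tracking the Frobenius twist is the trickiest technical step of the proof.
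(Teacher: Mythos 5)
The overall strategy — identify $\mathbf G/\mathbf P_I$ with the Lagrangian Grassmannian, translate the double-coset condition on $g^{-1}F(g)$ into the relative position of $U$ and $\tau(U)$, and then compute a local model around an $F$-fixed point — is sound and is in the spirit of what \cite{RTW} does (the paper itself only cites [RTW, 5.3, 5.4] and supplies no proof). The dimension count $\dim X_I(s_\theta) = \ell(w_{\max}) - \ell(w_0^I) = \theta$ is correct, as is the case $\theta=1$. Two steps, however, are glossed over in a way that leaves genuine gaps.

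First, the Bruhat-order argument only gives the inclusion $\overline{X_I(s_\theta)} \subseteq X_I(\mathrm{id}) \sqcup X_I(s_\theta)$: the closure of a preimage of a locally closed set under a continuous map sits inside the preimage of the closure, but equality is not automatic. To obtain the reverse inclusion $X_I(\mathrm{id}) \subseteq \overline{X_I(s_\theta)}$, one needs a density argument; for instance, in the local chart around an $F$-fixed Lagrangian, taking $M_t = tM_0$ with $M_0$ a rank-one symmetric matrix over $\mathbb F_q$ gives $M_t - M_t^{(q)} = (t - t^q)M_0$, of rank exactly $1$ for $t \notin \mathbb F_q$, so $M=0$ is a limit of points of $X_I(s_\theta)$. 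Second, the singularity argument computes the Zariski tangent space of the \emph{subscheme} $Z$ cut out by the $2\times 2$ minors of $M - M^{(q)}$, not of the reduced variety $S_\theta$. Since $I_{S_\theta} = \sqrt{I_Z}$, the vanishing $I_Z \subseteq \mathfrak m^2$ does not by itself imply $\sqrt{I_Z} \subseteq \mathfrak m^2$. The cleanest repair is to note that the map $\phi: M \mapsto M - M^{(q)}$ is \'etale (its differential is the identity in characteristic $p$), so $Z$ is the \'etale preimage of the determinantal variety of symmetric matrices of rank $\leq 1$, which is an integral cone of dimension $\theta$ that is singular at its vertex when $\theta \geq 2$; \'etaleness transfers reducedness, dimension and singularity to $Z = S_\theta$, closing both gaps at once.
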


\paragraph{}\label{Stratification}For $0 \leq \theta' \leq \theta$, define 
$$I_{\theta'} := \{s_1,\ldots ,s_{\theta-\theta'-1}\},$$
and $w_{\theta'} := s_{\theta+1-\theta'}\ldots s_{\theta}$. In particular $I_0 = I$, $I_{\theta-1} = I_{\theta} = \emptyset$, $w_0 = \mathrm{id}$ and $w_1 = s_{\theta}$. 

\begin{prop}[\cite{RTW} 5.5]
There is a stratification into locally closed subvarieties
$$S_{\theta} = \bigsqcup_{\theta'=0}^{\theta} X_{I_{\theta'}}(w_{\theta'}).$$
The stratum $X_{I_{\theta'}}(w_{\theta'})$ corresponds to points $U$ such that $\dim(U+\tau(U)+\ldots+\tau^{\theta'+1}(U)) = \theta + \theta'$. The closure in $S_{\theta}$ of a stratum $X_{I_{\theta'}}(w_{\theta'})$ is the union of all the strata $X_{I_{t}}(w_{t})$ for $t\leq \theta'$. The stratum $X_{I_{\theta'}}(w_{\theta'})$ is of dimension $\theta'$, and $X_{I_{\theta}}(w_{\theta})$ is open, dense and irreducible. In particular $S_{\theta}$ is irreducible.
\end{prop}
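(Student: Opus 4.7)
The plan is to translate the Deligne-Lusztig data into conditions on Lagrangians and their $\tau$-iterates, and then exhibit each stratum $X_{I_{\theta'}}(w_{\theta'})$ as the modular incarnation of a depth-$\theta'$ locus in $S_\theta$. First I would describe $\mathbf{G}/\mathbf{P}_{I_{\theta'}}$ modularly: since $\mathbf{S} \setminus I_{\theta'} = \{s_{\theta-\theta'}, \ldots, s_\theta\}$, its $k$-points parametrize partial isotropic flags $F_{\theta-\theta'} \subset F_{\theta-\theta'+1} \subset \cdots \subset F_\theta$ in $V_k$ with $\dim F_i = i$ and $F_\theta$ Lagrangian, and the canonical projection $\mathbf{G}/\mathbf{P}_{I_{\theta'}} \to \mathbf{G}/\mathbf{P}_I$ sends such a flag to $F_\theta$. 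For a Lagrangian $U \in S_\theta(k)$, orthogonality with respect to the symplectic form gives $\bigl(\sum_{i=0}^j \tau^i(U)\bigr)^\perp = \bigcap_{i=0}^j \tau^i(U)$ (using $U^\perp = U$ and that $\tau$ is symplectic), so the condition $\dim(U + \tau U + \cdots + \tau^{\theta'+1}U) = \theta + \theta'$ is equivalent to $\dim U_j = \theta - j$ for $0 \leq j \leq \theta'$ together with $U_{\theta'} = U_{\theta'+1}$, where $U_j := \bigcap_{i=0}^{j} \tau^i(U)$. The chain $U_{\theta'} \subset U_{\theta'-1} \subset \cdots \subset U_0 = U$ is then an isotropic flag of the correct combinatorial type for $\mathbf{G}/\mathbf{P}_{I_{\theta'}}$, giving a morphism $\varphi_{\theta'} : Y_{\theta'} \to \mathbf{G}/\mathbf{P}_{I_{\theta'}}$ from the locally closed locus $Y_{\theta'} \subset S_\theta$ cut out by the above dimension condition.

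The central step is to show that $\varphi_{\theta'}$ realizes a $G$-equivariant isomorphism $Y_{\theta'} \simeq X_{I_{\theta'}}(w_{\theta'})$, with inverse induced by the projection $F_\bullet \mapsto F_\theta$. I would fix an explicit $k$-rational base point in $Y_{\theta'}$ built from the symplectic basis $(e_1, \ldots, e_{2\theta})$ -- e.g. a Lagrangian containing $e_1, \ldots, e_{\theta-\theta'}$ together with a suitable $\tau$-twisted completion of the middle piece -- compute the representative $g^{-1}F(g)$ at this point, and verify directly that it lies in the Bruhat cell $\mathbf{P}_{I_{\theta'}} w_{\theta'} F(\mathbf{P}_{I_{\theta'}})$ for $w_{\theta'} = s_{\theta+1-\theta'}\cdots s_\theta$; the $G$-action then propagates the identification to all of $Y_{\theta'}$. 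For the converse direction, the relative position imposed by $w_{\theta'}$ forces the recursion $F_i = F_\theta \cap \tau F_\theta \cap \cdots \cap \tau^{\theta - i}(F_\theta)$, so any flag in $X_{I_{\theta'}}(w_{\theta'})$ is determined by its top piece $F_\theta$, which automatically lies in $Y_{\theta'}$. I expect this matching of relative positions to be the main technical obstacle, as it requires explicit computation with the braid $w_{\theta'}$ in the Weyl group $W_\theta$ of type $B_\theta$ and a careful comparison with the combinatorics of Frobenius orbits on the chosen basis.

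The remaining assertions are formal. The dimension of $X_{I_{\theta'}}(w_{\theta'})$ equals $l(w_{\theta'}) = \theta'$ by the general formula recalled in \ref{DefDLVariety}. The closure relations follow from upper semicontinuity of $U \mapsto \dim \sum_i \tau^i(U)$, which shows $\overline{Y_{\theta'}} \subset \bigsqcup_{t \leq \theta'} Y_t$; the reverse inclusion comes from dimension counting and the observation that collapsing a step of the flag in $\overline{X_{I_{\theta'}}(w_{\theta'})}$ drops the depth by exactly one. Finally, $X_{I_\theta}(w_\theta)$ is the classical Coxeter variety associated with the Coxeter element $w_\theta = s_1 s_2 \cdots s_\theta$ of $W_\theta$, which is irreducible by Lusztig's results \cite{cox}; since this stratum is open and dense in $S_\theta$, irreducibility of $S_\theta$ follows.
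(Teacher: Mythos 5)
The paper does not prove this proposition; it is quoted from \cite{RTW} Proposition 5.5, so there is no internal argument to measure against. Judged on its own terms, your framework is correctly assembled: the modular reading of $\mathbf G/\mathbf P_{I_{\theta'}}$ as flags of isotropic subspaces of dimensions $\theta-\theta',\dots,\theta$, the orthogonality identity $\bigl(\sum_{i\leq j}\tau^i U\bigr)^\perp=\bigcap_{i\leq j}\tau^i U$, the resulting equivalence of the dimension condition with $\dim U_j=\theta-j$ for $j\leq\theta'$ together with $U_{\theta'}=U_{\theta'+1}$, the dimension count $\ell(w_{\theta'})=\theta'$, and the irreducibility of the open stratum as a Coxeter variety. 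However, the decisive step, that the flag $(U_j)_j$ and its Frobenius translate sit in relative position exactly $w_{\theta'}$, is only announced, not carried out; you defer it as ``the main technical obstacle,'' and it is precisely there that the content of the proposition lies. Everything else you do supply is bookkeeping around a reformulation of the claim.

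Two further points need repair. First, the plan to verify $g^{-1}F(g)\in\mathbf P_{I_{\theta'}}w_{\theta'}F(\mathbf P_{I_{\theta'}})$ at a single base point and then ``propagate via the $G$-action'' does not work as stated: $G=\mathbf G^F$ is a finite group, so the orbit of a base point is finite, whereas the locus $Y_{\theta'}$ is positive-dimensional and is not a single $G$-orbit. What is needed is a uniform intersection-dimension computation showing the relative position is constantly $w_{\theta'}$ over all of $Y_{\theta'}$, not transport of structure. Second, the closure statement is only half-argued: semicontinuity of $U\mapsto\dim\bigl(\sum_i\tau^i U\bigr)$ gives $\overline{X_{I_{\theta'}}(w_{\theta'})}\subset\bigcup_{t\leq\theta'}X_{I_t}(w_t)$, but the reverse inclusion does not follow from ``dimension counting'' plus the remark that collapsing a flag step drops depth by one, since a priori a component of the closed set $\bigcup_{t\leq\theta'}X_{I_t}(w_t)$ could lie entirely in the lower strata. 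One must either exhibit an explicit degeneration from each $X_{I_t}(w_t)$ into the closure, or prove that $\bigcup_{t\leq\theta'}X_{I_t}(w_t)$ is irreducible; you establish neither.
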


\begin{rk}
This stratification plays the role of the Ekedahl-Oort stratification $\mathcal M_{\Lambda} = \bigsqcup_{t} \mathcal M_{\Lambda}(t)$ of the closed Bruhat-Tits strata in the unramified case, see \cite{vw2}. 
\end{rk}

\paragraph{}\label{IsomorphismEOStratum} It turns out that the strata $X_{I_{\theta'}}(w_{\theta'})$ are related to Coxeter varieties for symplectic groups of smaller sizes. For $0 \leq \theta' \leq \theta$, define
$$K_{\theta'} := \{s_1,\ldots s_{\theta-\theta'-1}, s_{\theta-\theta'+1}, \ldots , s_\theta\} = \mathbf S \setminus \{s_{\theta-\theta'}\}.$$
Note that $K_0 = I_0 = I$ and $K_{\theta} = \mathbf S$. We have $I_{\theta'} \subset K_{\theta'}$ with equality if and only if $\theta'=0$.

\noindent \begin{prop}
There is an $\mathrm{Sp}(2\theta,\mathbb F_p)$-equivariant isomorphism
$$X_{I_{\theta'}}(w_{\theta'}) \simeq \mathrm{Sp}(2\theta,\mathbb F_q)/U_{K_{\theta'}} \times_{L_{K_{\theta'}}} X_{I_{\theta'}}^{\mathbf L_{K_{\theta'}}}(w_{\theta'}),$$
where $X_{I_{\theta'}}^{\mathbf L_{K_{\theta'}}}(w_{\theta'})$ is a Deligne-Lusztig variety for $\mathbf L_{K_{\theta'}}$. The zero-dimensional variety $\mathrm{Sp}(2\theta,\mathbb F_q)/U_{K_{\theta'}}$ has a left action of $\mathrm{Sp}(2\theta,\mathbb F_q)$ and a right action of $L_{K_{\theta'}}$.
\end{prop}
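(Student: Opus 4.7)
The plan is to establish this as an instance of parabolic induction for Deligne-Lusztig varieties. First I would record the key combinatorial facts: by inspection $I_{\theta'} \subset K_{\theta'}$, so $\mathbf P_{I_{\theta'}} \subset \mathbf P_{K_{\theta'}}$; the element $w_{\theta'} = s_{\theta+1-\theta'}\cdots s_{\theta}$ lies in $\mathbf W_{K_{\theta'}}$ since each reflection $s_{\theta+1-\theta'},\ldots,s_{\theta}$ belongs to $K_{\theta'}$; and $K_{\theta'}$ is $F$-stable because $F$ acts trivially on $\mathbf W$, so $\mathbf P_{K_{\theta'}}$ and its Levi $\mathbf L_{K_{\theta'}}$ are $F$-stable subgroups of $\mathbf G$.

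Next, I would construct a natural projection $\pi: X_{I_{\theta'}}(w_{\theta'}) \to G/P_{K_{\theta'}}$. For $g\mathbf P_{I_{\theta'}} \in X_{I_{\theta'}}(w_{\theta'})$, the inclusion $\mathbf P_{I_{\theta'}}w_{\theta'}F(\mathbf P_{I_{\theta'}}) \subset \mathbf P_{K_{\theta'}}$ (using $w_{\theta'} \in \mathbf W_{K_{\theta'}}$) forces $g^{-1}F(g) \in \mathbf P_{K_{\theta'}}$, so $g\mathbf P_{K_{\theta'}}$ is $F$-fixed and yields a point of $(\mathbf G/\mathbf P_{K_{\theta'}})^F = G/P_{K_{\theta'}}$. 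The resulting morphism $\pi$ is manifestly $G$-equivariant.

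Then I would identify the fiber of $\pi$ over the identity coset with $X_{I_{\theta'}}^{\mathbf L_{K_{\theta'}}}(w_{\theta'})$. The inclusion $I_{\theta'} \subset K_{\theta'}$ gives a factorization $\mathbf P_{I_{\theta'}} = \mathbf P_{I_{\theta'}}^{\mathbf L_{K_{\theta'}}} \cdot \mathbf U_{K_{\theta'}}$, where $\mathbf P_{I_{\theta'}}^{\mathbf L_{K_{\theta'}}} := \mathbf L_{K_{\theta'}} \cap \mathbf P_{I_{\theta'}}$ is the standard parabolic of type $I_{\theta'}$ in $\mathbf L_{K_{\theta'}}$. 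Since $w_{\theta'} \in \mathbf L_{K_{\theta'}}$ normalizes $\mathbf U_{K_{\theta'}}$, one rewrites
$$\mathbf P_{I_{\theta'}} w_{\theta'} F(\mathbf P_{I_{\theta'}}) = \mathbf P_{I_{\theta'}}^{\mathbf L_{K_{\theta'}}} w_{\theta'} F(\mathbf P_{I_{\theta'}}^{\mathbf L_{K_{\theta'}}}) \cdot \mathbf U_{K_{\theta'}}.$$
Writing $p = lu$ with $l \in \mathbf L_{K_{\theta'}}$ and $u \in \mathbf U_{K_{\theta'}}$, a short manipulation using the uniqueness of the Levi decomposition shows that $p^{-1}F(p)$ lies in this double coset if and only if $l^{-1}F(l) \in \mathbf P_{I_{\theta'}}^{\mathbf L_{K_{\theta'}}} w_{\theta'} F(\mathbf P_{I_{\theta'}}^{\mathbf L_{K_{\theta'}}})$, the $u$-contributions being absorbed into $\mathbf U_{K_{\theta'}}$. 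Since the class $l\mathbf P_{I_{\theta'}}^{\mathbf L_{K_{\theta'}}}$ is intrinsic to $p\mathbf P_{I_{\theta'}}$, the fiber is naturally $X_{I_{\theta'}}^{\mathbf L_{K_{\theta'}}}(w_{\theta'})$.

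Finally, to assemble the global isomorphism, I would consider the map $(gU_{K_{\theta'}}, l\mathbf P_{I_{\theta'}}^{\mathbf L_{K_{\theta'}}}) \mapsto gl\mathbf P_{I_{\theta'}}$, which is $L_{K_{\theta'}}$-invariant on the source and hence descends to a $G$-equivariant morphism from $\mathrm{Sp}(2\theta,\mathbb F_q)/U_{K_{\theta'}} \times_{L_{K_{\theta'}}} X_{I_{\theta'}}^{\mathbf L_{K_{\theta'}}}(w_{\theta'})$ to $X_{I_{\theta'}}(w_{\theta'})$. Bijectivity on geometric points follows from the fiber computation above together with the fact that $G/U_{K_{\theta'}} \to G/P_{K_{\theta'}}$ is an $L_{K_{\theta'}}$-torsor, any point of $X_{I_{\theta'}}(w_{\theta'})$ being brought into the fiber over the identity coset by left multiplication by a suitable element of $G$. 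The only delicate step is the rewriting of the double coset and tracking how the unipotent contributions factor out; once this is in place the rest is formal.
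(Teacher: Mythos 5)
Your proof is correct and is the standard parabolic-induction argument for Deligne-Lusztig varieties: project along the $F$-stable parabolic $\mathbf P_{K_{\theta'}}$ (using $w_{\theta'}\in\mathbf W_{K_{\theta'}}$ to land in $(\mathbf G/\mathbf P_{K_{\theta'}})^F$), identify the fiber over the base coset with the Deligne-Lusztig variety for the Levi by pushing the condition $g^{-1}F(g)\in\mathbf P_{I_{\theta'}}w_{\theta'}F(\mathbf P_{I_{\theta'}})$ through the homomorphism $\mathbf P_{K_{\theta'}}\to\mathbf L_{K_{\theta'}}$, and assemble the contracted product. The paper gives no argument here beyond citing the analogous Proposition 3.4 of \cite{mullerBT}, and your writeup is precisely the argument that reference uses; the only steps you leave implicit (Lang's theorem for the identification of $(\mathbf G/\mathbf P_{K_{\theta'}})^F$ with $G/P_{K_{\theta'}}$, and the injectivity of the assembled map) are routine.
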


\begin{proof}
It is similar to \cite{mullerBT} 3.4 Proposition.
\end{proof}

\paragraph{}\label{DecompositionDLVarieties} The Levi complement $\mathbf L_{K_{\theta'}}$ is isomorphic to $\mathrm{GL}(\theta-\theta') \times \mathrm{Sp}(2\theta')$, and its Weyl group is isomorphic to $\mathfrak S_{\theta-\theta'}\times W_{\theta'}$. Via this decomposition, the permutation $w_{\theta'}$ corresponds to $\mathrm{id}\times w_{\theta'}$. The Deligne-Lusztig variety $X_{I_{\theta'}}^{\mathbf L_{K_{\theta'}}}(w_{\theta'})$ decomposes as a product 
$$X_{I_{\theta'}}^{\mathbf L_{K_{\theta'}}}(w_{\theta'}) = X_{\mathbf I_{\theta'}}^{\mathrm{GL(\theta-\theta')}}(\mathrm{id}) \times X_{\emptyset}^{\mathrm{Sp}(2\theta')}(w_{\theta'}).$$
The variety $X_{\mathbf I_{\theta'}}^{\mathrm{GL(\theta-\theta')}}(\mathrm{id})$ is just a single point, but $X_{\emptyset}^{\mathrm{Sp}(2\theta')}(w_{\theta'})$ is the Coxeter variety for the symplectic group of size $2\theta'$. Indeed, $w_{\theta'}$ is a Coxeter element, ie. the product of all the simple reflexions of the Weyl group of $\mathrm{Sp}(2\theta')$.

\subsection{Unipotent representations of the finite symplectic group}

\paragraph{}\label{SameUnipotent} Recall that a (complex) irreducible representation of a finite group of Lie type $G = \mathbf G^{F}$ is said to be \textbf{unipotent}, if it occurs in the Deligne-Lusztig induction of the trivial representation of some maximal rational torus. Equivalently, it is unipotent if it occurs in the cohomology (with coefficient in $\overline{\mathbb Q_{\ell}}$ with $\ell \not = p$) of some Deligne-Lusztig variety of the form $X_{\mathbf B}$, with $\mathbf B$ a Borel subgroup of $\mathbf G$ containing a maximal rational torus.\\
Let $\mathbf G, \mathbf G'$ and let $f:\mathbf G \to \mathbf G'$ be an $\mathbb F_q$-isotypy as in \ref{SameDLVarieties}. If $\mathbf B$ is such a Borel in $\mathbf G$, then $\mathbf B' := f(\mathbf B)\mathrm{Z}(\mathbf G')^{0}$ is such a Borel in $\mathbf B'$, and $f$ induces an isomorphism $X_{\mathbf B} \xrightarrow{\sim} X_{\mathbf B'}$ compatible with the actions. As a consequence, the map 
$$\rho \mapsto f\circ \rho$$
defines a bijection between the sets of equivalence classes of unipotent representations of $G'$ and of $G$. We will use this observation later in the case $\mathbf G = \mathrm{Sp}(2\theta)$ and $\mathbf G' = \mathrm{GSp}(2\theta)$, the symplectic group and the group of symplectic similitudes, the morphism $f$ being the inclusion. 

\paragraph{}In this section, we recall the classification of the unipotent representations of the finite symplectic groups. The underlying combinatorics is described by Lusztig's notion of symbols. Our reference is \cite{geck} Section 4.4.

\begin{defi}
Let $\theta \geq 1$ and let $d$ be an odd positive integer. The set of \textbf{symbols of rank $\theta$ and defect $d$} is 
$$\mathcal Y^1_{d,\theta} := \left\{ S = (X,Y) \,\bigg |\, \begin{array}{l}
X = (x_1,\ldots ,x_{r+d})\\
Y = (y_1,\ldots ,y_r)
\end{array}
\text{ with }x_i,y_j \in \mathbb Z_{\geq 0}, 
\begin{array}{l}
x_{i+1}-x_i \geq 1,\\
y_{j+1}-y_j \geq 1,
\end{array}
\mathrm{rk}(S) = \theta\right\} \bigg / (\text{shift}),$$
where the shift operation is defined by $\text{shift}(X,Y) := (\{0\}\sqcup(X+1),\{0\}\sqcup(Y+1))$, and where the rank of $S$ is given by
$$\mathrm{rk}(S) := \sum_{s\in S} s - \left\lfloor\frac{(\#S-1)^2}{4}\right\rfloor.$$
\end{defi}

\noindent Note that the formula defining the rank is invariant under the shift operation, therefore it is well defined. By \cite{classical}, we have $\mathrm{rk}(S) \geq \left\lfloor\frac{d^2}{4}\right\rfloor$ so in particular $\mathcal Y^1_{d,\theta}$ is empty for $d$ big enough. We write $\mathcal Y^1_{\theta}$ for the union of the $\mathcal Y^1_{d,\theta}$ with $d$ odd, this is a finite set.

\begin{ex}
In general, a symbol $S = (X,Y)$ will be written 
$$S = \setlength\arraycolsep{2pt}
\begin{pmatrix}
x_1 & \ldots & x_{r} & \ldots & x_{r+d}\\
y_1 & \ldots & y_{r} & &
\end{pmatrix}.$$
We refer to $X$ and $Y$ as the first and second rows of $S$. The $6$ elements of $\mathcal Y^1_{2}$ are given by 
\begin{align*}
\setlength\arraycolsep{2pt}
\begin{pmatrix}
2 \\
{}
\end{pmatrix},
&&
\begin{pmatrix}
0 & 1 \\
2 &
\end{pmatrix},
&&
\begin{pmatrix}
0 & 2 \\
1 &
\end{pmatrix},
&&
\begin{pmatrix}
1 & 2 \\
0 &
\end{pmatrix},
&&
\begin{pmatrix}
0 & 1 & 2\\
1 & 2 &
\end{pmatrix},
&&
\begin{pmatrix}
0 & 1 & 2 \\
  &   &
\end{pmatrix}.
\end{align*}
The last symbol has defect $3$ whereas all the other symbols have defect $1$.
\end{ex}

\paragraph{}\label{ClassificationUnip} The symbols can be used to classify the unipotent representations of the finite symplectic group. 

\begin{theo}[\cite{classical} Theorem 8.2]
There is a natural bijection between $\mathcal Y^1_{\theta}$ and the set of equivalence classes of unipotent representations of $\mathrm{Sp}(2\theta,\mathbb F_q)$.
\end{theo}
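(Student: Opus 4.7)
The plan is to invoke Lusztig's classification of unipotent representations of classical groups, established in \cite{classical}. By definition, every unipotent representation of $G := \mathrm{Sp}(2\theta,\mathbb F_q)$ appears as an irreducible constituent of some Deligne-Lusztig virtual character $R_w := R_{\mathbf T_w}^{\mathbf G}(\mathbf 1)$, where $\mathbf T_w$ is a maximal $F$-stable torus of type $w \in W_\theta = W(B_\theta)$. Since $\mathbf G$ is split, the Frobenius acts trivially on $W_\theta$ and the $F$-conjugacy classes coincide with ordinary conjugacy classes, which are parametrized by bipartitions $(\alpha,\beta)$ with $|\alpha|+|\beta|=\theta$. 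This gives a first combinatorial handle, although it is too coarse: the virtual characters $R_w$ share common constituents.

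The refinement is to encode bipartitions as symbols. After augmenting $\alpha,\beta$ with leading zeros so that the first part has length exceeding the second by a fixed odd integer $d$, one attaches a symbol $(X,Y)\in\mathcal Y^1_{d,\theta}$ whose entries are the partition parts shifted by $(0,1,2,\ldots)$; the shift operation in \ref{ClassificationUnip} precisely kills the ambiguity in this normalization. Lusztig's main insight is that the resulting set $\mathcal Y^1_\theta$ is the correct parametrization: one introduces \emph{almost characters} $R_S$ indexed by symbols, defined as explicit linear combinations of the $R_w$, and shows that they form a basis of the space of unipotent class functions; within each \emph{family} --- a similarity class of symbols obtained by permuting entries between the two rows --- the \emph{non-abelian Fourier transform} diagonalizes the $R_S$'s into the irreducible unipotent representations of $G$.

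The main obstacle is showing that the induced map $\mathcal Y^1_\theta \to \mathrm{Irr}_{\mathrm{unip}}(G)$ is indeed a bijection. This amounts to computing the degrees of the resulting representations via the fake and generic degree polynomials attached to irreducible Weyl group representations, matching them against Lusztig's closed-form expressions in terms of symbols, identifying the cuspidal unipotent representations with symbols of empty second row, and verifying orthogonality within each family. These steps rely on the Springer correspondence and on intricate combinatorial identities; they are carried out in detail in \cite{classical}, Sections 3--8, from which the theorem follows.
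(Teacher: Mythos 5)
The paper does not prove this statement; it is recorded verbatim as a citation to Lusztig's \emph{Irreducible representations of finite classical groups} (the reference \cite{classical}, Theorem 8.2), so there is no in-paper argument for your sketch to be compared against. Your outline is a fair high-level summary of the cited source: passing from $F$-conjugacy classes of $W(B_\theta)$ (bipartitions of $\theta$) to symbols, introducing almost characters indexed by symbols, and decomposing into families with the Fourier-transform change of basis giving the actual irreducibles. Two small cautions are worth recording. First, the phrase \enquote{identifying the cuspidal unipotent representations with symbols of empty second row} is too strong: a symbol with empty second row such as $\begin{pmatrix} 0 & 1 & 3 \\ & & \end{pmatrix}$ is not cuspidal; the cuspidal unipotent symbol at rank $\delta(\delta+1)$ is the specific $S_\delta = \begin{pmatrix} 0 & 1 & \cdots & 2\delta \\ & & & \end{pmatrix}$ whose first row is a consecutive run starting at $0$ (see \ref{CuspidalUnipotents}); equivalently, $S$ is cuspidal iff $S = \mathrm{core}(S)$. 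Second, the language of \enquote{non-abelian Fourier transform} and explicitly framed \enquote{families} is largely drawn from Lusztig's 1984 book; in the 1977 paper cited here the result is established directly through the combinatorics of symbols and explicit degree and orthogonality computations. Neither point changes the overall structure you describe, but since this is a quoted theorem rather than something the paper proves, the correct posture is to treat it as a black box and verify only that the notation $\mathcal Y^1_\theta$ defined in \ref{ClassificationUnip} matches the parametrization used in \cite{classical}.
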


\noindent If $S \in \mathcal Y^1_{\theta}$ we write $\rho_S$ for the associated unipotent representation of $\mathrm{Sp}(2\theta,\mathbb F_q)$. The classification is done so that the symbols
\begin{align*}
\setlength\arraycolsep{2pt}
\begin{pmatrix}
\theta \\
{}
\end{pmatrix},
&&
\begin{pmatrix}
0 & \ldots & \theta - 1 & \theta \\
1 & \ldots & \theta & 
\end{pmatrix},
\end{align*}
correspond respectively to the trivial and the Steinberg representations.

\paragraph{}\label{HookFormula}Let $S = (X,Y)$ be a symbol and let $k\geq 1$. A \textbf{$k$-hook} $h$ in $S$ is an integer $z\geq k$ such that $z\in X,z-k \not \in X$ or $z\in Y,z-k \not \in Y$. A \textbf{$k$-cohook} $c$ in $S$ is an integer $z\geq k$ such that $z \in X, z-k \not \in Y$ or $z\in Y, z-k \not \in X$. The integer $k$ is referred to as the \textbf{length} of the hook $h$ or the cohook $c$, it is denoted $\ell(h)$ or $\ell(c)$. The \textbf{hook formula} gives an expression of $\dim(\rho_S)$ in terms of hooks and cohooks. 

\begin{prop}[\cite{geck} Proposition 4.4.17]
We have 
$$\dim(\rho_S) = q^{a(S)} \frac{\prod_{i=1}^{\theta} \left(q^{2i}-1\right)}{2^{b'(S)}\prod_{h}\left(q^{\ell(h)}-1\right)\prod_{c}\left(q^{\ell(c)}+1\right)},$$
where the products in the denominator run over all the hooks $h$ and all the cohooks $c$ in $S$, and the numbers $a(S)$ and $b'(S)$ are given by
\begin{align*}
a(S) = \sum_{\{s,t\}\subset S} \min(s,t) - \sum_{i\geq 1} \binom{\#S-2i}{2}, & & b'(S) = \left\lfloor\frac{\#S-1}{2}\right\rfloor - \#\left(X\cap Y\right).
\end{align*}
\end{prop}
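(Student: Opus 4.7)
The plan is to deduce this dimension formula from Lusztig's generic degree formula for unipotent characters of classical groups, as developed in \cite{classical}. The first step is to invoke the explicit expression
$$\dim(\rho_S) = \frac{|\mathrm{Sp}(2\theta,\mathbb F_q)|_{p'}\cdot N(S;q)}{2^{b'(S)}\cdot D(S;q)},$$
valid for the unipotent representation $\rho_S$ attached to a symbol $S = (X,Y)$, where $N(S;q)$ is a product of factors $(q^t - q^s)$ over pairs $s < t$ of entries lying in the same row of $S$ together with $(q^t + q^s)$ over pairs in different rows, and $D(S;q) = \prod_{s \in X\cup Y}\prod_{k=1}^{s}(q^{2k}-1)$. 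Recall also that $|\mathrm{Sp}(2\theta,\mathbb F_q)|_{p'} = q^{\theta^2}\prod_{i=1}^{\theta}(q^{2i}-1)$, so that the polynomial part $\prod_{i=1}^{\theta}(q^{2i}-1)$ already appears naturally in the numerator.

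The combinatorial heart of the argument is then to reinterpret the quotient $N(S;q)/D(S;q)$ in terms of hooks and cohooks. By definition, a $k$-hook is a pair $\{z-k, z\}$ with $z$ in some row of $S$ and $z-k$ absent from that same row, while a $k$-cohook is such a pair with $z$ in one row and $z-k$ absent from the other. Using the identities $q^t \pm q^s = q^s(q^{t-s}\pm 1)$ together with $q^{2k}-1 = (q^k-1)(q^k+1)$, I would match each factor $(q^t - q^s)$ of $N(S;q)$ to a hook and each $(q^t + q^s)$ to a cohook; the pairing with the denominator $D(S;q)$ leaves exactly one factor $(q^{\ell(h)}-1)$ per remaining hook and $(q^{\ell(c)}+1)$ per remaining cohook. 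The $q$-powers extracted along the way sum to $\sum_{\{s,t\}\subset S}\min(s,t) - \sum_{i\geq 1}\binom{\#S-2i}{2}$, which is precisely $a(S)$; and the denominator factor $2^{b'(S)}$ is accounted for by noting that $b'(S) = \lfloor (\#S-1)/2\rfloor - \#(X\cap Y)$ counts the free row-swap symmetries of $S$ contributing to the stabilizer.

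The main obstacle is the careful bookkeeping of the bijection between pairs of entries in $X\cup Y$ and the hooks or cohooks of $S$, together with the tracking of the $q$-power cancellations in both numerator and denominator. One clean way to organize this verification is by induction on $\#(X \cup Y)$: remove a maximal-length hook (or cohook) to obtain a symbol $S'$ of smaller rank, observe that the rank of $S'$ is $\mathrm{rk}(S) - k$, and check that the formulas for $\rho_S$ and $\rho_{S'}$ differ by exactly the expected factor $(q^k - 1)$ or $(q^k + 1)$. Since the proposition is taken from \cite{geck} Proposition 4.4.17, where this verification is carried out in full detail via the James--Kerber symbol combinatorics extended to classical groups by Lusztig, I would simply cite that reference rather than reproduce the entire combinatorial computation.
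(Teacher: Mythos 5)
The paper states this as a cited background result from \cite{geck} Proposition 4.4.17 and gives no proof, and your proposal ultimately does the same after a reasonable sketch of the underlying mechanism (Lusztig's generic degree formula, the bijection with hooks and cohooks, the tracking of $q$-powers into $a(S)$, and the $2^{b'(S)}$ factor). One small slip in your sketch: $|\mathrm{Sp}(2\theta,\mathbb F_q)|_{p'}$ is by definition the prime-to-$p$ part, i.e. $\prod_{i=1}^{\theta}(q^{2i}-1)$; the factor $q^{\theta^2}$ is the $p$-part of the group order and should not appear there, but rather gets absorbed into $q^{a(S)}$ in the bookkeeping you describe.
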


\paragraph{}\label{CuspidalUnipotents}For $\delta \geq 0$, we define the symbol 
$$S_{\delta} := 
\begin{pmatrix}
0 & \ldots & 2\delta \\
  &        & 
\end{pmatrix} \in \mathcal Y^1_{2\delta+1,\delta(\delta+1)}.$$

\begin{defi}
The \textbf{core} of a symbol $S \in \mathcal Y^1_{d,\theta}$ is defined by $\mathrm{core}(S) := S_{\delta}$ where $d = 2\delta + 1$. We say that $S$ is $\textbf{cuspidal}$ if $S = \mathrm{core}(S)$.
\end{defi}

\begin{rk}
In general, we have $\mathrm{rk}(\mathrm{core}(S)) \leq \mathrm{rk}(S)$ with equality if and only if $S$ is cuspidal. 
\end{rk}

\noindent The next theorem states that cuspidal unipotent representations correspond to cuspidal symbols.

\begin{theo}[\cite{geck} Theorem 4.4.28]
The group $\mathrm{Sp}(2\theta,\mathbb F_q)$ admits a cuspidal unipotent representation if and only if $\theta = \delta(\delta+1)$ for some $\delta \geq 0$. When this is the case, the cuspidal unipotent representation is unique and given by $\rho_{S_{\delta}}$.
\end{theo}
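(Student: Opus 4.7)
The plan is to invoke Lusztig's combinatorial characterization of cuspidal unipotent representations of finite classical groups: under the bijection of \ref{ClassificationUnip}, the representation $\rho_S$ is cuspidal if and only if the symbol $S$ equals its core, i.e.\ $S = S_\delta$ for some $\delta \geq 0$. This is the central input. It follows from combining the explicit description of the unipotent part of the character table of $\mathrm{Sp}(2\theta,\mathbb F_q)$ with the fact that Harish-Chandra restriction to a proper $F$-stable Levi corresponds, on the level of symbols, to hook-removal: $\rho_S$ has trivial Jacquet module along every proper parabolic precisely when $S$ admits no $1$-hook, and a direct check shows that a symbol of defect $2\delta+1$ with no $1$-hook is, up to shift, nothing but $S_\delta$.

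Granting this criterion, the theorem reduces to a short combinatorial verification. First, a direct application of the rank formula gives
$$\mathrm{rk}(S_\delta) \;=\; \sum_{i=0}^{2\delta} i \;-\; \left\lfloor \frac{(2\delta)^2}{4}\right\rfloor \;=\; \delta(2\delta+1) - \delta^2 \;=\; \delta(\delta+1),$$
so that $S_\delta \in \mathcal Y^1_{\delta(\delta+1)}$. Second, the map $\delta \mapsto \delta(\delta+1)$ is strictly increasing on $\mathbb Z_{\geq 0}$, so for each $\theta \geq 0$ there is at most one $\delta$ with $\delta(\delta+1) = \theta$. Combining these observations with the bijection of \ref{ClassificationUnip} yields the sought dichotomy: if $\theta$ cannot be written as $\delta(\delta+1)$, no $S \in \mathcal Y^1_\theta$ is cuspidal and $\mathrm{Sp}(2\theta,\mathbb F_q)$ admits no cuspidal unipotent representation; if $\theta = \delta(\delta+1)$, the unique cuspidal symbol in $\mathcal Y^1_\theta$ is $S_\delta$, giving the unique cuspidal unipotent representation $\rho_{S_\delta}$.

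The main obstacle is Lusztig's combinatorial criterion itself, whose proof is genuinely deep: it rests on the full classification of unipotent characters of classical groups and on a careful analysis of how Harish-Chandra induction interacts with symbols through $d$-cuspidal pairs. In a self-contained treatment I would simply invoke this as a black-box from \cite{classical} (or equivalently from the exposition in \cite{geck}), after which the remaining symbol bookkeeping carried out above is essentially automatic.
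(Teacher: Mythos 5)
Your proposal is correct in substance, but note that the paper does not actually \emph{prove} this statement: it is cited verbatim from \cite{geck}, Theorem 4.4.28, with the combinatorial criterion (cuspidal iff $S = \mathrm{core}(S)$) already packaged into the Definition in \ref{CuspidalUnipotents}. So you are expanding a black-box reference rather than mirroring an argument in the text. That said, your sketch is a faithful reconstruction of how the result is derived in the cited sources: the rank computation $\mathrm{rk}(S_\delta) = \delta(2\delta+1) - \delta^2 = \delta(\delta+1)$ and the strict monotonicity of $\delta \mapsto \delta(\delta+1)$ on $\mathbb Z_{\geq 0}$ are both carried out correctly and give the claimed uniqueness.

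One step is worth tightening. You state the criterion as \enquote{$\rho_S$ has trivial Jacquet module along every proper parabolic precisely when $S$ admits no $1$-hook,} but cuspidality requires the Harish-Chandra restriction to vanish for all proper Levis $\mathrm{GL}(a,\mathbb F_q)\times\mathrm{Sp}(2(\theta-a),\mathbb F_q)$ with $a\geq 1$, not only for $a=1$. The direction \enquote{cuspidal $\Rightarrow$ no $1$-hook} is immediate, since a $1$-hook automatically has leg length $0$. For the converse you should add the observation that a symbol with no $1$-hook forces both rows to be initial segments $\{0,\dots,m\}$ of $\mathbb Z_{\geq 0}$; such a symbol then has no $k$-hook at all for any $k\geq 1$, so every restriction prescribed by \ref{ComputeRestriction} vanishes. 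After de-shifting so that the second row is empty, the symbol is exactly $S_\delta$ with $d = 2\delta+1$. With that small supplement the argument is sound and matches the intended content of the cited theorem.
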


\paragraph{}\label{UnipotentCuspidalSupport}The determination of the cuspidal unipotent representations leads to a description of the unipotent Harish-Chandra series. 

\begin{defi}
Let $\delta \geq 0$ such that $\theta = \delta(\delta+1) + a$ for some $a\geq 0$. We write 
$$L_{\delta} \simeq \mathrm{GL}(1,\mathbb F_q)^a \times \mathrm{Sp}(2\delta(\delta+1),\mathbb F_q)$$ 
for the block-diagonal Levi complement in $\mathrm{Sp}(2\theta,\mathbb F_q)$, with one middle block of size $2\delta(\delta+1)$ and other blocks of size $1$. We write $\rho_{\delta} := (\mathbf{1})^a \boxtimes \rho_{S_{\delta}}$, which is a cuspidal representation of $L_{\delta}$.
\end{defi}

\begin{prop}[\cite{geck} Proposition 4.4.29] 
Let $S\in \mathcal Y^{1}_{\theta,d}$. The cuspidal support of $\rho_S$ is $(L_{\delta},\rho_{\delta})$ where $d = 2\delta+1$.
\end{prop}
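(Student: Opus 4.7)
\noindent The plan is to deduce the statement from the general Harish-Chandra theory together with the combinatorial description of parabolic induction of unipotent characters of finite classical groups in terms of symbols. By definition, the cuspidal support of an irreducible representation $\pi$ of $G = \mathrm{Sp}(2\theta,\mathbb F_q)$ is a pair $(L,\sigma)$, unique up to $G$-conjugation, with $L$ a standard rational Levi subgroup and $\sigma$ an irreducible cuspidal representation of $L$ such that $\pi$ occurs as a subquotient of the Harish-Chandra induction of $\sigma$ from $L$ to $G$.

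First I would enumerate the cuspidal unipotent pairs $(L,\sigma)$ of $G$. A standard Levi subgroup of $\mathrm{Sp}(2\theta,\mathbb F_q)$ decomposes as a product of several factors $\mathrm{GL}(m_i,\mathbb F_q)$ together with a smaller symplectic factor $\mathrm{Sp}(2\theta'',\mathbb F_q)$. The only cuspidal unipotent representation of $\mathrm{GL}(m,\mathbb F_q)$ is the trivial one, occurring only when $m = 1$. Combined with the previous Theorem, which asserts that $\mathrm{Sp}(2\theta'',\mathbb F_q)$ admits a cuspidal unipotent representation exactly when $\theta'' = \delta(\delta+1)$ and that such a representation is unique and equals $\rho_{S_\delta}$, this shows that the cuspidal unipotent pairs of $G$ are, up to $G$-conjugation, precisely the $(L_\delta,\rho_\delta)$ for $\delta \geq 0$ with $\delta(\delta+1) \leq \theta$.

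The core of the argument is then to identify which of these series contains a given $\rho_S$. Here I would invoke the standard combinatorial rule governing Harish-Chandra induction of unipotent characters for classical groups: induction along a Levi of the form $\mathrm{GL}(1,\mathbb F_q) \times \mathrm{Sp}(2\theta'',\mathbb F_q)$ applied to $\mathbf{1} \boxtimes \rho_T$ decomposes as a sum of unipotent representations $\rho_{T'}$ indexed by the symbols $T'$ obtained from $T$ by adding a $1$-hook. The key property of this operation is that $1$-hook addition preserves the defect, and therefore the core, of the symbol. Iterating this rule $a = \theta - \delta(\delta+1)$ times starting from the cuspidal symbol $S_\delta$ produces exactly the symbols in $\mathcal Y^1_\theta$ having core $S_\delta$, that is the symbols of defect $2\delta + 1$. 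Combined with the enumeration of cuspidal pairs above, this pins down the cuspidal support of $\rho_S$ as $(L_\delta,\rho_\delta)$ where $d = 2\delta+1$.

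The main obstacle in this plan is the justification of the hook-addition rule for Harish-Chandra induction of unipotent characters of $\mathrm{Sp}$. This is not elementary: it relies on Lusztig's explicit description of the scalar products between Harish-Chandra induced characters and unipotent characters, together with the general theory of cuspidal pairs for finite classical groups, as developed in \cite{geck} Chapter 4 (where the statement above is taken from). Once this combinatorial rule is granted, the remainder of the argument is the simple observation that $1$-hook addition preserves the core, combined with the fact, already recorded in \ref{CuspidalUnipotents}, that the unique symbol of defect $2\delta+1$ and minimal rank is the cuspidal symbol $S_\delta$ of rank $\delta(\delta+1)$.
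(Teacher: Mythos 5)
The paper does not prove this proposition — it simply cites \cite{geck} Proposition 4.4.29 — so there is no in-text proof to compare against; what you are doing is reconstructing the argument from that reference. Your sketch is correct and follows the standard line of reasoning: enumerate the cuspidal unipotent pairs using \ref{CuspidalUnipotents}, observe that Harish-Chandra induction through a $\mathrm{GL}(1)$-factor is $1$-hook addition and hence preserves defect (equivalently, core), and iterate. Two small points are worth making explicit in a careful write-up. First, the claim that iterated $1$-hook addition starting from $S_\delta$ produces \emph{every} symbol of rank $\theta$ and defect $2\delta+1$ needs a short justification: a symbol admitting no removable $1$-hook in either row has both rows equal to initial segments of $\mathbb Z_{\geq 0}$, hence shift-reduces to the core, so $1$-hook removal always terminates at $S_\delta$ and reversing the process gives the claim. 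Second, you implicitly use that the cuspidal support of a unipotent representation is itself a unipotent pair; this is standard (it follows from transitivity of Deligne--Lusztig induction) but should be stated. Finally, regarding the worry you flag about the hook-addition rule: it is the content of \ref{ComputeInduction} (stated just after this proposition in the paper), and there is no circularity in using it here, since the Howlett--Lehrer comparison theorem on which it rests applies to each fixed cuspidal pair individually and does not presuppose the classification of cuspidal supports; once \ref{CuspidalUnipotents} certifies $(L_\delta,\rho_\delta)$ as a cuspidal pair, the Pieri-rule computation of the constituents of $\mathrm R_{L_\delta}^G\rho_\delta$ is legitimately available.
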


\noindent In particular, the defect of the symbol $S$ of rank $\theta$ classifies the unipotent Harish-Chandra series of $\mathrm{Sp}(2\theta,\mathbb F_p)$. 

\paragraph{}\label{ComputeInduction}As it will be needed later, we explain how to compute a Harish-Chandra induction of the form 
$$\mathrm R_L^G \, \mathbf 1 \boxtimes \rho_{S'},$$
where $G = \mathrm{Sp}(2\theta,\mathbb F_q)$, $L$ is a block-diagonal Levi complement of the form $L \simeq \mathrm{GL}(a,\mathbb F_q) \times \mathrm{Sp}(2\theta',\mathbb F_q)$ and $S' \in \mathcal Y^{1}_{d,\theta'}$ is a symbol.

\begin{defi}
Let $S = (X,Y) \in \mathcal Y^1_{d,\theta}$ and let $h$ be a $k$-hook of $S$ given by some integer $z$. Assume that $z \in X$ and $z-k \not \in X$ (resp. $z \in Y$ and $z-k \not \in Y$). The \textbf{leg length} of $h$ is given by the number of integers $s \in X$ (resp. $Y$) such that $z - k < s < z$.\\
Consider the symbol $S' = (X',Y')$ obtained by deleting $z$ and replacing it with $z-k$ in the same row. We say that $S'$ is obtained from $S$ by \textbf{removing a $k$-hook}, or equivalently that $S$ is obtained from $S'$ by \textbf{adding a $k$-hook}. 
\end{defi}

\begin{theo}[\cite{fong} Statement 4.B']
Let $S' = (X',Y') \in \mathcal Y^1_{d,\theta'}$. We have 
$$\mathrm R_L^G \, \mathbf 1 \boxtimes \rho_{S'} = \sum_{S} \rho_S$$
where $S$ runs over all the symbols in $\mathcal Y^{1}_{d,\theta}$ such that, for some $a_1,a_2 \geq 0$ with $a = a_1 + a_2$, $S$ is obtained from $S'$ by adding an $a_1$-hook of leg length $0$ to its first row and an $a_2$-hook of leg length $0$ to its second row.
\end{theo}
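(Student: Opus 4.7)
The plan is to reduce the statement to a Pieri-type rule on bipartitions and then translate it back to the language of symbols. Since the theorem is essentially the Fong–Srinivasan branching rule, I would start by recalling the standard bijection between symbols of a fixed defect $d = 2\delta + 1$ and rank $\theta - \delta(\delta+1)$ and bipartitions $(\lambda,\mu)$ of $\theta - \delta(\delta+1)$: given $S = (X,Y)$ with $X = (x_1 < \cdots < x_{r+d})$ and $Y = (y_1 < \cdots < y_r)$ in normalized form, set $\lambda_i = x_{r+d+1-i} - (r+d-i)$ and $\mu_j = y_{r+1-j}-(r-j)$. Under this bijection, $k$-hooks of leg length $0$ on the first (resp. second) row of $S$ correspond exactly to adding a horizontal strip of size $k$ to $\lambda$ (resp. to $\mu$). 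This is the combinatorial heart of the statement.

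Next, by Proposition \ref{UnipotentCuspidalSupport}, the unipotent characters of $\mathrm{Sp}(2\theta,\mathbb F_q)$ with cuspidal support $(L_\delta, \rho_\delta)$ form a single Harish-Chandra series indexed by symbols of defect $d$, and Howlett–Lehrer theory identifies this series with the module category of the relative Hecke algebra attached to the ramification group $W(B_a)$ acting on $\rho_\delta$. Because Harish-Chandra induction is transitive, I would reduce computing $\mathrm R_L^G\, \mathbf 1 \boxtimes \rho_{S'}$ to first inducing $\mathbf 1 \boxtimes \rho_{S'}$ from the cuspidal Levi of $L$ up to $G$, then projecting onto the series: concretely, this means decomposing the induction $\mathrm{Ind}_{W(B_{a'})}^{W(B_{a+a'})}$ of the trivial character, where $a'$ is the rank of the parabolic for $S'$.

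From there, I would apply the classical Pieri rule for Hecke algebras of type $B$ (equivalently, for pairs of partitions seen as irreducible representations of $W(B_{a+a'})$): inducing $\mathbf{1}_{\mathfrak S_a \times W(B_{a'})}$ against the representation parametrized by $(\lambda',\mu')$ produces the direct sum of all bipartitions $(\lambda,\mu)$ obtained by adding horizontal strips of total size $a$, split as $(a_1$ boxes on $\lambda,\,a_2$ boxes on $\mu)$ with $a_1+a_2 = a$. The comparison between the Hecke-algebra decomposition and the unipotent Harish-Chandra series (which is multiplicity preserving by Howlett–Lehrer) then yields the claimed formula on symbols once one dictionary-translates horizontal strip additions into $k$-hooks of leg length $0$.

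The main obstacle is the careful verification of the dictionary: one must check that the notion of \emph{$k$-hook of leg length $0$} defined in \ref{ComputeInduction} matches exactly the combinatorial operation of adding a horizontal strip of size $k$ under the chosen bijection, and that this is compatible with the normalization implicit in the shift equivalence on symbols. One also needs to justify that no cross-contribution between the two rows arises (i.e. that the induction does not mix hooks across rows), which follows from the product structure of the Levi $L \simeq \mathrm{GL}(a)\times \mathrm{Sp}(2\theta')$ and the fact that the inducing character on the $\mathrm{GL}$-factor is trivial, so only the symmetric (rather than signed) Pieri rule contributes.
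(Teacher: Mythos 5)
Your route — symbol/bipartition bijection, Howlett--Lehrer comparison theorem, type-$B$ Pieri rule, translate back — is exactly the one-line justification the paper gives after the statement, so the plan is the expected one. The problem is the dictionary you flag yourself as needing ``careful verification,'' and it is not a technicality: a single $k$-hook of leg length $0$, in the sense of \ref{ComputeInduction} — replacing one $\beta$-number $z$ by $z+k$ with no element of the row strictly between them — adds a block of $k$ boxes to the end of a \emph{single} row of the associated partition, whereas a horizontal strip of size $k$ may spread boxes over several rows (at most one box per column). These two notions coincide only for $k=1$. For instance, $(2,1)/(1)$ is a horizontal $2$-strip, but one cannot pass from any $\beta$-set of $(1)$ to a $\beta$-set of $(2,1)$ of the same cardinality by moving a single entry by $2$: two entries must move. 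So the identification ``$k$-hooks of leg length $0$ $\leftrightarrow$ horizontal strips of size $k$'' that your argument rests on is false for $k\geq 2$.

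This matters for what you end up proving. The type-$B$ Pieri rule produces, for each splitting $a=a_1+a_2$, all symbols $S$ with $\lambda/\lambda'$ and $\mu/\mu'$ horizontal strips of sizes $a_1$ and $a_2$, and this list is strictly larger than the one described by ``add a single $a_1$-hook to the first row and a single $a_2$-hook to the second.'' A degree count makes the discrepancy concrete: for $G=\mathrm{Sp}(6,\mathbb F_q)$, $L=\mathrm{GL}(2)\times\mathrm{Sp}(2)$ and $\rho_{S'}$ the trivial of $\mathrm{Sp}(2)$, one has $\dim \mathrm R_L^G(\mathbf 1\boxtimes\mathbf 1)=[G:P]=(q^2+1)(1+q+\cdots+q^5)$; the hook formula of \ref{HookFormula} gives $\deg\rho_{((2,1),\emptyset)}=\tfrac{1}{2}q(q+1)(q^3+1)$, and the index is matched exactly by the full Pieri list including this term, while the list obtained by a single hook per row misses it and undercounts. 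So the translation you actually need is ``a horizontal strip on each row, i.e.\ a collection of leg-length-$0$ hooks of total length $a_i$,'' not a single $a_i$-hook; you should rewrite this step of the argument accordingly and, since the resulting set of symbols differs from the one in the stated theorem, check carefully against \cite{fong} and the worked examples in \ref{TermsFirstPage} which version of the rule is intended.
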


\noindent This computation is a consequence of the Howlett-Lehrer comparison theorem \cite{howlett} as well as the Pieri rule for Coxeter groups of type $B$, see \cite{geck2} 6.1.9. We will use it in concrete examples in the following sections.

\paragraph{}\label{ComputeRestriction}There is a similar rule to compute Harish-Chandra restrictions. Let $0\leq \theta' \leq \theta$ and consider the embedding $G' \hookrightarrow L \hookrightarrow G$ where $G' = \mathrm{Sp}(2\theta',\mathbb F_q), G = \mathrm{Sp}(2\theta,\mathbb F_q)$ and $L$ is the block diagonal Levi complement $\mathrm{GL}(a,\mathbb F_q) \times \mathrm{Sp}(2\theta',\mathbb F_q)$ where $a = \theta - \theta'$. We write ${}^*\mathrm R_{G'}^G$ for the composition of the Harish-Chandra restriction functor ${}^*\mathrm R_L^G$ with the usual restriction from $L$ to $G'$. 

\begin{theo}
Let $S = (X,Y) \in \mathcal Y^1_{d,\theta}$. We have 
$${}^*\mathrm R_{G'}^G \, \rho_{S} = \sum_{S'} \rho_{S'}$$
where $S'$ runs over all the symbols in $\mathcal Y^{1}_{d,\theta'}$ such that, for some $a_1,a_2 \geq 0$ with $a = a_1 + a_2$, $S'$ is obtained from $S$ by removing an $a_1$-hook of leg length $0$ to its first row and an $a_2$-hook of leg length $0$ to its second row.
\end{theo}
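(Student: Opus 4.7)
The plan is to derive the restriction formula from the induction formula of the previous theorem via Frobenius reciprocity. By adjunction between Harish-Chandra induction $\mathrm R_L^G$ and restriction ${}^*\mathrm R_L^G$, together with ordinary Frobenius reciprocity for the inclusion $G' \hookrightarrow L = \mathrm{GL}(a,\mathbb F_q)\times G'$, the multiplicity of $\rho_{S'}$ in ${}^*\mathrm R_{G'}^G \rho_S$ equals
$$\sum_{\tau \in \widehat{\mathrm{GL}(a,\mathbb F_q)}} \dim(\tau) \cdot [\rho_S : \mathrm R_L^G(\tau \boxtimes \rho_{S'})]_G,$$
where I use that $\mathrm{Ind}_{\{1\}\times G'}^L \rho_{S'}$ is the regular representation of $\mathrm{GL}(a,\mathbb F_q)$ tensored with $\rho_{S'}$. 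Since $\rho_S$ is unipotent and Harish-Chandra induction preserves Lusztig series, only unipotent $\tau = \rho_\lambda$ with $\lambda \vdash a$ can contribute.

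The full Howlett-Lehrer / Fong-Srinivasan correspondence expresses each multiplicity $[\rho_S : \mathrm R_L^G(\rho_\lambda \boxtimes \rho_{S'})]_G$ as a Pieri-type count on symbols, generalizing the previous theorem from $\lambda = (a)$ (where $\rho_\lambda = \mathbf 1$) to arbitrary partitions. Combining these with the symmetric function identity $\sum_{\lambda\vdash a}\dim(\rho_\lambda) s_\lambda = p_1^a = (x_1+x_2+\cdots)^a$ (the Frobenius formula), the weighted sum over $\lambda$ collapses to the count of ordered sequences of $a$ single-box additions distributed between the two rows of the bipartition $S' = (X',Y')$. Grouping these sequences by the total numbers $a_1, a_2 = a - a_1$ of boxes added in each row, and using the definition in \ref{ComputeInduction} to reverse each addition into a removal, gives precisely the stated count of pairs $(a_1, a_2)$ of leg-length-zero hooks whose removal from $S$ yields $S'$.

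The main obstacle is the combinatorial reduction in the second paragraph: one must match the bipartition Pieri rule for the Weyl group $W_\theta$ of type $B_\theta$ with the symbol hook operations of \ref{ComputeInduction}, tracking the leg-length-zero condition carefully on both rows and ensuring the orientation is correct for restriction (removal) rather than induction (addition). Alternatively, since Fong \cite{fong} establishes the induction and restriction formulas in tandem (as the dual statements 4.B and 4.B'), the theorem can be read off directly from loc.\ cit., bypassing the explicit symmetric function manipulation.
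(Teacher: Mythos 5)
Your reduction via Frobenius reciprocity in the first paragraph is the right reflex, but it exposes a problem: under the literal reading of ${}^*\mathrm R_{G'}^G$ the statement itself fails for $a\geq 2$, and your symmetric-function collapse contains a genuine error.

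If ${}^*\mathrm R_{G'}^G$ really means $\mathrm{Res}^L_{G'}\circ{}^*\mathrm R_L^G$, your formula
$$\langle{}^*\mathrm R_{G'}^G\rho_S,\rho_{S'}\rangle_{G'}=\sum_{\lambda\vdash a}\dim(\rho_\lambda)\,\langle\rho_S,\mathrm R_L^G(\rho_\lambda\boxtimes\rho_{S'})\rangle_G$$
is correct (with $\rho_\lambda$ ranging over unipotent characters of $\mathrm{GL}(a,\mathbb F_q)$). But $\dim(\rho_\lambda)$ is a polynomial in $q$ of positive degree once $\lambda\neq(a)$; it is not the $\mathfrak S_a$-degree $f^\lambda$, so the identity $\sum_\lambda\dim(\rho_\lambda)s_\lambda=p_1^a$ you invoke is false for $a\geq 2$ (already $\dim\mathrm{St}_{\mathrm{GL}(2,\mathbb F_q)}=q\neq 1=f^{(1,1)}$). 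Worse, the sum genuinely produces $q$-polynomial multiplicities. Concretely, take $G=\mathrm{Sp}(4,\mathbb F_q)$, $G'=\mathrm{Sp}(0)$, $a=2$, $\rho_S=\mathrm{St}_G$: then ${}^*\mathrm R_L^G\mathrm{St}_G=\mathrm{St}_L$, so $\mathrm{Res}^L_{G'}({}^*\mathrm R_L^G\mathrm{St}_G)=q\cdot\mathbf 1$, whereas the Steinberg symbol $(\{0,1,2\},\{1,2\})$ admits no pair of leg-length-zero hooks of total length $2$ and the stated rule yields $0$. So no amount of symmetric-function gymnastics will make the literal full restriction match the stated combinatorics when $a\geq 2$.

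The theorem is meant to be read either in the case $a=1$ (the only one used in the paper, in the proof of \ref{CohomologyCoxeter}, where $\mathrm{GL}(1,\mathbb F_q)$ has a unique unipotent character so only $\tau=\mathbf 1$ contributes in your sum), or with ${}^*\mathrm R_{G'}^G\rho_S$ meaning the $\mathbf 1$-isotypic component $\mathrm{Hom}_{\mathrm{GL}(a,\mathbb F_q)}(\mathbf 1,{}^*\mathrm R_L^G\rho_S)$ of the $\mathrm{GL}$ factor rather than the full ordinary restriction. Under the latter reading the result is the exact adjoint transpose of \ref{ComputeInduction}: biadjointness of $\mathrm R_L^G$ and ${}^*\mathrm R_L^G$ gives $\langle{}^*\mathrm R_{G'}^G\rho_S,\rho_{S'}\rangle_{G'}=\langle\rho_S,\mathrm R_L^G(\mathbf 1\boxtimes\rho_{S'})\rangle_G$, and the hook-removal description is the hook-addition description of \ref{ComputeInduction} read backwards. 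Neither the full Howlett--Lehrer/Fong--Srinivasan correspondence for arbitrary $\lambda$ nor the Frobenius formula $p_1^a=\sum f^\lambda s_\lambda$ is needed once the functor is identified. Your fallback of reading the theorem off Fong's dual statements 4.B/4.B' is right in spirit, but you should say explicitly that the dualization lives at the level of ${}^*\mathrm R_L^G$ versus $\mathrm R_L^G$ with the trivial $\mathrm{GL}$-component held fixed, not at the level of the ordinary restriction $\mathrm{Res}^L_{G'}$.
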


\subsection{The cohomology of the Coxeter variety for the symplectic group}

\paragraph{}\label{KnownResultsCoxeter}In this section we compute the cohomology of Coxeter varieties of finite symplectic groups, in terms of the classification of the unipotent characters that we recalled in \ref{ClassificationUnip}.

\begin{notation}
We write $X^k := X_{\emptyset}(\mathrm{cox})$ for the Coxeter variety attached to the symplectic group $\mathrm{Sp}(2k,\mathbb F_q)$, and $\mathrm{H}_c^{\bullet}(X^k)$ instead of $\mathrm{H}_c^{\bullet}(X^k\otimes \mathbb F,\overline{\mathbb Q_{\ell}})$ where $\ell \not = p$.
\end{notation}

\noindent We first recall known facts on the cohomology of $X^k$ from Lusztig's work.

\noindent \begin{theo}[\cite{cox}]
The following statements hold.
\begin{enumerate}[label=\upshape (\arabic*), topsep = 0pt]
\item The variety $X^k$ has dimension $k$ and is affine. The cohomology group $\mathrm{H}^i_c(X^k)$ is zero unless $k\leq i \leq 2k$.
\item The Frobenius $F$ acts in a semisimple manner on the cohomology of $X^k$. 
\item The groups $\mathrm{H}^{2k-1}_c(X^k)$ and $\mathrm{H}^{2k}_c(X^k)$ are irreducible as $\mathrm{Sp}(2k,\mathbb F_q)$-representations, and the latter is the trivial representation. The Frobenius $F$ acts with eigenvalues respectively $q^{k-1}$ and $q^{k}$.
\item The group $\mathrm H^{k+i}_c(X^k)$ for $0 \leq i \leq k-2$ is the direct sum of two eigenspaces of $F$, for the eigenvalues $q^{i}$ and $-q^{i+1}$. Each eigenspace is an irreducible unipotent representation of $\mathrm{Sp}(2k,\mathbb F_q)$.
\item The sum $\bigoplus_{i\geq 0} \mathrm H_c^i(X^k)$ is multiplicity-free as a representation of $\mathrm{Sp}(2k,\mathbb F_q)$.
\end{enumerate}
\end{theo}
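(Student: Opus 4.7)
The plan is to recognize that each of the five assertions is among the results established for Coxeter varieties in Lusztig's paper \cite{cox}, and to identify how each specializes to the symplectic case.

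For (1), I would combine three standard ingredients: the dimension formula $\dim X_\emptyset(w) = \ell(w)$ recalled in \ref{DefDLVariety} applied to the Coxeter element of length $k$; the theorem of Deligne--Lusztig asserting that the Deligne--Lusztig variety attached to a Coxeter element is affine; and Artin's vanishing theorem for affine varieties, which forces $\mathrm{H}^i_c(X^k)=0$ for $i<k$, the upper bound $i\leq 2k$ being automatic from the dimension.

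For parts (2)--(4), the strategy is to follow Lusztig's eigenspace decomposition. He equips $X^k$ with an action of a cyclic maximal anisotropic torus commuting with the action of $G = \mathrm{Sp}(2k, \mathbb{F}_q)$, and decomposes the cohomology into isotypic components indexed by characters of this torus. This simultaneously diagonalizes the Frobenius, giving the semisimplicity in (2), and exhibits each non-zero component as a single irreducible unipotent representation of $G$ lying in the Coxeter Harish-Chandra series, with the trivial representation appearing in top degree. An explicit computation via the Lefschetz trace formula applied to the cyclic action (carried out in \cite{cox} for all types) produces, in type $C_k$, the Frobenius eigenvalues $q^i$ and $-q^{i+1}$ on $\mathrm{H}^{k+i}_c(X^k)$, with each eigenspace contributing a single irreducible representation, and with only one of the two eigenvalues surviving in the extremal degrees $2k-1$ and $2k$, as required by (3) and (4). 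The identification of the top cohomology with the trivial representation follows from connectedness of $X^k$ combined with Poincaré duality, since $X^k$ is smooth of dimension $k$.

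Part (5) is then formal from (3) and (4): Lusztig's bijection assigns distinct unipotent representations to distinct Frobenius eigenvalues, and no two of the eigenvalues $\{q^i, -q^{i+1}\}$ arising across the various cohomological degrees coincide. Hence all the irreducible constituents are pairwise non-isomorphic, and the sum is multiplicity-free. The main obstacle, were one to redo Lusztig's analysis from scratch, would be the matching of characters of the cyclic torus with unipotent characters of $G$; in practice I would invoke the statements of \cite{cox} directly, and translate them into the symbol notation of \ref{ClassificationUnip} in the subsequent paragraphs.
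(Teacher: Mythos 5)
Your proposal is correct and matches the paper's treatment: the theorem is presented as a direct citation of Lusztig's results from \cite{cox}, with no independent proof given in the paper. Your sketch of the underlying arguments (affineness and Artin vanishing for (1), the cyclic torus action and eigenspace decomposition for (2)--(4), connectedness plus Poincar\'e duality for the top degree, and the multiplicity-free statement for (5)) accurately identifies the ingredients that Lusztig uses, though, as you note at the end, the reference ultimately does the work.
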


\noindent In other words, there exists a uniquely determined family of pairwise distinct symbols $S^k_0,\ldots ,S^k_k$ and $T^k_0,\ldots ,T^k_{k-2}$ in $\mathcal Y^{1}_k$ such that
\begin{align*}
\forall 0\leq i \leq k-2, & & \mathrm H^{k+i}_c(X^k) & \simeq \rho_{S^k_i} \oplus \rho_{T^k_i}, \\
\forall k-1\leq i \leq k, & & \mathrm H^{k+1}_c(X^k) & \simeq \rho_{S^k_{i}}.
\end{align*} 
The representation $\rho_{S^k_i}$ (resp. $\rho_{T^k_i}$) corresponds to the eigenspace of the Frobenius $F$ on $\bigoplus_{i\geq 0} \mathrm H_c^i(X^k)$ attached to $p^i$ (resp. to $-p^{i+1}$). Moreover, we know that $\rho_{S^k_k}$ is the trivial representation, therefore 
$$\setlength\arraycolsep{2pt}
S^k_k = 
\begin{pmatrix}
k \\
{}
\end{pmatrix}.$$
Lusztig also gives a formula computing the dimension of the eigenspaces. Specializing to the case of the symplectic group, it reduces to the following statement.

\begin{prop}[\cite{cox}]
For $0\leq i \leq k$ we have 
$$\deg(\rho_{S^k_i}) = q^{(k-i)^2} \prod_{s=1}^{k-i} \frac{q^{s+i}-1}{q^s-1}\prod_{s=0}^{k-i-1} \frac{q^{s+i}+1}{q^s+1}.$$
For $0 \leq j \leq k-2$ we have 
$$\deg(\rho_{T^k_j}) = q^{(k-j-1)^2}\frac{(q^{k-1}-1)(q^k-1)}{2(q+1)}\prod_{s=1}^{k-j-2}\frac{q^{s+j}-1}{q^s-1}\prod_{s=2}^{k-j-1}\frac{q^{s+j}+1}{q^s+1}.$$
\end{prop}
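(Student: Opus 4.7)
The plan is to derive the dimension formulas by explicitly identifying each symbol $S^k_i$ and $T^k_j$ appearing in \ref{KnownResultsCoxeter}, and then applying the hook formula of \ref{HookFormula} to each.

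First I would pin down the symbols. Since $\rho_{S^k_k}$ is the trivial representation, all the $\rho_{S^k_i}$ lie in the principal unipotent Harish-Chandra series, hence correspond to symbols of defect $1$ by \ref{UnipotentCuspidalSupport}. The factor $\frac{1}{2(q+1)}$ in the target formula for $\deg(\rho_{T^k_j})$ matches the dimension $\frac{q(q-1)^2}{2}$ of the unique cuspidal unipotent representation of $\mathrm{Sp}(4,\mathbb F_q)$, so by \ref{CuspidalUnipotents} all the $\rho_{T^k_j}$ lie in its Harish-Chandra series and correspond to symbols of defect $3$ with core $S_1$. Matching against Lusztig's parametrisation in \cite{cox}, or verifying the small cases $k = 1,2,3$ directly against the target formulas, leads to the closed-form candidates
$$
S^k_i = \setlength\arraycolsep{2pt}\begin{pmatrix} 0 & 1 & \ldots & k-i-1 & k \\ 1 & 2 & \ldots & k-i & \end{pmatrix} \text{ for } 0 \leq i \leq k-1, \quad S^k_k = \setlength\arraycolsep{2pt}\begin{pmatrix} k \\ {} \end{pmatrix},
$$
$$
T^k_j = \setlength\arraycolsep{2pt}\begin{pmatrix} 0 & 1 & \ldots & k-j-1 & k \\ 1 & 2 & \ldots & k-j-2 & \end{pmatrix} \text{ for } 0 \leq j \leq k-2.
$$

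Once the symbols are in hand, I would apply the hook formula. The $a$-function, computed from its defining sum $\sum_{\{s,t\}\subset S}\min(s,t)-\sum_{i\geq 1}\binom{\#S-2i}{2}$, works out to $(k-i)^2$ for $S^k_i$ and to $(k-j-1)^2$ for $T^k_j$; the invariant $b'$ equals $1$ for both families, which accounts for the $\frac{1}{2}$ in each denominator. Explicit enumeration of hooks and cohooks then produces two polynomial products in $q$, which are reorganised against the numerator $\prod_{s=1}^{k}(q^{2s}-1)$ of the hook formula. For $S^k_i$ the hooks and cohooks attached to the isolated entry $k$ in the first row contribute $\prod_{s=1}^{k-i}(q^{s+i}-1)$ and $\prod_{s=0}^{k-i-1}(q^{s+i}+1)$, while cancellations with the hooks and cohooks of the Steinberg-like subsymbol of rank $i$ account for the denominator $\prod_{s=1}^{k-i}(q^s-1)\prod_{s=0}^{k-i-1}(q^s+1)$. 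For $T^k_j$ the same bookkeeping applies, but an additional cohook of length $1$ originating from the cuspidal core contributes the extra factor $\frac{1}{q+1}$, producing the leading coefficient $\frac{(q^{k-1}-1)(q^k-1)}{2(q+1)}$.

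The main obstacle is the first step: verifying the closed-form shape of the symbols in full generality. One may appeal directly to Lusztig's analysis in \cite{cox} and translate his parametrisation into the language of symbols of \ref{ClassificationUnip}, or proceed by induction on $k$ via the Harish-Chandra restriction rule of \ref{ComputeRestriction} applied to the maximal Levi $\mathrm{GL}(1,\mathbb F_q)\times \mathrm{Sp}(2(k-1),\mathbb F_q)$, together with the recursive structure of the Coxeter variety of $\mathrm{Sp}(2k)$ over that of $\mathrm{Sp}(2(k-1))$, thereby reducing the computation of $\deg(\rho_{S^k_i})$ and $\deg(\rho_{T^k_j})$ to the case of rank $k-1$.
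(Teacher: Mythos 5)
The paper does not prove this statement: it is cited directly from Lusztig's paper \cite{cox}, with the remark that ``specializing to the case of the symplectic group, it reduces to the following statement.'' Your proposal is therefore a genuinely different route: you want to pin down each symbol $S^k_i$ and $T^k_j$ first and then read off the dimensions from the hook formula \ref{HookFormula}. Your guessed closed-form symbols do agree with what the paper establishes later in \ref{CohomologyCoxeter} (and the $a$-function values $(k-i)^2$, $(k-j-1)^2$ and $b'=1$ are also correct), and the paper itself observes after \ref{CohomologyCoxeter} that the hook formula applied to those symbols is consistent with the present proposition --- so the computation you sketch is in principle a sound consistency check.

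The difficulty is logical ordering. In the paper, the present proposition is an \emph{input} to \ref{CohomologyCoxeter}: the induction on $k$ via the Harish-Chandra restriction rule and Lusztig's cohomological recursion \eqref{eq} produces two candidate symbols in the boundary cases ($k=1$; $k=2,i=0$; $k=3$; $k=4$, etc.), and the paper discriminates between them by comparing against the degree formula you are trying to prove. Your suggested ``proceed by induction on $k$ via the Harish-Chandra restriction rule'' therefore runs straight into a circularity: without the degrees you cannot complete the disambiguation (and multiplicity-freeness alone does not resolve it, e.g. for $k=2,i=0$ both candidates $\begin{pmatrix}0&1&2\\1&2&\end{pmatrix}$ and $\begin{pmatrix}0&1\\2&\end{pmatrix}$ are distinct from $S^2_1, S^2_2$). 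Your other escape route --- translate Lusztig's parametrisation in \cite{cox} directly into the symbol formalism --- does avoid the circularity, but at that point you are doing what the paper does (cite \cite{cox}) plus an extra verification step, since Lusztig records both the representations and their degrees. To make your argument genuinely self-contained you would need to determine the symbols by a method independent of the degrees, for instance by exploiting that $\rho_{S^k_0}$ must be the Steinberg representation (so that its symbol is forced), or by translating Lusztig's explicit description of each eigenspace.
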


\paragraph{}\label{CohomologyCoxeter}Our goal in this section is to determine the symbols $S^k_i$ and $T^k_j$ explicitly. This is done in the following proposition.

\begin{prop}
For $0\leq i \leq k$ and $0\leq j \leq k-2$, we have 
\begin{align*}
\setlength\arraycolsep{2pt}
S^k_i = 
\begin{pmatrix}
0 & \ldots & k-i-1 & \,\,\,k \\
1 & \ldots & k-i & 
\end{pmatrix},
&&
T^k_j = 
\begin{pmatrix}
0 & \ldots & k-j-3 & k-j-2 & k-j-1 & k \\
1 & \ldots & k-j-2 & & & 
\end{pmatrix}.
\end{align*}
\end{prop}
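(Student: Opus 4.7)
The plan is to identify the symbols by matching Harish-Chandra series (through the defect) and dimensions (through the hook formula \ref{HookFormula}), invoking the multiplicity-freeness of $\bigoplus_i \mathrm H^i_c(X^k)$ from \ref{KnownResultsCoxeter}(5) for uniqueness.

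First I would check the defects. The proposed $S^k_i$ has rows of lengths $k-i+1$ and $k-i$, hence defect $1$, so by \ref{UnipotentCuspidalSupport} the representation $\rho_{S^k_i}$ lies in the principal unipotent series of $\mathrm{Sp}(2k,\mathbb F_q)$. The proposed $T^k_j$ has rows of lengths $k-j+1$ and $k-j-2$, hence defect $3$, so by \ref{UnipotentCuspidalSupport} and \ref{CuspidalUnipotents} (applied with $\delta=1$, so that $\theta = \delta(\delta+1) = 2$) $\rho_{T^k_j}$ lies in the Harish-Chandra series attached to the cuspidal unipotent representation $\rho_{S_1}$ of $\mathrm{Sp}(4,\mathbb F_q)$. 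By Lusztig's analysis of Coxeter-variety cohomology in \cite{cox}, the Frobenius eigenvalues of the form $q^i$ (resp.\ $-q^{j+1}$) occur precisely on principal (resp.\ cuspidal) series constituents, so the proposed symbols at least belong to the right series.

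Second, I would verify the dimensions via the hook formula. For $S^k_i$ one has $X \cap Y = \{1,\ldots,k-i-1\}$, so $b'(S^k_i) = 1$; the hooks and cohooks decompose into those generated by the "staircase" shared by both rows and the additional ones generated by the jump between $k-i-1$ and $k$ in the first row. Direct substitution and telescoping in \ref{HookFormula} reproduces Lusztig's formula for $\deg(\rho_{S^k_i})$ recalled in \ref{KnownResultsCoxeter}. The analogous computation for $T^k_j$ involves a similar staircase together with the larger jump between $k-j-1$ and $k$, and the factor $(q^{k-1}-1)(q^k-1)/(2(q+1))$ in Lusztig's formula for $T^k_j$ should arise from the extra hooks generated by that larger jump, together with the different overlap $X\cap Y$. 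Finally, uniqueness follows from \ref{KnownResultsCoxeter}(4)--(5): each $F$-eigenspace is irreducible with a distinct eigenvalue, so each irreducible constituent is characterised by its Harish-Chandra series and dimension, and Lusztig's dimensions are pairwise distinct as polynomials in $q$ within a fixed series.

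The main obstacle is the combinatorial bookkeeping of the hook formula for $T^k_j$, whose defect-$3$ structure generates more intertwined hook and cohook contributions than in the defect-$1$ case of $S^k_i$. One way to streamline this is by induction on $k - j$: the passage from smaller to larger symbols in the $T$-family adds one staircase step together with a controlled set of new hooks and cohooks, whose contributions telescope into Lusztig's expression; an analogous induction on $k - i$ handles $S^k_i$ and reduces the base case to the trivial and Steinberg representations, which are already identified in \ref{ClassificationUnip}.
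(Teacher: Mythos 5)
Your proposed route (match the Harish--Chandra series by defect, then match dimensions by the hook formula, then invoke multiplicity-freeness) is genuinely different from the paper's argument, which instead uses Lusztig's recursive restriction formula
\[
{}^*\mathrm R_{k-1}^{k}\bigl(\mathrm H^{k+i}_c(X^k)\bigr) \simeq \mathrm H^{k-1+i}_c(X^{k-1}) \oplus \mathrm H^{k-2+i}_c(X^{k-1})(1)
\]
from \cite{cox} Corollary 2.10 together with the hook-removal rule of \ref{ComputeRestriction}. In that argument the defect of $S^k_i$ (resp.\ $T^k_j$) is \emph{deduced} at each step (hook removal preserves defect, and the symbol is known for $k-1$ by induction), and the symbol itself is pinned down to at most a binary choice, with the dimension formula used only as a tiebreaker.

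Your version has two substantive gaps. First, the step ``by Lusztig's analysis in \cite{cox}, eigenvalues $q^i$ occur precisely on principal-series constituents and $-q^{j+1}$ precisely on cuspidal-series constituents'' is not available as an input in the paper's setup: \cite{cox} (as recorded in \ref{KnownResultsCoxeter}) says only that each eigenspace is an irreducible unipotent representation and gives its degree; the identification of its Harish--Chandra series \emph{is} part of what the proposition establishes. Using this to fix the defect before matching dimensions is therefore circular as written; to make it non-circular you would need to import and justify the deeper fact that the root-of-unity part of the Frobenius eigenvalue on Coxeter cohomology determines Lusztig's ``Frobenius eigenvalue'' of the unipotent character and hence its cuspidal support, which the paper never invokes. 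Second, even granting the defect, the uniqueness step requires that no other symbol of rank $k$ and the same defect has the same generic degree as the candidate. You assert that degrees are pairwise distinct within a fixed series, but distinct unipotent characters of classical groups can share degrees (indeed some do within a Lusztig family), so this needs an explicit check in the relevant range; the proposal treats it, together with the hook-formula computation for the defect-$3$ symbols, as ``bookkeeping'' without carrying it out. These are precisely the difficulties the inductive restriction argument avoids, because the hook-removal constraints alone narrow the candidate set so tightly that the dimension formula is only needed in a few base cases.
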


\noindent We note that the statement is coherent with the two dimension formulae that we provided earlier. That is, the degree of $\rho_{S^k_i}$ (resp. of $\rho_{T^k_j}$) computed with the hook formula \ref{HookFormula}, agrees with the dimension of the eigenspace of $p^i$ (resp. of $-p^{j+1}$) in the cohomology of $X^k$ as given in the previous paragraph.

\begin{proof} We use induction on $k\geq 0$. Since we already know that $S^k_k$ is the symbol corresponding to the trivial representation, the proposition is proved for $k=0$. Thus we may assume $k\geq 1$. We consider the block diagonal Levi complement $L \simeq \mathrm{GL}(1,\mathbb F_q) \times \mathrm{Sp}(2(k-1),\mathbb F_q)$, and we write ${}^*\mathrm R_{k-1}^{k}$ for the composition of the Harish-Chandra restriction from $\mathrm{Sp}(2k,\mathbb F_q)$ to $L$, with the usual restriction from $L$ to $\mathrm{Sp}(2(k-1),\mathbb F_q)$. As in the proof of \cite{mullerBT} 6.3 Proposition, for all $0\leq i \leq k$ we have an $\mathrm{Sp}(2(k-1),\mathbb F_q)\times \langle F \rangle$-equivariant isomorphism 
\begin{equation}\label{eq}
{}^*\mathrm R_{k-1}^{k} \left(\mathrm H^{k+i}_c(X^k)\right) \simeq \mathrm H^{k-1+i}_c(X^{k-1}) \oplus \mathrm H^{k-1+(i-1)}_c(X^{k-1})(1)\tag{$*$}.
\end{equation}
Here, $(1)$ denotes the Tate twist. This recursive formula is established by Lusztig in \cite{cox} Corollary 2.10. The right-hand side is known by induction hypothesis whereas the left-hand side can be computed using \ref{ComputeRestriction} Theorem. We establish the proposition by comparing the different eigenspaces of $F$ on both sides.\\
If $S\in \mathcal Y^1_{d,k}$ is any symbol, the restriction ${}^*\mathrm R_{k-1}^{k}\,\rho_{S}$ is the sum of all the representations $\rho_{S'}$ where $S'$ is obtained from $S$ by removing a $1$-hook from any of its rows.\\
We distinguish different cases depending on the values of $k$ and $i$.
\begin{enumerate}[label={--},noitemsep,topsep=0pt]
\item \textbf{Case }$\mathbf{k=1}$. We only need to determine $S^1_0$. For $i=0$, the right-hand side of \eqref{eq} is $\rho_{S^0_0}$ with eigenvalue $1$. Thus, the symbol $S^1_0 \in \mathcal Y^1_{1}$ has defect $1$ and admits only one $1$-hook. If we remove this hook we obtain $S^0_0$. Therefore, $S^1_0$ must be one of the two following symbols
\begin{align*}
\setlength\arraycolsep{2pt}
\begin{pmatrix}
0 & 1\\
1
\end{pmatrix},
&&
\begin{pmatrix}
1\\
{} 
\end{pmatrix}.
\end{align*}
By \ref{KnownResultsCoxeter}, we know that $\rho_{S^1_0}$ has degree $q$, thus $S^1_0$ must be equal to the former symbol.
\end{enumerate}

\noindent From now, we assume $k\geq 2$ and we determine $S^k_i$ for $0\leq i < k$. 

\begin{enumerate}[label={--},noitemsep,topsep=0pt]
\item \textbf{Case }$\mathbf{k=2}$ \textbf{ and }$\mathbf{i=0}$. The eigenspace attached to $1$ on the right-hand side of \eqref{eq} is $\rho_{S^{1}_0}$. Thus, the symbol $S^2_0 \in \mathcal Y^1_k$ has defect $1$ and admits only one $1$-hook. If we remove this hook we obtain $S^{1}_0$. Therefore, $S^2_0$ must be one of the two following symbols
\begin{align*}
\setlength\arraycolsep{2pt}
\begin{pmatrix}
0 & 1 & 2\\
1 & 2 &
\end{pmatrix},
&&
\begin{pmatrix}
0 & 1\\
2 & 
\end{pmatrix}.
\end{align*}
By \ref{KnownResultsCoxeter}, we know that $\rho_{S^2_0}$ has degree $q^4$, thus $S^2_0$ must be equal to the former symbol.

\item \textbf{Case }$\mathbf{k>2}$ \textbf{ and }$\mathbf{i=0}$. The eigenspace attached to $1$ on the right-hand side of \eqref{eq} is $\rho_{S^{k-1}_0}$. Thus, the symbol $S^k_0 \in \mathcal Y^1_k$ has defect $1$ and admits only one $1$-hook. If we remove this hook we obtain $S^{k-1}_0$. The only such symbol is
$$\setlength\arraycolsep{2pt}
S^k_0 = 
\begin{pmatrix}
0 & \ldots & k-1 & k\\
1 & \ldots & k &
\end{pmatrix}.$$
\item \textbf{Case }$\mathbf{1\leq i \leq k-1}$. The eigenspace attached to $p^i$ on the right-hand side of \eqref{eq} is $\rho_{S^{k-1}_i} \oplus \rho_{S^{k-1}_{i-1}}$. Thus, the symbol $S^k_i \in \mathcal Y^1_k$ has defect $1$ and admits only two $1$-hooks. If we remove one of these hooks we obtain either $S^{k-1}_i$ or $S^{k-1}_{i-1}$. The only such symbol is
$$\setlength\arraycolsep{2pt}
S^k_i = 
\begin{pmatrix}
0 & \ldots & k-i-1 & \,\,\,k \\
1 & \ldots & k-i & 
\end{pmatrix}.$$
\end{enumerate}

\noindent It remains to determine $T^k_j$ for $0\leq j \leq k-2$. 

\begin{enumerate}[label={--},noitemsep,topsep=0pt]
\item \textbf{Case }$\mathbf{k=2}$. The eigenspace attached to $-p$ on the right-hand side of \eqref{eq} is $0$. Thus, the symbol $T^2_0 \in \mathcal Y^1_2$ has no hook at all, implying that it is cuspidal in the sense of \ref{CuspidalUnipotents}. Since $\mathrm{Sp}(4,\mathbb F_q)$ admits only $1$ unipotent cuspidal representation, we deduce that 
$$\setlength\arraycolsep{2pt}
T^2_0 = 
\begin{pmatrix}
0 & 1 & 2 \\
& &
\end{pmatrix}.$$
\item \textbf{Case }$\mathbf{k=3}$. First when $j=0$, the eigenspace attached to $-p$ on the right-hand side of \eqref{eq} is $\rho_{T^{2}_0}$. Thus, the symbol $T^3_0 \in \mathcal Y^1_3$ has defect $3$ and admits only one $1$-hook. If we remove this hook we obtain $T^{2}_0$. Therefore, $T^3_0$ must be one of the two following symbols
\begin{align*}
\setlength\arraycolsep{2pt}
\begin{pmatrix}
0 & 1 & 2 & 3\\
1 & & &
\end{pmatrix},
&&
\begin{pmatrix}
0 & 1 & 3\\
& & 
\end{pmatrix}.
\end{align*}
By \ref{KnownResultsCoxeter}, we know that $\rho_{T^3_0}$ has degree $q^4\frac{(q^2-1)(q^3-1)}{2(q+1)}$, thus $T^3_0$ must be equal to the former symbol.\\
Then when $j=1$, the eigenspace attached to $-p^2$ on the right-hand side of \eqref{eq} is $\rho_{T^{2}_0}$. Thus, the symbol $T^3_1 \in \mathcal Y^1_3$ has defect $3$ and admits only one $1$-hook. If we remove this hook we obtain $T^{2}_0$. Thus $T^3_1$ is also one of the two symbols above. We can deduce that it is equal to the latter by comparing the dimensions or by using the fact that the symbols $T_j^k$ are pairwise distinct.
\end{enumerate}

\noindent From now, we assume $k\geq 4$ and we determine $T^k_j$ for $0\leq j \leq k-2$. 

\begin{enumerate}[label={--},noitemsep,topsep=0pt]
\item \textbf{Case }$\mathbf{k=4}$ \textbf{ and }$\mathbf{j=0}$. The eigenspace attached to $-p$ on the right-hand side of \eqref{eq} is $\rho_{T^{3}_0}$. Thus, the symbol $T^4_0 \in \mathcal Y^1_k$ has defect $3$ and admits only one $1$-hook. If we remove this hook we obtain $T^{3}_0$. Therefore, $T^4_0$ must be one of the two following symbols
\begin{align*}
\setlength\arraycolsep{2pt}
\begin{pmatrix}
0 & 1 & 2 & 3 & 4\\
1 & 2 & & &
\end{pmatrix},
&&
\begin{pmatrix}
0 & 1 & 2 & 3\\
2 & & & 
\end{pmatrix}.
\end{align*}
By \ref{KnownResultsCoxeter}, we know that $\rho_{T^4_0}$ has degree $q^9\frac{(q^3-1)(q^4-1)}{2(q+1)}$, thus $T^4_0$ must be equal to the former symbol.

\item \textbf{Case }$\mathbf{k>4}$ \textbf{ and }$\mathbf{j=0}$. The eigenspace attached to $-p$ on the right-hand side of \eqref{eq} is $\rho_{T^{k-1}_0}$. Thus, the symbol $T^k_0 \in \mathcal Y^1_k$ has defect $3$ and admits only one $1$-hook. If we remove this hook we obtain $T^{k-1}_0$. The only such symbol is
$$\setlength\arraycolsep{2pt}
T^k_0 = 
\begin{pmatrix}
0 & \ldots & k-3 & k-2 & k-1 & k\\
1 & \ldots & k-2 & & &
\end{pmatrix}.$$

\item \textbf{Case }$\mathbf{k=4}$ \textbf{ and }$\mathbf{j=k-2}$. The eigenspace attached to $-p^{3}$ on the right-hand side of \eqref{eq} is $\rho_{T^{3}_{1}}$. Thus, the symbol $T^4_{2} \in \mathcal Y^1_k$ has defect $3$ and admits only one $1$-hook. If we remove this hook we obtain $T^{3}_{1}$. Therefore, $T^4_2$ must be one of the two following symbols
\begin{align*}
\setlength\arraycolsep{2pt}
\begin{pmatrix}
0 & 1 & 4\\
& &
\end{pmatrix},
&&
\begin{pmatrix}
0 & 2 & 3\\
& &
\end{pmatrix}.
\end{align*}
By \ref{KnownResultsCoxeter}, we know that $\rho_{T^4_2}$ has degree $q\frac{(q^3-1)(q^4-1)}{2(q+1)}$, thus $T^4_2$ must be equal to the former symbol.

\item \textbf{Case }$\mathbf{k>4}$ \textbf{ and }$\mathbf{j = k-2}$. The eigenspace attached to $-p^{k-1}$ on the right-hand side of \eqref{eq} is $\rho_{T^{k-1}_{k-3}}$. Thus, the symbol $T^k_{k-2} \in \mathcal Y^1_k$ has defect $3$ and admits only one $1$-hook. If we remove this hook we obtain $T^{k-1}_{k-3}$. The only such symbol is
$$\setlength\arraycolsep{2pt}
T^k_{k-2} = 
\begin{pmatrix}
0 & 1 & k \\
& & 
\end{pmatrix}.$$

\item \textbf{Case }$\mathbf{1\leq j \leq k-3}$. The eigenspace attached to $-p^{j+1}$ on the right-hand side of \eqref{eq} is $\rho_{T^{k-1}_{j}} \oplus \rho_{T^{k-1}_{j-1}}$. Thus, the symbol $T^k_j \in \mathcal Y^1_k$ has defect $3$ and admits only two $1$-hooks. If we remove one of these hooks we obtain either $T^{k-1}_j$ or $T^{k-1}_{j-1}$. The only such symbol is
$$\setlength\arraycolsep{2pt}
T^k_j = 
\begin{pmatrix}
0 & \ldots & k-j-3 & \;k-j-2 & \;k-j-1 & \;k \\
1 & \ldots & k-j-2 & & & 
\end{pmatrix}.$$
\end{enumerate}
\end{proof}

\subsection{On the cohomology of a closed Bruhat-Tits stratum}

\paragraph{}\label{FrobeniusEigenvalues} Recall from \ref{IsomorphismDLVariety} the $\theta$-dimensional normal projective variety $S_{\theta} := \overline{X_{I}(s_{\theta})}$ defined over $\mathbb F_{q}$. It is equipped with an action of the finite symplectic group $\mathrm{Sp}(2\theta,\mathbb F_q)$. We use the stratification of \ref{Stratification} Proposition to study its cohomology over $\overline{\mathbb Q_{\ell}}$. If $\lambda$ is a scalar, we write $\mathrm{H}^{\bullet}_c(S_{\theta})_{\lambda}$ to denote the eigenspace of the Frobenius $F$ associated to $\lambda$ (we do not in principle assume the eigenspace to be non zero). We give a series of statements before proving all of them at once in the remaining of this section.

\noindent \begin{prop}
The Frobenius $F$ acts semi-simply on $\mathrm{H}^{\bullet}_c(S_{\theta})$. Its eigenvalues form a subset of 
$$\{q^i\,|\, 0\leq i \leq \theta\}\cup \{-q^{j+1} \,|\, 0\leq j \leq \theta-2\}.$$
\end{prop}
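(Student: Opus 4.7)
The plan is to use the stratification from \ref{Stratification} Proposition, which decomposes $S_\theta$ as the disjoint union of strata $X_{I_{\theta'}}(w_{\theta'})$ for $0 \leq \theta' \leq \theta$, and to reduce the cohomology of each stratum to that of a Coxeter variety for a smaller symplectic group.

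First I would compute the cohomology of a single stratum. By \ref{IsomorphismEOStratum} and \ref{DecompositionDLVarieties}, the stratum $X_{I_{\theta'}}(w_{\theta'})$ is obtained by parabolic induction from the Levi $\mathbf L_{K_{\theta'}} \simeq \mathrm{GL}(\theta-\theta') \times \mathrm{Sp}(2\theta')$, whose associated Deligne-Lusztig variety is the product of a point with the Coxeter variety $X^{\theta'}$ for $\mathrm{Sp}(2\theta',\mathbb F_q)$. Since parabolic induction commutes with the Frobenius action and the Künneth formula applies, we obtain an $\mathrm{Sp}(2\theta,\mathbb F_q) \times \langle F \rangle$-equivariant identification
$$\mathrm H_c^{\bullet}(X_{I_{\theta'}}(w_{\theta'})) \simeq \mathrm R_{L_{K_{\theta'}}}^{\mathrm{Sp}(2\theta)} \bigl(\mathbf 1 \boxtimes \mathrm H_c^{\bullet}(X^{\theta'})\bigr),$$
under which the Frobenius eigenvalues on the two sides coincide. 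By \ref{KnownResultsCoxeter}, $F$ acts semisimply on $\mathrm H_c^{\bullet}(X^{\theta'})$ with eigenvalues in $\{q^i : 0 \leq i \leq \theta'\} \cup \{-q^{j+1} : 0 \leq j \leq \theta'-2\}$, which is a subset of the target set since $\theta' \leq \theta$.

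Next I would assemble these local contributions via the spectral sequence
$$E_1^{a,b} = \mathrm H_c^{a+b}(X_{I_a}(w_a)) \Longrightarrow \mathrm H_c^{a+b}(S_\theta)$$
induced by the stratification, as recalled in the introduction. Because $F$ acts on each $E_1^{a,b}$ with eigenvalues in the target set, the same holds at every page $E_r^{a,b}$, and in particular for the graded pieces $E_\infty^{a,b}$ of the induced filtration on $\mathrm H_c^{\bullet}(S_\theta)$. This immediately yields the eigenvalue part of the proposition.

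The main obstacle is the semisimplicity of $F$ on the total cohomology, since this does not follow formally from semisimplicity on the graded pieces of a filtration. It will be established as a byproduct of the explicit computations carried out in the subsequent subsections \ref{CohomologyS} and \ref{CohomologyT}: a careful analysis of the weights of the Frobenius on the various $E_1^{a,b}$ terms shows that the spectral sequence degenerates at $E_2$, and that within a fixed total degree the Frobenius eigenvalues on the different graded pieces are distributed in a way which forces any potential extension to split. Hence the semisimplicity stated in the proposition will be confirmed once the finer description of the cohomology of $S_\theta$ is available.
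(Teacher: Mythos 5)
Your proposal is correct and follows essentially the same route as the paper: stratify $S_\theta$ by the $X_{I_{\theta'}}(w_{\theta'})$, identify each $E_1^{a,b}$ with a parabolic induction of Coxeter-variety cohomology (so $F$ acts semisimply there with eigenvalues among $q^b$ and $-q^{b+1}$), read off the eigenvalue set, then obtain semisimplicity from $E_2$-degeneration together with the splitting of the filtration. The paper makes the splitting argument a bit more explicit — on row $b$ the only eigenvalues are $q^b$ and $-q^{b+1}$, so eigenvalues on distinct rows (hence on distinct graded pieces of a fixed total degree) are pairwise distinct, which kills all higher differentials and forces the filtration to split — but this is exactly the observation you gesture at when invoking the analysis of \ref{CohomologyS} and \ref{CohomologyT}.
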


\paragraph{}\label{CohomologyS}In a first statement, we give our results regarding the eigenspaces attached to a scalar of the form $q^i$ for some $i$. Recall from \ref{UnipotentCuspidalSupport} the cuspidal supports $(L_{\delta},\rho_{\delta})$ for the finite symplectic group $\mathrm{Sp}(2\theta,\mathbb F_q)$. 

\noindent \begin{theo} 
Let $0 \leq i \leq \theta$ and $\theta' \in \mathbb Z$.  
\begin{enumerate}[label=\upshape (\arabic*), topsep = 0pt]
		\item The eigenspace $\mathrm{H}^{\theta'+i}_c(S_{\theta})_{q^i}$ is zero when $\theta' < i$ or $\theta' > \theta$.
\end{enumerate}
We now assume that $0 \leq i \leq \theta' \leq \theta$.
\begin{enumerate}[label=\upshape (\arabic*), topsep = 0pt]
		\setcounter{enumi}{1}
		\item All the irreducible representations of $\mathrm{Sp}(2\theta,\mathbb F_q)$ in the eigenspace $\mathrm H_c^{\theta'+i}(S_{\theta})_{q^i}$ belong to the unipotent principal series, ie. they have cuspidal support $(L_0,\rho_0)$. 
		\item We have 
	\begin{align*}
\setlength\arraycolsep{2pt} \mathrm H_c^{0}(S_{\theta}) = \mathrm H_c^{0}(S_{\theta})_1 \simeq \rho_{\footnotesize \begin{pmatrix}
\theta \\
{}
\end{pmatrix}}, & & 
\mathrm H_c^{2\theta}(S_{\theta}) = \mathrm H_c^{2\theta}(S_{\theta})_{q^{\theta}} \simeq \rho_{\footnotesize \begin{pmatrix}
\theta \\
{}
\end{pmatrix}}.
	\end{align*}
	\item If $i+2 \leq \theta'$ then 
\begin{equation*}
\setlength\arraycolsep{2pt} \begin{split} 
\bigoplus_{0\leq d \leq \theta - \theta'-1} \rho_{\footnotesize \begin{pmatrix}
0 & \ldots & \theta' - i - 2 & \theta' - i - 1 & \theta' + d \\
1 & \ldots & \theta' - i - 1 & \theta - i - d &
\end{pmatrix}} 
& \oplus \\ 
\bigoplus_{\substack{1\leq d \leq \\ \min(i,\theta-\theta'-1)}} & \rho_{\footnotesize \begin{pmatrix}
0 & \ldots & \theta' - i - 2 & \theta' - i - 1 + d & \theta' \\
1 & \ldots & \theta' - i - 1 & \theta - i - d &
\end{pmatrix}}
\hookrightarrow \mathrm H_c^{\theta'+i}(S_{\theta})_{q^i}.
\end{split}
\end{equation*}
The cokernel of this map consists of at most $4$ irreducible representations of $\mathrm{Sp}(2\theta,\mathbb F_q)$. 
\item When $i = \theta' \not = \theta$, we have 
\begin{align*}
\setlength\arraycolsep{2pt}
\rho_{\footnotesize \begin{pmatrix}
\theta \\
{}
\end{pmatrix}} \hookrightarrow \mathrm H_c^{2i}(S_{\theta})_{q^i} & \text{ if } 2i < \theta, & 
\rho_{\footnotesize \begin{pmatrix}
\theta \\
{}
\end{pmatrix}} \oplus 
\rho_{\footnotesize \begin{pmatrix}
\theta-i & i+1 \\
0 &
\end{pmatrix}}\hookrightarrow \mathrm H_c^{2i}(S_{\theta})_{q^i} & \text{ if } 2i \geq \theta. 
\end{align*}
\item When $\theta' = \theta$ we have 
\begin{equation*}
\setlength\arraycolsep{2pt}
\mathrm H_c^{\theta+i}(S_{\theta})_{q^i} \simeq 0 \text{ or } \rho_{\footnotesize \begin{pmatrix}
0 & \ldots & \theta-i-1 & \theta \\
1 & \ldots & \theta-i
\end{pmatrix}}.
\end{equation*}
\item When $\theta' = 1$ and $i=0$, we have 
\begin{align*}
\setlength\arraycolsep{2pt}
\mathrm H_c^1(S_1) = 0, & & \mathrm H_c^1(S_{\theta}) = \mathrm H_c^1(S_{\theta})_1 \simeq 0 \text{ or } \rho_{\footnotesize \begin{pmatrix}
0 & 1 & \theta \\
1 & 2
\end{pmatrix}} \text{ when } \theta \geq 2. 
\end{align*}
\end{enumerate}
\end{theo}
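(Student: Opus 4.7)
\textbf{The plan} is to exploit the stratification of \ref{Stratification} via its associated spectral sequence
\[
E_1^{a,b} \;=\; \mathrm H_c^{a+b}\!\left(X_{I_a}(w_a)\right) \;\Longrightarrow\; \mathrm H_c^{a+b}(S_{\theta}),
\]
which is $\mathrm{Sp}(2\theta,\mathbb F_q)\times \langle F\rangle$-equivariant. Combining \ref{IsomorphismEOStratum} and \ref{DecompositionDLVarieties}, each term rewrites as a Harish-Chandra induction
\[
E_1^{a,b} \;\simeq\; \mathrm R_{L_{K_a}}^{\mathrm{Sp}(2\theta)}\!\left(\mathbf 1 \boxtimes \mathrm H_c^{a+b}(X^a)\right),
\]
so by \ref{KnownResultsCoxeter} the term vanishes outside $0 \leq b \leq a \leq \theta$ and the Frobenius acts semi-simply with weights in $\{q^b,-q^{b+1}\}$, the second value occurring only when $b \leq a-2$. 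This already yields the semi-simplicity claim of \ref{FrobeniusEigenvalues} and the vanishing statement (1).

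\textbf{Degeneration and Pieri decomposition.} The next step is to observe that any $d_r$ with $r\geq 2$ connects terms whose $b$-coordinates differ by $|r-1|\geq 1$, hence whose Frobenius weight sets $\{q^b,-q^{b+1}\}$ and $\{q^{b'},-q^{b'+1}\}$ are disjoint; this forces $d_r = 0$ and the sequence degenerates at $E_2$. I would then decompose each $E_1^{\theta',i}$ into irreducible constituents by applying the Pieri rule of \ref{ComputeInduction} to the symbol $S^{\theta'}_i$ of \ref{CohomologyCoxeter}, working modulo shift equivalence: the constituents are exactly the symbols of rank $\theta$ and defect $1$ obtained from (some shift of) $S^{\theta'}_i$ by adding leg-length-zero hooks of total size $\theta - \theta'$ to the two rows. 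Statement (2) follows because adding hooks preserves the defect and hence the cuspidal support $(L_0,\rho_0)$; statement (3) is immediate from $S_\theta$ being normal, irreducible and projective of dimension $\theta$.

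\textbf{Extracting the monomorphisms.} For statements (4), (5), (6) and (7), the key principle is that an irreducible constituent $\rho_S$ of $E_1^{\theta',i}$ survives to $E_\infty$ whenever $\rho_S$ appears in neither $E_1^{\theta'-1,i}$ nor $E_1^{\theta'+1,i}$, since $d_1$ is the only possibly non-zero differential and respects Frobenius eigenspaces. I would therefore list the Pieri expansions of $E_1^{\theta'-1,i}$, $E_1^{\theta',i}$ and $E_1^{\theta'+1,i}$ and isolate the middle-exclusive symbols. A case analysis in the range $i+2\leq \theta'$ of (4) shows that the stratum-unique symbols are precisely the two families explicited in the theorem: the first is characterised by an extremal first-row entry $\theta' + d$ which can only arise as the hook endpoint of the maximal element $\theta'$ of $S^{\theta'}_i$, and the second by an entry $\theta'-i-1+d$ intercalated between $\theta'-i-2$ and $\theta'$ whose appearance forces the source symbol to be $S^{\theta'}_i$ rather than a neighbouring Coxeter symbol. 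The edge constraints $d \leq i$ and $d \leq \theta-\theta'-1$ correspond exactly to the leg-length-zero and valid-shift conditions. The same enumeration, performed at smaller $\theta'$, furnishes the statements (5), (6) and (7). The constituents of $E_1^{\theta',i}$ that are shared with an adjacent stratum may a priori be killed by $d_1$; a direct count shows that there are at most four such borderline symbols, accounting for the ambiguous cokernel of (4).

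\textbf{Main obstacle.} The principal difficulty is the combinatorial bookkeeping of the Pieri rule modulo shift equivalence. Each $E_1^{\theta',i}$ decomposes into many symbols whose provenance must be traced: for a candidate survivor $\rho_S$, one must rule out every way of ``undoing'' a leg-length-zero hook addition from any shift of $S^{\theta'-1}_i$ or $S^{\theta'+1}_i$ that could land on $S$. Separating first-row hooks from second-row hooks, tracking the admissible shifts, and verifying the edge inequalities $d \leq i$, $d \leq \theta-\theta'-1$ must all be done carefully. Once this systematic matching is carried out, the stratum-unique survivors are pinned down and the count of undetermined constituents is bounded by four, which completes the proof of all the claims.
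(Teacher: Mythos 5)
Your proposal takes essentially the same route as the paper's own proof: you use the stratification spectral sequence, rewrite each $E_1$-term as a Harish-Chandra induction from a Coxeter variety, identify its constituents via the Pieri rule on the symbols $S^{\theta'}_i$, observe degeneration at $E_2$ from the disjointness of the Frobenius eigenvalue sets $\{q^b,-q^{b+1}\}$ across rows, and then isolate the constituents that appear in $E_1^{\theta',i}$ but in neither horizontal neighbour. The only piece left implicit is the explicit symbol bookkeeping (the paper tracks the length of the second row of each symbol in the families (S1), (S2), (S Exc 1), (S Exc 2) of its Lemma \ref{TermsFirstPage} to pin down the monomorphism and the ``at most four'' ambiguous constituents), which you correctly flag as the main obstacle but do not carry out.
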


\noindent We note that when $\theta' = \theta$, the formula of $(4)$ does not say anything about the eigenspace $\mathrm H_c^{\theta+i}(S_{\theta})_{q^i}$ since the sums are empty. However, by $(6)$ we understand that this eigenspace is either $0$ either irreducible.\\
We note also that the theorem does not give any information in the case $i+1 = \theta'$, except when $\theta' = 1$ and $i=0$ which corresponds to $(7)$. 

\paragraph{}\label{CohomologyT} In a second statement, we give our results regarding the eigenspaces attached to a scalar of the form $-q^{j+1}$ for some $j$.

\noindent \begin{theo} 
Let $0 \leq j \leq \theta - 2$ and $\theta' \in \mathbb Z$.  
\begin{enumerate}[label=\upshape (\arabic*), topsep = 0pt]
		\item The eigenspace $\mathrm{H}^{\theta'+j}_c(S_{\theta})_{-q^{j+1}}$ is zero when $\theta' < j+2$ or $\theta' > \theta$.
\end{enumerate}
We now assume that $2 \leq j+2 \leq \theta' \leq \theta$.
\begin{enumerate}[label=\upshape (\arabic*), topsep = 0pt]
		\setcounter{enumi}{1}
		\item All the irreducible representations of $\mathrm{Sp}(2\theta,\mathbb F_q)$ in the eigenspace $\mathrm H_c^{\theta'+j}(S_{\theta})_{-q^{j+1}}$ are unipotent with cuspidal support $(L_1,\rho_1)$. 
		\item We have
	\begin{equation*}
\setlength\arraycolsep{2pt} \mathrm H_c^{2\theta-2}(S_{\theta})_{-q^{\theta-1}} \simeq \rho_{\footnotesize \begin{pmatrix}
0 & 1 & \theta \\
{}
\end{pmatrix}}.
	\end{equation*}
	\item If $j+4 \leq \theta' \leq \theta$ then 
\begin{equation*}
\setlength\arraycolsep{2pt} \begin{split} 
\bigoplus_{0\leq d \leq \theta - \theta'-1} & \rho_{\footnotesize \begin{pmatrix}
0 & \ldots & \theta' - i - 4 & \theta' - i - 3 & \theta' - j - 2 & \theta' - j - 1 & \theta' + d \\
1 & \ldots & \theta' - j - 3 & \theta - j - 2 - d & & &
\end{pmatrix}} 
 \oplus \\ 
\bigoplus_{\substack{1\leq d \leq \\ \min(i,\theta-\theta'-1)}} & \rho_{\footnotesize \begin{pmatrix}
0 & \ldots & \theta' - i - 4 & \theta' - i - 3 & \theta' - j - 2 & \theta' - j - 1 + d & \theta' \\
1 & \ldots & \theta' - j - 3 & \theta - j - 2 - d & & &
\end{pmatrix}}
\hookrightarrow \mathrm H_c^{\theta'+j}(S_{\theta})_{-q^{j+1}}.
\end{split}
\end{equation*}
The cokernel of this map consists of at most $4$ irreducible representations of $\mathrm{Sp}(2\theta,\mathbb F_q)$. 
\item When $j+2 = \theta' \not = \theta$, we have 
\begin{align*}
\setlength\arraycolsep{2pt}
\rho_{\footnotesize \begin{pmatrix}
0 & 1 & \theta \\
{}
\end{pmatrix}} & \hookrightarrow \mathrm H_c^{2(j+1)}(S_{\theta})_{-q^{j+1}} & \text{ if } 2(j+1) < \theta, \\ 
\rho_{\footnotesize \begin{pmatrix}
0 & 1 & \theta \\
& &
\end{pmatrix}} \oplus 
\rho_{\footnotesize \begin{pmatrix}
0 & \theta-i-1 & i+2 \\
& &
\end{pmatrix}} & \hookrightarrow \mathrm H_c^{2(j+1)}(S_{\theta})_{-q^{j+1}} & \text{ if } 2(j+1) \geq \theta. 
\end{align*}
\item When $\theta' = \theta$ we have 
\begin{equation*}
\setlength\arraycolsep{2pt}
\mathrm H_c^{\theta+j}(S_{\theta})_{-q^{j+1}} \simeq 0 \text{ or } \rho_{\footnotesize \begin{pmatrix}
0 & \ldots & \theta-j-3 & \theta - j - 2 & \theta - j - 1 & \theta \\
1 & \ldots & \theta-j-2 & & &
\end{pmatrix}}.
\end{equation*}
\end{enumerate}
\end{theo}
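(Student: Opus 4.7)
The approach is to mimic the proof of Theorem \ref{CohomologyS}, applying the stratification spectral sequence from \ref{Stratification} to the $-q^{j+1}$-eigenspaces. Setting $E_1^{\theta',b} = \mathrm H_c^{\theta'+b}(X_{I_{\theta'}}(w_{\theta'}))$, by \ref{IsomorphismEOStratum} and \ref{DecompositionDLVarieties} one has
$$E_1^{\theta',b} \simeq \mathrm R_{L_{K_{\theta'}}}^{\mathrm{Sp}(2\theta,\mathbb F_q)}\bigl(\mathbf 1 \boxtimes \mathrm H_c^{\theta'+b}(X^{\theta'})\bigr).$$
By Proposition \ref{CohomologyCoxeter}, the $-q^{j+1}$-part of this term is non-zero only when $b=j$ with $j+2\leq \theta'\leq\theta$, in which case it equals $\mathrm R_{L_{K_{\theta'}}}^{\mathrm{Sp}(2\theta,\mathbb F_q)}(\mathbf 1 \boxtimes \rho_{T^{\theta'}_j})$. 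In particular, for each fixed total degree only a single $E_1$-term carries $-q^{j+1}$-weight, which already yields the vanishing claim $(1)$. Since $T^{\theta'}_j$ has defect $3$, Theorem \ref{ComputeInduction} produces only defect-$3$ symbols of rank $\theta$, so by Proposition \ref{UnipotentCuspidalSupport} every irreducible constituent has cuspidal support $(L_1,\rho_1)$, giving $(2)$.

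A direct weight check shows that the only differentials acting non-trivially on the $-q^{j+1}$-part are the two $d_1$-differentials relating
$$E_1^{\theta'-1,j}\big|_{-q^{j+1}}, \quad E_1^{\theta',j}\big|_{-q^{j+1}}, \quad E_1^{\theta'+1,j}\big|_{-q^{j+1}},$$
so $\mathrm H_c^{\theta'+j}(S_\theta)|_{-q^{j+1}}$ identifies with the cohomology of this two-step complex. Using the hook-addition rule of Theorem \ref{ComputeInduction}, the irreducible constituents of $E_1^{\theta',j}|_{-q^{j+1}}$ are exactly the $\rho_S$ where $S$ is obtained from the explicit symbol $T^{\theta'}_j$ of Proposition \ref{CohomologyCoxeter} by adding an $a_1$-hook of leg length $0$ to its first row and an $a_2$-hook of leg length $0$ to its second row, with $a_1+a_2=\theta-\theta'$.

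For part $(4)$, I enumerate these symbols by cases according to which row receives each hook and where it is placed, then compare with the analogous lists for $E_1^{\theta'\pm 1,j}|_{-q^{j+1}}$. The displayed summands on the left-hand side of $(4)$ are exactly those symbols which appear in $E_1^{\theta',j}|_{-q^{j+1}}$ but in neither of the two neighbouring terms; by equivariance, they must lie in the kernel of the outgoing $d_1$ and outside the image of the incoming one, hence inject into $\mathrm H_c^{\theta'+j}(S_\theta)|_{-q^{j+1}}$. The boundary cases $(3)$, $(5)$ and $(6)$ are precisely the settings where one of the two neighbouring $E_1$-terms vanishes: in $(5)$ the incoming term is zero because $j=\theta'-2$ kills the $T$-part in the cohomology of $X^{\theta'-1}$; in $(3)$ and $(6)$ the outgoing term is absent because $\theta'=\theta$; in $(6)$ the remaining source $\rho_{T^\theta_j}$ is irreducible, so the quotient is either trivial or the full representation. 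The main obstacle is the combinatorial bookkeeping in part $(4)$: identifying precisely the (at most four) ambiguous symbols which do appear in a neighbouring $E_1$-term, whose survival to $E_\infty$ cannot be resolved without more delicate information on the $d_1$-map, and verifying that the enumeration in the statement is exhaustive. This analysis is parallel in difficulty to the one carried out for the $q^i$-eigenspaces in Theorem \ref{CohomologyS}.
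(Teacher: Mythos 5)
Your proposal is correct and takes essentially the same approach as the paper: the stratification spectral sequence from \ref{Stratification}, identification of the $E_1$-terms via parabolic induction from Coxeter varieties, degeneration on the second page by weight separation across rows, and combinatorial analysis of the $d_1$-chain at eigenvalue $-q^{j+1}$ using hook-addition. The paper in fact proves points (3)--(6) of \ref{CohomologyT} by explicitly declaring them parallel to the corresponding points of \ref{CohomologyS}, using the symbol families (T1), (T2), (T Exc 1), (T Exc 2) of \ref{TermsFirstPage} Lemma, which is exactly the enumeration you are gesturing at.
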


\noindent We note that when $\theta' = \theta$, the formula of $(4)$ does not say anything about the eigenspace $\mathrm H_c^{\theta+j}(S_{\theta})_{-q^{j+1}}$ since the sums are empty. However, by $(6)$ we understand that this eigenspace is either $0$ either irreducible.\\
We note also that the theorem does not give any information in the case $j+3 = \theta'$.

\noindent \begin{rk}
A cuspidal representation occurs in the cohomology of $S_{\theta}$ only in the cases $\theta = 0$ and $\theta = 2$. When $\theta = 0$ it corresponds to $\mathrm H^0_c(S_{0})$ which is trivial. When $\theta = 2$ it corresponds to $\mathrm H^2_c(S_2)_{-q}$ as described by $(3)$ in the theorem above.
\end{rk}

\paragraph{}\label{TermsFirstPage}The remaining of this section is dedicated to proving the theorems stated above. Recall from \ref{Stratification} that we have a stratification $S_{\theta} = \bigsqcup_{\theta'=0}^{\theta} X_{I_{\theta'}}(w_{\theta'})$. It induces a spectral sequence on the cohomology whose first page is given by 
\begin{equation}\label{spectral}
E^{a,b}_1 = \mathrm H^{a+b}_c(X_{I_a}(w_a)) \implies \mathrm H^{a+b}_c(S_{\theta}).
\tag{$E$}
\end{equation}
Now, recall that the strata $X_{I_{\theta'}}(w_{\theta'})$ are related to Coxeter varieties for the finite symplectic group $\mathrm{Sp}(2\theta',\mathbb F_q)$. Using \ref{DecompositionDLVarieties}, the geometric isomorphism given in \ref{IsomorphismEOStratum} Proposition induces an isomorphism on the cohomology
\begin{equation}\label{eq2}
\mathrm H_c^{\bullet}(X_{I_{\theta'}}(w_{\theta'})) \simeq \mathrm R_{L_{K_{\theta'}}}^{\mathrm{Sp}(2\theta,\mathbb F_q)} \,\mathbf{1} \boxtimes \mathrm H_c^{\bullet}(X^{\mathrm{Sp}(2\theta')}(w_{\theta'})),
\tag{$**$}
\end{equation}
where $L_{K_{\theta'}}$ denotes the block-diagonal Levi complement isomorphic to $\mathrm{GL}(\theta-\theta',\mathbb F_q) \times \mathrm{Sp}(2\theta',\mathbb F_q)$. The variety $X^{\mathrm{Sp}(2\theta')}(w_{\theta'})$ is nothing but the Coxeter variety that we denoted by $X^{k'}$ in \ref{KnownResultsCoxeter}, and whose cohomology we have described. For $0 \leq i \leq \theta'$ and $0 \leq j \leq \theta'-2$, recall from \ref{CohomologyCoxeter} the symbols $S_i^{\theta'}$ and $T_j^{\theta'}$. We define
\begin{align*}
\mathrm R^S_{i,\theta'} := \mathrm R_{L_{K_{\theta'}}}^{\mathrm{Sp}(2\theta,\mathbb F_q)} \,\mathbf{1} \boxtimes \rho_{S_i^{\theta'}},
& & 
\mathrm R^T_{j,\theta'} := \mathrm R_{L_{K_{\theta'}}}^{\mathrm{Sp}(2\theta,\mathbb F_q)} \,\mathbf{1} \boxtimes \rho_{T_j^{\theta'}}.
\end{align*}
Then by \eqref{eq2}, we have 
\begin{align*}
\mathrm H_c^{\theta' + i}(X_{I_{\theta'}}(w_{\theta'})) & \simeq \mathrm R^S_{i,\theta'} \oplus \mathrm R^T_{i,\theta'} & & \forall 0 \leq i \leq \theta' - 2, \\ 
\mathrm H_c^{\theta' + i}(X_{I_{\theta'}}(w_{\theta'})) & \simeq \mathrm R^S_{i,\theta'} & & \forall \theta' - 1 \leq i \leq \theta'.
\end{align*}
The cohomology groups of other degrees vanish. The representation $\mathrm R^S_{i,\theta'}$ corresponds to the eigenvalue $q^i$ of $F$, whereas $\mathrm R^T_{j,\theta'}$ corresponds to $-q^{j+1}$. 

\noindent \begin{lem}
Let $0 \leq \theta' \leq \theta$, $0 \leq i \leq \theta'$ and $0 \leq j \leq \theta'-2$.
\begin{enumerate}[label={--},noitemsep,topsep=0pt,leftmargin=0pt]
\item If $i < \theta'$, the representation $\mathrm R^S_{i,\theta'}$ is the multiplicity-free sum of the unipotent representations $\rho_{S}$ where $S \in \mathcal Y^1_{1,\theta}$ runs over the following $4$ distinct families of symbols 

\begin{flalign*}
\setlength\arraycolsep{2pt}
& \text{(S1)} & &
\begin{pmatrix}
0 & \ldots & \theta' - i - 2 & \theta' - i - 1 & \theta' + d \\
1 & \ldots & \theta' - i - 1 & \theta - i - d &
\end{pmatrix}
& & \forall 0 \leq d \leq \theta - \theta', \\
& \text{(S2)} & & 
\begin{pmatrix}
0 & \ldots & \theta' - i - 2 & \theta' - i - 1 + d & \theta' \\
1 & \ldots & \theta' - i - 1 & \theta - i - d &
\end{pmatrix}
& & \forall 1 \leq d \leq \min(i,\theta-\theta'), \\
& \text{(S Exc 1)} & &
\begin{pmatrix}
0 & \ldots & \theta' - i - 1 & \theta' - i & \theta \\
1 & \ldots & \theta' - i & \theta' - i + 1 &
\end{pmatrix}
& & \text{if } \theta' \not = \theta,\\
& \text{(S Exc 2)} & & 
\begin{pmatrix}
0 & \ldots & \theta' - i - 1 & \theta - i - 1 & \theta' + 1 \\
1 & \ldots & \theta' - i & \theta' - i + 1 &
\end{pmatrix}
& & \text{if } \theta' \not = \theta, \theta - 1 \text{ and } \theta \leq \theta' + i + 1.\\
\end{flalign*}

\item The representation $\mathrm R^S_{\theta',\theta'}$ is the multiplicity-free sum of the unipotent representations $\rho_{S}$ where $S \in \mathcal Y^1_{1,\theta}$ runs over the following $2$ distinct families of symbols 

\begin{flalign*}
\setlength\arraycolsep{2pt}
& \text{(S1')} & &
\begin{pmatrix}
0 & \theta' + 1 + d \\
\theta - \theta' - d & 
\end{pmatrix}
& & \forall 0 \leq d \leq \theta - \theta', \\
& \text{(S2')} & & 
\begin{pmatrix}
d & \theta' + 1 \\
\theta - \theta' - d & 
\end{pmatrix}
& & \forall 1 \leq d \leq \min(\theta',\theta-\theta').
\end{flalign*}

\item If $j + 2 < \theta'$, the representation $\mathrm R^T_{j,\theta'}$ is the multiplicity-free sum of the unipotent representations $\rho_{T}$ where $T \in \mathcal Y^1_{3,\theta}$ runs over the following $4$ distinct families of symbols
 
\begin{flalign*}
\setlength\arraycolsep{2pt}
& \text{(T1)} & &
\begin{pmatrix}
0 & \ldots & \theta' - j - 4 & \theta' - j - 3 & \theta' - j - 2 & \theta' - j - 1 & \theta' + d \\
1 & \ldots & \theta' - j - 3 & \theta - j - 2 - d & &
\end{pmatrix}
& & \forall 0 \leq d \leq \theta - \theta', \\
& \text{(T2)} & & 
\begin{pmatrix}
0 & \ldots & \theta' - j - 4 & \theta' - j - 3 & \theta' - j - 2 & \theta' - j - 1 + d & \theta' \\
1 & \ldots & \theta' - j - 3 & \theta - j - 2 - d & &
\end{pmatrix}
& & \begin{array}{@{}l@{}} \forall 1 \leq d \leq \\ \min(j,\theta-\theta'), \end{array} \\
& \text{(T Exc 1)} & &
\begin{pmatrix}
0 & \ldots & \theta' - j - 2 & \theta' - j - 1 & \theta' - j & \theta \\
1 & \ldots & \theta' - j - 1 & & &
\end{pmatrix}
& & \text{if } \theta' \not = \theta,\\
& \text{(T Exc 2)} & & 
\begin{pmatrix}
0 & \ldots & \theta' - j - 2 & \theta' - j - 1 & \theta - j - 1 & \theta' + 1 \\
1 & \ldots & \theta' - j - 1 & & &
\end{pmatrix}
& & \begin{array}{@{}l@{}} \text{if } \theta' \not = \theta, \theta - 1 \\ \text{ and } \theta \leq \theta' + j + 1. \end{array}\\
\end{flalign*}

\item The representation $\mathrm R^T_{\theta'-2,\theta'}$ is the multiplicity-free sum of the unipotent representations $\rho_{T}$ where $T \in \mathcal Y^1_{3,\theta}$ runs over the following $2$ distinct families of symbols 

\begin{flalign*}
\setlength\arraycolsep{2pt}
& \text{(T1')} & &
\begin{pmatrix}
0 & 1 & 2 & \theta' + 1 + d \\
\theta - \theta' - d & & &
\end{pmatrix}
& & \forall 0 \leq d \leq \theta - \theta', \\
& \text{(T2')} & & 
\begin{pmatrix}
0 & 1 & 2 + d & \theta' + 1 \\
\theta - \theta' - d & 
\end{pmatrix}
& & \forall 1 \leq d \leq \min(\theta'-2,\theta-\theta').
\end{flalign*}

\end{enumerate}
\end{lem}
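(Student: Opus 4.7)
The plan is to apply Theorem \ref{ComputeInduction} (Fong's hook rule) to the source symbols $S^{\theta'}_i$ and $T^{\theta'}_j$ computed in Proposition \ref{CohomologyCoxeter}, enumerating all result symbols of rank $\theta$ obtained by adding leg-$0$ hooks of total length $a = \theta - \theta'$, split as $a_1 + a_2$ between the two rows. The key subtlety is that symbols are equivalence classes under the shift operation, so one must consider valid hooks in both the given representative of the source symbol and in its shifts; in our setting only shifting by one will be needed.

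For $S^{\theta'}_i$ with $i < \theta'$, a direct inspection of the unshifted first row $\{0,1,\ldots,\theta'-i-1,\theta'\}$ shows exactly two valid starting positions for a leg-$0$ hook: the interior entry $\theta'-i-1$, which admits hooks of length $1 \leq a_1 \leq i$ (a hook of length $i+1$ would land on $\theta'$, and longer ones would have $\theta'$ in the leg), and the top entry $\theta'$, which admits hooks of any length $a_1 \geq 1$. The unshifted second row $\{1,\ldots,\theta'-i\}$ admits only the obvious hook at its top entry. Combining these over all decompositions $a_1 + a_2 = a$ will yield the two main families: (S1) when the first-row hook is at $\theta'$ or absent, and (S2) when it is at $\theta'-i-1$.

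The exceptional families should arise from the once-shifted representative. Its second row $\{0,2,3,\ldots,\theta'-i+1\}$ contains a newly inserted $0$, which supports a unique additional leg-$0$ hook, necessarily of length $1$ since longer hooks immediately collide with the entry $2$. Combined with a first-row hook of length $a-1$ in the shifted first row $\{0,1,\ldots,\theta'-i,\theta'+1\}$, namely either at $\theta'+1$ (valid whenever $a \geq 1$) giving (S Exc 1), or at $\theta'-i$ (valid when $1 \leq a-1 \leq i$) giving (S Exc 2), this will produce the two exceptional symbols. Crucially, these cannot be unshifted, since the resulting second row no longer contains $0$. I would then verify that deeper shifts yield no further symbols: the additional zeros inserted remain trapped by the consecutive entries immediately above them.

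The case $i = \theta'$ and the $T^{\theta'}_j$ analogues follow the same scheme. For $i = \theta'$ the source symbol has empty second row, which simplifies the analysis and produces the families (S1') and (S2'). The defect-$3$ case of $T^{\theta'}_j$ replicates the $S$-case combinatorics with the roles of $\theta'-i-1, \theta'-i$ played by $\theta'-j-1, \theta'-j-2$. Multiplicity-freeness of the decomposition will follow from the observation that distinct tuples consisting of the shift level together with the positions of the two hooks produce symbols distinguished by the locations of their non-consecutive entries. The main obstacle will be the bookkeeping for the exceptional families, in particular establishing the precise range $2 \leq a \leq i+1$ for (S Exc 2) and verifying the non-overlap of (S Exc 1) and (S Exc 2) in the boundary case $a = 1$, where a single exceptional symbol is counted only once.
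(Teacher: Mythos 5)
Your proposal is correct and follows the same route as the paper: both apply Fong's hook rule from \ref{ComputeInduction} to the source symbols $S^{\theta'}_i$, $T^{\theta'}_j$ of \ref{CohomologyCoxeter}, enumerate all leg-zero hook additions summing to $\theta-\theta'$, and crucially observe that one must work with shifted representatives of the source symbol — the unshifted symbol yields (S1)/(S2), and the once-shifted one (whose second row acquires a new entry $0$, supporting exactly a $1$-hook) yields the exceptional families (S Exc 1)/(S Exc 2), with the ranges $a\geq 1$ and $2\leq a\leq i+1$ matching the stated constraints. The paper presents this via a worked example ($\theta=6$, $\theta'=3$, $i=2$) and a separate treatment of $i=\theta'$, while you propose a slightly more systematic enumeration including the verification that deeper shifts produce no new symbols; but the mathematical content is identical.
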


\noindent This lemma results directly from the computational rule explained in \ref{ComputeInduction}. In concrete terms, an induction of the form 
$$\mathrm R_{L_{K_{\theta'}}}^{\mathrm{Sp}(2\theta,\mathbb F_q)} \,\mathbf{1} \boxtimes \rho_{S'}$$
is the sum of all the representations $\rho_S$ where $S$ is obtained from $S'$ by adding a hook of leg length $0$ to both rows, whose lengths sum to $\theta-\theta'$. We illustrate the arguments by looking at a concrete example. \\

\noindent With $\theta = 6, \theta' = 3$ and $i = 2$ let us explain the computation of 
$$\mathrm R^S_{2,3} = \mathrm R_{L_{K_{3}}}^{\mathrm{Sp}(12,\mathbb F_q)} \,\mathbf{1} \boxtimes \rho_{S_2^3}.$$
Recall that 
$$\setlength\arraycolsep{2pt}
S^3_2 = 
\begin{pmatrix}
0 & 3 \\
1 &
\end{pmatrix}.$$
For $0 \leq d \leq \theta - \theta' = 3$, we add a $d$-hook of leg length $0$ to the first row of $S^3_2$, and a $(3-d)$-hook of leg length $0$ to its second row.\\
We may always add the hooks to the last entries of each row. By doing so we obtain the representations corresponding to the family of symbols (S1):
\begin{align*}
\setlength\arraycolsep{2pt}
\begin{pmatrix}
0 & 3 \\
4 &
\end{pmatrix}, 
& & 
\begin{pmatrix}
0 & 4 \\
3 &
\end{pmatrix},
& & 
\begin{pmatrix}
0 & 5 \\
2 &
\end{pmatrix},
& & 
\begin{pmatrix}
0 & 6 \\
1 &
\end{pmatrix}.
\end{align*}
When $d \leq \min(\theta - \theta', i) = \min(3,2) = 2$, we may also add the first hook to the penultimate entry of the first row. Note that since $i < \theta'$, the first row of $S_i^{\theta'}$ has at least $2$ entries. By doing so, we obtain the representations corresponding to the family of symbols (S2):
\begin{align*}
\setlength\arraycolsep{2pt}
\begin{pmatrix}
1 & 3 \\
3 &
\end{pmatrix}, 
& & 
\begin{pmatrix}
2 & 3 \\
2 &
\end{pmatrix}.
\end{align*}
Now, recall that symbols are equal up to shifts. Therefore, one may rewrite $S_2^3$ as 
$$\setlength\arraycolsep{2pt}
S^3_2 = \text{shift}(S^3_2) = 
\begin{pmatrix}
0 & 1 & 4 \\
0 & 2
\end{pmatrix}.$$
Written this way, we notice that a $1$-hook can be added to the first entry of the second row, which is a $0$. Then one must add to the first row a hook of length $d = \theta - \theta' - 1 = 2$. One may always add it to the last entry, which results in the first \enquote{exceptional} representation (S Exc 1). Moreover if $d \leq i$, which is the case here, one may also add this hook to the penultimate entry of the first row, which leads to the second \enquote{exceptional} representation (S Exc 2):
\begin{align*}
\setlength\arraycolsep{2pt}
\begin{pmatrix}
0 & 1 & 6 \\
1 & 2 &
\end{pmatrix}, 
& & 
\begin{pmatrix}
0 & 3 & 4 \\
1 & 2
\end{pmatrix}.
\end{align*}
The sum of the representations attached to all the $8$ symbols written above is isomorphic to $\mathrm R^S_{2,3}$.\\

\noindent We also explain in detail the special case $i = \theta$. Thus we compute 
$$\mathrm R^S_{\theta,\theta} = \mathrm R_{L_{K_{\theta'}}}^{\mathrm{Sp}(2\theta,\mathbb F_q)} \,\mathbf{1} \boxtimes \rho_{S_{\theta'}^{\theta'}}.$$
Recall that 
$$\setlength\arraycolsep{2pt}
S^{\theta'}_{\theta'} = 
\begin{pmatrix}
\theta' \\
{}
\end{pmatrix}$$
corresponds to the trivial representation of $\mathrm{Sp}(2\theta',\mathbb F_q)$. In order to compute this induction, we shift the symbol $S^{\theta'}_{\theta'}$ first:
$$\setlength\arraycolsep{2pt}
S^{\theta'}_{\theta'} = 
\begin{pmatrix}
0 & \theta'+1 \\
0 &
\end{pmatrix}.$$
For $0 \leq d \leq \theta - \theta'$, we add a $d$-hook of leg length $0$ to the first row and a $(\theta-\theta'-d)$-hook of leg length $0$ to the second row. We may always add the hooks to the last entries of each row. By doing so, we obtain the representations corresponding to the family of symbols $(S1')$. Moreover when $d \leq \min(\theta', \theta - \theta')$, we may also add the first hook to the $0$ in the first row. It leads to the representations corresponding to the family of symbols $(S2')$.\\
In particular, we notice that the symbol of $(S1')$ with $d = \theta - \theta'$ corresponds to the trivial representation of $\mathrm{Sp}(2\theta,\mathbb F_q)$.

\paragraph{}Now, we have an explicit description of the terms $E^{a,b}_1$ in the first page of the spectral sequence \eqref{spectral}. In the Figure 1, we draw the shape of the first page.\\

\begin{figure}[h]
\hspace{-30pt}
\begin{tikzcd}[sep=small]
	\, & \, & \, & \, & \, & \, & \mathrm R_{\theta,\theta}^S\\
	\, & \, & \, & \, & \, & \quad \mathrm R_{\theta-1,\theta-1}^S \quad \arrow{r} & \quad \mathrm R_{\theta-1,\theta}^S\\
	\, & \, & \, & \, & \quad \mathrm R_{\theta-2,\theta-2}^S \quad \arrow{r} & \quad \mathrm R_{\theta-2,\theta-1}^S \quad \arrow{r} & \mathrm R_{\theta-2,\theta}^S \oplus \mathrm R_{\theta-2,\theta}^T \\
	\, & \, & \, & \reflectbox{$\ddots$} & \, & \, & \vdots\\
    \, & \, & \quad \mathrm R_{2,2}^S \quad \arrow{r} & \ldots \arrow{r} & \mathrm R_{2,\theta-2}^S \oplus \mathrm R_{2,\theta-2}^T \arrow{r} & \mathrm R_{2,\theta- 1}^S \oplus \mathrm R_{2,\theta-1}^T \arrow{r} & \mathrm R_{2,\theta}^S \oplus \mathrm R_{2,\theta}^T\\
    \, & \quad \mathrm R_{1,1}^S \quad \arrow{r} & \mathrm R_{1,2}^S \arrow{r} & \ldots \arrow{r} & \mathrm R_{1,\theta-2}^S \oplus \mathrm R_{1,\theta-2}^T \arrow{r} & \mathrm R_{1,\theta-1}^S \oplus \mathrm R_{1,\theta-1}^T \arrow{r} & \mathrm R_{1,\theta}^S \oplus \mathrm R_{1,\theta}^T \\
    \mathrm R_{0,0}^S \arrow{r} & \mathrm R_{0,1}^S \arrow{r} & \mathrm R_{0,2}^S \oplus \mathrm R_{0,2}^T \arrow{r} & \ldots \arrow{r} & \mathrm R_{0,\theta-2}^S \oplus \mathrm R_{0,\theta-2}^T \arrow{r} & \mathrm R_{0,\theta-1}^S \oplus \mathrm R_{0,\theta-1}^T \arrow{r} & \mathrm R_{0,\theta}^S \oplus \mathrm R_{0,\theta}^T
\end{tikzcd}
\caption{The first page of the spectral sequence.}
\end{figure}

\noindent First, since the Frobenius $F$ acts with the eigenvalue $q^i$ (resp. $-q^{j+1}$) on the representations $\mathrm R^S_{i,\theta'}$ (resp. $\mathrm R^T_{j,\theta'}$), \ref{FrobeniusEigenvalues} Proposition as well as point $(1)$ of \ref{CohomologyS} and \ref{CohomologyT} Theorems follow from the triangular shape of the spectral sequence. Point $(2)$ also follows from \ref{TermsFirstPage} Lemma.\\
Next, we notice that on the $b$-th row of the first page $E_1$, the eigenvalues of $F$ which occur are $q^b$ and $-q^{b+1}$. In particular, the eigenvalues on different rows are all distinct. It follows that all the arrows in the deeper pages of the sequence are zero, therefore it degenerates on the second page. Moreover, the filtration induced by the spectral sequence on the abutment splits, so that $\mathrm H_c^{k}(S_{\theta})$ is isomorphic to the direct sum of the terms $E_2^{k-b,b}$ on the $k$-th diagonal of the second page. \\

\noindent We prove point $(3)$ of \ref{CohomologyS} and \ref{CohomologyT} Theorems. By the shape of the spectral sequence, we see that 
\begin{align*}
\setlength\arraycolsep{2pt}
\mathrm H^{2\theta}_c(S_{\theta}) = \mathrm H^{2\theta}_c(S_{\theta})_{q^{\theta}} \simeq \mathrm R_{\theta,\theta}^S \simeq \rho_{\footnotesize \begin{pmatrix}
\theta \\
{}
\end{pmatrix}}, & &
\mathrm H^{2\theta-2}_c(S_{\theta})_{-q^{\theta-1}} \simeq \mathrm R_{\theta-2,\theta}^T \simeq \rho_{\footnotesize \begin{pmatrix}
0 & 1 & \theta \\
& &
\end{pmatrix}}.
\end{align*}
Moreover, by the spectral sequence we know that $\mathrm H^0_c(S_{\theta})$ is a subspace of $\mathrm R_{0,0}^{S}$, thus the Frobenius $F$ acts like the identity. Since $S_{\theta}$ is projective and irreducible, the cohomology group $\mathrm H^0_c(S_{\theta}) = \mathrm H^0(S_{\theta})$ is trivial. \\

\noindent We now prove point $(4)$ of \ref{CohomologyS} and \ref{CohomologyT} Theorems. Let $2 \leq i+2 \leq \theta' \leq \theta-1$. By extracting the eigenvalue $q^i$ in the spectral sequence, we have a chain 
\begin{center}
\begin{tikzcd}
\ldots \arrow{r} & \mathrm R_{i,\theta'-1}^S \arrow[r,"u"] & \mathrm R_{i,\theta'}^S \arrow[r,"v"] & \mathrm R_{i,\theta'+1}^S \arrow{r} & \ldots
\end{tikzcd}
\end{center}

\noindent The quotient $\mathrm{Ker}(v)/\mathrm{Im}(u)$ is isomorphic to the eigenspace $\mathrm H^{\theta'+i}_c(S_{\theta})_{q^i}$.\\
The middle term $\mathrm R_{i,\theta'}^S$ is the sum of the representations $\rho_S$ where $S$ runs over the families of symbols (S1), (S2), (S Exc 1) and (S Exc 2) as in \ref{TermsFirstPage} Lemma. All these symbols are written in their \enquote{reduced} form, meaning that they can not be written as the shift of another symbol. Let us look at the length of the second row of these symbols. If $S$ belongs to (S1) or (S2), then the second row has length $\theta' - i$. If $S$ belongs to (S Exc 1) or (S Exc 2), then the second row has length $\theta' - i + 1$.\\ 
We may do a similar analysis for the left term (resp. the right term) by replacing $\theta'$ with $\theta' - 1$ (resp. $\theta' + 1$). In the left term $\mathrm R_{i,\theta'-1}^S$, all the representations corresponding to the families (S1) and (S2) have second row of length $\theta'-i-1$. No such representation occurs in the middle term, therefore they all automatically lie in the $\mathrm{Ker}(u)$. Then, in the left term the representation corresponding to (S Exc 1) occurs since $\theta' - 1 \not = \theta$. We observe that it is equivalent to the representation $\rho_S$ occuring in $\mathrm R_{i,\theta'}^S$ with $S$ in the family (S1) and $d=\theta-\theta'$. Further, assume that $\theta \leq \theta' + i$ so that the representation corresponding to (S Exc 2) occurs in $\mathrm R_{i,\theta'-1}^S$. Then we observe that it is equivalent to the representation $\rho_S$ occuring in $\mathrm R_{i,\theta'}^S$ with $S$ in the family (S2) and $d=\theta-\theta' = \min(i,\theta-\theta')$. Hence, it follows that $\mathrm{Im}(u)$ consists of at most $2$ irreducible subrepresentations of $\mathrm R_{i,\theta'}^S$, and they correspond to the symbols of (S1) and (S2) with $d = \theta - \theta'$.\\
Next, all the subrepresentations $\rho_S$ of $\mathrm R_{i,\theta'}^S$ with $S$ in (S1) or (S2) belong to $\mathrm{Ker}(v)$, since no component of $\mathrm R_{i,\theta'+1}^S$ correspond to a symbol whose second row has length $\theta' - i$. Since $\theta' \not = \theta$, the represensation corresponding to (S Exc 1) occurs in $\mathrm R_{i,\theta'}^S$. We observe that it is equivalent to the representation $\rho_S$ occuring in $\mathrm R_{i,\theta'+1}^S$ with $S$ in the family (S1) and $d=\theta-\theta'-1$. Assume that $\theta' \leq \theta - 2$ and $\theta \leq \theta' + i + 1$, so that the representation corresponding to (S Exc 2) occurs in $\mathrm R_{i,\theta'}^S$. Then we observe that it is equivalent to the representation $\rho_S$ occuring in $\mathrm R_{i,\theta'+1}^S$ with $S$ in the family (S2) and $d=\theta-\theta'-1 = \min(i,\theta-\theta'-1)$. Therefore, it is not possible to tell whether the components of $\mathrm R^S_{i,\theta'}$ corresponding to (S Exc 1) and (S Exc 2) are in $\mathrm{Ker}(v)$ or not.\\
In all cases, we conclude that $\mathrm{Ker}(v)/\mathrm{Im}(u)$ contains at least all the representations corresponding to the symbols $S$ in (S1) and (S2) with $d<\theta-\theta'$. With this description we miss up to four irreducible representations, which correspond to (S1) and (S2) with $d = \theta - \theta'$, (S Exc 1) and (S Exc 2). This proves point (4) of \ref{CohomologyS} Theorem.\\
The point (4) of \ref{CohomologyT} Theorem is proved by identical arguments.\\

\noindent We now prove point $(5)$ of \ref{CohomologyS} and \ref{CohomologyT} Theorems. We consider $i = \theta' \not = \theta$. By extracting the eigenvalue $q^i$ in the spectral sequence, we have a chain 
\begin{center}
\begin{tikzcd}
\mathrm R_{i,i}^S \arrow[r,"u"] & \mathrm R_{i,i+1}^S \arrow[r] & \ldots
\end{tikzcd}
\end{center}

\noindent The kernel $\mathrm{Ker}(u)$ is isomorphic to the eigenspace $\mathrm H^{2i}_c(S_{\theta})_{q^i}$. The left term $\mathrm R_{i,i}^S$ is the sum of the representations $\rho_{S'}$ where $S'$ runs over the families of symbols (S1') and (S2'). We observe that the representation $\rho_{S'}$ with $S'$ in (S1') corresponding to some $0 \leq d' \leq \theta - i - 1$ is equivalent to the component $\rho_{S}$ of $\mathrm R_{i,i+1}^S$ with $S$ in (S1) corresponding to $d = d'$. Similarly, we observe that the representation $\rho_{S'}$ with $S'$ in (S2') corresponding to some $1 \leq d' \leq \min(i,\theta - i - 1)$ is equivalent to the component $\rho_{S}$ of $\mathrm R_{i,i+1}^S$ with $S$ in (S2) corresponding to $d = d'$.\\
Therefore, the representation $\rho_S$ corresponding to $S$ in (S1') with $d' = \theta - i$ belongs to $\mathrm{Ker}(u)$. This is no other than the trivial representation. Moreover, if $\min(i,\theta - i - 1) \not = \min(i,\theta-i)$, ie. if $2i\geq \theta$, then the representation $\rho_S$ corresponding to $S$ in (S2') with $d' = \theta - i$ also belongs to $\mathrm{Ker}(u)$. This proves point (5) of \ref{CohomologyS} Theorem.\\
The point (5) of \ref{CohomologyT} Theorem is proved by identical arguments.\\

\noindent Points (6) of \ref{CohomologyS} and \ref{CohomologyT} Theorems follows easily from the shape of the spectral sequence. Indeed, it suffices to notice that all the terms $\mathrm R_{i,\theta}^S$ and $\mathrm R_{j,\theta}^T$ in the rightmost column of the sequence are irreducible. Thus, they may either vanish, either remain the same in the second page.

\noindent Lastly we prove point (7) of \ref{CohomologyS}. Assume first that $\theta = 1$. The $0$-th row of the spectral sequence is given by 
\begin{center}
\begin{tikzcd}
\setlength\arraycolsep{2pt} 
\rho_{\footnotesize \begin{pmatrix}
1\\
{}
\end{pmatrix}} \oplus
\rho_{\footnotesize \begin{pmatrix}
0 & 1\\
1 &
\end{pmatrix}} \arrow[r,"u"] & 
\setlength\arraycolsep{2pt} 
\rho_{\footnotesize \begin{pmatrix}
0 & 1\\
1 &
\end{pmatrix}}
\end{tikzcd}
\end{center}

\noindent We have $\mathrm H^1_c(S_{1}) \simeq \mathrm{Coker}(u)$. Since we already know that $\mathrm H^0_c(S_{1}) \simeq \mathrm{Ker}(u)$ is the trivial representation of $\mathrm{Sp}(2,\mathbb F_q)$, we see that $u$ must be surjective. Therefore $\mathrm H^1_c(S_{1}) = 0$.\\

\noindent \begin{rk}
The vanishing of $\mathrm H^1_c(S_{1})$ also follows directly from the fact that $S_1 \simeq \mathbb P^1$.
\end{rk}

\noindent Let us now assume $\theta \geq 2$. The first terms of the $0$-th row of the spectral sequence are 

\begin{center}
\begin{tikzcd}
\mathrm R_{0,0}^S \arrow[r,"u"] & \mathrm R_{0,1}^S \arrow[r,"v"] & \mathrm R_{0,2}^S \arrow{r} & \ldots
\end{tikzcd}
\end{center}

\noindent We have $\mathrm H^1_c(S_{\theta}) = \mathrm H^1_c(S_{\theta})_1 \simeq \mathrm{Ker}(v)/\mathrm{Im}(u)$. The middle term $\mathrm R_{0,1}^S$ is the sum of all the representations corresponding to the following symbols 
\begin{align*}
\setlength\arraycolsep{2pt}
\begin{pmatrix}
0 & 1 & \theta\\
1 & 2 &
\end{pmatrix}, & & 
\begin{pmatrix}
0 & 1+d\\
\theta-d &
\end{pmatrix}, & &
\forall 0 \leq d \leq \theta-1.
\end{align*}
On the other hand, the left term $\mathrm R_{0,0}^S$ is the sum of all the representations corresponding to the following symbols 
\begin{align*}
\setlength\arraycolsep{2pt}
\begin{pmatrix}
0 & 1+d\\
\theta-d &
\end{pmatrix}, & &
\forall 0 \leq d \leq \theta.
\end{align*}
Since we already know that $\mathrm H^0_c(S_{\theta}) \simeq \mathrm{Ker}(u)$ is the trivial representation of $\mathrm{Sp}(2\theta,\mathbb F_q)$, we see that $\mathrm{Im}(u)$ contains all the components of $\mathrm R_{0,1}^S$ associated to a symbol whose second row has length $1$. Therefore, $\mathrm H^1_c(S_{\theta})$ is either $0$ either irreducible, depending on whether the remaining component 
$$\setlength\arraycolsep{2pt}
\begin{pmatrix}
0 & 1 & \theta\\
1 & 2 &
\end{pmatrix}$$
is in $\mathrm{Ker}(v)$ or not. This proves point (7) and concludes the proof of \ref{CohomologyS} and \ref{CohomologyT} Theorems.

\section{The geometry of the ramified PEL unitary Rapoport-Zink space of signature $(1,n-1)$}

\subsection{The Bruhat-Tits stratification}

\paragraph{} Recall that $E = \mathbb Q_p[\pi]$ is a quadratic ramified extension of $\mathbb Q_p$ with $\pi = \sqrt{-p}$ (case $E=E_1$) or $\pi = \sqrt{\epsilon p}$ (case $E=E_2$). If $k$ is any perfect field over $\mathbb F_p$, we define $E_k := E\otimes_{\mathbb Q_p} W(k)_{\mathbb Q}$ with the embedding $E\hookrightarrow E_k, x\mapsto x\otimes 1$. We still write $\overline{\,\cdot\,}$ and $\sigma$ for $\overline{\,\cdot\,}\otimes \mathrm{id}$ and $\mathrm{id}\otimes \sigma$ respectively on $E_k$. We define 
$$E' := 
\begin{cases} 
E & \text{if } E=E_1,\\
E_{\mathbb F_{p^2}} & \text{if } E=E_2.
\end{cases}$$ 
Eventually we write $\widecheck{E} := E_{\mathbb F}$, where $\mathbb F := \overline{\mathbb F_p}$. In \cite{RTW}, the authors introduce the ramified PEL unitary Rapoport-Zink space $\mathcal M$ of signature $(1,n-1)$ as a moduli space which classifies the deformations of a given $p$-divisible group $\mathbb X$ equipped with additional structures, called the \textbf{framing object}. The latter is defined over $\mathbb F$ and the Rapoport-Zink space $\mathcal M$ is defined over $\mathcal O_{\widecheck{E}}$. For our purpose, it will be convenient to define this space over $\mathcal O_{E_k}$ where $k$ is the smallest possible perfect extension of $\mathbb F_p$. Therefore we start by defining the framing object over a finite field. Denote by $A_k$ the Dieudonné ring over $k$, that is the $p$-adic completion of the associative ring $W(k)\langle F,V \rangle$ with two indeterminates satisfying the relations $FV = VF = p$, $F\lambda = \lambda^{\sigma}F$ and $V\lambda^{\sigma} = \lambda V$ for all $\lambda \in W(k)$. We denote by $\mathbb D(\cdot)$ the \textit{covariant} Dieudonné module functor from the category of $p$-divisible groups over $k$, to the category of $A_k$-modules that are free of finite rank over $W(k)$. A $p$-divisible group $X$ over $k$ is called \textbf{superspecial} if $\mathbb D(X) \otimes_{W(k)} W(L) \simeq A_{1,1}^{\oplus g}\otimes_{W(k)} W(L)$ for some $g\geq 1$ and some algebraically closed extension $L/k$, where $A_{1,1} := A_k/A_k(F-V)$ seen as a quotient of left $A_k$-modules. In particular, if $X$ is superspecial then $2g = \mathrm{height}(X) = 2\dim(X)$. Eventually, we define non-negative integers $m,m^+$ and $m^-$ via the formula 
$$n = \begin{cases} 
2m+1 & \text{if } n \text{ is odd}, \\
2m^+ = 2(m^-+1) & \text{if } n \text{ is even}. 
\end{cases}$$

\paragraph{}\label{FramingObject} Assume first that $E = E_1$, in which case $\mathbb X$ can be defined over $\mathbb F_p$. According to \cite{LiOort} §1.2, there exists an elliptic curve $\mathcal E$ over $\mathbb F_p$ whose relative Frobenius $\mathcal F: \mathcal E \rightarrow \mathcal E$ satisfies $\mathcal F^2+[p] = 0$. Its Dieudonné module is isomorphic to $\mathbb D(\mathcal E) \simeq A_{\mathbb F_p} / A_{\mathbb F_p}(F+V)$. If $L$ is any extension of $k$ containing $\mathbb F_{p^4}$ then $\mathbb D(\mathcal E) \otimes W(L) \simeq A_{1,1} \otimes W(L)$ so that $\mathcal E$ is supersingular. By \cite{Tate} Theorem 2, the endomorphism algebra $\mathrm{End}^{\circ}(\mathcal E)$ is commutative and isomorphic to $\mathbb Q[\mathcal F]$. Thus we have an action of $\mathcal O_E$ on the $p$-divisible group $\mathcal E[p^{\infty}]$ via the choice of an embedding 
$$\iota_{\mathcal E} : E \xrightarrow{\sim} \mathrm{End}^{\circ}(\mathcal E) \otimes \mathbb Q_p = \mathrm{End}^{\circ}(\mathcal E[p^{\infty}]).$$
Eventually we have a canonical principal polarization $\lambda_{\mathcal E}:\mathcal E \xrightarrow{\sim} \mathcal E^{\vee}$. Next, as in \cite{RTW} we define $\mathbb X_2^{+} := \mathcal E[p^{\infty}] \times \mathcal E[p^{\infty}]$ with diagonal $\mathcal O_E$-action and polarization induced by the $2\times 2$ matrix having $1$'s on the anti-diagonal and $0$'s on the diagonal. In the same manner, define $\mathbb X_2^{-}$ but with polarization induced by a $2\times 2$ diagonal matrix having coefficients $u_1,u_2 \in \mathbb Z_{p}^{\times}$ such that $-u_1u_2$ is not a norm of $E$. The framing object $\mathbb X$ is given by any of the three following cases
\begin{align*}
& (\mathbb X_2^+)^{m}\times \mathcal E[p^{\infty}] & & \text{when }n\text{ is odd,}\\
& (\mathbb X_2^+)^{m^+} & & \text{when }n\text{ is even (split case),}\\
& (\mathbb X_2^+)^{m^-}\times \mathbb X_2^- & & \text{when }n\text{ is even (non-split case),}
\end{align*}
with diagonal $\mathcal O_E$-action $\iota_{\mathbb X}$ and polarization $\lambda_{\mathbb X}$.\\

\noindent Assume now that $E = E_2$. Since there is no supersingular elliptic curve over $\mathbb F_p$ whose endomorphism algebra at $p$ contains $E$, the framing object $\mathbb X$ may only be defined over $\mathbb F_{p^2}$ in this case. There exists an elliptic curve $\mathcal E'$ over $\mathbb F_{p^2}$ whose Dieudonné module is isomorphic to $A_{1,1}$. The endomorphism algebra $\mathrm{End}^{\circ}(\mathcal E)$ is a central simple algebra over $\mathbb Q$ of degree $4$ which ramifies only at $p$ and infinity. At $p$, it is a quaternion algebra over $\mathbb Q_p$ generated by elements $i,j$ such that $i^2 = -\epsilon$, $j^2 =p$ and $ij = -ji$. By fixing an embedding of $E$, we obtain an $\mathcal O_E$-action on $\mathcal E'[p^{\infty}]$ and we equip it with its natural polarization. We may then proceed with defining $\mathbb X_2^+$, $\mathbb X_2^-$ and $\mathbb X$ exactly as in the previous paragraph, except that we use $\mathcal E'$ instead of $\mathcal E$. 

\paragraph{}\label{RZ-Space} Let $\mathrm{Nilp}$ denote the category of $\mathcal O_{E'}$-schemes where $\pi$ is locally nilpotent. For $S\in \mathrm{Nilp}$, a \textbf{unitary $p$-divisible group of signature $(1,n-1)$} over $S$ is a triple $(X,\iota_X,\lambda_X)$ where 
\begin{enumerate}[label={--}]
\item $X$ is a $p$-divisible group over $S$.
\item $\iota_X: \mathcal O_{E}\rightarrow \mathrm{End}(X)$ is a $\mathcal O_{E}$-action on $X$ such that the induced action on its Lie algebra satisfies the Kottwitz and the Pappas conditions: 
\begin{align*}
\forall a \in \mathcal O_E, & & & \mathrm{char}(\iota(a)\,|\,\mathrm{Lie}(X)) = (T - a)^1(T-\overline{a})^{n-1}, \\
\forall n \geq 3, & & & \bigwedge^n (\iota(\pi)-\pi\,|\,\mathrm{Lie}(X)) = 0 \text{ and } \bigwedge^2(\iota(\pi)+\pi\,|\,\mathrm{Lie}(X)) = 0.
\end{align*}
\item $\lambda_X:X \xrightarrow{\sim} {}^tX$ is a principal polarization, where ${}^tX$ denotes the Serre dual of $X$. We assume that the associated Rosati involution induces $\overline{\,\cdot\,}$ on $\mathcal O_E$.
\end{enumerate}

\noindent Note that $\mathrm{char}(\iota(a)\,|\,\mathrm{Lie}(X))$ is a polynomial with coefficients in $\mathcal O_S$. The Kottwitz condition compares it with a polynomial with coefficients in $\mathcal O_E \subset \mathcal O_{E'}$ via the structure morphism $S \rightarrow \mathcal O_{E'}$. For instance, the framing object $(\mathbb X,\iota_{\mathbb X},\lambda_{\mathbb X})$ defined in the previous paragraph is an exemple of unitary $p$-divisible group of signature $(1,n-1)$ over $\kappa(E')$.\\
The following set-valued functor $\mathcal M$ defines a moduli problem classifying deformations of $\mathbb X$ by quasi-isogenies. More precisely, for $S \in \mathrm{Nilp}$ the set $\mathcal M(S)$ consists of all isomorphism classes of tuples $(X,\iota_X,\lambda_X,\rho_X)$ such that 
\begin{enumerate}[label={--}]
\item $(X,\iota_X,\lambda_X)$ is a unitary $p$-divisible group of signature $(1,n-1)$ over $S$,
\item $\rho_X: X\times_S \overline{S} \rightarrow \mathbb X\times_{\kappa(E')} \overline{S}$ is an $\mathcal O_E$-linear quasi-isogeny compatible with the polarizations, in the sense that ${}^t\rho_X \circ \lambda_{\mathbb X} \circ \rho_X$ is a $\mathbb Q_p^{\times}$-multiple of $\lambda_X$.
\end{enumerate}

\noindent In the second condition, $\overline{S}$ denotes the special fiber of $S$. By \cite{RZ} Corollary 3.40, this moduli problem is represented by a separated formal scheme $\mathcal M$ over $\mathrm{Spf}(\mathcal O_{E'})$ called a \textbf{Rapoport-Zink space}. It is formally locally of finite type and flat over $\mathcal O_{E'}$. Let $\mathcal M_{\mathrm{red}}$ denote the reduced special fiber of $\mathcal M$, which is a scheme locally of finite type over $\mathrm{Spec}(\kappa(E'))$. 

\noindent \begin{rk}
In \cite{RZ}, Corollary 3.40 is stated under the assumption that the residue field $\kappa(E')$ contains $\mathbb F_{p^s}$, where $s > 0$ is an exponent appearing in a decency condition for the isocristal of $\mathbb X$. In general, we say that an isocristal $N$ with Frobenius $F$ is \textbf{decent} if it is generated by elements $n\in N$ such that $F^s n = p^r n$ for some integers $r\geq 0$ and $s>0$ (loc. cit. Definition 2.13). By construction, the isocristal of $\mathbb X$ is decent. If $E = E_2$, then we have $F^2 = p\mathrm{id}$ on $\mathbb D(\mathbb X)_{\mathbb Q}$, so that $s = 2$ and $\kappa(E')$ contains $\mathbb F_{p^2}$. However, if $E=E_1$ then $F^2 = -p\mathrm{id}$ on $\mathbb D(\mathbb X)$, so that a decency equation is given by $F^4 = p^2\mathrm{id}$. In this case $s=4$ and $\kappa(E') = \mathbb F_p$ does not contain $\mathbb F_{p^4}$.\\
Nonetheless, to our understanding, the condition that $\kappa(E')$ contains $\mathbb F_{p^s}$ can be relaxed. It seems to be used only in loc. cit. Lemma 3.37 in order to scale a bilinear form by a suitable unit so that it corresponds to a polarization of isocristals. In our case, since the isocristal $\mathbb D(\mathbb X)$ already comes from a polarized $p$-divisible group, this lemma does not seems necessary.
\end{rk}

\paragraph{}We have a decomposition 
$$\mathcal M = \bigsqcup_{i\in \mathbb Z} \mathcal M_i$$
into a disjoint union of open and closed formal connected subschemes, where the points of $\mathcal M_i$ correspond to those tuples $(X,\iota_X,\lambda_X,\rho_X)$ such that ${}^t\rho_X \circ \lambda_{\mathbb X} \circ \rho_X = c\lambda_X$ with $c \in \mathbb Q_p^{\times}$ having $p$-adic valuation $i$.

\paragraph{}Dieudonné theory can be used to describe the rational points of $\mathcal M$ over a perfect field extension $k$ of $\kappa(E')$. Let $N := \mathbb D(\mathbb X)_{\mathbb Q}$ denote the Dieudonné isocristal of $\mathbb X$. The $\mathcal O_E$-action $\iota_{\mathbb X}$ induces an $E'$-vector space structure on $N$ of dimension $n$. The polarization $\lambda_{\mathbb X}$ induces a $W(\kappa(E'))_{\mathbb Q}$-bilinear skew-symmetric form $\langle\cdot,\cdot\rangle$ on $N$ such that 
\begin{align*}
\forall x,y\in N, & & \langle \mathbf F x, y \rangle &  = \langle x , \mathbf V y \rangle,\\
\forall a \in E, & & \langle a\,\cdot,\cdot \rangle & = \langle \cdot , \overline{a} \,\cdot \rangle,
\end{align*}
where $\mathbf F$ and $\mathbf V$ denote respectively the Frobenius and the Verschiebung on $N$. Let $\tau := \eta \pi \mathbf V^{-1}: N\xrightarrow{\sim} N$ where $\eta \in W(\kappa(E'))_{\mathbb Q}$ is $1$ if $E=E_1$ and a square root of $-\epsilon^{-1}$ if $E=E_2$. Notice that we have $(\eta\pi)^2 = -p$ in both cases. Let $C := N^{\tau}$ be the subset of vectors in $N$ which are fixed by $\tau$. It is naturally an $E$-vector space of dimension $n$ and the natural map $C\otimes_E E' \xrightarrow{\sim} N$ is an isomorphism under which $\tau$ corresponds to $\mathrm{id}\otimes \sigma$. If $x,y\in C$ then 
\begin{align*}
\langle x,y \rangle = \langle \tau(x),\tau(y) \rangle = \langle \eta\pi\mathbf V^{-1}x,\eta\pi\mathbf V^{-1}y\rangle = -p^{-1}(\pi\eta)^{2} \langle x, y\rangle^{\sigma} = \langle x,y \rangle^{\sigma}.
\end{align*}
Therefore the restriction of $\langle \cdot,\cdot \rangle$ to $C$ takes value in $\mathbb Q_p$. We define an $E$-hermitian form $(\cdot,\cdot)$ on $C$ by the formula 
\begin{align*}
\forall x,y\in C, & & (x,y) := \langle \pi x, y \rangle + \langle x , y \rangle \pi \in E.
\end{align*}
Let $k$ be any perfect field extension of $\mathbb F_p$. We extend $(\cdot,\cdot)$ to an $E_k$-hermitian form on $C_k := C \otimes_{E} E_k$ by the formula 
\begin{align*}
\forall x,y\in C, \forall a,b \in E_k, & & (x \otimes a, y \otimes b) := a\overline{b}(x,y).
\end{align*}
We still denote by $\tau$ the map $\mathrm{id}\otimes \sigma$ on $C_k$. For $M$ an $\mathcal O_{E_k}$-lattice in $C_k$, we define its dual lattice $M^{\sharp} := \{x \in C_k \,|\, (x,M) \in \mathcal O_{E_k}\}.$

\paragraph{}Let $k$ be a perfect extension of $\kappa(E')$. The $k$-rational points of $\mathcal M$ are classified by the following proposition. 

\begin{prop}[\cite{RTW} Proposition 2.4]
There is a bijection 
$$\mathcal M_i(k) \simeq \left\{M \subset C_k \text{ an }\mathcal O_{E_k}\text{-lattice} \,|\, M = p^iM^{\sharp}, \pi\tau(M) \subset M \subset \pi^{-1}\tau(M), M \overset{\leq 1}{\subset} M+\tau(M) \right\}.$$
\end{prop}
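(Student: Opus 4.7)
The plan is to use covariant Dieudonn\'e theory to translate the moduli problem defining $\mathcal M_i(k)$ into a problem about $\mathcal O_{E_k}$-lattices in the isocristal $N_k := N \otimes_{W(\kappa(E'))_{\mathbb Q}} W(k)_{\mathbb Q}$, and then to match each ingredient of the moduli data with one of the three conditions on the right-hand side. First I would set up the dictionary: by covariant Dieudonn\'e theory, the pair $(X, \rho_X)$ corresponds to a $W(k)$-lattice $M := \mathbb D(X) \subset N_k$ that is stable under $\mathbf F$ and $\mathbf V$; the embedding into $N_k$ is supplied by the quasi-isogeny $\rho_X$, and the index $i$ is recovered from the similitude factor $c$. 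Because $E/\mathbb Q_p$ is totally ramified one has $\mathcal O_E \otimes_{\mathbb Z_p} W(k) = W(k)[\pi] = \mathcal O_{E_k}$, so the $\mathcal O_E$-action $\iota_X$, which commutes with $\mathbf F$ and $\mathbf V$, upgrades $M$ to an $\mathcal O_{E_k}$-lattice inside the $\tau$-fixed subspace $C_k \subset N_k$. The compatibility ${}^t\rho_X \circ \lambda_{\mathbb X} \circ \rho_X = c\lambda_X$ with $v_p(c) = i$ translates the polarization into the duality relation $M = p^i M^\sharp$ for the hermitian form $(\cdot,\cdot)$ built from $\langle\cdot,\cdot\rangle$.

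Next I would rewrite the Dieudonn\'e-module stability in terms of $\tau$. Using $\tau = \eta \pi \mathbf V^{-1}$ together with $\eta \in \mathcal O_{E_k}^\times$ and the relation $p\mathbf V^{-1} = \mathbf F$ in the isocristal, a direct computation yields $\pi\tau = (\pi^2/p)\eta\,\mathbf F$, which is a unit multiple of $\mathbf F$. Hence $\pi\tau(M) \subset M$ is equivalent to $\mathbf F M \subset M$, and symmetrically $M \subset \pi^{-1}\tau(M)$ unwinds to $\mathbf V M \subset M$. Thus the double inclusion in the target set is precisely the automatic Dieudonn\'e-stability condition, once the $\mathcal O_{E_k}$-structure has been used to rephrase everything in terms of $\tau$.

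The main obstacle will be to show that the remaining condition $M \overset{\leq 1}{\subset} M + \tau(M)$ encodes exactly the Kottwitz signature condition together with both Pappas wedge vanishings on $\mathrm{Lie}(X) = M/\mathbf V M$. My strategy is to work directly on the $\mathcal O_{E_k}$-module $M/\mathbf V M$, on which two a priori different actions of $\pi$ operate: scalar multiplication, and the operator $\iota(\pi)$. Using the identity $\tau(M) = \eta\pi \mathbf V^{-1}(M)$, one should identify $(M+\tau(M))/M$ canonically with the image of $\iota(\pi) + \pi$ acting on $\mathrm{Lie}(X)$, so that the dimension bound $\dim_k (M+\tau(M))/M \leq 1$ corresponds exactly to the Pappas wedge condition $\bigwedge^2(\iota(\pi)+\pi) = 0$. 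Combined with $\iota(\pi)^2 = \pi^2 = 0$ on $\mathrm{Lie}(X)$ and a rank count from the $\mathcal O_{E_k}$-structure of $M$, this constraint then forces both the Kottwitz characteristic polynomial $(T-\pi)(T+\pi)^{n-1}$ for $\iota(\pi)$ and the other Pappas vanishing $\bigwedge^n(\iota(\pi)-\pi) = 0$. Surjectivity of the correspondence then follows formally: any admissible $M$ reconstructs via Dieudonn\'e theory a $p$-divisible group carrying the requisite additional structures and quasi-isogeny. The delicate point is keeping track of the interaction between the two $\pi$-actions on $\mathrm{Lie}(X)$ and confirming that the single lattice-theoretic inequality really does capture the full package of signature conditions.
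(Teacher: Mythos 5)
Your overall strategy is the right one: translate $\mathcal M_i(k)$ through covariant Dieudonn\'e theory into lattices in the isocristal, then match each piece of the moduli data against one of the three lattice conditions. Since the paper only cites \cite{RTW} Proposition 2.4 without reproducing an argument, there is no source proof to compare against. Your rewriting of $\mathbf F$- and $\mathbf V$-stability as $\pi\tau(M)\subset M\subset\pi^{-1}\tau(M)$, using $\tau=\eta\pi\mathbf V^{-1}$ with $\eta$ a unit and $\pi^2/p\in\mathbb Z_p^\times$, is carried out correctly. One imprecision in the set-up: $C_k$ is not a subspace of $N_k$ in general. The $E$-linear isomorphism $C\otimes_E E'\xrightarrow{\sim}N$ extends to $C_k\xrightarrow{\sim}N_k$, and $M$ is transported along this map; it is under this isomorphism that $\eta\pi\mathbf V^{-1}$ on $N_k$ becomes $\mathrm{id}\otimes\sigma$ on $C_k$, which is what the paper calls $\tau$.

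The genuine gap is the third paragraph, which you yourself flag as ``the delicate point'' and leave as a plan. Two remarks close it, and the second shows the picture is actually simpler than you suggest. First, the identification you want comes from applying the $\sigma^{-1}$-semilinear bijection $\mathbf V$ to $M\subset M+\tau(M)$. Since $\pi\in E$ is fixed by $\sigma$, this gives
$$\mathbf V\bigl(M+\tau(M)\bigr)=\mathbf V M+\eta^{\sigma^{-1}}\pi M=\mathbf V M+\iota(\pi)M,$$
so $(M+\tau(M))/M$ has the same $k$-length as the image of $\iota(\pi)$ on $\mathrm{Lie}(X)=M/\mathbf V M$, and the lattice inequality is precisely $\dim_k\iota(\pi)\mathrm{Lie}(X)\leq 1$. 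Second, over a perfect field $k$ of characteristic $p$ the structure morphism $\mathcal O_{E'}\to k$ kills $\pi$, so the scalar $\pi$ acts as $0$ on $\mathrm{Lie}(X)$ and the ``two $\pi$-actions'' you worry about collapse: $\iota(\pi)\pm\pi=\iota(\pi)$ there. Hence $\bigwedge^2(\iota(\pi)+\pi\,|\,\mathrm{Lie}(X))=0$ is exactly the rank bound above. Moreover $\iota(\pi)^2=\iota(\pi^2)$ is a unit multiple of $p$, hence $0$ on $\mathrm{Lie}(X)$, so $\iota(\pi)$ is automatically nilpotent; therefore $\bigwedge^n(\iota(\pi)-\pi)=\det(\iota(\pi))=0$ is free, and the Kottwitz condition for every $a\in\mathcal O_E$ reduces to $\mathrm{char}(\iota(a)\,|\,\mathrm{Lie}(X))=(T-a_0)^n$ ($a_0$ the residue of $a$; note $a\equiv\bar a\bmod\pi$), which also holds for free since $\iota(a)-a_0$ is nilpotent. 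So nothing is ``forced'': over $k$ the Kottwitz condition and the first Pappas vanishing are vacuous, and $M\overset{\leq 1}{\subset}M+\tau(M)$ is equivalent to the second Pappas vanishing alone. Your sentence that the lattice inequality ``forces the characteristic polynomial $(T-\pi)(T+\pi)^{n-1}$'' is misleading: over $k$ that polynomial is just $T^n$ and is automatic. With these two corrections (and a clean treatment of the duality $M=p^iM^\sharp$, which requires checking that the $(\cdot,\cdot)$-dual of an $\mathcal O_{E_k}$-lattice coincides with its $\langle\cdot,\cdot\rangle$-dual) your plan becomes a complete proof.
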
 

\noindent The notation $\overset{\leq 1}{\subset}$ denotes an inclusion of $\mathcal O_{E_k}$-lattices with index at most $1$.

\paragraph{}\label{WittDecomposition} Recall the integers $m,m^+$ and $m^-$ that we defined depending on the parity of $n$. For $k\geq 0$, let $A_k$ denote the $k\times k$ matrix with $1$ on the antidiagonal and $0$ everywhere else. We fix some scalars $u_1,u_2\in \mathbb Z_{p}^{\times}$ such that $-u_1u_2 \not \in \mathrm{Norm}_{E/\mathbb Q_p}(E^{\times})$. We then define the three matrices 

\begin{align*}
T_{\mathrm{odd}} := A_{2m+1}, & & T_{\mathrm{even}}^+ := A_{2m^+}, & & T_{\mathrm{even}}^- := 
\begin{pmatrix}
& & & A_{m^-} \\
& u_1 & 0 & \\
& 0 & u_2 & \\
A_{m^-} & & &
\end{pmatrix}.
\end{align*}

\noindent By construction, $C$ has a basis in which $(\cdot,\cdot)$ is given by $T_{\mathrm{odd}}, T_{\mathrm{even}}^+$ or $T_{\mathrm{even}}^-$ when $n$ is odd, when $n$ is even and $\mathbb X = (\mathbb X_2^{+})^{m^+}$ (split case) or when $n$ is even and $\mathbb X = (\mathbb X_2^{+})^{m^-} \times \mathbb X_2^-$ (non-split case) respectively. We denote such a basis by $e = (e_{-j},e_0^{\mathrm{an}},e_j)_{1\leq j \leq m}$ when $n$ is odd, and if $n$ is even by $e = (e_{-j},e_j)_{1\leq j \leq m^+}$ in the split case and by $e = (e_{-j},e_0^{\mathrm{an}},e_1^{\mathrm{an}},e_j)_{1 \leq j \leq m^-}$ in the non-split case.

\begin{rk}
The integers $m,m^+$ and $m^-$ correspond to the Witt index of $C$ in each of the three cases.
\end{rk} 

\paragraph{}Let $J = \mathrm{Aut}(\mathbb X)$ be the group of automorphisms of $\mathbb X$ compatible with the additional structures. By \cite{RTW} Lemma 2.3, we have an isomorphism $J \simeq \mathrm{GU}(C,(\cdot,\cdot))$. As a reductive group over $\mathbb Q_p$, $J$ is quasi-split if and only if $n$ is odd or $n$ is even and $C$ is split. Let
$$c:J\mapsto \mathbb Q_{p}^{\times}$$
denote the multiplier character. For instance, $\pi^k\mathrm{id} \in J$ has multiplier $\pi^{2k}\in \mathbb Q_p^{\times}$. We define a surjective morphism $\alpha: J \mapsto \mathbb Z$ by $\alpha(g) := v_p(c(g))$ where $v_p$ is the $p$-adic valuation. We denote by $J^{\circ}$ the kernel of $\alpha$. Then $J^{\circ}$ is the subgroup generated by all compact subgroups of $J$.\\
The group $J$ acts on $\mathcal M$ via 
$$g\cdot (X,\iota_X,\lambda_X,\rho_X) := (X,\iota_X,\lambda_X,g\circ\rho_X).$$
An element $g\in J$ induces an isomorphism $g:\mathcal M_i \xrightarrow{\sim} \mathcal M_{i+\alpha(g)}$. 

\paragraph{}For $i\in \mathbb Z$ we define 
$$\mathcal L_i := \left\{ \Lambda \subset C \text{ an } \mathcal O_E\text{-lattice} \,|\, p^i\Lambda^{\sharp} \subset \Lambda \subset \pi^{-1}p^i\Lambda^{\sharp}\right\}.$$
We also write $\mathcal L$ for the (disjoint) union of the $\mathcal L_i$'s. Elements of $\mathcal L$ are called \textbf{vertex lattices}. If $\Lambda$ is a vertex lattice, its \textbf{orbit type} $t(\Lambda)$ is the lattice index $[\Lambda:p^i\Lambda^{\sharp}]$. According to \cite{RTW}, $t(\Lambda)$ is an even integer between $0$ and $n$.\\
The group $J$ acts on $\mathcal L$ via $g\cdot \Lambda := g(\Lambda)$. An element $g\in J$ defines a type preserving, inclusion preserving bijection $g:\mathcal L_i \xrightarrow{\sim} \mathcal L_{i+\alpha(g)}$. With arguments similar to those used in the unramified case in \cite{vw1}, one may prove the following proposition. 

\begin{prop}
Two vertex lattices $\Lambda,\Lambda' \in \mathcal L$ are in the same $J$-orbit if and only if $t(\Lambda) = t(\Lambda')$. 
\end{prop}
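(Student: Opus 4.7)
The plan is to handle the two directions separately. The forward direction is a formal consequence of the similitude relation, while for the converse I would first use the scalar similitudes $\pi^k\cdot\mathrm{id}$ to normalize $\Lambda$ and $\Lambda'$ into $\mathcal L_0$, and then produce a normal form for vertex lattices of type $2t$ via the symplectic structure induced on $\Lambda/\Lambda^{\sharp}$.

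For the forward direction, fix $g \in J$ with multiplier $c(g)$ and set $j = \alpha(g) = v_p(c(g))$. The similitude identity $(gx,gy) = c(g)(x,y)$ implies $g(M^{\sharp}) = c(g)\cdot g(M)^{\sharp}$ for every $\mathcal O_E$-lattice $M \subset C$, i.e.\ $g(M)^{\sharp} = p^{-j}g(M^{\sharp})$ as $\mathcal O_E$-lattices (absorbing the unit factor in $c(g)/p^j$). Applied to the chain $p^i\Lambda^{\sharp} \subset \Lambda \subset \pi^{-1}p^i\Lambda^{\sharp}$ defining a vertex lattice $\Lambda \in \mathcal L_i$, this yields $p^{i+j}g(\Lambda)^{\sharp} \subset g(\Lambda) \subset \pi^{-1}p^{i+j}g(\Lambda)^{\sharp}$, so $g(\Lambda) \in \mathcal L_{i+j}$ with $t(g(\Lambda)) = [g(\Lambda):g(p^i\Lambda^{\sharp})] = t(\Lambda)$.

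For the converse, assume $t(\Lambda) = t(\Lambda') = 2t$. Since $c(\pi^k\mathrm{id}) = (-1)^k\pi^{2k} \in p^k\mathbb Z_p^{\times}$, we have $\alpha(\pi^k\mathrm{id}) = k$; after rescaling by a suitable power of $\pi$, I may assume $\Lambda,\Lambda' \in \mathcal L_0$, i.e.\ $\Lambda^{\sharp}\subset \Lambda \subset \pi^{-1}\Lambda^{\sharp}$ and $[\Lambda:\Lambda^{\sharp}] = 2t$. The key step is then to exhibit a normal form for any such $\Lambda$. Because $\pi\Lambda \subset \Lambda^{\sharp}$, the quotient $V_{\Lambda} := \Lambda/\Lambda^{\sharp}$ is a $2t$-dimensional $\mathbb F_p$-vector space, and the assignment $(x,y) \mapsto \pi(x,y) \bmod \pi\mathcal O_E$ descends to a non-degenerate alternating $\mathbb F_p$-bilinear form on $V_{\Lambda}$: non-degeneracy follows from $\Lambda^{\sharp\sharp} = \Lambda$, and skew-symmetry from $\overline{\pi} = -\pi$ together with $\overline{(x,y)} = (y,x)$. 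Lifting a symplectic hyperbolic basis of $V_{\Lambda}$ to $\Lambda$ and iteratively correcting the lifts by elements of $\Lambda^{\sharp}$, I would split $\Lambda$ into the orthogonal sum of $t$ rank-$2$ hermitian sublattices of type $2$ (each with off-diagonal Gram entries $\pm\pi^{-1}$) together with a self-dual $\mathcal O_E$-lattice of rank $n-2t$ in the orthogonal complement. The latter realizes a self-dual lattice in a hermitian subspace whose Witt index and discriminant are fixed by $n$, $t$ and the isomorphism class of $C$, and hence coincides up to isometry with the corresponding standard lattice built from the Witt basis of \ref{WittDecomposition}. Once the normal form is established, Witt's extension theorem for $(C,(\cdot,\cdot))$ produces an element $g \in J$ with $c(g) = 1$ sending $\Lambda$ to $\Lambda'$.

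The main obstacle is the orthogonal splitting step: given a hyperbolic pair in $V_{\Lambda}$, producing a lift to $\Lambda$ that is simultaneously a hermitian hyperbolic plane of type $2$ and an orthogonal direct summand whose complement is again a vertex lattice of type $2(t-1)$ requires a delicate $\pi$-adic Gram--Schmidt procedure, in which one must check that every intermediate complement still satisfies the two nested inclusions defining membership in $\mathcal L_0$. This follows the strategy of \cite{vw1} in the unramified setting and uses throughout the hypothesis $p>2$, so that $2$ is invertible and units admit square roots when needed to diagonalize the induced bilinear forms.
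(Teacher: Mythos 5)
Your argument is correct and is precisely the normal-form approach (Jordan splitting off $\pi^{-1}$-modular hyperbolic planes followed by Witt extension) that the paper points to via \cite{vw1}: the forward direction is the formal identity $g(M^{\sharp})=c(g)\,g(M)^{\sharp}$, and the converse reduces to the decomposition $\Lambda=P_1\perp\cdots\perp P_t\perp\Lambda_0$ with a self-dual complement $\Lambda_0$ whose isometry class is pinned down by $n$, $t$ and $\mathrm{disc}\,C$. The splitting step you flag as the main obstacle does go through cleanly: the plane $P=\mathcal O_E e+\mathcal O_E f$ obtained by lifting a hyperbolic pair of $V_\Lambda$ has off-diagonal Gram entry a unit times $\pi^{-1}$ and diagonal entries in $\mathbb Z_p$, hence is $\pi^{-1}$-modular with $P^{\sharp}=\pi P$; since $(\Lambda,P)\subset(\Lambda,\Lambda)\subset\pi^{-1}\mathcal O_E$, the orthogonal projection of $\Lambda$ onto $P\otimes_{\mathcal O_E}E$ lands in $\pi^{-1}P^{\sharp}=P$, which forces $\Lambda=P\perp(\Lambda\cap P^{\perp})$, and a Hensel substitution $e\mapsto e+\mu\pi f$ (here $p>2$ is used so that $2$ is a unit) then clears the diagonal Gram entries, putting $P$ in the standard form.
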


\paragraph{}Recall the basis $e$ of $C$ that we fixed in \ref{WittDecomposition}. For a family of integers $(r_j,s)$ where 
\begin{enumerate}[label={--},noitemsep,topsep=0pt]
\item $1\leq j \leq m$ and $s\in \mathbb Z$ if $n$ is odd,
\item $1\leq j \leq m^+$ and $s = \emptyset$ if $n$ is even and $C$ is split,
\item $1\leq j \leq m^-$ and $s\in \mathbb Z^2$ if $n$ is even and $C$ is non-split,
\end{enumerate}
we denote by 
$$\Lambda(r_{-j} \,;\, s \,;\, r_j)$$
the $\mathcal O_E$-lattice generated by the vectors $p^{r_{\pm j}}e_{\pm j}$, and by $p^{s_0}e_0^{\mathrm{an}}$ and $p^{s_1}e_{1}^{\mathrm{an}}$ when it makes sense.

\begin{prop}
A lattice $\Lambda = \Lambda(r_{-j} \,;\, s \,;\, r_j)$ is a vertex lattice if and only if  for some $i\in \mathbb Z$, $r_{-j}+r_j \in \{2i-1,2i\}$ for all $j$, and $s$ is respectively given by $i, \emptyset$ or $(i,i)$ depending on whether $n$ is odd or even with $C$ split or not. When $\Lambda$ is a vertex lattice, its orbit type is given by 
$$t(\Lambda) = 2\#\{j \,|\, r_{-j}+r_j = 2i-1\}.$$
\end{prop}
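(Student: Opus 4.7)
The plan is to argue by direct computation using the explicit Gram matrices $T_{\mathrm{odd}}, T^+_{\mathrm{even}}, T^-_{\mathrm{even}}$ recorded in \ref{WittDecomposition}. In all three cases, the hermitian form pairs $e_{-j}$ with $e_{j}$ via the antidiagonal $1$'s, so $(e_{-j}, e_j) = 1$; it pairs each anisotropic basis vector with itself via a unit in $\mathbb Z_p^{\times}$ (namely $(e_0^{\mathrm{an}}, e_0^{\mathrm{an}}) = 1$ in the odd case, and $(e_0^{\mathrm{an}}, e_0^{\mathrm{an}}) = u_1, (e_1^{\mathrm{an}}, e_1^{\mathrm{an}}) = u_2$ in the non-split even case); and all other pairings between distinct basis vectors vanish. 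The key observation is that since $u_1, u_2 \in \mathbb Z_p^{\times}$ are units, they do not affect $\pi$-valuations and will drop out of the computation.

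First I would compute the dual lattice $\Lambda^{\sharp}$ for $\Lambda = \Lambda(r_{-j}; s; r_j)$. Writing a general vector of $C$ as $x = \sum_{j>0} (c_{-j} e_{-j} + c_j e_j) + c_0^{\mathrm{an}} e_0^{\mathrm{an}} + c_1^{\mathrm{an}} e_1^{\mathrm{an}}$ (dropping the anisotropic terms when they do not apply), the defining condition $(x, v) \in \mathcal O_E$ for $v$ ranging over the generators of $\Lambda$ translates, using the shape of the Gram matrix, into bounds on $\pi$-valuations: $c_{-j} \in \pi^{-r_j}\mathcal O_E$, $c_j \in \pi^{-r_{-j}}\mathcal O_E$, and $c_k^{\mathrm{an}} \in \pi^{-s_k}\mathcal O_E$ (where $s_k$ denotes the unique value $s$ in the odd case, and the two components of $s$ in the non-split even case). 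Hence
$$\Lambda(r_{-j}; s; r_j)^{\sharp} = \Lambda(-r_j; -s; -r_{-j}),$$
with the indices $\pm j$ swapped by the antidiagonal form.

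Second, I would translate the two inclusions $p^i\Lambda^{\sharp} \subset \Lambda \subset \pi^{-1}p^i\Lambda^{\sharp}$ defining a vertex lattice. Since $p\mathcal O_E = \pi^2\mathcal O_E$, multiplying by $p^i$ shifts every exponent by $2i$, giving $p^i\Lambda^{\sharp} = \Lambda(2i - r_j; 2i - s; 2i - r_{-j})$ (in terms of $\pi$-valuations), while $\pi^{-1}p^i\Lambda^{\sharp}$ has exponents uniformly one less. Comparing $\pi$-valuations direction by direction, the first inclusion becomes $r_{-j} + r_j \leq 2i$ for every $j$, together with $2s \leq 2i$ in the odd case and $2s_k \leq 2i$ in the non-split even case; the second inclusion becomes $r_{-j} + r_j \geq 2i - 1$ and $2s \geq 2i - 1$ (resp.\ $2s_k \geq 2i - 1$). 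Because all exponents are integers, the conditions on the anisotropic part collapse to $s = i$ (resp.\ $s = (i,i)$; no condition in the even split case), and the conditions on the hyperbolic pairs give exactly $r_{-j} + r_j \in \{2i-1, 2i\}$.

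Finally, for the orbit-type formula, I would decompose $\Lambda / p^i\Lambda^{\sharp}$ as a direct sum along the basis $e$. Each hyperbolic pair $(e_{-j}, e_j)$ contributes
$$\pi^{r_{-j}}\mathcal O_E / \pi^{2i - r_j}\mathcal O_E \;\oplus\; \pi^{r_j}\mathcal O_E / \pi^{2i - r_{-j}}\mathcal O_E,$$
of $\mathbb F_p$-dimension $2(2i - r_{-j} - r_j)$, which equals $0$ if $r_{-j} + r_j = 2i$ and $2$ if $r_{-j} + r_j = 2i - 1$. The anisotropic directions contribute $0$ since the corresponding exponents match exactly. Summing yields $t(\Lambda) = 2\#\{j \,|\, r_{-j}+r_j = 2i-1\}$. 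The argument is essentially bookkeeping; the only subtle points are the index-swap $j \leftrightarrow -j$ caused by the antidiagonal Gram blocks and, in the non-split case, verifying that the units $u_1, u_2$ play no role in the $\pi$-valuations.
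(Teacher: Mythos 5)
Your computation is correct and is exactly the direct argument the paper intends by deferring to the unramified analogue (\cite{muller} 1.2.4): compute $\Lambda^{\sharp}$ from the explicit Gram matrix, compare $\pi$-valuations coordinate by coordinate across the two inclusions $p^i\Lambda^{\sharp}\subset\Lambda\subset\pi^{-1}p^i\Lambda^{\sharp}$, and read off $t(\Lambda)$ as the $\mathbb F_p$-dimension of $\Lambda/p^i\Lambda^{\sharp}$ summed over hyperbolic pairs. One thing you pass over in silence but should flag: you have (correctly) taken $\Lambda(r_{-j};s;r_j)$ to be generated by $\pi^{r_{\pm j}}e_{\pm j}$ and $\pi^{s_k}e_k^{\mathrm{an}}$, whereas the text in \ref{WittDecomposition} literally writes $p^{r_{\pm j}}e_{\pm j}$ and $p^{s_k}e_k^{\mathrm{an}}$; with genuine $p$-powers the two inclusions force $r_{-j}+r_j=i$ for every $j$ and $t(\Lambda)=0$, and the lattices $\Lambda_{\theta}$ of the following paragraph would not lie in $\mathcal L_0$, so the $\pi$-power reading is the only one under which the proposition (and the rest of the section) is true --- the $p$ is evidently a slip carried over from the unramified paper, where the uniformizer of $E$ is $p$ itself.
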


\noindent This is proved in the same way as \cite{muller} 1.2.4 Proposition. In particular, when $n$ is even and $C$ is non-split there is no vertex lattice of orbit type $n$. Let $t_{\mathrm{max}}$ denote the maximal type of a vertex lattice. We have 
$$t_{\mathrm{max}} = \begin{cases}
n-1 & \text{if } n \text{ is odd},\\
n & \text{if } n \text{ is even and } C \text{ is split},\\
n-2 & \text{if } n \text{ is even and } C \text{ is non-split}.
\end{cases}$$
We also write $t_{\mathrm{max}} = 2\theta_{\mathrm{max}}$, so that $\theta_{\mathrm{max}} = m,m^+$ or $m^-$ depending on whether $n$ is odd, $n$ is even with $C$ split or $n$ is even with $C$ non-split respectively. 

\paragraph{}The set $\mathcal L$ of vertex lattices can be given the structure of a polysimplicial complex, by declaring that an $s$-simplex in $\mathcal L$ is a subset $\{\Lambda_0,\ldots,\Lambda_s\} \subset \mathcal L_i$ for some $i \in \mathbb Z$ such that, up to reordering, we have 
$$\Lambda_0 \subset \Lambda_1 \subset \ldots \subset \Lambda_s.$$
Depending on whether $n$ is odd or even with $C$ split or not, for an $s$-simplex to exist we must have $s$ between $0$ and $\theta_{\mathrm{max}}$. We fix a specific maximal simplex in each case.
\begin{align*}
\intertext{If $n$ is odd, for $0\leq \theta \leq \theta_{\mathrm{max}}$ we define}
\Lambda_{\theta} & := \Lambda(0^{\theta_{\mathrm{max}}} \,;\, 0 \,;\, 0^{\theta_{\mathrm{max}}-\theta}, -1^{\theta}).
\intertext{If $n$ is even and $C$ is split, for $0\leq \theta \leq \theta_{\mathrm{max}}$ we define}
\Lambda_{\theta} & := \Lambda(0^{\theta_{\mathrm{max}}} \,;\, 0^{\theta_{\mathrm{max}}-\theta}, -1^{\theta}).
\intertext{If $n$ is even and $C$ is non-split, for $0\leq \theta \leq \theta_{\mathrm{max}}$ we define} 
\Lambda_{\theta} & := \Lambda(0^{\theta_{\mathrm{max}}} \,;\, 0,0 \,;\, 0^{\theta_{\mathrm{max}}-\theta}, -1^{\theta}).
\end{align*}
In each case we have $\Lambda_{\theta} \in \mathcal L_0$ and $\Lambda_{\theta} \subset \Lambda_{\theta+1}$. Moreover the orbit type of $\Lambda_{\theta}$ is $2\theta$. 

\paragraph{}For $\Lambda \in \mathcal L$, let $J_{\Lambda}$ denote the fixator of $\Lambda$ in $J$. Let $J_{\Lambda}^+$ be its pro-unipotent radical, and write $\mathcal J_{\Lambda} := J_{\Lambda}/J_{\Lambda}^{+}$ for the \textbf{finite reductive quotient}. It is a finite group of Lie type over $\mathbb F_p$.\\
We define also the quotients 
\begin{align*}
V_{\Lambda}^0 := \Lambda/p^i\Lambda^{\sharp}, & & V_{\Lambda}^1 := \pi^{-1}p^i\Lambda^{\sharp}/\Lambda.
\end{align*}
They are both $\mathcal O_E/\pi\mathcal O_E \simeq \mathbb F_p$-vector spaces of dimension respectively $t(\Lambda)$ and $n - t(\lambda)$. Both spaces inherit a perfect $\mathbb F_p$-bilinear form, which we denote by the same notation $\{\cdot,\cdot\}$, induced respectively by $\pi p^{-i}(\cdot,\cdot)$ and by $p^{1-i}(\cdot,\cdot)$. Then $\{\cdot,\cdot\}$ is symplectic on $V_{\Lambda}^0$ whereas it is symmetric on $V_{\Lambda}^1$. If $k$ is a perfect field extension of $\mathbb F_p$, we denote by $V_{\Lambda,k}^0$ and $V_{\Lambda,k}^1$ the scalar extensions to $k$, equipped with their perfect $k$-bilinear forms $\{\cdot,\cdot\}$, and we denote by $\tau$ the map $\mathrm{id}\otimes\sigma$ on both spaces. If $U$ is a subspace, we denote by $U^{\perp}$ its orthogonal.\\
We denote by $\mathrm{GSp}(\cdot)$ and $\mathrm{GO}(\cdot)$ the associated groups of symplectic or orthogonal similitudes. Then we have a natural isomorphism 
$$\mathcal J_{\Lambda} \simeq \mathrm{G}(\mathrm{GSp}(V_{\Lambda}^0)\times \mathrm{GO}(V_{\Lambda}^1)),$$
where the right-hand side is the subgroup of $\mathrm{GSp}(V_{\Lambda}^0)\times \mathrm{GO}(V_{\Lambda}^1)$ with both factors sharing the same multiplier in $\mathbb F_p^{\times}$. Let $\mathcal J_{\Lambda}^{\circ}$ be the connected component of unity and let $J_{\Lambda}^{\circ}$ be its preimage in $J_{\Lambda}$. We recall some known facts about parahoric subgroups of $J$, see for instance \cite{luststevens} section 2 for a complete summary. 

\begin{prop}
Let $\Lambda \in \mathcal L$.
\begin{enumerate}[label={--},noitemsep,topsep=0pt]
\item The fixator $J_{\Lambda}$ is a maximal compact subgroup of $J$. All maximal compact subgroups arise this way.
\item The subgroup $J_{\Lambda}^{\circ}$ is a parahoric subgroup of $J$. It is a maximal parahoric subgroup unless $n$ is even, $C$ is split and $t(\Lambda) = n-2$. All maximal parahoric subgroups of $J$ arise this way.
\item If $t(\Lambda) \not = n$ then $J_{\Lambda}^{\circ}$ has index $2$ in $J_{\Lambda}$. If $t(\Lambda) = n$ then $J_{\Lambda}^{\circ} = J_{\Lambda}$.
\end{enumerate}
\end{prop}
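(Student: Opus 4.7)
The plan is to identify the polysimplicial complex $\mathcal{L}$ (modulo the homothety action of $\pi^{\mathbb{Z}}\cdot\mathrm{id}$) with the Bruhat-Tits building of $J$, and then deduce the three claims from the general theory of parahoric subgroups. The derived group of $J$ is the ramified unitary group $\mathrm{U}(C,(\cdot,\cdot))$ whose building admits a well-known lattice-theoretic description: vertices correspond to $J$-orbits of vertex lattices, and simplices to chains of vertex lattices. This matches the combinatorial structure on $\mathcal{L}$ that we introduced, with the $J$-orbits of vertices being indexed by the orbit type $t(\Lambda) \in \{0,2,\ldots,t_{\max}\}$.

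For part $(1)$, each $J_{\Lambda}$ is compact since it stabilizes the lattice $\Lambda$ and preserves the hermitian form. Maximality among compact subgroups follows from the general fact that maximal compact subgroups of a reductive $p$-adic group are stabilizers of vertices of its building; conversely, every maximal compact arises as some $J_{\Lambda}$ once we have identified the vertices of $\mathcal{B}(J)$ with $\mathcal{L}$ modulo homothety. For part $(3)$, we have $[J_{\Lambda}:J_{\Lambda}^{\circ}] = |\pi_0(\mathcal{J}_{\Lambda})|$ by definition of $J_{\Lambda}^{\circ}$. Since $\mathrm{GSp}(V_{\Lambda}^0)$ is connected, $\pi_0(\mathcal{J}_{\Lambda}) = \pi_0(\mathrm{GO}(V_{\Lambda}^1))$, which is trivial if and only if $V_{\Lambda}^1 = 0$, i.e. if and only if $t(\Lambda) = n$, and is of order $2$ otherwise (the non-identity component being generated by a reflection with determinant $-1$).

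For part $(2)$, that $J_{\Lambda}^{\circ}$ is a parahoric subgroup is a direct consequence of the identification with $\mathcal{B}(J)$: by Bruhat-Tits theory the parahoric attached to a vertex is the preimage of the connected component of unity inside the smooth reductive quotient of the fixator, which is precisely how we defined $J_{\Lambda}^{\circ}$. Maximality as a parahoric then amounts to checking which vertices of the local Dynkin diagram of $J$ are not adjacent to a finer facet. By inspecting the local Dynkin diagrams of the ramified unitary similitude group in each of the three cases (odd $n$, even $n$ with split $C$, even $n$ with non-split $C$), every vertex gives a maximal parahoric except in the exceptional case where $n$ is even, $C$ is split, and $t(\Lambda) = n-2$: there, $V_{\Lambda}^1$ is a split hyperbolic plane whose orthogonal group contains an extra rank-one torus, which forces $J_{\Lambda}^{\circ}$ to be properly contained in the parahoric attached to the adjacent type-$n$ vertex $\Lambda_{\theta_{\max}}$. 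I would verify this exception explicitly using the chain $\Lambda_{\theta_{\max}-1} \subset \Lambda_{\theta_{\max}}$ and comparing the two induced parahoric subgroups.

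The main obstacle is to pin down the identification of $\mathcal{L}$ with the Bruhat-Tits building in the ramified setting, since the standard references (Tits' Corvallis article, Prasad-Raghunathan) use rather different conventions than the lattice-theoretic framework of \cite{RTW}, and the building of a ramified unitary group has non-standard features such as the $BC$-type endpoints when $n$ is odd. A practical route is to verify directly that the fixator of the maximal simplex $\{\Lambda_0,\ldots,\Lambda_{\theta_{\max}}\}$ is an Iwahori subgroup, by computing that its reductive quotient is a torus of the expected rank, and then transport to all simplices of $\mathcal{L}$ using the transitivity of the type-preserving $J$-action on simplices of each fixed shape. Once this identification is in hand, all three claims follow mechanically.
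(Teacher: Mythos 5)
The paper does not actually prove this proposition: it presents it as a recollection of known facts and cites Lust--Stevens, section 2. So there is no "paper's own proof" to compare to, and your proposal stands alone.

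Your part (3) contains a concrete error. You claim $\pi_0(\mathrm{GO}(V^1_\Lambda))$ is trivial if and only if $V^1_\Lambda = 0$, with the non-identity component generated by a reflection of determinant $-1$. This is true for $\mathrm{O}(V)$, but not for $\mathrm{GO}(V)$: when $\dim V$ is odd, the group $\mathrm{GO}(V) = \mathbb{G}_m \cdot \mathrm{SO}(V)$ is connected (a reflection $r$ equals $(-\mathrm{id}) \cdot (-r)$ with $-\mathrm{id}$ a scalar and $-r \in \mathrm{SO}(V)$, so $r$ lies in the identity component). Since $\dim V^1_\Lambda = n - t(\Lambda)$ and $t(\Lambda)$ is always even, $\dim V^1_\Lambda$ is odd for every vertex lattice whenever $n$ is odd. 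In those cases the short exact sequence $1 \to \mathrm{Sp}(V^0_\Lambda) \to \mathrm{G}(\mathrm{GSp}(V^0_\Lambda) \times \mathrm{GO}(V^1_\Lambda)) \to \mathrm{GO}(V^1_\Lambda) \to 1$ gives $\pi_0(\mathcal{J}_\Lambda) = \pi_0(\mathrm{GO}(V^1_\Lambda)) = 1$, so your argument would yield index $1$, contradicting the proposition's assertion of index $2$ for all $t(\Lambda) \neq n$. This reveals a deeper gap in the approach: the index $[J_\Lambda : J_\Lambda^\circ]$ for a ramified unitary group is not simply read off from $\pi_0$ of the naive abstract algebraic group $\mathrm{G}(\mathrm{GSp}(V^0_\Lambda) \times \mathrm{GO}(V^1_\Lambda))$; it reflects a Bruhat--Tits/Kottwitz-homomorphism subtlety specific to ramification that your computation does not see.

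Your part (2) is also not quite right as stated. The identification of $\mathcal{L}$ with the building is not "vertex lattices = vertices": as the paper itself remarks after the proposition, in the even split case a vertex lattice of orbit type $n-2$ corresponds to an \emph{edge} of the Bruhat--Tits building, because it is contained in exactly two lattices of maximal type $n$. The reason $J_\Lambda^\circ$ fails to be maximal there is that the associated facet is one-dimensional, not that $\mathrm{GO}(V^1_\Lambda)$ has an extra rank-one torus. Your suggestion to verify the exception explicitly via the chain $\Lambda_{\theta_{\max}-1} \subset \Lambda_{\theta_{\max}}$ would catch this, but the heuristic you give for \emph{why} the exception occurs is misdirected. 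Part (1) is reasonable as a sketch. The overall strategy of identifying $\mathcal{L}$ with the building and invoking Bruhat--Tits theory is sound, but the execution of parts (2) and (3) needs substantial repair, especially for odd $n$.
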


\noindent We note that the condition $t(\Lambda) = n$ can only occur when $n$ is even and $C$ is split. Besides, in this case any vertex lattice $\Lambda \in \mathcal L_i$ of orbit type $n-2$ is contained in precisely two different vertex lattices $\Lambda_1, \Lambda_2 \in \mathcal L_i$ of orbit type $n$ (see \ref{NumberOfLattices}). Then the parahoric subgroup $J_{\Lambda}^{\circ}$ is the intersection of the two maximal parahoric subgroups $J_{\Lambda_1}^{\circ}$ and $J_{\Lambda_2}^{\circ}$. 

\begin{notation}
If $\Lambda$ is one of the $\Lambda_{\theta}$'s, we write $J_{\theta}$, $V^0_{\theta}$ and $V^1_{\theta}$ instead of $J_{\Lambda_{\theta}}$, $V^0_{\Lambda_{\theta}}$ and $V^1_{\Lambda_{\theta}}$ respectively. 
\end{notation}

\paragraph{}In \cite{RTW} section 6, the authors attach to any vertex lattice $\Lambda \in \mathcal L_i$ a closed projective subscheme $\mathcal M_{\Lambda} \hookrightarrow \mathcal M_{i,\mathrm{red}}$, which is called a \textbf{closed Bruhat-Tits stratum}. Its rational points are described by the following proposition. 

\begin{prop}[\cite{RTW} Corollary 6.3]
Let $k$ be a perfect field extension of $\kappa(E')$ and let $\Lambda \in \mathcal L_i$. We have a natural bijection 
$$\mathcal M_{\Lambda}(k) \simeq \left\{M\in \mathcal M_i(k) \,|\, M \subset \Lambda_k := \Lambda \otimes_{\mathcal O_E} \mathcal O_{E_k}\right\}.$$
\end{prop}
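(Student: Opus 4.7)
The plan is to deduce this bijection from the Dieudonné-theoretic description of $\mathcal M_i(k)$ recalled just before the statement, combined with the definition of $\mathcal M_{\Lambda}$ as a closed subscheme of $\mathcal M_{i,\mathrm{red}}$ from \cite{RTW}. The proof splits into a translation step (turning the scheme-theoretic definition of $\mathcal M_{\Lambda}$ into a lattice condition) and a representability step (which is the geometric content imported from \cite{RTW}).

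First, I would recall that by the preceding proposition a $k$-point of $\mathcal M_i$ is completely encoded by its covariant Dieudonné module, realised as an $\mathcal O_{E_k}$-lattice $M \subset C_k$ satisfying $M = p^i M^{\sharp}$, $\pi \tau(M) \subset M \subset \pi^{-1} \tau(M)$, and $M \overset{\leq 1}{\subset} M + \tau(M)$. Under this dictionary the quasi-isogeny $\rho_X$ corresponds to the inclusion of $M$ into the ambient isocrystal $N_k \simeq C_k \otimes_E E_k$, and the assertion that $\rho_X$ becomes an honest isogeny into some auxiliary $p$-divisible group with Dieudonné lattice $\Lambda_k$ becomes the lattice inclusion $M \subset \Lambda_k$.

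Next, I would unpack the construction of $\mathcal M_{\Lambda}$ in \cite{RTW}. The vertex lattice $\Lambda$ singles out an $\mathcal O_{E_k}$-lattice $\Lambda_k \subset N_k$ which is $\tau$-stable and self-dual up to the scalar $p^i$, hence arises from a unitary $p$-divisible group $\mathbb X_{\Lambda}$ equipped with a canonical quasi-isogeny to $\mathbb X$. The stratum $\mathcal M_{\Lambda}$ is then defined as the locus in $\mathcal M_{i,\mathrm{red}}$ where the composed quasi-isogeny $X \to \mathbb X \to \mathbb X_{\Lambda}$ is integral. Representability of this condition as a closed subscheme is a general fact about moduli of quasi-isogenies (cf.\ \cite{RZ}), and projectivity is verified in \cite{RTW} via local model diagrams, which also identify $\mathcal M_{\Lambda}$ with the Deligne-Lusztig-type variety $S_{\theta}$ studied in section~1 of the present paper.

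With both descriptions in hand, the bijection on $k$-points is tautological: $(X,\iota_X,\lambda_X,\rho_X) \in \mathcal M_i(k)$ comes from a point of $\mathcal M_{\Lambda}$ if and only if the above integrality condition holds, which via the Dieudonné dictionary is exactly $M \subset \Lambda_k$. One should also check that the inclusion $M \subset \Lambda_k$ is consistent with the three conditions defining membership in $\mathcal M_i(k)$, which is straightforward since these are conditions internal to the lattice $M$. The main obstacle, and the part I would import verbatim from \cite{RTW} rather than redo, is the geometric input establishing that the locus $\{M \subset \Lambda_k\}$ underlies a closed and projective subscheme structure on $\mathcal M_{i,\mathrm{red}}$ rather than being merely a set-theoretic subset of the points.
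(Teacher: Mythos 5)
The paper does not prove this proposition; it simply cites \cite{RTW} Corollary~6.3, so there is no in-paper proof to compare against. Your sketch is, however, a plausible reconstruction of the argument in the cited reference: starting from the lattice description of $\mathcal M_i(k)$, interpreting $\mathcal M_{\Lambda}$ as the locus where a quasi-isogeny associated to $\Lambda$ becomes an isogeny, and reading off the inclusion $M \subset \Lambda_k$ via the Dieudonné dictionary is indeed how such bijections are established in this literature, and you are upfront about importing the representability and projectivity from \cite{RTW} rather than re-deriving them.

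One factual slip worth correcting: you assert that $\Lambda_k$ is ``self-dual up to the scalar $p^i$, hence arises from a unitary $p$-divisible group.'' This is false for a general vertex lattice. By definition one only has the chain $p^i\Lambda^{\sharp} \subset \Lambda \subset \pi^{-1}p^i\Lambda^{\sharp}$, and the orbit type $t(\Lambda) = [\Lambda : p^i\Lambda^{\sharp}]$ is positive as soon as $\Lambda$ is not of type $0$. Consequently $\mathbb X_{\Lambda}$ does not carry a \emph{principal} polarization and is not a unitary $p$-divisible group of signature $(1,n-1)$ in the sense of \ref{RZ-Space}; it only carries a quasi-polarization with non-trivial kernel of order $p^{t(\Lambda)}$. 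This does not derail the argument, since the existence of the $p$-divisible group $\mathbb X_{\Lambda}$ with covariant Dieudonné lattice $\Lambda_k$ (equipped with the induced $\mathcal O_E$-action and the non-principal quasi-polarization) is all that is needed to make the integrality condition meaningful, and that existence only requires $\Lambda_k$ to be $\mathbf F$- and $\mathbf V$-stable, which holds because $\tau(\Lambda_k)=\Lambda_k$. You should also be cautious about attributing the projectivity of $\mathcal M_{\Lambda}$ to ``local model diagrams''; in \cite{RTW} projectivity is established by a direct study of the moduli problem, and the identification with $S_{\theta}$ is a separate step.
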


\noindent By mapping $M \in \mathcal M_{\Lambda}(k)$ to its image $\overline{M} := M/p^i\Lambda_k^{\sharp}$ in $V^0_{\Lambda,k}$, one obtains a bijection between $\mathcal M_{\Lambda}(k)$ and the set 
$$\{U\subset V^0_{\Lambda,k} \,|\, U^{\perp} = U \text{ and }U \overset{\leq 1}{\subset} U+\tau(U)\}.$$
The action of $J$ on $\mathcal M$ restricts to an action of $J_{\Lambda}$ on $\mathcal M_{\Lambda}$. This action factors through the finite reductive quotient $\mathcal J_{\Lambda}$, and the $\mathrm{GO}(V_{\Lambda}^1)$-component acts trivially. Therefore we obtain an action of $\mathrm{GSp}(V_{\Lambda}^0) \simeq \mathrm{GSp}(2\theta,\mathbb F_p)$ where $t(\Lambda) = 2\theta$ on $\mathcal M_{\Lambda}$. The main theorem of loc. cit. is the construction of a natural isomorphism between the closed Bruhat-Tits stratum $\mathcal M_{\Lambda}$ and the closed Deligne-Lusztig variety $S_{\theta}$ that we introduced in \ref{IsomorphismDLVariety}.

\begin{theo}[\cite{RTW} Proposition 6.7]
Let $\Lambda\in \mathcal L$ and write $t(\Lambda) = 2\theta$ for its orbit type. There is a natural isomorphism 
$$\mathcal M_{\Lambda} \xrightarrow{\sim} S_{\theta} \otimes_{\mathbb F_p} \kappa(E')$$
which is $\mathrm{GSp}(2\theta,\mathbb F_p)$-equivariant.
\end{theo}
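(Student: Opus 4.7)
The plan is to identify both $\mathcal M_{\Lambda}$ and $S_{\theta}\otimes_{\mathbb F_p}\kappa(E')$ as moduli of totally isotropic subspaces close to their Frobenius twist inside a $2\theta$-dimensional symplectic $\mathbb F_p$-vector space, and then show that the two moduli descriptions coincide both at the level of points and functorially.

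First I would fix a symplectic isomorphism $\varphi: V \xrightarrow{\sim} V^0_{\Lambda}$, where $V$ is the standard $2\theta$-dimensional symplectic $\mathbb F_p$-space of \ref{FiniteHermitianSpace}. Such a $\varphi$ exists because non-degenerate symplectic forms on a $\mathbb F_p$-vector space of given dimension are equivalent, and it can be normalized so that the induced isomorphism $\mathrm{GSp}(V)\xrightarrow{\sim}\mathrm{GSp}(V^0_{\Lambda})$ is the one implicit in the statement. Since $\varphi$ is defined over $\mathbb F_p$, its base change to any perfect $k/\mathbb F_p$ intertwines the two Frobenius maps $\tau=\mathrm{id}\otimes\sigma$ and preserves orthogonality. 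Comparing the description of $\mathcal M_{\Lambda}(k)$ via $M\mapsto \overline{M}:=M/p^i\Lambda_k^{\sharp}$, given in the Corollary that immediately precedes the theorem, with the description of $S_{\theta}(k)$ from the Proposition in \ref{IsomorphismDLVariety}, one obtains a bijection $\mathcal M_{\Lambda}(k)\simeq S_{\theta}(k)\otimes_{\mathbb F_p}\kappa(E')$ functorial in perfect field extensions.

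Second, to upgrade this bijection to an isomorphism of schemes, I would construct the morphism globally. On $\mathcal M_{\Lambda}$ the universal chain $\pi\Lambda^{\sharp}\cdot\mathcal O\subset \mathcal M\subset \Lambda\cdot\mathcal O$ provided by Corollary 6.3 of \cite{RTW} defines a locally free Lagrangian sub-bundle $\overline{\mathcal M}\subset V^0_{\Lambda}\otimes\mathcal O_{\mathcal M_{\Lambda}}$ such that $\overline{\mathcal M}\overset{\leq 1}{\subset} \overline{\mathcal M}+\tau(\overline{\mathcal M})$. This gives a morphism $\mathcal M_{\Lambda}\to \mathbf G/\mathbf P_I$ (a Lagrangian Grassmannian for $V^0_{\Lambda}$), whose image lies in the closed locus cut out by the Frobenius closeness condition, namely in $\varphi(S_{\theta})\otimes\kappa(E')$. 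Composing with $\varphi^{-1}$ yields a morphism $f: \mathcal M_{\Lambda}\to S_{\theta}\otimes\kappa(E')$.

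Third, I would verify that $f$ is an isomorphism: it is bijective on geometric points by the previous step, and since both source and target are reduced projective schemes of the same dimension $\theta$ over $\kappa(E')$ with $S_{\theta}$ normal by \ref{IsomorphismDLVariety}, bijectivity of $f$ combined with Zariski's main theorem (or the elementary fact that a bijective morphism onto a normal variety with geometrically reduced source is an isomorphism) forces $f$ to be an isomorphism. The $\mathrm{GSp}(2\theta,\mathbb F_p)$-equivariance is then automatic: the $J$-action on $\mathcal M_{\Lambda}$ factors through $\mathcal J_{\Lambda}$ with the orthogonal factor acting trivially, and on $\overline{\mathcal M}$ it is the evident linear action, which via $\varphi$ is exactly the $\mathrm{GSp}(V)$-action on isotropic subspaces defining $S_{\theta}$. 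The main obstacle I anticipate is the rigorous construction of the universal Lagrangian sub-bundle $\overline{\mathcal M}$ together with the incidence condition with $\tau(\overline{\mathcal M})$ holding scheme-theoretically rather than only pointwise; this requires unraveling the Dieudonné-module description of $\mathcal M$ relatively, exactly as carried out in \cite{RTW}, which is where the bulk of the work in loc. cit. is concentrated.
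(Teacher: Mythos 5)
The paper does not actually supply a proof of this statement: it is imported verbatim as \cite{RTW} Proposition 6.7, and the only indication of the argument given here is the remark following the $k$-point description of $\mathcal M_{\Lambda}$ (Corollary 6.3 of loc.\ cit.), namely that $M\mapsto \overline{M}:=M/p^i\Lambda_k^{\sharp}$ identifies $\mathcal M_{\Lambda}(k)$ with $\{U\subset V^0_{\Lambda,k}\mid U^{\perp}=U,\ U\overset{\leq 1}{\subset}U+\tau(U)\}$. Your first step reproduces exactly this identification of $k$-valued points, so you are on the same track as the source, and you correctly flag that the substance of the proof is the scheme-theoretic (rather than pointwise) construction of the Lagrangian sub-bundle $\overline{\mathcal M}$ out of the relative Dieudonné description of $\mathcal M$; that is where the real content of \cite{RTW} lies and your outline cannot substitute for it.

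There is, however, a genuine gap in your third step. The claim that ``a bijective morphism onto a normal variety with geometrically reduced source is an isomorphism'' is false as stated: the relative Frobenius $\mathbb A^1_{\mathbb F_p}\to\mathbb A^1_{\mathbb F_p}$, $x\mapsto x^p$, is finite, bijective on points of every perfect field extension, has reduced source and normal target, yet is not an isomorphism. A finite bijective morphism onto a normal integral scheme is an isomorphism only once one also knows it is \emph{birational} (equivalently, induces an isomorphism on function fields), and bijectivity on geometric points alone does not rule out an inseparable degree. In the present situation this can be repaired: either produce the inverse morphism directly from the universal object, or show that $f$ restricts to an isomorphism on the open dense stratum $\mathcal M_{\Lambda}^{\circ}\to X_{I}(s_{\theta})$ (where one can check étaleness, e.g.\ by a tangent space computation on the Coxeter stratum, both sides being smooth there) and then invoke normality of $S_{\theta}$ to extend. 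As written, your appeal to Zariski's main theorem leaves the separability unaddressed.
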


\noindent In particular, the variety $\mathcal M_{\Lambda}$ is always defined over $\mathbb F_p$. We note also that the $\mathrm{GSp}(2\theta,\mathbb F_p)$-action on $S_{\theta}$ is induced from \ref{SameDLVarieties}. Identifying the unipotent representations of $\mathrm{Sp}(2\theta,\mathbb F_p)$ and of $\mathrm{GSp}(2\theta,\mathbb F_p)$ as in \ref{SameUnipotent}, the theorems \ref{CohomologyS} and \ref{CohomologyT} give us a certain knowledge of the cohomology of the closed Bruhat-Tits stratum $\mathcal M_{\Lambda}$. 

\paragraph{}The closed subschemes $\mathcal M_{\Lambda}$ form the \textbf{Bruhat-Tits stratification} of the reduced special fiber $\mathcal M_{\mathrm{red}}$, whose incidence relations mimic the combinatorics of vertex lattices. 

\begin{theo}[\cite{RTW} Theorem 6.10]
Let $i\in \mathbb Z$ and $\Lambda,\Lambda'\in\lcal_i$.
\begin{enumerate}[label=\upshape (\arabic*), topsep = 0pt, noitemsep]
		\item The inclusion $\Lambda\subset \Lambda'$ is equivalent to the scheme-theoretic inclusion $\mathcal M_{\Lambda}\subset \mathcal M_{\Lambda'}$. It implies $t(\Lambda)\leq t(\Lambda')$ with equality if and only if $\Lambda = \Lambda'$.
		\item The three following assertions are equivalent.
		\begin{align*}
		\mathrm{(i)}\;\Lambda\cap \Lambda'\in \lcal_i. & & \mathrm{(ii)}\; \Lambda\cap \Lambda' \text{ contains a lattice of }\lcal_i. & & \mathrm{(iii)}\;\mathcal M_{\Lambda}\cap \mathcal M_{\Lambda'} \not = \emptyset.
		\end{align*}
		If these conditions are satisfied, then $\mathcal M_{\Lambda}\cap \mathcal M_{\Lambda'}=\mathcal M_{\Lambda\cap \Lambda'}$ scheme-theoretically.
		\item If $k$ is a perfect field field extension of $\kappa(E')$ then $\mathcal M_i(k)=\bigcup_{\Lambda\in \lcal_i}\mathcal M_{\Lambda}(k)$.
	\end{enumerate}
\end{theo}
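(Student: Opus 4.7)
The overall strategy is to translate each of the three claims into the lattice-theoretic description of rational points given in the preceding proposition, and then reason combinatorially on $\mathcal O_{E_k}$-lattices. The central auxiliary construction is, for any $M\in \mathcal M_i(k)$, a canonical vertex lattice $\Lambda(M)\in\mathcal L_i$ such that $M\subset \Lambda(M)_k$. A natural candidate is the smallest $\tau$-stable $\mathcal O_{E_k}$-lattice containing $M$, which exists since the iterated sums $M+\tau(M)+\ldots+\tau^r(M)$ form an increasing chain with each step of index at most one (applying $\tau$ to $M\overset{\leq 1}{\subset} M+\tau(M)$ successively), while remaining bounded inside $C_k$. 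The $\tau$-stability means the lattice descends to an $\mathcal O_E$-lattice $\Lambda$ in $C$; the duality condition $p^i\Lambda^\sharp\subset \Lambda\subset \pi^{-1}p^i\Lambda^\sharp$ is then deduced from $M=p^iM^\sharp$ by dualizing and using compatibility of $\sharp$ with $\tau$. Point (3) is then immediate, since $M\in \mathcal M_{\Lambda(M)}(k)$.

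For (1), the implication $\Lambda\subset\Lambda'\Rightarrow \mathcal M_\Lambda\subset\mathcal M_{\Lambda'}$ is immediate from $\Lambda_k\subset\Lambda'_k$. Conversely, assuming the scheme-theoretic inclusion, one exhibits a point $M\in \mathcal M_\Lambda(\mathbb F)$ with $\Lambda(M)=\Lambda$; such points are dense in $\mathcal M_\Lambda$, as they correspond via the isomorphism $\mathcal M_\Lambda\simeq S_\theta\otimes\kappa(E')$ to the top-dimensional stratum $X_{I_\theta}(w_\theta)$ from section 1.1. Then $\Lambda=\Lambda(M)\subset\Lambda'$. The type comparison follows from a duality computation: given $\Lambda\subset\Lambda'$ both in $\mathcal L_i$, one obtains the chain $p^i(\Lambda')^\sharp\subset p^i\Lambda^\sharp\subset\Lambda\subset\Lambda'$, and self-duality yields $[p^i\Lambda^\sharp:p^i(\Lambda')^\sharp]=[\Lambda':\Lambda]$, hence $t(\Lambda')=t(\Lambda)+2[\Lambda':\Lambda]\geq t(\Lambda)$ with equality iff $\Lambda=\Lambda'$.

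For (2), the implication (i)$\Rightarrow$(iii) is trivial, since a point of $\mathcal M_{\Lambda\cap\Lambda'}$ lies in both $\mathcal M_\Lambda$ and $\mathcal M_{\Lambda'}$. For (iii)$\Rightarrow$(ii), any $M\in (\mathcal M_\Lambda\cap\mathcal M_{\Lambda'})(k)$ satisfies $\Lambda(M)\subset\Lambda$ and $\Lambda(M)\subset\Lambda'$, hence $\Lambda(M)\subset\Lambda\cap\Lambda'$. For (ii)$\Rightarrow$(i), given $\Lambda_0\in\mathcal L_i$ with $\Lambda_0\subset\Lambda\cap\Lambda'$, one uses the standard identity $(\Lambda\cap\Lambda')^\sharp=\Lambda^\sharp+(\Lambda')^\sharp$ together with $p^i\Lambda^\sharp, p^i(\Lambda')^\sharp\subset \Lambda_0\subset\Lambda\cap\Lambda'$ to verify the lower duality bound for $\Lambda\cap\Lambda'$, while $\Lambda\cap\Lambda'\subset\pi^{-1}p^i(\Lambda\cap\Lambda')^\sharp$ is inherited from the corresponding inclusions for $\Lambda$ and $\Lambda'$. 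The scheme-theoretic equality $\mathcal M_\Lambda\cap\mathcal M_{\Lambda'}=\mathcal M_{\Lambda\cap\Lambda'}$ then follows from the trivial identity $\{M\subset\Lambda_k\}\cap\{M\subset\Lambda'_k\}=\{M\subset (\Lambda\cap\Lambda')_k\}$ on rational points, combined with the reducedness of $\mathcal M_{\mathrm{red}}$.

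The main obstacle in this plan is verifying that $\Lambda(M)$ genuinely lies in $\mathcal L_i$, rather than merely in some coarser set of $\tau$-stable lattices, since the self-duality condition $p^i\Lambda^\sharp\subset \Lambda$ is a nontrivial constraint that does not follow formally from $M=p^iM^\sharp$. Making this work requires a careful analysis of how each iterated sum $\sum_{j=0}^r\tau^j(M)$ interacts with the hermitian pairing, exploiting $M\overset{\leq 1}{\subset} M+\tau(M)$ at each step to bound the successive indices and pin down the stabilization index $r$. Once this construction is secured, the rest of the proof is essentially a bookkeeping exercise on lattice inclusions and duality.
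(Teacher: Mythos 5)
The paper does not prove this theorem: it is imported verbatim from [RTW] Theorem 6.10, so there is no internal proof to compare against. What you have written is a reasonable reconstruction of the argument in that reference, which follows the unramified prototype of Vollaard--Wedhorn: the central device is the assignment $M \mapsto \Lambda(M)$, the smallest $\tau$-stable lattice containing $M$, and parts (1)--(3) reduce to lattice combinatorics once one knows $\Lambda(M) \in \mathcal L_i$. Your structure for each part is correct, and the duality count $t(\Lambda') = t(\Lambda) + 2[\Lambda':\Lambda]$ is the right way to settle the type comparison in (1).

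You have correctly flagged the genuine technical crux: verifying the upper duality bound $\Lambda(M) \subset \pi^{-1} p^i \Lambda(M)^\sharp$. The lower bound is indeed formal, since $p^i\Lambda(M)^\sharp$ is the largest $\tau$-stable lattice contained in $M = p^iM^\sharp$. The upper bound, however, amounts to $\pi\tau^j(M)\subset M$ for all $j\geq 0$, whereas the hypothesis $\pi\tau(M)\subset M$ only yields $\pi^j\tau^j(M)\subset M$ by iteration; closing the gap from $\pi^j$ to a single $\pi$ is exactly where the index-$\leq 1$ condition and the non-increase of the successive indices $[M_{r+1}:M_r]$ must enter, and this is not a formal dualization. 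You are right to mark it as the step requiring real work.

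There is a second gap you did not flag, in the scheme-theoretic equality $\mathcal M_\Lambda \cap \mathcal M_{\Lambda'} = \mathcal M_{\Lambda\cap\Lambda'}$. You deduce it from equality of rational points plus ``the reducedness of $\mathcal M_{\mathrm{red}}$.'' That is insufficient: the scheme-theoretic intersection of two reduced closed subschemes of a reduced scheme need not itself be reduced, so equality of geometric points only gives $\mathcal M_{\Lambda\cap\Lambda'} = (\mathcal M_\Lambda \cap \mathcal M_{\Lambda'})_{\mathrm{red}}$. One must instead argue functorially -- the closed conditions cutting out $\mathcal M_\Lambda$ and $\mathcal M_{\Lambda'}$ inside the moduli problem combine into the condition cutting out $\mathcal M_{\Lambda\cap\Lambda'}$ already on arbitrary test schemes, not merely on reduced field-valued points -- or else show directly that $\mathcal M_\Lambda\cap\mathcal M_{\Lambda'}$ is reduced. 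As written, the appeal to reducedness of the ambient scheme does not deliver the claimed equality.
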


\noindent It follows in particular that $\mathcal M_{\mathrm{red}}$ has pure dimension $\theta_{\mathrm{max}}$.

\subsection{Counting the Bruhat-Tits strata}

\label{CountingBTStrata}In this short section we give a formula for the number of closed Bruhat-Tits strata of a certain dimension, which are included in or which contain a fixed stratum. Let $d\geq 0$ and let $V$ be a $d$-dimensional $\mathbb F_p$-vector space equipped with a non-degenerate symmetric or symplectic form $\{\cdot,\cdot\}:V\times V \rightarrow \mathbb F_p$. Define the integer $\delta$ by 
$$d = 
\begin{cases}
2\delta & \text{if } \{\cdot,\cdot\} \text{ is symplectic, or if it is symmetric, } d \text{ is even and } V \text{ is split,}\\
2(\delta + 1) & \text{if } \{\cdot,\cdot\} \text{ is symmetric, } d \text{ is even and } V \text{ is not split,}\\
2\delta + 1 & \text{if } \{\cdot,\cdot\} \text{ is symmetric, } d \text{ is odd}.
\end{cases}
$$
Thus $\delta$ corresponds to the Witt index of $V$. For $0\leq r \leq \delta$ we define 
$$N(r,V) := \{U\subset V \,|\, \dim U = r \text{ and } U \subset U^{\perp}\}.$$
Let $i \in \mathbb Z$ and let $\Lambda \in \mathcal L_i$ be a vertex lattice. Write $t(\Lambda) = 2\theta$ so that $0 \leq \theta \leq \theta_{\mathrm{max}}$. 

\begin{prop}
\begin{enumerate}[label=\upshape (\arabic*), topsep = 0pt, noitemsep]
\item The set of vertex lattices $\Lambda'\in\mathcal L_i$ of orbit type $t(\Lambda') = 2\theta'$ such that $\Lambda' \subset \Lambda$ is in bijection with $N(\theta-\theta',V^0_{\Lambda})$.
\item The set of vertex lattices $\Lambda'\in\mathcal L_i$ of orbit type $t(\Lambda') = 2\theta'$ such that $\Lambda \subset \Lambda'$ is in bijection with $N(\theta'-\theta,V^1_{\Lambda})$.
\end{enumerate}
\end{prop}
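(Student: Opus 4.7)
My plan is to construct explicit bijections for (1) and (2) and verify they are mutually inverse, using the interplay between lattice duality with respect to $(\cdot,\cdot)$ and the induced bilinear forms on $V^0_\Lambda$ and $V^1_\Lambda$.

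For part (1), the natural map would be
$$\Lambda' \longmapsto p^i\Lambda'^{\sharp}/p^i\Lambda^{\sharp} \subset V^0_\Lambda.$$
Since $\Lambda'\subset \Lambda$ dualizes to $\Lambda^{\sharp}\subset \Lambda'^{\sharp}$ and both are sandwiched in the chain $p^i\Lambda^{\sharp}\subset p^i\Lambda'^{\sharp}\subset \Lambda'\subset \Lambda$, the image is a well-defined subspace. Its $\mathbb F_p$-dimension is $\theta-\theta'$: setting $a := \dim_{\mathbb F_p}(\Lambda/\Lambda')$, the duality $[\Lambda:\Lambda'] = [\Lambda'^{\sharp}:\Lambda^{\sharp}]$ coming from the perfect hermitian pairing, together with $[\Lambda:p^i\Lambda^{\sharp}] = p^{2\theta}$ and $[\Lambda':p^i\Lambda'^{\sharp}] = p^{2\theta'}$, forces $a = \theta-\theta'$. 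Isotropy with respect to $\pi p^{-i}(\cdot,\cdot)$ reduces to the inclusion $(p^i\Lambda'^{\sharp},p^i\Lambda'^{\sharp})\subset p^i\mathcal O_E$, which holds because $p^i\Lambda'^{\sharp}\subset \Lambda'$ and $\Lambda'^{\sharp}$ pairs with $\Lambda'$ into $\mathcal O_E$ by definition. The inverse map sends an isotropic $U\subset V^0_\Lambda$ of dimension $\theta-\theta'$ to the preimage $\Lambda'$ of $U^{\perp}$ in $\Lambda$.

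For part (2), the parallel bijection is
$$\Lambda' \longmapsto \Lambda'/\Lambda \subset V^1_\Lambda.$$
Here $\Lambda\subset \Lambda'$ dualizes to $\Lambda'^{\sharp}\subset \Lambda^{\sharp}$, so that $\Lambda\subset \Lambda'\subset \pi^{-1}p^i\Lambda'^{\sharp}\subset \pi^{-1}p^i\Lambda^{\sharp}$, and the same duality computation gives $\dim_{\mathbb F_p}(\Lambda'/\Lambda) = \theta'-\theta$. The crucial observation is that, given the preimage $\Lambda'\supset \Lambda$, the defining condition $\Lambda'\in \mathcal L_i$ is equivalent, granted $\Lambda'^{\sharp}\subset \Lambda^{\sharp}$, to the single inclusion $(\Lambda',\Lambda')\subset \pi^{-1}p^i\mathcal O_E$; and this is exactly the statement that the induced symmetric form $p^{1-i}(\cdot,\cdot)$ on $V^1_\Lambda$ vanishes on $\Lambda'/\Lambda$. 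The inverse therefore sends an isotropic $U\subset V^1_\Lambda$ of dimension $\theta'-\theta$ to its preimage in $\pi^{-1}p^i\Lambda^{\sharp}$, and a length computation in the chain $p^i\Lambda'^{\sharp}\subset p^i\Lambda^{\sharp}\subset \Lambda\subset \Lambda'$ gives $t(\Lambda') = 2\theta'$.

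The main technical obstacle is cleanly establishing the equivalence \enquote{isotropy of the subspace} $\Longleftrightarrow$ \enquote{vertex lattice condition} on $\Lambda'$, which requires carefully tracking how the normalizing factors $\pi p^{-i}$ and $p^{1-i}$ interact with the lattice-duality inclusions. Once this is in place, showing that the two constructions are mutually inverse is formal, since in each case they amount to taking the orthogonal complement twice inside a perfect pairing. The overall strategy parallels the one used in the unramified case (see \cite{vw1}), with the role of the self-dual period lattice now played by the intermediate object $p^i\Lambda^{\sharp}\subset \Lambda\subset \pi^{-1}p^i\Lambda^{\sharp}$.
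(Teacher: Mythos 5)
Your proposal is correct and takes exactly the approach the paper indicates: the paper's own ``proof'' is the one-line remark that the bijection in case (1) sends $\Lambda'$ to the image of $p^i\Lambda'^{\sharp}$ in $V^0_{\Lambda}$ and in case (2) to the image of $\Lambda'$ in $V^1_{\Lambda}$, and you construct the same maps and carry out the verifications (dimension count via the duality $[\Lambda:\Lambda']=[\Lambda'^{\sharp}:\Lambda^{\sharp}]$, isotropy from the normalizations $\pi p^{-i}(\cdot,\cdot)$ resp.\ $p^{1-i}(\cdot,\cdot)$, and mutual inversion via double orthogonal) that the paper leaves implicit.
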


\noindent The bijection is established by mapping $\Lambda'$ to the image of $p^i(\Lambda')^{\sharp}$ in $V^0_{\Lambda}$ in case (1), and to its own image in $V^1_{\Lambda}$ in case (2). The following statement gives the cardinality of $N(r,V)$.

\begin{prop}
Let $\delta$ be the Witt index of $V$ and let $0\leq r \leq \delta$.
\begin{enumerate}[label={--},noitemsep,topsep=0pt]
\item If $\{\cdot,\cdot\}$ is symplectic, or if it is symmetric and $d$ is odd, then
$$\#N(r,V) = \prod_{i=1}^{r} \frac{p^{2(i+\delta-r)}-1}{p^i-1}.$$
\item If $\{\cdot,\cdot\}$ is symmetric, $d$ is even and $V$ is not split then
$$\#N(r,V) = \frac{p^{\delta+1}+1}{p^{\delta+1-r}+1}\prod_{i=1}^r \frac{p^{2(i+\delta-r)}-1}{p^i-1}.$$
\item If $\{\cdot,\cdot\}$ is symmetric, $d$ is even and $V$ is split then
$$\#N(r,V) = \frac{p^{\delta-r}+1}{p^{\delta}+1}\prod_{i=1}^{r} \frac{p^{2(i+\delta-r)}-1}{p^i-1}.$$
\end{enumerate}
\end{prop}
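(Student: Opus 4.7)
The plan is to argue by induction on $r$, using a double-counting recursion that reduces $\#N(r,V)$ to the same quantity on a lower-dimensional space of the same algebraic type. First I would count pairs $(v,U)$ with $U \in N(r,V)$ and $v \in U \setminus \{0\}$ in two ways. On the one hand there are $\#N(r,V)\cdot (p^r-1)$ such pairs. On the other hand, fixing $v$ first among the non-zero isotropic vectors of $V$, the quotient $U/\langle v \rangle$ is an isotropic $(r-1)$-dimensional subspace of the non-degenerate space $V_v := v^{\perp}/\langle v \rangle$, whose dimension is $d-2$ and Witt index is $\delta - 1$. By Witt's extension theorem, the classical group $\mathrm{Sp}(V)$ or $\mathrm{O}(V)$ acts transitively on non-zero isotropic vectors, so $\#N(r-1,V_v)$ does not depend on $v$. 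This yields the recursion
$$\#N(r,V) = \frac{Q(d,\delta) \cdot \#N(r-1,V_v)}{p^r - 1},$$
where $Q(d,\delta)$ denotes the number of non-zero isotropic vectors in $V$.

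Next I would input the classical formulas for $Q(d,\delta)$: it equals $p^{2\delta}-1$ in the symplectic case and in the odd orthogonal case, $(p^\delta-1)(p^{\delta-1}+1)$ in the even split orthogonal case of dimension $2\delta$, and $(p^{\delta+1}+1)(p^\delta-1)$ in the even non-split orthogonal case of dimension $2(\delta+1)$. These are standard and follow by counting fibers of the quadratic form on a Witt decomposition. The base case $r=0$ is $\#N(0,V)=1$, after which the inductive step consists of multiplying the induction hypothesis for $\#N(r-1, V_v)$ (with $\delta$ and $r$ both lowered by one) by $Q(d,\delta)/(p^r-1)$ and checking that the missing $i=r$ factor $(p^{2\delta}-1)/(p^r-1)$ of the product in the target formula exactly absorbs the extra $(p^\delta\pm 1)$ factors coming from $Q(d,\delta)$ and from the ratio of prefactors in the induction hypothesis. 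This reduces to the factorization $p^{2\delta}-1 = (p^\delta-1)(p^\delta+1)$ and is a short algebraic verification in each of the three cases.

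The main potential obstacle is to verify that the quotient $V_v$ inherits the same algebraic type as $V$, so that the induction remains within a single one of the three formulas. In the symplectic case this is automatic. In the orthogonal cases it follows from the fact that $v$ can be completed to a hyperbolic pair $(v,v^*)$ in $V$, giving $V = \langle v, v^*\rangle \oplus \langle v, v^*\rangle^{\perp}$, with $\langle v, v^*\rangle^{\perp}$ naturally isometric to $V_v$; splitting off one hyperbolic plane preserves the anisotropic kernel and hence the type (odd symmetric, even split, or even non-split). Once this invariance is established, the three formulas follow from three parallel inductions governed by the same recursion.
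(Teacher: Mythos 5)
Your argument is correct and self-contained. The double-counting recursion
$$\#N(r,V)\,(p^r-1)=Q(d,\delta)\cdot\#N(r-1,V_v),\qquad V_v:=v^\perp/\langle v\rangle,$$
together with the standard counts of nonzero isotropic vectors, the observation that $V_v$ has the same algebraic type as $V$ with Witt index $\delta-1$, and the base case $\#N(0,V)=1$, does reproduce each of the three closed formulas; I checked that the inductive step
$$\frac{Q(d,\delta)}{p^r-1}\cdot\#N(r-1,V_v)=\#N(r,V)$$
is an identity in all three cases using $p^{2\delta}-1=(p^\delta-1)(p^\delta+1)$. The one subtlety you handle correctly is that the form on $v^\perp$ has radical exactly $\langle v\rangle$, so $V_v$ is genuinely non-degenerate, and that splitting off the hyperbolic plane $\langle v,v^*\rangle$ preserves the anisotropic kernel and hence the type (this uses $p\neq 2$ to construct $v^*$ isotropic with $\{v,v^*\}=1$).

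For comparison: the paper itself does not write out a proof, but simply cites \cite{muller}~1.4.2 Proposition as establishing the analogous unramified-case formula by the same method. Your recursion-on-one-isotropic-vector argument is the standard route and is the natural reading of that citation, so your proposal is consistent with what the paper intends.
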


\noindent The proof of this proposition is very similar to \cite{muller} 1.4.2 Proposition, therefore we omit it.

\begin{rk}\label{NumberOfLattices}
Assume that $n$ is even and that $C$ is split. Let $\Lambda\in \mathcal L_i$ with orbit type $n-2 = 2(\theta_{\mathrm{max}}-1)$. The set of vertices $\Lambda'\in \mathcal L_i$ of maximal orbit type $n = 2\theta_{\mathrm{max}}$ which contain $\Lambda$ is in bijection with $N(1,V^1_{\Lambda})$. The space $V^1_{\Lambda}$ has dimension $2$ with a symmetric form and is split. According to the formula above, the number of such lattices $\Lambda'$ is 
$$\frac{p^{1-1}+1}{p^1+1}\prod_{i=1}^{1}\frac{p^{2i}-1}{p^{i}-1} = 2.$$
We recover the fact stated in \cite{RTW} proof of Proposition 3.4 that in the even split case, vertex lattices of orbit type $n-2$ correspond in fact to an edge in the Bruhat-Tits building of $J$.
\end{rk}

\subsection{Shimura variety and $p$-adic uniformization of the basic stratum}

\paragraph{}In this section, we introduce the integral model of a Shimura variety whose supersingular locus is uniformized by the Rapoport-Zink space $\mathcal M$. We follow the construction of \cite{RTW} Section 7. Let $\mathbb E$ be an imaginary quadratic field in which $p>2$ ramifies, and let $\overline{\,\cdot\,}$ denote the non-trivial element of $\mathrm{Gal}(\mathbb E/\mathbb Q)$. Let $\mathbb V$ be an $n$-dimensional hermitian $\mathbb E$-vector space of signature $(1,n-1)$ at infinity. Let $\mathbb G$ denote the group of unitary similitudes of $\mathbb V$ as a reductive group over $\mathbb Q$.\\
First, we give the moduli description of the canonical model of the Shimura variety associated to the data above. Let $K\subset \mathbb G(\mathbb A_f)$ be an open compact subgroup. For a locally noetherian $\mathbb E$-scheme $S$, let $\mathrm{Sh}_K(S)$ denote the set of isomorphism classes of tuples $(A,\lambda,\iota,\overline{\eta})$ where
\begin{enumerate}[label={--}]
\item $A$ is an abelian scheme over $S$.
\item $\lambda: A\rightarrow \widehat{A}$ is a polarization.
\item $\iota:\mathbb E\rightarrow \mathrm{End}(A)\otimes \mathbb Q$ is a $\mathbb E$-action on $A$ such that $\iota(\overline{x}) = \iota(x)^{\dagger}$ where $\cdot^{\dagger}$ denotes the Rosati involution associated to $\lambda$, and such that the Kottwitz determinant condition is satisfied:
$$\forall x \in \mathbb E,\, \det(T-\iota(x)\,|\,\mathrm{Lie}(A)) = (T-x)^1(T-\overline{x})^{n-1} \in \mathbb E[T].$$
\item $\overline{\eta}$ is a $K$-level structure, that is a $K$-orbit of isomorphisms of $\mathbb E\otimes \mathbb A_f$-modules $\mathrm{H}_1(A,\mathbb A_f) \xrightarrow{\sim} \mathbb V\otimes \mathbb A_f$ that is compatible with the other data.
\end{enumerate}

\noindent According to \cite{KR} Proposition 4.3, when $K$ is small enough the functor $\mathrm{Sh}_K$ is represented by a smooth quasi-projective scheme over $\mathbb E$. As the level $K$ varies, the Shimura varieties sit together in a projective system $(\mathrm{Sh}_K)_K$ on which $\mathbb G(\mathbb A_f)$ acts by Hecke correspondences.

\paragraph{}In order to define integral models for these Shimura varieties, let us assume that there exists a self-dual $\mathcal O_{\mathbb E}$-lattice $\Gamma$ in $\mathbb V$. Let $K \subset \mathbb G(\mathbb A_f)$ denote the stabilizer of $\Gamma$. Let $K^p \subset K\cap \mathbb G(\mathbb A_f^p)$ be an open compact subgroup. For an $\mathcal O_{\mathbb E,(p)}$-scheme $S$, let $\mathrm S_{K^p}(S)$ denote the set of isomorphism classes of tuples $(A,\lambda,\iota,\overline{\eta}^{p})$ where
\begin{enumerate}[label={--}]
\item $A$ is an abelian scheme over $S$.
\item $\lambda: A\rightarrow \widehat{A}$ is a polarization of order prime to $p$.
\item $\iota:\mathcal O_{\mathbb E} \rightarrow \mathrm{End}(A)\otimes \mathbb Z_{(p)}$ is an $\mathcal O_{\mathbb E}$-action on $A$ such that $\iota(\overline{x}) = \iota(x)^{\dagger}$ where $\cdot^{\dagger}$ denotes the Rosati involution associated to $\lambda$, and such that the Kottwitz determinant condition is satisfied:
$$\forall x \in \mathcal O_{\mathbb E},\, \det(T-\iota(x)\,|\,\mathrm{Lie}(A)) = (T-x)^1(T-\overline{x})^{n-1} \in \mathcal O_{\mathbb E}[T].$$
\item $\overline{\eta}^p$ is a $K^p$-level structure, that is a $K^p$-orbit of isomorphisms of $\mathbb E\otimes \mathbb A_f^p$-modules $\mathrm{H}_1(A,\mathbb A_f^p) \xrightarrow{\sim} \mathbb V\otimes \mathbb A_f^p$ that is compatible with the other data.
\end{enumerate}

\noindent By \cite{RTW} Section 7, when $K^p$ is small enough the functor $\mathrm S_{K^p}$ is represented by a smooth quasi-projective $\mathcal O_{\mathbb E,(p)}$-scheme. As the level $K^p$ varies, these integral models form a projective system on which $\mathbb G(\mathbb A_f^p)$ acts by Hecke correspondences. We have natural isomorphisms 
$$\mathrm{Sh}_{K_pK^p} \simeq \mathrm S_{K^p}\otimes_{\mathcal O_{\mathbb E,(p)}} \mathbb E,$$
which are compatible as $K^p$ varies, where $K_p$ is the stabilizer of $\Gamma\otimes \mathbb Z_p$ in $\mathbb V_{\mathbb Q_p}$.

\begin{rk}
From now on, the notation $\mathrm S_{K^p}$ will be used to denote the $\mathbb \mathcal O_{\mathbb E_p}$-scheme obtained by base change.
\end{rk} 

\paragraph{}\label{Uniformization} Let $\overline{S}_{K^p}$ denote the special fiber of the Shimura variety over the residue field $\kappa(\mathbb E_p)$. Let $\overline{S}_{K^p}^{\mathrm{ss}}$ denote the \textbf{supersingular locus}, it is a closed projective subscheme of $\overline{S}_{K^p}$. Let $\widehat{\mathrm S}_{K^p}^{\mathrm{ss}}$ denote the formal completion of $\mathrm S_{K^p}$ along the supersingular locus. Eventually, let $\widehat{\mathrm S}_{K^p}^{\mathrm{ss},\mathrm{an}}$ denote the Berkovich generic fiber of the formal scheme $\widehat{\mathrm S}_{K^p}^{\mathrm{ss}}$, a smooth analytic space over $\mathbb E_p$.\\
From now on, we write $E := \mathbb E_p$ and $\kappa(\mathbb E_p) = \kappa(E) = \mathbb F_p$. Let $(A^0,\lambda^0,\iota^0,\overline{\eta}^0)$ be the $\kappa(E')$-rational point of $\overline{S}_{K^p}^{\mathrm{ss}}$ given by the product of elliptic curves used to define the framing object $\mathbb X$ as in \ref{FramingObject}. In particular the $p$-divisible group $A^0[p^{\infty}]$ is identified with $\mathbb X$ and we may consider the associated Rapoport-Zink space $\mathcal M$ over $\mathrm{Spf}(\mathcal O_{E'})$. As observed in \cite{RTW} Remark 7.1, when $n$ is even the discriminants of the hermitian spaces $C$ and $\mathbb V\otimes E$ are different, so that one space is split precisely when the other is non-split. Let $I$ denote the group of quasi-isogenies of $A^0$ which respect all additional structures. Since $A^0$ is in the basic stratum, $I$ can be seen as an inner-form of $\mathbb G$ such that $I_{\mathbb A_f^p} \simeq \mathbb G_{\mathbb A_f^p}$ and $I_{\mathbb Q_p} \simeq J$. One may therefore think of $I(\mathbb Q)$ as a subgroup both of $\mathbb G(\mathbb A_f^p)$ and of $J$ at the same time. The $p$-adic uniformization theorem gives a geometric link between the Rapoport-Zink space and the supersingular locus of the Shimura variety. 

\noindent \begin{theo}[\cite{RTW}]
There is an isomorphism of formal schemes over $\mathrm{Spf}(\mathcal O_{E'})$
$$\Theta_{K^p}: I(\mathbb Q)\backslash \left( \mathcal M \times \mathbb G(\mathbb A_f^p) / K^p \right) \xrightarrow{\sim} \widehat{\mathrm S}_{K^p}^{\mathrm{ss}}\otimes_{\mathcal O_E} \mathcal O_{E'}$$
which is compatible with the $G(\mathbb A_f^p)$-action as the level $K^p$ varies.
\end{theo}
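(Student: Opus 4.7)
The plan is to adapt Rapoport--Zink's general uniformization theorem (\cite{RZ} Theorem 6.30) to the present ramified PEL setting, using $(A^0,\lambda^0,\iota^0,\overline{\eta}^{0,p})$ as the base point. On functorial points, I would send a pair $((X,\iota_X,\lambda_X,\rho_X), gK^p) \in (\mathcal M \times \mathbb G(\mathbb A_f^p)/K^p)(S)$ to an abelian scheme $A$ over $S$ built from the quasi-isogeny $\rho_X$ via Serre--Tate: concretely, $\rho_X$ lifts to an $\mathcal O_{\mathbb E}$-linear quasi-isogeny $\rho : A \to A^0 \times_{\kappa(E')} \overline S$ with $A[p^\infty] \simeq X$. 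Transporting $\iota^0$ and a suitable $\mathbb Q^\times$-rescaling of $\lambda^0$ through $\rho$ yields the action $\iota$ and a polarization $\lambda$ of order prime to $p$ (the connected component decomposition $\mathcal M = \bigsqcup_i \mathcal M_i$ controls the similitude factor), while composing $g$ with the prime-to-$p$ component of $\rho$ and with $\overline{\eta}^{0,p}$ produces the required $K^p$-level structure. The Kottwitz and Pappas conditions transfer from $X$ to $A$ since both share the same Lie algebra.

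Next I would verify well-definedness under the two equivalence relations. A diagonal element $\gamma \in I(\mathbb Q)$ acts by $(\rho_X, g) \mapsto (\gamma \circ \rho_X, \gamma \cdot g)$, and the two modifications cancel in the construction of the output tuple up to isomorphism of PEL abelian schemes. Right-invariance under $K^p$ is automatic after passing to $K^p$-orbits of level structures. For bijectivity at geometric points $k$ of $\widehat{\mathrm S}_{K^p}^{\mathrm{ss}}(k)$, the essential input is that every supersingular PEL abelian variety of signature $(1,n-1)$ over $k$ is $\mathcal O_{\mathbb E}$-linearly isogenous to $A^0_k$; this follows from Honda--Tate theory for unitary groups combined with the uniqueness of the isocrystal in the basic stratum, rigidified by the Kottwitz and Pappas conditions. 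Any such isogeny produces a preimage in $\mathcal M(k) \times \mathbb G(\mathbb A_f^p)/K^p$, and the ambiguity is precisely by $I(\mathbb Q)$. To upgrade this pointwise bijection to an isomorphism of formal schemes, I would appeal to the Serre--Tate theorem a second time: near each point the formal deformation theory of the PEL abelian scheme $A$ is governed entirely by that of its $p$-divisible group with additional structures, which is the deformation problem represented by $\mathcal M$ itself.

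The main obstacle is the global input identifying $I$ as an inner form of $\mathbb G$ whose $\mathbb Q_p$-component is exactly $J$. This requires carefully comparing the hermitian spaces $C$ (attached to $\mathbb X$) and $\mathbb V \otimes_{\mathbb Q} E$, accounting for the discriminant flip in the even case already flagged in \ref{Uniformization}, and checking that the supersingular PEL isogeny class over $k$ is unique; both amount to a bookkeeping of local invariants at $p$ and at the ramification of $\mathbb E/\mathbb Q$. Hecke compatibility as $K^p$ shrinks is built into the construction via the $g$-factor. Finally, the base change $\otimes_{\mathcal O_E} \mathcal O_{E'}$ on the right-hand side is forced by the fact that the framing object $\mathbb X$ is only defined over $\kappa(E')$ (which equals $\mathbb F_{p^2}$ in the case $E = E_2$), whereas the integral Shimura variety $\mathrm S_{K^p}$ naturally lives over $\mathcal O_E$.
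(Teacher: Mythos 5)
The paper does not prove this statement: it is quoted directly from \cite{RTW} Section~7, which specializes the general Rapoport--Zink uniformization theorem to the ramified PEL unitary setting. Your sketch follows that same standard route and captures the correct skeleton --- lift $A^0$ along the quasi-isogeny $\rho_X$ via Serre--Tate, rescale the polarization by a power of $p$ tracked by the similitude-factor decomposition $\mathcal M = \bigsqcup_i \mathcal M_i$, splice the prime-to-$p$ level structure with $g$ and $\overline{\eta}^{0,p}$, and upgrade the pointwise bijection to a formal-scheme isomorphism using Serre--Tate deformation theory.

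The one step you compress into ``bookkeeping of local invariants'' deserves to be called out as substantive: the claim that the supersingular locus is a \emph{single} $I(\mathbb Q)$-isogeny class. Honda--Tate theory together with uniqueness of the basic isocrystal gives you local quasi-isogenies at every place, but a global $\mathcal O_{\mathbb E}$-linear quasi-isogeny to $A^0_k$ compatible with the polarization is a local-global assertion. In the Rapoport--Zink framework the uniformization is proved one isogeny class at a time, and the full basic stratum can a priori decompose into several classes indexed by a set governed by the failure of the Hasse principle for $\mathbb G$ (a group like $\ker^1(\mathbb Q,\mathbb G)$). Collapsing to a single copy of $\mathcal M$ on the left, as in the stated theorem, therefore requires checking that this indexing set is a singleton for the ramified unitary similitude group; this is a genuine arithmetic input, not a consequence of the isocrystal being unique, and your sketch should either carry out the verification or cite it explicitly from \cite{RTW}.
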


\noindent As in \cite{muller} 3.6, one also obtains uniformization isomorphisms $(\Theta_{K^p})_s$ and $\Theta_{K^p}^{\mathrm{an}}$ for the special and the generic fibers respectively. 

\paragraph{}\label{BTStrataOnShimura}Let $g_1,\ldots,g_s \in \mathbb G(\mathbb A_f^p)$ be a system of representatives for the double coset space $I(\mathbb Q)\backslash \mathbb G(\mathbb A_f^p) / K^p$, and let $\Gamma_k := I(\mathbb Q) \cap g_kK^pg_k^{-1}$ for $ 1\leq k \leq s$. These are subgroups of $J$ which are discrete and cocompact modulo the center. The uniformization theorem for the special fiber may be written as 
$$(\Theta_{K^p})_s: \bigsqcup_{k=1}^{s} \Gamma_k \backslash \mathcal M_{\mathrm{red}} \xrightarrow{\sim} \overline{S}_{K^p}^{\mathrm{ss}} \otimes_{\mathbb F_p} \kappa(E').$$
Let $\Phi_{K^p}^k$ be the composition $\mathcal M_{\mathrm{red}}\rightarrow \Gamma_k \backslash \mathcal M_{\mathrm{red}} \rightarrow \overline{S}_{K^p}^{\mathrm{ss}}$, and let $\Phi_{K^p}$ be the union of the $\Phi_{K^p}^k$. By the same arguments as \cite{vw2} Section 6.4, the surjection $\Phi_{K^p}$ is a local isomorphism. Moreover the restriction of $\Phi_{K^p}^k$ to any closed Bruhat-Tits stratum $\mathcal M_{\Lambda} \subset \mathcal M_{\mathrm{red}}$ is an isomorphism onto its image. We will denote this scheme-theoretic image by $\overline{\mathrm S}_{K^p,\Lambda,k}$. For varying $\Lambda$ and $k$, these subschemes constitute the closed strata of the \textbf{Bruhat-Tits stratification} of the supersingular locus of the Shimura variety. 

\section{On the cohomology of the Rapoport-Zink space}

\subsection{The spectral sequence associated to the Bruhat-Tits open cover of $\mathcal M^{\mathrm{an}}$}

\paragraph{}Let $\mathcal M^{\mathrm{an}}$ denote the Berkovich generic fiber of the Rapoport-Zink space. It is a smooth analytic space over $E'$ of dimension $n-1$. Let $\mathrm{red}:\mathcal M^{\mathrm{an}}\rightarrow \mathcal M_{\mathrm{red}}$ denote the reduction map. Write $\mathcal M^{\mathrm{an}}_i := \mathrm{red}^{-1}(\mathcal M_{\mathrm{red},i})$ so that 
$$\mathcal M^{\mathrm{an}} = \bigsqcup_{i\in \mathbb Z} \mathcal M^{\mathrm{an}}_i,$$
each $\mathcal M^{\mathrm{an}}_i$ being connected. For a vertex lattice $\Lambda \in \mathcal L_i$, let 
$$U_{\Lambda} := \mathrm{red}^{-1}(\mathcal M_{\Lambda}) \subset \mathcal M^{\mathrm{an}}_i$$
denote the analytical tube of the closed Bruhat-Tits stratum indexed by $\Lambda$. Since the map $\mathrm{red}$ is anticontinuous, it is an open subspace of the generic fiber. The group $J$ acts on $\mathcal M^{\mathrm{an}}$ and the map $\mathrm{red}$ is $J$-equivariant. The action restricts to an action of the maximal compact subgroup $J_{\Lambda}$ on $U_{\Lambda}$.

\paragraph{} We fix a prime number $\ell \not = p$ and we consider the cohomology groups 
$$\mathrm H_c^{\bullet}(\mathcal M^{\mathrm{an}},\overline{\mathbb Q_{\ell}}) := \varinjlim_U \varprojlim_k \mathrm H_c^{\bullet}(U\,\widehat{\otimes}\,\mathbb C_p,\mathbb Z/\ell^k\mathbb Z)\otimes \overline{\mathbb Q_{\ell}},$$
where $U$ runs over all relatively compact open subspaces of $\mathcal M_{\mathrm{an}}$. These cohomology groups are representations of $J\times W$ where $W$ is the absolute Weil group of $E$. The $W$-action on the cohomology group is defined in the following specific way. The inertia $I \subset W$ acts on the coefficients $\mathbb C_p$, whereas the action of the Frobenius is given by \textbf{Rapoport and Zink's descent datum} on $\mathcal M \,\widehat{\otimes}\, \mathcal O_{\widecheck{E}}$. As we recalled in \cite{muller} 4.1.2, this descent datum is an isomorphism $\alpha_{\mathrm{RZ}}:\mathcal M \,\widehat{\otimes}\, \mathcal O_{\widecheck{E}} \xrightarrow{\sim} \sigma^{*}(\mathcal M \,\widehat{\otimes}\, \mathcal O_{\widecheck{E}})$, where $\sigma \in \mathrm{Gal}(\widecheck E/E) \simeq \mathrm{Gal}(\mathbb F/\mathbb F_p)$ is the arithmetic Frobenius. The right-hand side can be identified with the Rapoport-Zink space for $(\mathbb X\otimes \mathbb F)^{(p)}$. This isomorphism is induced by the relative Frobenius $\mathcal F_{\mathbb X}: \mathbb X\otimes \mathbb F \rightarrow (\mathbb X\otimes \mathbb F)^{(p)}$, via $(X,\iota,\lambda,\rho) \mapsto (X,\iota,\lambda,\mathcal F_{\mathbb X} \circ \rho)$. We fix a lift $\mathrm{Frob} \in W$ of the \textit{geometric} Frobenius. Then the action of $\mathrm{Frob}$ on $\mathrm H_c^{\bullet}(\mathcal M^{\mathrm{an}})$ is induced by $\alpha_{\mathrm{RZ}}^{-1}$.\\
Via covariant Dieudonné theory, the relative Frobenius $\mathcal F_{\mathbb X}$ corresponds to the Verschiebung morphism $\mathbf V$ on $C_{\mathbb F} = C\otimes_{E} \widecheck{E}$. If $k$ is any perfect extension of $\kappa(\widecheck E)$, the Verschiebung sends a $k$-rational point $M\in \mathcal M(k)$ to $\mathbf V M = \eta^{\sigma^{-1}}\pi\tau^{-1}(M) = \pi\tau^{-1}(M)$ (since $\eta$ is a scalar unit). Hence, the descent datum $\alpha_{\mathrm{RZ}}$ sends a $k$-point $M$ to $\pi\tau^{-1}(M)$.

\begin{rk}
The descent datum $\alpha_{\mathrm{RZ}}$ is not effective. The rational structure of $\mathcal M$ over $\mathcal O_{E'}$ is induced by $\pi\alpha_{\mathrm{RZ}}^{-1}$ if $E=E_1$, and by $(\pi\alpha_{\mathrm{RZ}}^{-1})^2$ if $E=E_2$. It maps a $k$-point $M$ to $\tau(M)$ in the first case, and to $\tau^2(M)$ in the second case.\\
In any case, we will denote by $\tau$ the action on the cohomology induced by $\pi\alpha_{\mathrm{RZ}}^{-1}$, and we refer to it as the \textbf{rational Frobenius}. We have $\tau = (\pi^{-1}\cdot\mathrm{id},\mathrm{Frob}) \in J\times W$, the $\pi^{-1}$ coming from contravariance of cohomology with compact support. 
\end{rk}

\paragraph{}One also defines in a similar way the cohomology of the connected components $\mathcal M^{\mathrm{an}}_i$. Any element $g\in J$ induces an isomorphism 
$$g:\mathrm H_c^{\bullet}(\mathcal M^{\mathrm{an}}_i,\overline{\mathbb Q_{\ell}}) \xrightarrow{\sim} \mathrm H_c^{\bullet}(\mathcal M^{\mathrm{an}}_{i+\alpha(g)},\overline{\mathbb Q_{\ell}}).$$
Besides, $\mathrm{Frob}$ induces an isomorphism between the cohomology of $\mathcal M^{\mathrm{an}}_i$ and that of $\mathcal M^{\mathrm{an}}_{i+1}$. Let $(J\times W)^{\circ}$ be the subgroup of elements $(g,u\mathrm{Frob}^j)$ where $u\in I$ and $\alpha(g) = -j$. In fact we have $(J\times W)^{\circ} = (J^{\circ}\times I)\tau^{\mathbb Z}$. Then each cohomology group $\mathrm H_c^{\bullet}(\mathcal M^{\mathrm{an}}_i,\overline{\mathbb Q_{\ell}})$ is a $(J\times W)^{\circ}$-representation and we have 
$$\mathrm H_c^{\bullet}(\mathcal M^{\mathrm{an}},\overline{\mathbb Q_{\ell}}) \simeq \mathrm{c-Ind}_{(J\times W)^{\circ}}^{J\times W}\,\mathrm H_c^{\bullet}(\mathcal M^{\mathrm{an}}_i,\overline{\mathbb Q_{\ell}}).$$

\paragraph{}\label{SpectralSequenceGeneric} We write $\mathcal L_i^{\mathrm{max}}$ for the subset of $\mathcal L_i$ consisting only of lattices of orbit type $t_{\mathrm{max}}$, and we write $\mathcal L^{\mathrm{max}}$ for the disjoint union of the $\mathcal L_i^{\mathrm{max}}$. The collection $\{U_{\Lambda}\}_{\Lambda\in \mathcal L^{\mathrm{max}}}$ forms a locally finite open cover of $\mathcal M^{\mathrm{an}}$. By \cite{fargues} Proposition 4.2.2, we obtain a $J$-equivariant spectral sequence
$$E_1^{a,b} = \bigoplus_{\gamma \in I_{-a+1}} \mathrm H_c^b(U(\gamma),\overline{\mathbb Q_{\ell}}) \implies \mathrm H_c^{a+b}(\mathcal M^{\mathrm{an}},\overline{\mathbb Q_{\ell}}).$$
Here for $s\geq 1$ the index set is given by 
$$I_s := \left\{ \gamma \in \mathcal L^{\mathrm{max}} \,\middle | \, \#\gamma = s \text{ and } U(\gamma) := \bigcap_{\Lambda\in\gamma}U_{\Lambda} \not = \emptyset \right\}.$$
Note that if $\gamma \in I_s$ then there exists $i\in \mathbb Z$ such that $\gamma \subset \mathcal L_i^{\mathrm{max}}$ and $U(\gamma) = U_{\Lambda(\gamma)}$ where $\Lambda(\gamma) := \bigcap_{\Lambda\in\gamma}\Lambda \in \mathcal L_i$. There is a natural action of $J$ on $I_s$, and an element $g\in J$ induces an isomorphism between the cohomology of $U(\gamma)$ and that of $U(g\cdot\gamma)$.

\begin{rk}
The descent datum $\pi\alpha_{\mathrm{RZ}}^{-1}$, mapping a $k$-point $M$ to $\tau(M)$, induces the $\mathbb F_p$-rational structure on $\mathcal M_{\Lambda}\otimes \mathbb F$. It induces an action of $\tau$ on the cohomology of $U_{\Lambda}$. Since for any $\gamma \in I_s$ we have $\pi\cdot\gamma \in I_s$, each term $E_1^{a,b}$ carries a $W$-representation. The spectral sequence is then $J\times W$-equivariant.
\end{rk}

\paragraph{}\label{SpectralSequenceIsInduced} One may also consider the similar $(J\times W)^{\circ}$-equivariant spectral sequence for the $0$-th connected component $\mathcal M^{\mathrm{an}}_0$,
$$E'^{a,b}_1: \bigoplus_{\gamma \in I'_{-a+1}} \mathrm H^b_c(U(\gamma),\overline{\mathbb Q_{\ell}}) \implies \mathrm H^{a+b}_c(\mathcal M^{\mathrm{an}}_0,\overline{\mathbb Q_{\ell}}),$$
where $I'_{-a+1}$ is defined as $I_{-a+1}$ except that we only consider subsets $\gamma$ of $\mathcal L_0^{(m)}$. The compact induction operation $\mathrm{c-Ind}_{J^{\circ}}^{J}$ is an additive exact functor between the categories of smooth representations of $J^{\circ}$ and of $J$. We have 
$$E = \mathrm{c-Ind}_{J^{\circ}}^J \,E',$$
in the sense that all terms and differentials in the sequence $E$ are the compact induction of the terms and differentials of $E'$. This observation will play a crucial role later in \ref{VanishingExt1}.

\paragraph{}\label{TrivialNearbyCycles} For $\Lambda \in \mathcal L$, the cohomology groups $\mathrm H_c^{\bullet}(U_{\Lambda},\overline{\mathbb Q_{\ell}})$ are representations of the subgroup $(J_{\Lambda}\times I)\cdot\tau^{\mathbb Z} \subset (J\times W)^{\circ}$. They are related to the cohomology of the special fiber $\mathcal M_{\Lambda}$ by the following proposition. 

\begin{prop}
Let $\Lambda \in \mathcal L$. There is a natural $(J_{\Lambda}\times I)\cdot\tau^{\mathbb Z}$-equivariant isomorphism 
$$\mathrm H_c^b(U_{\Lambda},\overline{\mathbb Q_{\ell}}) \xrightarrow{\sim} \mathrm H_c^{2(n-1)-b}(\mathcal M_{\Lambda},\overline{\mathbb Q_{\ell}})^{\vee}(n-1),$$
where on the right-hand side the inertia $I$ acts trivially and the rational Frobenius $\tau$ acts like the Frobenius $F$.
\end{prop}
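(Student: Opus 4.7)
The plan is to combine three ingredients: the local smoothness of $\mathcal M$ in a formal neighborhood of $\mathcal M_{\Lambda}$ obtained through the $p$-adic uniformization, Berkovich's comparison between the cohomology of an analytical tube and the cohomology of the underlying subscheme of the special fiber with coefficients in the nearby cycles, and Poincaré duality on the smooth Berkovich generic fiber.

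First I would use the $p$-adic uniformization theorem \ref{Uniformization}: an étale neighborhood of $\mathcal M_{\Lambda}$ in $\mathcal M$ is identified with an étale neighborhood of the corresponding stratum $\overline{\mathrm S}_{K^p,\Lambda,k}$ inside $\widehat{\mathrm S}_{K^p}^{\mathrm{ss}}$. Since for $K^p$ small enough the integral model $\mathrm S_{K^p}$ is a smooth quasi-projective $\mathcal O_{\mathbb E,(p)}$-scheme at hyperspecial level, its formal completion along the supersingular locus is smooth, and therefore so is $\mathcal M$ in a formal neighborhood of $\mathcal M_{\Lambda}$. By Berkovich's vanishing cycles theorem for smooth formal schemes, the restriction of the complex $R\Psi\overline{\mathbb Q_{\ell}}$ to $\mathcal M_{\Lambda}$ is canonically isomorphic to the constant sheaf $\overline{\mathbb Q_{\ell}}$.

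Next, Berkovich's comparison theorem identifies $R\Gamma(U_{\Lambda},\overline{\mathbb Q_{\ell}})$ with $R\Gamma(\mathcal M_{\Lambda}, R\Psi\overline{\mathbb Q_{\ell}}_{|\mathcal M_{\Lambda}})$, which combined with the previous step yields a natural isomorphism $\mathrm H^{\bullet}(U_{\Lambda},\overline{\mathbb Q_{\ell}}) \simeq \mathrm H^{\bullet}(\mathcal M_{\Lambda},\overline{\mathbb Q_{\ell}})$. Functoriality of the reduction map ensures $J_{\Lambda}$-equivariance. The inertia $I$ acts trivially on the right-hand side because $\mathcal M_{\Lambda}$ is defined over $\mathbb F_p$, and the rational Frobenius $\tau$ corresponds to the geometric Frobenius $F$ on $\mathcal M_{\Lambda}\otimes\mathbb F$ by virtue of the explicit description $M\mapsto \tau(M)$ of the descent datum $\pi\alpha_{\mathrm{RZ}}^{-1}$ recalled at the beginning of this subsection.

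Finally, since $U_{\Lambda}$ is a smooth Berkovich analytic space of dimension $n-1$, Poincaré duality provides an isomorphism $\mathrm H_c^b(U_{\Lambda},\overline{\mathbb Q_{\ell}}) \simeq \mathrm H^{2(n-1)-b}(U_{\Lambda},\overline{\mathbb Q_{\ell}})^{\vee}(n-1)$. Combined with the preceding identification and the projectivity of $\mathcal M_{\Lambda}$ (so that ordinary cohomology agrees with compactly supported cohomology on the right), this produces the claimed isomorphism. The most delicate step is the first one: one must verify that the triviality of nearby cycles on the Shimura variety genuinely transfers through the uniformization to a triviality on $\mathcal M$ near $\mathcal M_{\Lambda}$, and that this transfer is compatible with the descent datum $\alpha_{\mathrm{RZ}}$, so that $\tau$ is identified with the geometric Frobenius $F$ itself rather than some twist. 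The remaining ingredients are classical.
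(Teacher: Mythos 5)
Your proposal follows essentially the same route as the paper: the paper invokes its predecessor (\cite{muller} 4.1.5 Proposition) for the identification $\mathrm H^b(U_{\Lambda})\simeq \mathrm H^b(\mathcal M_{\Lambda})$, which is precisely the $p$-adic uniformization + smoothness of $\mathrm S_{K^p}$ + triviality of nearby cycles + Berkovich comparison argument you spell out, and then concludes exactly as you do via Poincaré duality on the smooth space $U_{\Lambda}$ together with projectivity of $\mathcal M_{\Lambda}$. Your explicit attention to the compatibility of the identification with the descent datum $\pi\alpha_{\mathrm{RZ}}^{-1}$, so that $\tau$ matches the geometric Frobenius $F$ on $\mathcal M_{\Lambda}$, is the correct point to flag and is handled in the same way.
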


\begin{proof}
By the same arguments as in \cite{muller} 4.1.5 Proposition, we have an isomorphism 
$$\mathrm H^b(U_{\Lambda},\overline{\mathbb Q_{\ell}}) \xrightarrow{\sim} \mathrm H^b(\mathcal M_{\Lambda},\overline{\mathbb Q_{\ell}}).$$
This requires the fact that the integral model of the Shimura variety $\mathrm S_{K^p}$ with hyperspecial level at $p$ is smooth, so that the nearby cycles sheaf is trivial. Since $\mathcal M_{\Lambda}$ is projective, the right-hand side coincide with the cohomology with compact support. On the other hand, we apply Poincaré duality on the left-hand side to obtain 
$$\mathrm H_c^b(U_{\Lambda},\overline{\mathbb Q_{\ell}}) \simeq \mathrm H^{2(n-1)-b}(U_{\Lambda},\overline{\mathbb Q_{\ell}})^{\vee}(n-1) \simeq \mathrm H^{2(n-1)-b}_c(\mathcal M_{\Lambda},\overline{\mathbb Q_{\ell}})^{\vee}(n-1).$$
\end{proof}

\begin{rk}
The cohomology groups $\mathrm H_c^{\bullet}(\mathcal M_{\Lambda})$ decompose as a sum of irreducible unipotent representations of $\mathrm{GSp}(2\theta,\mathbb F_p)$, inflated to $J_{\Lambda}$. The smallest field of definition of unipotent representations of classical groups is $\mathbb Q$ by \cite{LusztigRational}, therefore they are autodual. Thus, we have a $\mathrm{GSp}(2\theta,\mathbb F_p)$-equivariant isomorphism $\mathrm H_c^{\bullet}(\mathcal M_{\Lambda})^{\vee} \simeq \mathrm H_c^{\bullet}(\mathcal M_{\Lambda})$, but it is not equivariant for the action of the Frobenius $F$.
\end{rk}

\noindent The situation is less favorable than in the unramified case since the Frobenius action on the cohomology of $\mathcal M_{\Lambda}$, and consequently of $U_{\Lambda}$ as well, is not pure (at least when $t(\Lambda) \geq 6$). Therefore, \cite{muller} 4.1.8 Corollary does not seem to hold in general, ie. one may not deduce from the previous proposition that the spectral sequence degenerates on the second page, splits and that $\tau$ acts semi-simply on the abutment. However, the spectral sequence does eventually degenerate in deeper pages since the non-zero terms $E_1^{a,b}$ are concentrated in a finite strip. In particular, the inertia acts trivially on $\mathrm H_c^{\bullet}(\mathcal M^{\mathrm{an}})$.

\paragraph{}\label{SpectralSequenceGenericBis} In order to analyze the $J$-action on $E_1^{a,b}$, we rewrite the direct sum by making compactly induced representations appear. For $s\geq 1$ we define 
$$I_s^{(\theta)} := \{\gamma \in I_s \,|\, t(\Lambda(\gamma)) = 2\theta\}.$$
We denote by $N(\Lambda_{\theta})$ the set $N(\theta_{\mathrm{max}} - \theta,V^1_{\theta})$ that we defined in \ref{CountingBTStrata}. It corresponds to the set of lattices $\Lambda \in \mathcal L_0$ of orbit type $t_{\mathrm{max}}$ containing $\Lambda_{\theta}$. We then define 
$$K_s^{(\theta)} := \{\gamma \subset N(\Lambda_{\theta}) \,|\, \#\gamma = s \text{ and } \Lambda(\gamma) = \Lambda_{\theta}\}.$$
If $\gamma \in I_s^{(\theta)}$ then there exists some $g \in J$ such that $g\cdot\Lambda(\gamma) = \Lambda_{\theta}$. Therefore $g\cdot\gamma \in K_s^{(\theta)}$, and the coset $J_{\theta}\cdot g$ is uniquely determined. This induces a bijection of orbit sets 
$$J\backslash I_s^{(\theta)} \xrightarrow{\sim} J_{\theta}\backslash K_s^{(\theta)}.$$
Let us write $k_{s,\theta} := \# K_s^{(\theta)} \geq 0$. 

\begin{prop}
We have 
$$E_1^{a,b} = \bigoplus_{\theta = 0}^{\theta_{\mathrm{max}}} \left(\mathrm{c-Ind}_{J_{\theta}}^J \, \mathrm H_c^b(U_{\Lambda_{\theta}},\overline{\mathbb Q_{\ell}})\right)^{k_{-a+1,\theta}}.$$
\end{prop}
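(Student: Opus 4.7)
The plan is to decompose the index set $I_{-a+1}$ defining $E_1^{a,b}$ in two successive steps and to identify each resulting direct sum as a compact induction from the appropriate maximal compact subgroup $J_\theta$.

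First, since the $J$-action on $\mathcal L$ preserves orbit types, splitting according to $t(\Lambda(\gamma)) = 2\theta$ gives the $J$-stable decomposition $I_{-a+1} = \bigsqcup_{\theta=0}^{\theta_{\max}} I_{-a+1}^{(\theta)}$ and hence
\[
E_1^{a,b} \;\simeq\; \bigoplus_{\theta=0}^{\theta_{\max}} \, \bigoplus_{\gamma \in I_{-a+1}^{(\theta)}} \mathrm H_c^b(U(\gamma),\overline{\mathbb Q_{\ell}})
\]
as $J \times W$-representations.

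Second, fix $\theta$. The $J$-transitivity on vertex lattices of orbit type $2\theta$, together with the inclusion $\mathrm{Stab}_J(\delta) \subset J_\theta$ for any $\delta \in K_{-a+1}^{(\theta)}$ (both recorded in the paragraph immediately preceding the proposition), refines the orbit-set bijection $J\backslash I_{-a+1}^{(\theta)} \simeq J_\theta \backslash K_{-a+1}^{(\theta)}$ to a $J$-equivariant bijection
\[
J \times^{J_\theta} K_{-a+1}^{(\theta)} \;\xrightarrow{\sim}\; I_{-a+1}^{(\theta)}, \qquad (g,\delta) \longmapsto g \cdot \delta.
\]
Combined with the functoriality $g\colon \mathrm H_c^b(U(\gamma)) \xrightarrow{\sim} \mathrm H_c^b(U(g\gamma))$ and with the identity $U(\delta) = U_{\Lambda_\theta}$ valid for every $\delta \in K_{-a+1}^{(\theta)}$, the standard dictionary for representations indexed by induced $J$-sets yields
\[
\bigoplus_{\gamma \in I_{-a+1}^{(\theta)}} \mathrm H_c^b(U(\gamma)) \;\simeq\; \mathrm{c-Ind}_{J_\theta}^J \Bigl( \mathrm H_c^b(U_{\Lambda_\theta},\overline{\mathbb Q_\ell}) \Bigr)^{k_{-a+1,\theta}},
\]
since the inner direct sum contributes $k_{-a+1,\theta} = \# K_{-a+1}^{(\theta)}$ copies of $\mathrm H_c^b(U_{\Lambda_\theta})$ and compact induction commutes with finite direct sums. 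Summing over $\theta$ gives the claimed identity.

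The main technical point to be careful about is the $J_\theta$-action on the inner sum $\bigoplus_{\delta \in K_{-a+1}^{(\theta)}} \mathrm H_c^b(U_{\Lambda_\theta})$, which combines the intrinsic action on $\mathrm H_c^b(U_{\Lambda_\theta})$ with the permutation of summands induced by the action of $J_\theta$ on $K_{-a+1}^{(\theta)} \subset N(\Lambda_\theta)$; the proof has to verify that this produces a genuine $k_{-a+1,\theta}$-fold direct sum of $\mathrm{c-Ind}_{J_\theta}^J \mathrm H_c^b(U_{\Lambda_\theta})$, as in the analogous unramified computation of \cite{muller}. The $W$-equivariance then follows because the rational Frobenius $\tau = (\pi\cdot \mathrm{id}, \mathrm{Frob})$ acts on the index sets through the central element $\pi\cdot \mathrm{id} \in J$, which intertwines correctly with the bijection $J \times^{J_\theta} K_{-a+1}^{(\theta)} \xrightarrow{\sim} I_{-a+1}^{(\theta)}$ and with the shift across the components $\mathcal L_i$.
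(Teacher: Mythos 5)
Your structural reduction matches the approach that the paper delegates to \cite{muller}~4.1.10: split by orbit type, upgrade the orbit-set bijection $J\backslash I_s^{(\theta)} \simeq J_\theta\backslash K_s^{(\theta)}$ to the $J$-equivariant bijection $J\times^{J_\theta}K_s^{(\theta)}\xrightarrow{\sim} I_s^{(\theta)}$, and use $U(\delta)=U_{\Lambda_\theta}$ for $\delta\in K_s^{(\theta)}$ to rewrite the sum as a compact induction. The verification that stabilizers of $\delta\in K_s^{(\theta)}$ lie in $J_\theta$ and the surjectivity/injectivity of your map $(g,\delta)\mapsto g\cdot\delta$ are both correct.

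However, the step you label as ``the main technical point'' is exactly where the substance lies, and you do not actually prove it; you only say ``the proof has to verify'' it and defer to the analogous unramified argument. After your bijection, what you have shown is
\[
\bigoplus_{\gamma\in I_{-a+1}^{(\theta)}}\mathrm H_c^b(U(\gamma)) \;\simeq\; \mathrm{c\text{-}Ind}_{J_\theta}^J\Bigl(\mathrm H_c^b(U_{\Lambda_\theta})\otimes \overline{\mathbb Q_\ell}\bigl[K_{-a+1}^{(\theta)}\bigr]\Bigr),
\]
where the inner module carries the diagonal $J_\theta$-action: $J_\theta$ acts on $\mathrm H_c^b(U_{\Lambda_\theta})$ through the $\mathrm{GSp}(V_\theta^0)$ component of $\mathcal J_\theta$ and on $K_{-a+1}^{(\theta)}\subset N(\Lambda_\theta)$ through the $\mathrm{GO}(V_\theta^1)$ component. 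To pass from this to $\bigl(\mathrm{c\text{-}Ind}_{J_\theta}^J\,\mathrm H_c^b(U_{\Lambda_\theta})\bigr)^{k_{-a+1,\theta}}$ one needs $\overline{\mathbb Q_\ell}[K_{-a+1}^{(\theta)}]$ to be a trivial $J_\theta$-module of rank $k_{-a+1,\theta}$, and this is not the case: for $\theta<\theta_{\mathrm{max}}$ the permutation action of $\mathrm{GO}(V_\theta^1)$ on $s$-element subsets of $N(\theta_{\mathrm{max}}-\theta,V_\theta^1)$ is nontrivial (and is usually transitive on distinct pairs, etc.). Equivalently, splitting $K_{-a+1}^{(\theta)}$ into $J_\theta$-orbits with representatives $\delta$ and stabilizers $H_\delta$ gives $\bigoplus_\delta\mathrm{c\text{-}Ind}_{H_\delta}^J\bigl(\mathrm H_c^b(U_{\Lambda_\theta})|_{H_\delta}\bigr)$, which is not literally a $\# K_{-a+1}^{(\theta)}$-fold direct sum of $\mathrm{c\text{-}Ind}_{J_\theta}^J\,\mathrm H_c^b(U_{\Lambda_\theta})$. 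So the final identification does require an argument that you have not supplied, and naming it as a ``technical point'' does not discharge it; this is a genuine gap in the proposal.

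Two smaller remarks. First, you should also keep track of the orientation sign coming from the \v{C}ech complex: in the equivariant model the term attached to $\gamma$ carries an extra twist by the determinant line of $\gamma$, which contributes another permutation character of $J_\theta$ on $K_{-a+1}^{(\theta)}$; this does not change the nature of the gap, but it must be accounted for in a complete proof. Second, $\tau=(\pi^{-1}\cdot\mathrm{id},\mathrm{Frob})$, not $(\pi\cdot\mathrm{id},\mathrm{Frob})$, as recorded in the paper's Remark on the rational Frobenius.
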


\noindent The proof is strictly identic to \cite{muller} 4.1.10 Proposition. 

\paragraph{}\label{MinimalEigenvalue1} By definition, we have $k_{s,\theta_{\mathrm{max}}} = \delta_{1,s}$ and $k_{1,\theta} = \delta_{\theta_{\mathrm{max}},\theta}$. Observe the term $E_1^{0,2(n-1-\theta_{\mathrm{max}})}$. Using \ref{TrivialNearbyCycles} Proposition, it is isomorphic to 
$$\mathrm{c-Ind}_{J_{\theta_{\mathrm{max}}}}^{J} \, \mathrm H_c^{2(n-1-\theta_{\mathrm{max}})}(U_{\Lambda_{\theta_\mathrm{max}}},\overline{\mathbb Q_{\ell}}) \simeq \mathrm{c-Ind}_{J_{\theta_{\mathrm{max}}}}^{J} \, \mathrm H_c^{2\theta_{\mathrm{max}}}(\mathcal M_{\Lambda_{\theta_{\mathrm{max}}}},\overline{\mathbb Q_{\ell}})^{\vee}(n-1).$$
According to \ref{CohomologyS} Theorem, we deduce that $E_1^{0,2(n-1-\theta_{\mathrm{max}})} \simeq \mathrm{c-Ind}_{J_{\theta_{\mathrm{max}}}}^{J} \, \mathbf 1$ with the rational Frobenius $\tau$ acting like multiplication by $p^{n-1-\theta_{\mathrm{max}}}$. Let us consider another non-zero term $E_1^{a,b}$ in the spectral sequence. There must be some vertex lattice $\Lambda \in \mathcal L$ which can be obtained as the intersection of $-a+1$ vertex lattices of maximal type and such that $\mathrm H_c^{b}(U_{\Lambda},\overline{\mathbb Q_{\ell}})$ is not zero. If $a\leq -1$ then $t(\Lambda) = 2\theta$ with $\theta < \theta_{\mathrm{max}}$ and by \ref{TrivialNearbyCycles} Proposition, we have that $0 \leq 2(n-1)-b \leq 2\theta$. The possible Frobenius eigenvalues on $\mathrm H_c^{2(n-1)-b}(\mathcal M_{\Lambda})$ have the form $-p^{j+1}$ for some $j\geq 0$ and $p^i$ for some $0 \leq i \leq n-1-b+\lceil b/2 \rceil$ according to \ref{CohomologyS} Theorem. After taking dual and Tate twist by $n-1$, it follows that the possible Frobenius eigenvalues on $H_c^{b}(U_{\Lambda})$ have the form $-p^{j+1}$ for some $j$ and $p^i$ for some $b-\lceil b/2 \rceil \leq i \leq n-1$. Since $b \geq 2(n-1-\theta)$ we have $b-\lceil b/2 \rceil \geq n-1-\theta$. In particular, we conclude that $i > n-1-\theta_{\mathrm{max}}$ so that the eigenvalue $p^{n-1-\theta_{\mathrm{max}}}$ does not appear in $E_1^{a,b}$, and so neither in $E_k^{a,b}$ in the deeper pages.\\
To sum up, we have observed that the eigenvalue $p^{n-1-\theta_{\mathrm{max}}}$ of $\tau$ only appears in the term of coordinates $(0,2(n-1-\theta_{\mathrm{max}}))$ and nowhere else. The Frobenius equivariance of the spectral sequence forces all differentials connected to this term to be zero. It follows that for any $k\geq 1$, we have $E_k^{0,2(n-1-\theta_{\mathrm{max}})} \simeq E_1^{0,2(n-1-\theta_{\mathrm{max}})}$. Since this term is the last nonzero term of its diagonal, it contributes to a subspace of $\mathrm H_c^{2(n-1-\theta_{\mathrm{max}})}(\mathcal M^{\mathrm{an}})$. Thus, we have obtained the following statement. 

\begin{theo}
There is a $J\times W$-equivariant monomorphism 
$$\mathrm{c-Ind}_{J_{\theta_{\mathrm{max}}}}^{J} \, \mathbf 1 \hookrightarrow \mathrm H_c^{2(n-1-\theta_{\mathrm{max}})}(\mathcal M^{\mathrm{an}}),$$
where on the left-hand side the inertia acts trivially and $\tau$ acts like multiplication by the scalar $p^{n-1-\theta_{\mathrm{max}}}$.
\end{theo}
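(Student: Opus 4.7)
The plan is to extract the desired monomorphism directly from the \v{C}ech spectral sequence of \ref{SpectralSequenceGenericBis} by isolating the corner term $E_1^{0,2(n-1-\theta_{\mathrm{max}})}$ and showing, via a Frobenius--weight argument, that it survives unchanged to $E_\infty$. Since $k_{1,\theta} = \delta_{\theta_{\mathrm{max}},\theta}$, this corner term equals $\mathrm{c\text{-}Ind}_{J_{\theta_{\mathrm{max}}}}^{J}\,\mathrm H_c^{2(n-1-\theta_{\mathrm{max}})}(U_{\Lambda_{\theta_{\mathrm{max}}}},\overline{\mathbb Q_\ell})$. I would then invoke \ref{TrivialNearbyCycles} Proposition to rewrite this as $\mathrm{c\text{-}Ind}_{J_{\theta_{\mathrm{max}}}}^{J}\,\mathrm H_c^{2\theta_{\mathrm{max}}}(\mathcal M_{\Lambda_{\theta_{\mathrm{max}}}})^{\vee}(n-1)$, and apply point (3) of \ref{CohomologyS} Theorem (with $\theta = \theta_{\mathrm{max}}$ and $i = \theta_{\mathrm{max}}$) which identifies the top cohomology of $\mathcal M_{\Lambda_{\theta_{\mathrm{max}}}}$ with the trivial unipotent representation on which $F$ acts by $p^{\theta_{\mathrm{max}}}$. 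After dualizing and applying the Tate twist by $n-1$, the inertia acts trivially and $\tau$ acts by $p^{n-1-\theta_{\mathrm{max}}}$.

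The key technical step is to verify that this eigenvalue $p^{n-1-\theta_{\mathrm{max}}}$ of $\tau$ occurs in no other term $E_1^{a,b}$ of the first page. Any other non-zero term corresponds to a vertex lattice $\Lambda$ of type $2\theta$ with $\theta<\theta_{\mathrm{max}}$ (so $a\leq-1$) and to a degree $b$ with $b\geq 2(n-1-\theta)$. Translating via \ref{TrivialNearbyCycles} Proposition, the Frobenius eigenvalues on $\mathrm H_c^b(U_\Lambda)$ come from the eigenvalues on $\mathrm H_c^{2(n-1)-b}(\mathcal M_\Lambda)$ dualized and shifted by $p^{n-1}$, and \ref{CohomologyS}~(1) together with \ref{CohomologyT}~(1) constrain these to be of the form $p^i$ with $b-\lceil b/2\rceil \leq i \leq n-1$, or of the form $-p^{j+1}$. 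The bound $b\geq 2(n-1-\theta)$ yields $i\geq n-1-\theta>n-1-\theta_{\mathrm{max}}$, so the scalar $p^{n-1-\theta_{\mathrm{max}}}$ is indeed excluded from every other term.

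From here, the conclusion is formal: by $\tau$-equivariance of the spectral sequence all differentials entering or leaving $E_1^{0,2(n-1-\theta_{\mathrm{max}})}$ must vanish on any generalized eigenspace for $p^{n-1-\theta_{\mathrm{max}}}$, hence kill the whole term; so $E_\infty^{0,2(n-1-\theta_{\mathrm{max}})}\simeq E_1^{0,2(n-1-\theta_{\mathrm{max}})}$. As $a=0$ is the largest index on its diagonal for which $E_1^{a,b}\neq 0$ (terms with $a\geq 1$ vanish by definition of the \v{C}ech indexing), this entry sits at the bottom of the induced filtration on $\mathrm H_c^{2(n-1-\theta_{\mathrm{max}})}(\mathcal M^{\mathrm{an}})$ and therefore embeds $J\times W$-equivariantly into it.

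The main obstacle I anticipate is the careful bookkeeping of Frobenius weights under the duality and Tate twist in \ref{TrivialNearbyCycles} Proposition: one must make sure that after passing from $\mathrm H_c^{2(n-1)-b}(\mathcal M_\Lambda)$ to $\mathrm H_c^b(U_\Lambda)$ the eigenvalue ranges given by \ref{CohomologyS} and \ref{CohomologyT} translate into the inequality $i\geq n-1-\theta$, strictly greater than $n-1-\theta_{\mathrm{max}}$, for every $\theta<\theta_{\mathrm{max}}$ and every admissible $b$. Once this numerical check is in place, the rest of the argument is purely formal degeneration.
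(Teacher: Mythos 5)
Your proposal is correct and follows essentially the same route as the paper: identify the corner term $E_1^{0,2(n-1-\theta_{\mathrm{max}})}$ via \ref{SpectralSequenceGenericBis}, compute it using \ref{TrivialNearbyCycles} and \ref{CohomologyS}(3), and then use the weight bound $b-\lceil b/2\rceil \geq n-1-\theta > n-1-\theta_{\mathrm{max}}$ on all other terms to force degeneration. The paper argues identically, so there is nothing to add.
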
 

\paragraph{}\label{IrreducibleSubquotients} In \cite{muller} 4.2, we made a summary of a general analysis of compactly induced representation in regards to type theory, using results of \cite{bk}, \cite{bushnell} and \cite{morris}. It leads to the following important consequence. For $n = 1$ and for $n=2$ with $C$ non-split, we have $\theta_{\mathrm{max}} = 0$ and the trivial representation $\mathbf 1$ is cuspidal for the maximal reductive quotient $\mathcal J_{0} \simeq \mathrm G(\mathrm{GSp}(0,\mathbb F_p)\times \mathrm{GO}(V_{\Lambda_0}^1))$. Here, note that $V_{\Lambda_0}^1$ has dimension $n$, and if $n=2$ with $C$ non-split then $\mathrm{GO}(V_{\Lambda_0}^1) \simeq \mathrm{GO}^{-}(2,\mathbb F_p)$ is the non-split finite orthogonal group in two variables. We then define 
$$\sigma_0 := \mathrm{c-Ind}_{\mathrm N_{J}(J_0)}^J \, \mathbf 1,$$ 
where $\mathrm N_{J}(J_0)$ is the normalizer of $J_0$ in $J$. According to \cite{morris} 4.1 Proposition, $\sigma_0$ is an irreducible supercuspidal representation of $J$.\\
If $n\geq 3$ or if $n=2$ with $C$ split, then the trivial representation is not cuspidal for $\mathcal J_{\theta_{\mathrm{max}}}$.\\
Eventually, if $V$ is any smooth representation of $J$ and if $\chi$ is any smooth character of $\mathrm{Z}(J) \simeq E^{\times}$, we denote by $V_{\chi}$ the largest quotient of $V$ on which $\mathrm{Z}(J)$ acts through $\chi$.

\begin{prop}
Let $\chi$ be an unramified character of $E^{\times}$. 
\begin{enumerate}[label=\upshape (\arabic*), topsep = 0pt]
\item If $n=1$ or $n=2$ with $C$ non-split, all irreducible subquotients of $V := \mathrm{c-Ind}_{J_0}^{J} \, \mathbf 1$ are supercuspidal, and are isomorphic to an unramified twist of $\sigma_0$. In this case, we have $V_{\chi} \simeq \sigma_0\otimes\chi$.
\item If $n\geq 3$ or if $n=2$ with $C$ split, then no irreducible subquotient of $V := \mathrm{c-Ind}_{J_{\theta_{\mathrm{max}}}}^{J} \, \mathbf 1$ is supercuspidal. In this case, $V_{\chi}$ does not contain any non-zero admissible subrepresentation of $J$.
\end{enumerate}
\end{prop}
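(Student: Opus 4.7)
The plan is to apply the theory of types due to Bushnell-Kutzko and Morris, following the same strategy as in the companion paper \cite{muller}, Section 4.2. The dichotomy in the proposition mirrors the central dichotomy of that theory: if the trivial representation of the finite reductive quotient $\mathcal{J}_{\theta_{\max}} = J_{\theta_{\max}}/J_{\theta_{\max}}^+$ is cuspidal, then compact induction produces irreducible supercuspidals; otherwise, every irreducible subquotient lies in a Bernstein block attached to a proper Levi subgroup.

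For part (1), I would first identify the finite reductive quotient $\mathcal{J}_0 \simeq \mathrm{G}(\mathrm{GSp}(0,\mathbb{F}_p) \times \mathrm{GO}(V_0^1))$: when $n=1$ it reduces to $\mathrm{GO}(1,\mathbb{F}_p) \simeq \{\pm 1\}$, and when $n=2$ with $C$ non-split it is closely related to $\mathrm{GO}^-(2,\mathbb{F}_p)$, whose connected component is an anisotropic torus. In both cases the trivial representation is cuspidal, so \cite{morris} Proposition 4.1 yields that $\sigma_0 = \mathrm{c-Ind}_{\mathrm{N}_J(J_0)}^J \mathbf{1}$ is irreducible supercuspidal. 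Induction in stages then presents $V$ as the compact induction from $\mathrm{N}_J(J_0)$ of the finite-dimensional representation $\mathrm{c-Ind}_{J_0}^{\mathrm{N}_J(J_0)} \mathbf{1}$, which is the regular representation of the finite abelian quotient $\mathrm{N}_J(J_0)/J_0$. Up to finite obstruction this quotient is generated by the image of a uniformizer, and its characters therefore correspond to unramified characters of $E^\times$ twisting $\sigma_0$ through the central character. Thus $V$ decomposes as a direct sum of unramified twists of $\sigma_0$, and imposing the central character $\chi$ selects the summand $\sigma_0 \otimes \chi$.

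For part (2), the hypotheses $n \geq 3$ or $n=2$ with $C$ split force $\theta_{\max} \geq 1$, so $\mathrm{GSp}(2\theta_{\max},\mathbb{F}_p)$ has positive semisimple rank and the trivial representation is the Harish-Chandra induction of the trivial character from a proper parabolic; in particular it is not cuspidal on $\mathcal{J}_{\theta_{\max}}$. Since $V$ is projective as a smooth $J$-representation (being a compact induction from an open compact subgroup), any supercuspidal irreducible subquotient of $V$ must in fact appear as a direct summand of the component of $V$ in the corresponding Bernstein block. By Frobenius reciprocity this forces $\pi^{J_{\theta_{\max}}} \neq 0$ for such a $\pi$, and the Bushnell-Kutzko-Morris classification of supercuspidals \cite{bk}, \cite{bushnell}, \cite{morris} then requires $(J_{\theta_{\max}}, \mathbf{1})$ to be a cuspidal type, contradicting the non-cuspidality of $\mathbf{1}$ on $\mathcal{J}_{\theta_{\max}}$. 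For the non-admissibility of $V_\chi$, the plan is to identify $\mathrm{End}_J(V)$ with the Hecke algebra $\mathcal{H}(J, J_{\theta_{\max}})$ which, after imposing $\chi$, remains infinite-dimensional; through the Bushnell-Kutzko-Morris equivalence between the relevant Bernstein block and modules over this algebra, $V_\chi$ corresponds to the regular module, which contains no nonzero finite-length submodule. Since admissible representations correspond to finite-length Hecke modules under this equivalence, no nonzero admissible subrepresentation of $V_\chi$ can exist.

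The hard part will be the non-admissibility step, which requires either setting up the Bernstein-Morris equivalence of categories explicitly and tracking the projective generator, or giving a direct Hecke-algebra argument by showing that $V_\chi^{J_{\theta_{\max}}}$ is free of infinite rank over the twisted Hecke algebra while any admissible submodule would have finite-dimensional $J_{\theta_{\max}}$-invariants. In the ramified case this is more delicate than in the unramified setting because the relevant spherical Hecke algebra is that of an affine Hecke algebra attached to a twisted, possibly non-reduced, root system, so its module theory requires additional care in identifying the correct parameters.
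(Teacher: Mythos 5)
Your overall strategy — compact induction analyzed through Bushnell--Kutzko--Morris type theory exactly as in \cite{muller}~Section~4.2 — is precisely what the paper itself invokes; the paper simply refers the reader to that companion section and does not repeat the argument. So the route is aligned.

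However, your part~(1) contains a genuine error. The quotient $\mathrm{N}_J(J_0)/J_0$ is \emph{not} finite: one has $\mathrm{N}_J(J_0)=\mathrm{Z}(J)J_0$ and $\mathrm{Z}(J)\cap J_0=\mathcal O_E^\times\cdot\mathrm{id}$, so $\mathrm{N}_J(J_0)/J_0\simeq E^\times/\mathcal O_E^\times\simeq\mathbb Z$. Accordingly $\mathrm{c\text{-}Ind}_{J_0}^{\mathrm{N}_J(J_0)}\mathbf 1$ is the infinite-dimensional regular representation of $\mathbb Z$, i.e.\ $\overline{\mathbb Q_\ell}[T,T^{-1}]$ acting on itself, which has \emph{no} decomposition as a direct sum of characters. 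Hence $V$ does not decompose as a direct sum of unramified twists of $\sigma_0$ as you claim; this is exactly why the proposition is stated in terms of irreducible subquotients rather than summands. The isomorphism $V_\chi\simeq\sigma_0\otimes\chi$ still holds, but it comes from the co-invariant construction: since $\chi$ is unramified and hence trivial on $\mathcal O_E^\times=\mathrm Z(J)\cap J_0$, one gets $V_\chi=\mathrm{c\text{-}Ind}_{\mathrm Z(J)J_0}^J(\mathbf 1\boxtimes\chi)=\mathrm{c\text{-}Ind}_{\mathrm N_J(J_0)}^J(\widetilde{\mathbf 1}\otimes\chi)\simeq\sigma_0\otimes\chi$, not from selecting a summand. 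For part~(2) your Hecke-algebra sketch is a plausible reconstruction of the argument deferred to \cite{muller}~4.2, but as you yourself flag it is left incomplete; the paper offers no independent proof here against which it can be checked.
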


\noindent Combining this proposition with the previous paragraph, we deduce the following corollary.

\begin{corol}
If $n\geq 3$ or $n=2$ with $C$ split, and if $\chi$ is any unramified character of $E^{\times}$, then $\mathrm H_c^{2(n-1-\theta_{\mathrm{max}})}(\mathcal M^{\mathrm{an}})_{\chi}$ is not $J$-admissible.
\end{corol}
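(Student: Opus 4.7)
The plan is to combine the monomorphism supplied by \ref{MinimalEigenvalue1} with the non-admissibility statement of \ref{IrreducibleSubquotients}(2), using the $\tau$-weight structure to upgrade the embedding to a $J$-equivariant direct summand before taking $\chi$-coinvariants.

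Write $A := \mathrm{c\text{-}Ind}_{J_{\theta_{\mathrm{max}}}}^J \, \mathbf 1$ and $B := \mathrm H_c^{2(n-1-\theta_{\mathrm{max}})}(\mathcal M^{\mathrm{an}})$, and set $\lambda := p^{n-1-\theta_{\mathrm{max}}}$. First I would argue that the image of $A$ inside $B$ is exactly the generalized $\tau$-eigenspace $B[\lambda]$. By the theorem $\tau$ acts on the image of $A$ as the scalar $\lambda$, giving the inclusion $A \subseteq B[\lambda]$. For the reverse inclusion one revisits the weight analysis already carried out in \ref{MinimalEigenvalue1}: the eigenvalue $\lambda$ occurs on no other term $E_1^{a,b}$ of the \v{C}ech spectral sequence, so on the associated graded of $B$ this weight is exhibited only on the image of $A$, forcing $B[\lambda] = A$. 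Since $\tau$ commutes with the $J$-action and acts locally finitely on the cohomology of the Berkovich space, $B[\lambda]$ is a $J$-equivariant direct summand of $B$, realised by a polynomial projector in $\tau$.

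Applying the additive functor $(\cdot)_\chi$ to the resulting splitting $B = A \oplus B'$ yields $B_\chi = A_\chi \oplus B'_\chi$, so that $A_\chi$ sits as a $J$-subrepresentation of $B_\chi$. One checks $A_\chi \not= 0$: since $\mathcal O_E^\times \subset J_{\theta_{\mathrm{max}}}$ acts trivially on $\mathbf 1$, the representation $A$ is free as a module over $\overline{\mathbb Q_\ell}[\pi^{\mathbb Z}]$ with basis indexed by the infinite set $J_{\theta_{\mathrm{max}}} \backslash J / \pi^{\mathbb Z}$, so $A_\chi = A/(\pi - \chi(\pi))A$ has the same basis. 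If $B_\chi$ were $J$-admissible then $A_\chi$ would be a non-zero admissible subrepresentation of itself, contradicting \ref{IrreducibleSubquotients}(2).

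The main obstacle is the first step: \ref{MinimalEigenvalue1} only records a monomorphism, and to pass $(\cdot)_\chi$ through it without losing injectivity one must argue that $A$ is a direct summand rather than merely a subrepresentation. The key input is the separation of the weight $\lambda$ from all other $\tau$-weights appearing in the spectral sequence, which is already used in the proof of \ref{MinimalEigenvalue1} to conclude that no differential touches the distinguished term; the same input allows one to construct the projector onto the generalized $\tau$-eigenspace and produce the required direct summand.
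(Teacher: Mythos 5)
Your proof is correct and supplies exactly what the paper's terse ``combining this proposition with the previous paragraph, we deduce\ldots'' leaves implicit: since $(\cdot)_\chi$ is only right exact, the monomorphism of \ref{MinimalEigenvalue1} must be upgraded to a direct summand before $\chi$-coinvariants can be taken, and the separation of the $\tau$-weight $p^{n-1-\theta_{\mathrm{max}}}$ established in the proof of \ref{MinimalEigenvalue1} is precisely what makes $\mathrm{c\text{-}Ind}_{J_{\theta_{\mathrm{max}}}}^J \mathbf 1 = B[\lambda]$ a $J$-equivariant summand. The one assertion worth fleshing out is that $\tau$ is locally finite on $B$: it follows from the finiteness of the $E_\infty$-filtration (the sets $I_s$ vanish for $s > \#N(\Lambda_0)$) combined with the semisimplicity of $\tau$ on each $E_1^{a,b}$ inherited from \ref{FrobeniusEigenvalues} via \ref{TrivialNearbyCycles}, so $\tau$ in fact satisfies a single polynomial on $B$ and the projector onto $B[\lambda]$ is genuinely a polynomial in $\tau$, as you claim.
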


\paragraph{}\label{MinimalEigenvalue2} We may repeat the exact same arguments by looking this time at the smallest eigenvalue of the form $-p^{j+1}$ in the spectral sequence. To alleviate the notations, for any integer $\theta\geq 2$ we will write 
$$\rho_{\theta} := \rho_{\footnotesize \begin{pmatrix}
0 & 1 & \theta \\
{}
\end{pmatrix}}.$$
Assume that $n\geq 5$ or that $n=4$ with $C$ split, so that $\theta_{\mathrm{max}} \geq 2$. By \ref{TrivialNearbyCycles} Proposition and by \ref{CohomologyT} Theorem (3), the eigenspace of $\tau$ for $-p^{n-\theta_{\mathrm{max}}}$ in $E_1^{0,2(n-\theta_{\mathrm{max}})}$ is isomorphic to 
$$\mathrm{c-Ind}_{J_{\theta_{\mathrm{max}}}}^J\, \rho_{\theta_{\mathrm{max}}},$$
where $\rho_{\theta_{\mathrm{max}}}$ also stands for the inflation to $J_{\theta_{\mathrm{max}}}$ of the representation $\rho_{\theta_{\mathrm{max}}} \boxtimes \mathbf 1$ of the reductive quotient $\mathcal J_{\theta_{\mathrm{max}}} \simeq \mathrm G(\mathrm{GSp}(2\theta_{\mathrm{max}},\mathbb F_p) \times \mathrm{GO}(V_{\Lambda_{\theta_{\mathrm{max}}}}^1))$. We observe that the eigenvalue $-p^{n-\theta_{\mathrm{max}}}$ does not appear anywhere else in the spectral sequence. Therefore, we have 

\begin{theo}
Assume that $n\geq 5$ or that $n=4$ with $C$ split. There is a $J\times W$-equivariant monomorphism 
$$\mathrm{c-Ind}_{J_{\theta_{\mathrm{max}}}}^{J} \, \rho_{\theta_{\mathrm{max}}} \hookrightarrow \mathrm H_c^{2(n-\theta_{\mathrm{max}})}(\mathcal M^{\mathrm{an}}),$$
where on the left-hand side the inertia acts trivially and $\tau$ acts like multiplication by the scalar $-p^{n-\theta_{\mathrm{max}}}$.
\end{theo}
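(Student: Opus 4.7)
The proof will follow the same template as that of Theorem~\ref{MinimalEigenvalue1}, this time tracking the $\tau$-eigenvalue $-p^{n-\theta_{\mathrm{max}}}$ through the spectral sequence of~\ref{SpectralSequenceGenericBis}. The plan is to locate this eigenspace inside a single term $E_1^{0,2(n-\theta_{\mathrm{max}})}$ on the first page, check that it cannot occur anywhere else in the sequence, and then invoke $\tau$-equivariance of the differentials to deduce that it survives intact to the abutment.

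First I would analyse the term $E_1^{0,2(n-\theta_{\mathrm{max}})}=\mathrm{c-Ind}_{J_{\theta_{\mathrm{max}}}}^{J}\,\mathrm H_c^{2(n-\theta_{\mathrm{max}})}(U_{\Lambda_{\theta_{\mathrm{max}}}})$ (using $k_{1,\theta}=\delta_{\theta_{\mathrm{max}},\theta}$). Via~\ref{TrivialNearbyCycles} and the isomorphism $\mathcal M_{\Lambda_{\theta_{\mathrm{max}}}}\simeq S_{\theta_{\mathrm{max}}}$, this rewrites in terms of $\mathrm H_c^{2\theta_{\mathrm{max}}-2}(S_{\theta_{\mathrm{max}}})^{\vee}(n-1)$. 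The hypothesis ``$n\geq 5$, or $n=4$ with $C$ split'' is exactly the condition $\theta_{\mathrm{max}}\geq 2$, so Theorem~\ref{CohomologyT}~(3) applies and pins down the $F$-eigenspace for $-p^{\theta_{\mathrm{max}}-1}$ as $\rho_{\theta_{\mathrm{max}}}$. The combination of Poincaré duality and Tate twist in~\ref{TrivialNearbyCycles} converts an $F$-eigenvalue $\lambda$ into the $\tau$-eigenvalue $p^{n-1}/\lambda$ (exactly as used in the proof of~\ref{MinimalEigenvalue1}, where $p^{\theta_{\mathrm{max}}}\mapsto p^{n-1-\theta_{\mathrm{max}}}$), and here it sends $-p^{\theta_{\mathrm{max}}-1}$ to $-p^{n-\theta_{\mathrm{max}}}$. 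This produces a copy of $\mathrm{c-Ind}_{J_{\theta_{\mathrm{max}}}}^{J}\,\rho_{\theta_{\mathrm{max}}}$ inside $E_1^{0,2(n-\theta_{\mathrm{max}})}$, with the inertia acting trivially.

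Second, I would check that $-p^{n-\theta_{\mathrm{max}}}$ is an isolated $\tau$-eigenvalue on the entire first page. For $a\leq -1$ only vertex lattices of type $2\theta\leq 2(\theta_{\mathrm{max}}-1)$ contribute, and Theorem~\ref{CohomologyT} restricts the $F$-eigenvalues of the form $-q^{j+1}$ on $\mathrm H_c^{\bullet}(S_\theta)$ to $0\leq j\leq \theta-2$; the corresponding $\tau$-eigenvalues $-p^{n-j-2}$ therefore satisfy $n-j-2\geq n-\theta\geq n-\theta_{\mathrm{max}}+1 > n-\theta_{\mathrm{max}}$. For $a=0$ and $b\neq 2(n-\theta_{\mathrm{max}})$, producing $-p^{n-\theta_{\mathrm{max}}}$ would require the $F$-eigenvalue $-p^{\theta_{\mathrm{max}}-1}$ on some $\mathrm H_c^{2(n-1)-b}(S_{\theta_{\mathrm{max}}})$; by Theorem~\ref{CohomologyT}~(1) this can only happen for $\theta'=\theta=\theta_{\mathrm{max}}$ and $j=\theta_{\mathrm{max}}-2$, which forces $b=2(n-\theta_{\mathrm{max}})$, a contradiction.

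The main technical point is precisely this bookkeeping of admissible Frobenius eigenvalues on the closed Bruhat-Tits strata, entirely controlled by Theorem~\ref{CohomologyT}. Once the isolation of the eigenvalue is established, $\tau$-equivariance kills every incoming and outgoing differential on the $(-p^{n-\theta_{\mathrm{max}}})$-eigenspace of $E_1^{0,2(n-\theta_{\mathrm{max}})}$ on all pages, so it survives as a direct summand of $E_\infty^{0,2(n-\theta_{\mathrm{max}})}$. Since $E_\infty^{a,b}=0$ for $a\geq 1$, this summand sits at the top of the induced filtration on $\mathrm H_c^{2(n-\theta_{\mathrm{max}})}(\mathcal M^{\mathrm{an}})$, yielding the required $J\times W$-equivariant monomorphism with the prescribed actions of inertia and $\tau$.
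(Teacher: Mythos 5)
Your proposal is correct and follows the same route as the paper: identify the $(-p^{n-\theta_{\mathrm{max}}})$-eigenspace of $\tau$ in $E_1^{0,2(n-\theta_{\mathrm{max}})}$ via \ref{TrivialNearbyCycles} and \ref{CohomologyT}(3), verify the eigenvalue occurs nowhere else on the first page, and conclude by $\tau$-equivariance that the term survives to a subspace of the abutment. The paper merely states ``we may repeat the exact same arguments'' as \ref{MinimalEigenvalue1} without spelling out the eigenvalue bookkeeping, which you have done explicitly and correctly.
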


\noindent If $n = 4$ with $C$ split, if $n=5$ or if $n=6$ with $C$ non-split, we have $\theta_{\mathrm{max}} = 2$ and $\rho_{2}\boxtimes \mathbf 1$ is a cuspidal representation of $\mathcal J_{2} \simeq \mathrm G(\mathrm{GSp}(4,\mathbb F_p) \times \mathrm{GO}(V_{\Lambda_{\theta_{\mathrm{max}}}}^1))$ (note that $V_{\Lambda_{\theta_{\mathrm{max}}}}^1$ has dimension $n-4$). In this case, we define 
$$\sigma_1 := \mathrm{c-Ind}_{\mathrm N_{J}(J_2)}^J \, \widetilde{\rho_2},$$
where $\widetilde{\rho_2}$ is defined the following way. One may prove that $\mathrm{N}_{J}(J_2) = \mathrm Z(J)J_2$, and the restriction of $\rho_2$ to $\mathrm Z(J) \cap J_2$ is trivial since $\rho_2$ is unipotent. Thus, we may extend $\rho_2$ to a representation $\widetilde{\rho_2}$ of the normalizer $\mathrm{N}_{J}(J_2)$ by letting the center $\mathrm Z(J)$ act trivially. Then $\sigma_1$ is an irreducible supercuspidal representation of $J$. \\
If $n\geq 7$ or if $n=6$ with $C$ split, then $\rho_{\theta_{\mathrm{max}}}\boxtimes \mathbf 1$ is not a cuspidal representation of $\mathcal J_{\theta_{\mathrm{max}}}$. We deduce the following consequences.

\begin{prop}
Let $\chi$ be an unramified character of $E^{\times}$. 
\begin{enumerate}[label=\upshape (\arabic*), topsep = 0pt]
\item If $n=4$ with $C$ split, if $n=5$ or if $n=6$ with $C$ non-split, all irreducible subquotients of $V := \mathrm{c-Ind}_{J_2}^{J} \, \rho_2$ are supercuspidal, and are isomorphic to an unramified twist of $\sigma_1$. In this case, we have $V_{\chi} \simeq \sigma_1\otimes\chi$.
\item If $n\geq 7$ or if $n=6$ with $C$ split, then no irreducible subquotient of $V := \mathrm{c-Ind}_{J_{\theta_{\mathrm{max}}}}^{J} \, \rho_{\theta_{\mathrm{max}}}$ is supercuspidal. In this case, $V_{\chi}$ does not contain any non-zero admissible subrepresentation of $J$.
\end{enumerate}
\end{prop}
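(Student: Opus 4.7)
The plan is to adapt the type-theoretic analysis that handled the pair $(J_0,\mathbf 1)$ in the preceding proposition (and in \cite{muller} 4.2) to the pair $(J_{\theta_{\mathrm{max}}},\rho_{\theta_{\mathrm{max}}})$. Three inputs must be checked: cuspidality of $\rho_{\theta_{\mathrm{max}}}\boxtimes \mathbf 1$ on the finite reductive quotient $\mathcal J_{\theta_{\mathrm{max}}}\simeq \mathrm G(\mathrm{GSp}(2\theta_{\mathrm{max}},\mathbb F_p)\times \mathrm{GO}(V^1_{\theta_{\mathrm{max}}}))$, the structure of the normalizer $\mathrm N_J(J_{\theta_{\mathrm{max}}})$, and the standard dichotomy between compact induction from a type vs.\ from a non-cuspidal datum. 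First I would pin down case (1) vs.\ case (2) at the level of symbols: the unipotent representation $\rho_{\theta_{\mathrm{max}}}$ has symbol of rank $\theta_{\mathrm{max}}$ and defect $3$, whose core $S_1$ has rank $2$, so by \ref{CuspidalUnipotents} it is cuspidal iff $\theta_{\mathrm{max}}=2$; on the orthogonal side, the trivial representation of $\mathrm{GO}(V^1_{\theta_{\mathrm{max}}})$ is cuspidal iff $\dim V^1_{\theta_{\mathrm{max}}}\leq 1$, i.e.\ $n-2\theta_{\mathrm{max}}\leq 1$. The combination of both conditions recovers precisely the list in (1); the failure of either one gives (2).

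For case (1), I would compute $\mathrm N_J(J_2)=\mathrm Z(J)J_2$ by arguing that any element normalizing $J_2$ must stabilize the lattice $\Lambda_2$ up to a power of $\pi$; since $\rho_2$ is unipotent, the intersection $\mathrm Z(J)\cap J_2$ acts trivially, so $\rho_2$ extends to $\widetilde{\rho_2}$ on the normalizer by letting $\mathrm Z(J)$ act trivially. Morris's construction \cite{morris} Proposition 4.1 then yields that $\sigma_1:=\mathrm{c-Ind}_{\mathrm N_J(J_2)}^J\widetilde{\rho_2}$ is irreducible supercuspidal, with the key hypothesis being cuspidality of $\rho_2\boxtimes \mathbf 1$ on $\mathcal J_2$ together with the absence of intertwining outside the normalizer, inherited from the finite group setting. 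Transitivity of compact induction writes
$$V=\mathrm{c-Ind}_{\mathrm N_J(J_2)}^J\bigl(\mathrm{c-Ind}_{J_2}^{\mathrm N_J(J_2)}\rho_2\bigr),$$
and the inner induction decomposes as the direct sum of the unramified twists $\widetilde{\rho_2}\otimes \chi_0$ over characters $\chi_0$ of $\mathrm N_J(J_2)/J_2\simeq \mathbb Z$. These twists are distinguished by their central characters, so fixing an unramified $\chi$ extracts a single summand, giving $V_\chi\simeq \sigma_1\otimes \chi$.

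For case (2), the failure of cuspidality means that $\rho_{\theta_{\mathrm{max}}}\boxtimes \mathbf 1$ is a subquotient of a Harish-Chandra induction from a proper Levi of $\mathcal J_{\theta_{\mathrm{max}}}$; by Bushnell-Kutzko's cover formalism \cite{bk} this lifts to realizing $V$ inside a representation of $J$ parabolically induced from a proper Levi subgroup, so no irreducible subquotient of $V$ is supercuspidal. The non-admissibility of $V_\chi$ is then obtained by transporting the Hecke-algebra argument of \cite{muller} 4.2: an admissible subrepresentation of $V_\chi$ would force a finiteness property on the module $\mathrm{End}_J(V)_\chi$, whose structure is controlled by (an unramified quotient of) the relative affine Weyl group $\mathrm N_J(J_{\theta_{\mathrm{max}}}^\circ)/J_{\theta_{\mathrm{max}}}^\circ$, and this remains infinite-dimensional after imposing a single central character.

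The main obstacle I foresee is verifying case (2) in full: namely, checking that the Hecke-module computation carried out for the trivial datum in \cite{muller} 4.2 goes through for the non-trivial unipotent datum $\rho_{\theta_{\mathrm{max}}}$. The needed input is a description, via Howlett-Lehrer, of the spherical Hecke algebra attached to $(J_{\theta_{\mathrm{max}}},\rho_{\theta_{\mathrm{max}}})$ and of the action of the center of $J$ on it; one then must rule out that fixing $\chi$ produces a finite-dimensional quotient. A secondary subtlety in case (1) is confirming that the conjugation action on $\rho_2$ by elements of $J\setminus \mathrm N_J(J_2)$ produces genuinely inequivalent representations of $J_2$, which is what guarantees irreducibility of $\sigma_1$ and the multiplicity-one decomposition of $V$ as a sum of twists.
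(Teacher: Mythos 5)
Your overall strategy matches the paper's: test cuspidality of $\rho_{\theta_{\mathrm{max}}}\boxtimes\mathbf 1$ on the finite reductive quotient $\mathcal J_{\theta_{\mathrm{max}}}$, apply Morris's construction \cite{morris} 4.1 in the cuspidal case, and use Bushnell--Kutzko covers together with the Hecke-module argument of \cite{muller} 4.2 in the non-cuspidal case; the paper offers essentially no more detail than this.

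There is, however, a genuine gap in your identification of the cuspidal cases. You assert that the trivial representation of $\mathrm{GO}(V^1_{\theta_{\mathrm{max}}})$ is cuspidal iff $\dim V^1_{\theta_{\mathrm{max}}}\leq 1$, and claim that combining this with the symbol condition $\theta_{\mathrm{max}}=2$ reproduces the list in part (1). It does not: the list includes $n=6$ with $C$ non-split, for which $\dim V^1_{\theta_{\mathrm{max}}}=n-2\theta_{\mathrm{max}}=2$. The correct criterion is that $\mathrm{GO}(V^1_{\theta_{\mathrm{max}}})$ be anisotropic (Witt index $0$), which over $\mathbb F_p$ happens iff $\dim V^1\leq 1$ or $\dim V^1=2$ with the non-split quadratic form. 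You then need to check that for $n=6$ non-split and $\Lambda_2$ of maximal type, the $2$-dimensional quadratic space $V^1_{\Lambda_2}$ really is non-split: by \ref{CountingBTStrata}, an isotropic line in $V^1_{\Lambda_2}$ would produce a vertex lattice of type $n$ containing $\Lambda_2$, contradicting $t_{\mathrm{max}}=n-2$. As written, your criterion misplaces $n=6$ non-split into case (2).

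A secondary imprecision in case (1): $\mathrm{c-Ind}_{J_2}^{\mathrm N_J(J_2)}\rho_2$ is isomorphic to $\rho_2\otimes\overline{\mathbb Q_\ell}[\mathrm N_J(J_2)/J_2]$ as a module over the group ring of $\mathrm N_J(J_2)/J_2\simeq\mathbb Z$, and that group ring does not decompose as a direct sum of one-dimensional characters. The correct route is that every irreducible subquotient of $V$ is an unramified twist of $\sigma_1$, and $V_\chi$, being the $\chi$-coinvariants, is the single twist $\sigma_1\otimes\chi$. Your conclusion is right, but the "extracts a single summand" wording does not reflect what actually happens.
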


\begin{corol}
If $n\geq 7$ or $n=6$ with $C$ split, and if $\chi$ is any unramified character of $E^{\times}$, then $\mathrm H_c^{2(n-\theta_{\mathrm{max}})}(\mathcal M^{\mathrm{an}})_{\chi}$ is not $J$-admissible.
\end{corol}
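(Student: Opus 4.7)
The argument proceeds by contradiction, combining the Theorem in \ref{MinimalEigenvalue2} with the Proposition stated immediately after it. Set $V := \mathrm{c-Ind}_{J_{\theta_{\mathrm{max}}}}^{J} \, \rho_{\theta_{\mathrm{max}}}$ and $H := \mathrm H_c^{2(n-\theta_{\mathrm{max}})}(\mathcal M^{\mathrm{an}})$. The Theorem furnishes a $J \times W$-equivariant monomorphism $\iota: V \hookrightarrow H$, on which $\tau$ acts by the scalar $-p^{n-\theta_{\mathrm{max}}}$.

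The key step is to pass to $\chi$-coinvariants while preserving injectivity. Denote $t := \pi \cdot \mathrm{id} \in Z(J)$. Since $\alpha(t) = 1$ and $t$ is central, $J$ splits as $J \simeq J^\circ \times \langle t \rangle$ with $\langle t \rangle \simeq \mathbb Z$. Because $\rho_{\theta_{\mathrm{max}}}$ is unipotent, the compact intersection $\mathcal O_E^\times = Z(J) \cap J_{\theta_{\mathrm{max}}}$ acts trivially on $\rho_{\theta_{\mathrm{max}}}$; transitivity of compact induction then yields $V \simeq V' \otimes \overline{\mathbb Q_\ell}[t^{\pm 1}]$ for $V' := \mathrm{c-Ind}_{J_{\theta_{\mathrm{max}}}}^{J^\circ} \rho_{\theta_{\mathrm{max}}}$. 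Analogously, combining the decomposition $\mathcal M^{\mathrm{an}} = \bigsqcup_i \mathcal M_i^{\mathrm{an}}$ with the observation of \ref{SpectralSequenceIsInduced}, the cohomology $H$ decomposes as $H \simeq H_0 \otimes \overline{\mathbb Q_\ell}[t^{\pm 1}]$, where $H_0 := \mathrm H_c^{2(n-\theta_{\mathrm{max}})}(\mathcal M_0^{\mathrm{an}})$. The morphism $\iota$ is $\langle t \rangle$-equivariant and hence respects these gradings, so it restricts to an injection $\iota_0: V' \hookrightarrow H_0$ on the zeroth graded piece. Since $\chi$ is unramified, taking $\chi$-coinvariants amounts to specializing $t$ to the scalar $\chi(t)$, and we obtain the desired monomorphism $V_\chi \simeq V' \hookrightarrow H_0 \simeq H_\chi$.

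Noting that $V_\chi \simeq V'$ is nonzero, the contradiction is immediate: if $H_\chi$ were $J$-admissible, then its nonzero subrepresentation $V_\chi$ would also be admissible, contradicting the fact (from the Proposition in \ref{MinimalEigenvalue2}) that $V_\chi$ contains no nonzero admissible subrepresentation. Hence $H_\chi$ is not $J$-admissible.

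I expect the main obstacle to be the structural unpacking in the second paragraph, namely verifying the compatible $\mathbb Z$-gradings on $V$ and $H$ induced by $t = \pi \cdot \mathrm{id}$, and checking that $\iota$ respects them. Once this is in place, the rest of the argument follows for free from the preceding Theorem and Proposition.
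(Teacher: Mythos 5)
Your overall strategy is the right one (and, as far as one can tell, the one the paper intends): exhibit a monomorphism $V_{\chi}\hookrightarrow H_{\chi}$ and invoke the Proposition of \ref{MinimalEigenvalue2} to conclude. The reductions $V\simeq V'\otimes\overline{\mathbb Q_{\ell}}[t^{\pm 1}]$ and $H\simeq H_0\otimes\overline{\mathbb Q_{\ell}}[t^{\pm 1}]$, together with $V_{\chi}\simeq V'\neq 0$, are all correct.

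The gap is in the sentence \enquote{The morphism $\iota$ is $\langle t\rangle$-equivariant and hence respects these gradings.} Equivariance for the group $\langle t\rangle$ makes $\iota$ a map of $\overline{\mathbb Q_{\ell}}[t^{\pm 1}]$-modules, but a $\overline{\mathbb Q_{\ell}}[t^{\pm 1}]$-linear map between free modules need not be homogeneous for the $t$-degree, and its reduction modulo $(t-\chi(t))$ need not remain injective even if the original map is. For instance, multiplication by $1+t$ on $\overline{\mathbb Q_{\ell}}[t^{\pm 1}]$ is $\langle t\rangle$-equivariant and injective, yet dies modulo $(t+1)$. So $\langle t\rangle$-equivariance alone does not reduce the problem to the zeroth graded piece, and the step \enquote{$V_{\chi}\simeq V'\hookrightarrow H_0\simeq H_{\chi}$} is unjustified as written. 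Commuting additionally with $\mathrm{Frob}$ does not close the gap either, since $\mathrm{Frob}$ acts on $V$ by the proportional operator $-p^{n-\theta_{\mathrm{max}}}t$, giving no new constraint.

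What does close it is the fact you cite but apply only to $H$: by \ref{SpectralSequenceIsInduced}, the entire \v{C}ech spectral sequence for $\mathcal M^{\mathrm{an}}$ is the compact induction $\mathrm{c\text{-}Ind}_{J^{\circ}}^{J}$ of the one for $\mathcal M_0^{\mathrm{an}}$, applied to all terms, differentials, filtrations, and edge maps. The monomorphism of \ref{MinimalEigenvalue2} is precisely the edge map $E_1^{0,2(n-\theta_{\mathrm{max}})}\hookrightarrow \mathrm H_c^{2(n-\theta_{\mathrm{max}})}(\mathcal M^{\mathrm{an}})$ restricted to the isolated $\tau$-eigenspace, so it \emph{is} of the form $\mathrm{c\text{-}Ind}_{J^{\circ}}^{J}\,\iota_0$ for some $J^{\circ}$-equivariant injection $\iota_0\colon V'\hookrightarrow H_0$, i.e. $\iota=\iota_0\otimes\mathrm{id}_{\overline{\mathbb Q_{\ell}}[t^{\pm 1}]}$. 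Only with this sharper structure on $\iota$ does specialization at $t=\chi(t)$ (together with exactness of taking $\mathcal O_E^{\times}$-isotypic components, $\mathcal O_E^{\times}$ being compact in characteristic $0$) yield the desired injection $V_{\chi}\hookrightarrow H_{\chi}$. Once that is in place, your concluding paragraph is correct.
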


\paragraph{}\label{HighestCohomologyGroup} We finish this section with the following observation regarding the cohomology group (with compact support) of highest degree. 

\begin{prop}
There is an isomorphism 
$$\mathrm H_c^{2(n-1)}(\mathcal M^{\mathrm{an}},\overline{\mathbb Q_{\ell}}) \simeq \mathrm{c-Ind}_{J^{\circ}}^J \, \mathbf 1,$$
where $\mathbf 1$ denotes the trivial representation, and where $\mathrm{Frob}$ acts like $p^{n-1}\cdot\mathrm{id}$. 
\end{prop}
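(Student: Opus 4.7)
The plan is to extract the statement from the Čech spectral sequence of \ref{SpectralSequenceGeneric}, exploiting the fact that $2(n-1) = 2\dim(\mathcal M^{\mathrm{an}})$ is the top cohomological degree. First I would locate the terms on the first page contributing to total degree $2(n-1)$. Since $-a+1\geq 1$ forces $a\leq 0$, and since \ref{TrivialNearbyCycles} Proposition combined with $\dim\mathcal M_{\Lambda_\theta} = \theta \leq \theta_{\mathrm{max}} \leq n-1$ gives the vanishing of $\mathrm H_c^b(U_{\Lambda_\theta})$ for all $b > 2(n-1)$, the only non-zero term on the diagonal $a+b = 2(n-1)$ is $E_1^{0,2(n-1)}$. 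Using $k_{1,\theta} = \delta_{\theta,\theta_{\mathrm{max}}}$ together with point (3) of \ref{CohomologyS} Theorem, this simplifies to $\mathrm{c-Ind}_{J_{\theta_{\mathrm{max}}}}^J\, \mathrm H_c^{2(n-1)}(U_{\Lambda_{\theta_{\mathrm{max}}}})$, where the inner cohomology group is a copy of $\overline{\mathbb Q_\ell}$ on which the inertia acts trivially and the rational Frobenius $\tau$ acts as the scalar $p^{n-1}$.

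Next I would check that $E_\infty^{0,2(n-1)}$ is the cokernel of the unique non-trivial differential reaching this term, namely the first-page Čech coboundary $d_1\colon E_1^{-1,2(n-1)}\to E_1^{0,2(n-1)}$. On all deeper pages, incoming arrows originate from $E_r^{-r, 2(n-1)+r-1}$, which vanishes by the same dimensional bound, and outgoing arrows target the forbidden range $a\geq 1$. Explicitly $d_1$ is the Čech coboundary
$$\bigoplus_{\{\Lambda,\Lambda'\}\in I_2} \overline{\mathbb Q_\ell}[\Lambda\cap\Lambda'] \;\longrightarrow\; \bigoplus_{\Lambda\in\mathcal L^{\mathrm{max}}} \overline{\mathbb Q_\ell}[\Lambda], \qquad [\{\Lambda,\Lambda'\}] \;\longmapsto\; [\Lambda]-[\Lambda'],$$
where every summand carries the same trivial inertia action and scalar $\tau = p^{n-1}$.

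Computing the cokernel then reduces to computing $\pi_0(\mathcal B)$, where $\mathcal B$ is the graph whose vertices are the elements of $\mathcal L^{\mathrm{max}}$ and whose edges are the pairs $\{\Lambda,\Lambda'\}$ with $\Lambda\cap\Lambda'\in\mathcal L$. I claim $\pi_0(\mathcal B)$ is in canonical bijection with $\mathbb Z$ via the assignment $\Lambda\in\mathcal L_i^{\mathrm{max}} \mapsto i$. One direction is immediate since edges necessarily connect lattices of the same index $i$. The other direction, namely the $\mathcal B$-connectedness of each $\mathcal L_i^{\mathrm{max}}$, I would deduce from the connectedness of $\mathcal M_i^{\mathrm{an}}$ (hence of $\mathcal M_{\mathrm{red},i}$) together with the incidence characterization of the Bruhat-Tits stratification: two top-type closed strata $\mathcal M_\Lambda$ and $\mathcal M_{\Lambda'}$ have non-empty intersection if and only if $\Lambda\cap\Lambda' \in \mathcal L_i$.

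Finally, the $J$-action on $\pi_0(\mathcal B)\simeq \mathbb Z$ factors through the surjection $\alpha\colon J\twoheadrightarrow \mathbb Z$ with kernel $J^{\circ}$, so the chain of identifications produces an isomorphism $\mathrm H_c^{2(n-1)}(\mathcal M^{\mathrm{an}}) \simeq \mathrm{c-Ind}_{J^{\circ}}^J \mathbf 1$ of $J$-representations, while the scalar $p^{n-1}$ action of the rational Frobenius propagates unchanged through the cokernel. The main obstacle is proving that $\mathcal B$ restricted to each $\mathcal L_i^{\mathrm{max}}$ is connected; I expect to handle this either by a direct combinatorial argument on the polysimplicial structure of the Bruhat-Tits building of $J$, or by transferring the connectedness of the basic Newton stratum $\mathcal M_{\mathrm{red},i}$ to the nerve of its Bruhat-Tits stratification.
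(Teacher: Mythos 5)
Your argument is correct and follows essentially the route the paper intends (the paper defers to the unramified analogue in the cited reference): you isolate $E_1^{0,2(n-1)}$ as the sole contributor on the top diagonal, show $E_\infty^{0,2(n-1)}$ is the cokernel of the \v Cech coboundary $d_1\colon E_1^{-1,2(n-1)}\to E_1^{0,2(n-1)}$, and identify that cokernel with $\overline{\mathbb Q_\ell}[\pi_0]$ of the incidence graph, which is a copy of $\mathbb Z$ on which $J$ acts through $\alpha$ and on which $\tau$ acts by the scalar $p^{n-1}$. The connectedness of each $\mathcal L_i^{\mathrm{max}}$ in the incidence graph, which you correctly flag as the remaining step, does follow from the connectedness of $\mathcal M_{\mathrm{red},i}$ (stipulated in \S 3.1) combined with the incidence criterion $\mathcal M_\Lambda\cap\mathcal M_{\Lambda'}\neq\emptyset \iff \Lambda\cap\Lambda'\in\mathcal L_i$ and the local finiteness of the stratification, exactly along the second route you sketch, so there is no genuine gap.
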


\noindent The proof is identic to \cite{muller} 4.1.13 Proposition.

\subsection{The spectral sequence for small values of $n$}

\paragraph{} As computed in the first section, the cohomology of $S_{\theta}$ for $\theta \leq 1$ is entirely known with pure Frobenius action. Thus, as long as $\theta_{\mathrm{max}} \leq 1$, ie. if $n\leq 3$ or if $n=4$ with $C$ non-split, we understand all the terms $E_1^{a,b}$ in the spectral sequence. In order to get a better image of the situation, let us have a deeper look at the first page in these cases.

\paragraph{}If $\theta_{\mathrm{max}} = 0$, ie. if $n = 1$ or $n = 2$ and $C$ is non-split, then $C$ is anisotropic and $\mathcal L_i$ is a singleton. There is only one non-zero term in the spectral sequence, equal to $\mathrm{c-Ind}_{J^{\circ}}^{J}\, \mathbf 1$, which computes the cohomology group $\mathrm H_c^{2(n-1)}(\mathcal M^{\mathrm{an}})$ as we already checked in \ref{HighestCohomologyGroup}. Thus, there is not much to say in this case. 

\paragraph{} We assume now that $\theta_{\mathrm{max}} = 1$, ie. $n = 2$ and $C$ is split, $n=3$ or $n=4$ and $C$ is non-split. In this case, the orbit type $2\theta$ of any vertex lattice is $0$ or $2$, so that $\theta = 0$ or $1$. We have $k_{s,1} = \delta_{1,s}$ by definition of $K_s^{(1)}$. Moreover, we have 
$$K_s^{(0)} := \{\gamma \subset N(\Lambda_0) \,|\, \#\gamma = s \text{ and } \Lambda(\gamma) = \Lambda_0\}.$$
If $s = 1$ then this set is empty, since $\Lambda(\gamma)$ would have orbit type $2$. However if $\gamma \subset N(\Lambda_0)$ has cardinality at least $2$, then the condition $\Lambda(\gamma) = \Lambda_0$ is automatically satisfied. Indeed, $\Lambda(\gamma)$ is a vertex lattice of orbit type strictly less than $2$, thus equal to $0$, and which contains $\Lambda_0$. Therefore, $K_s^{(0)}$ is simply the set of subsets of $N(\Lambda_0)$ of cardinality $s$. By \ref{CountingBTStrata}, we have 

$$
\#N(\Lambda_0) = 
\begin{cases}
2 & \text{if }n = 2 \text{ and } C \text{ is split,}\\
p+1 & \text{if } n = 3,\\
p^2+1 & \text{if } n = 4 \text{ and } C \text{ is non-split.}
\end{cases}
$$

\noindent Therefore $k_{s,0}$ is $0$ if $s = 1$, and for $s\geq 2$ we have 

$$
k_{s,0} = 
\begin{cases}
2\choose s & \text{if }n = 2 \text{ and } C \text{ is split,}\\
{p+1}\choose s & \text{if } n = 3,\\
{p^2+1}\choose s & \text{if } n = 4 \text{ and } C \text{ is non-split.}
\end{cases}
$$

\noindent In Figure 2 and 3, we draw the first page of the spectral sequence respectively when $n=2$ and $C$ is split, and when $n=3$. In brackets we have written the scalar by which $\tau$ acts on each term. The spectral sequence in the case $n=4$ with $C$ non-split is similar to Figure 3, except that two more 0 rows must be added at the bottom, and all eigenvalues of $\tau$ are multiplied by $p$. The morphisms on the top row are denoted $\varphi_i$ for $i=1$ if $n=2$ with $C$ split, for $1\leq i \leq p$ if $n=3$ and for $1\leq i \leq p^2$ if $n=4$ with $C$ non-split. Given the shape of these sequences and taking into account the Frobenius weights of each term, we observe that they degenerate on the second page.

\begin{figure}[h]
\centering
\begin{tikzcd}
\mathrm{c-Ind}^J_{J_{0}}\,\mathbf{1}[p] \arrow{r}{\varphi_1} & \mathrm{c-Ind}^J_{J_{1}}\mathbf{1}[p] \\
\, & 0 \\
\, & \mathrm{c-Ind}^J_{J_{1}}\,\mathbf{1}[1]
\end{tikzcd}
\caption{The first page $E_1$ when $n=2$ and $C$ is split.}
\end{figure}

\begin{figure}[h]
\hspace{-40pt}
\begin{tikzcd}
\ldots \arrow{r}{\varphi_4} & \left(\mathrm{c-Ind}^J_{J_{0}}\,\mathbf{1}\right)^{k_{4,0}}[p^2] \arrow{r}{\varphi_3} & \left(\mathrm{c-Ind}^J_{J_{0}}\,\mathbf{1}\right)^{k_{3,0}}[p^2] \arrow{r}{\varphi_2} & \left(\mathrm{c-Ind}^J_{J_{0}}\,\mathbf{1}\right)^{k_{2,0}}[p^2] \arrow{r}{\varphi_1} & \mathrm{c-Ind}^J_{J_{1}}\mathbf{1}[p^2] \\
\, & \, & \, & \, & 0 \\
\, & \, & \, & \, & \mathrm{c-Ind}^J_{J_{1}}\,\mathbf{1}[p]\\
\, & \, & \, & \, & 0 \\
\, & \, & \, & \, & 0 
\end{tikzcd}
\caption{The first page $E_1$ when $n=3$.}
\end{figure}

\section{The cohomology of the basic stratum of the Shimura variety for small values of $n$}

\subsection{The Hochschild-Serre spectral sequence induced by $p$-adic uniformization}

\paragraph{}\label{ClassificationAlgebraicRepresentations} Related to the $p$-adic uniformization theorem in \ref{Uniformization}, Fargues has built in \cite{fargues} a spectral sequence relating the cohomology of $\mathcal M^{\mathrm{an}}$ to that of $\widehat{\mathrm S}_{K^p}^{\mathrm{ss},\mathrm{an}}$. Even though the construction of loc. cit. is done in the context of unramified Rapoport-Zink spaces, it works in greater generality as mentioned in the last paragraph of 4.5.2.1.\\
Recall the notations of section 2.3. Let $\xi: \mathbb G \rightarrow W_{\xi}$ be a finite-dimensional irreducible algebraic representation of $\mathbb G$ over $\overline{\mathbb Q_{\ell}}$. In \cite{muller} 5.1.2, we recalled the classification of all such representations $\xi$ following \cite{harris} III.2. Let $\mathbb V_0$ denote the dual of $\mathbb V \otimes \overline{\mathbb Q_{\ell}}$ on which $\mathbb G$ acts. There exists uniquely defined integers $t(\xi), m(\xi) \geq 0$ and an idempotent $\epsilon(\xi) \in \mathrm{End}(\mathbb V_0^{\otimes m(\xi)})$ such that 
$$W_{\xi} \simeq c^{t(\xi)}\otimes\epsilon(\xi)(\mathbb V_0^{\otimes m(\xi)}),$$
where $c$ denotes the similitude factor. The weight of $\xi$ is defined by 
$$w(\xi) := m(\xi) - 2t(\xi).$$
One can associate to $\xi$ a local system $\mathcal L_{\xi}$ on the tower $(\mathrm{S}_{K^p})_{K^p}$ of Shimura varieties. Let $\mathcal A_{K^p}$ be the universal abelian scheme over $\mathrm{S}_{K^p}$. We write $\pi_{K^p}^m : \mathcal A_{K^p}^m \to \mathrm{S}_{K^p}$ for the structure morphism of the $m$-fold product of $\mathcal A_{K^p}$ with itself over $\mathrm{S}_{K^p}$. Then
$$\mathcal L_{\xi} \simeq \epsilon(\xi)\epsilon_{m(\xi)} \left( \mathrm{R}^{m(\xi)}({\pi_{K^p}^{m(\xi)}})_{*}\overline{\mathbb Q_{\ell}}(t(\xi))\right),$$
where $\epsilon_{m(\xi)}$ is some idempotent. We denote by $\overline{\mathcal L_{\xi}}$ its restriction to the special fiber $\overline{\mathrm S}_{K^p}$. 

\paragraph{}\label{FarguesSpectralSequence} Let $\mathcal A_{\xi}$ be the space of \textbf{automorphic forms of $I$ of type $\xi$ at infinity}. Explicitly, it is given by 
$$\mathcal A_{\xi} = \left\{f: I(\mathbb A_{f})\rightarrow W_{\xi} \,|\, f \text{ is } I(\mathbb A_{f}) \text{-smooth by right translations and } \forall \gamma \in I(\mathbb Q), f(\gamma\,\cdot) = \xi(\gamma)f(\cdot)\right\}.$$
We denote by $\mathcal L_{\xi}^{\mathrm{an}}$ the analytification of $\mathcal L_{\xi}$. 

\noindent \begin{notation}
We write $\mathrm{H}^{\bullet}(\widehat{\mathrm S}_{K^p}^{\mathrm{ss},\mathrm{an}},\mathcal L_{\xi}^{\mathrm{an}})$ for the cohomology of $\widehat{\mathrm S}_{K^p}^{\mathrm{ss},\mathrm{an}} \, \widehat{\otimes} \, \mathbb C_p$ with coefficients in $\mathcal L_{\xi}^{\mathrm{an}}$.
\end{notation}

\noindent \begin{theo}[\cite{fargues} 4.5.12]
There is a $W$-equivariant spectral sequence 
$$F_2^{a,b}(K^p) : \mathrm{Ext}_{J}^a \left (\mathrm H_c^{2(n-1)-b}(\mathcal M^{\mathrm{an}}, \overline{\mathbb Q_{\ell}})(1-n), \mathcal A^{K^p}_{\xi}\right) \implies \mathrm H^{a+b}(\widehat{\mathrm S}_{K^p}^{\mathrm{ss},\mathrm{an}}, \mathcal L_{\xi}^{\mathrm{an}}).$$
These spectral sequences are compatible as the open compact subgroup $K^p$ varies in $\mathbb G(\mathbb A_f^p)$. 
\end{theo}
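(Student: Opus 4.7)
The plan is to adapt Fargues' strategy from \cite{fargues} Section 4.5: combine the $p$-adic uniformization of the supersingular locus with a Cartan-Leray spectral sequence, then repackage the resulting group cohomology of the arithmetic lattices $\Gamma_k$ as an $\mathrm{Ext}$ over $J$ with coefficients in automorphic forms.

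First I would use the generic fiber uniformization $\Theta_{K^p}^{\mathrm{an}}$ from \ref{Uniformization}, which after base change to $\widecheck{E}$ identifies $\widehat{\mathrm S}_{K^p}^{\mathrm{ss},\mathrm{an}}$ with $I(\mathbb Q)\backslash(\mathcal M^{\mathrm{an}}\times \mathbb G(\mathbb A_f^p)/K^p)$. Choosing double coset representatives as in \ref{BTStrataOnShimura}, this breaks into the finite disjoint union $\bigsqcup_k \Gamma_k\backslash \mathcal M^{\mathrm{an}}$, each $\Gamma_k$ being discrete and cocompact modulo center in $J$. The local system $\mathcal L_\xi^{\mathrm{an}}$ pulls back to the constant analytic sheaf on $\mathcal M^{\mathrm{an}}$ with stalk $W_\xi$, twisted by the $\Gamma_k$-action through $\xi|_{\Gamma_k}$. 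The smoothness of the integral model, already exploited in \ref{TrivialNearbyCycles}, ensures that the inertia acts trivially on the relevant cohomology groups, so the whole construction upgrades to a Weil-equivariant statement.

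Next, for each $k$ I would apply the Cartan-Leray spectral sequence for the Galois covering $\mathcal M^{\mathrm{an}}\to \Gamma_k\backslash \mathcal M^{\mathrm{an}}$, combined with Poincaré duality on the smooth analytic space $\mathcal M^{\mathrm{an}}$ of pure dimension $n-1$ to rewrite
$$\mathrm H^b(\mathcal M^{\mathrm{an}}, W_\xi) \simeq \mathrm{Hom}\!\left(\mathrm H_c^{2(n-1)-b}(\mathcal M^{\mathrm{an}})(1-n), W_\xi\right).$$
This produces a spectral sequence abutting to $\mathrm H^{a+b}(\Gamma_k\backslash \mathcal M^{\mathrm{an}},\mathcal L_\xi^{\mathrm{an}})$ whose $E_2$-term is
$$\mathrm{Ext}^a_{\Gamma_k}\!\left(\mathrm H_c^{2(n-1)-b}(\mathcal M^{\mathrm{an}})(1-n), W_\xi\right).$$

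The main step is to assemble the $\Gamma_k$-contributions into one $J$-equivariant object. Unwinding the definition of $\mathcal A_\xi^{K^p}$ through the double coset decomposition $I(\mathbb A_f^p) = \bigsqcup_k I(\mathbb Q) g_k K^p$ yields a $J$-equivariant isomorphism
$$\mathcal A_\xi^{K^p}\simeq \bigoplus_{k=1}^s \mathrm{Ind}_{\Gamma_k}^{J}\, W_\xi,$$
with $W_\xi$ viewed as a $\Gamma_k$-module via $\xi$. Smooth Frobenius reciprocity for the closed embeddings $\Gamma_k \hookrightarrow J$ should then give, for any smooth $J$-module $V$, an isomorphism
$$\mathrm{Ext}^a_J(V, \mathcal A_\xi^{K^p}) \simeq \bigoplus_{k=1}^s \mathrm{Ext}^a_{\Gamma_k}(V|_{\Gamma_k}, W_\xi),$$
at which point specializing to $V = \mathrm H_c^{2(n-1)-b}(\mathcal M^{\mathrm{an}})(1-n)$ and reassembling the Cartan-Leray sequences for all $k$ produces $F_2^{a,b}(K^p)$. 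The hard part will be this final identification at the derived level: one must show that $\mathrm{Ind}_{\Gamma_k}^J$ sends injective $\Gamma_k$-modules to injective smooth $J$-modules (or dually, construct compatible projective resolutions), and verify that Rapoport-Zink's descent datum on $\mathcal M$ together with the natural action on $\mathcal A_\xi^{K^p}$ induce the correct Weil group structure on both sides, with compatibility in $K^p$ coming from the functoriality of Hecke correspondences on the tower.
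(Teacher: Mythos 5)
The paper does not prove this theorem: it is cited verbatim from Fargues \cite{fargues} 4.5.12, and the only argument offered in the text preceding the statement is the observation that Fargues' construction, although carried out for unramified Rapoport-Zink spaces, applies without change in the ramified setting, as Fargues himself points out at the end of his 4.5.2.1. Your proposal instead tries to reconstruct Fargues' argument from first principles, which is a genuinely different and much larger undertaking than what the paper does here.

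At the level of the outline, your route is the correct one and does mirror Fargues' strategy: uniformize the generic fiber of $\widehat{\mathrm S}_{K^p}^{\mathrm{ss}}$ as $\bigsqcup_k \Gamma_k\backslash\mathcal M^{\mathrm{an}}$, apply Cartan-Leray/Hochschild-Serre for the covers $\mathcal M^{\mathrm{an}}\to\Gamma_k\backslash\mathcal M^{\mathrm{an}}$, use Poincaré duality on the smooth space $\mathcal M^{\mathrm{an}}$ to trade ordinary cohomology for compactly supported cohomology, identify $\mathcal A_\xi^{K^p}\simeq\bigoplus_k\mathrm{Ind}_{\Gamma_k}^J W_\xi$, and then invoke derived Frobenius reciprocity to convert $\bigoplus_k\mathrm{Ext}^a_{\Gamma_k}$ into $\mathrm{Ext}^a_J(\,\cdot\,,\mathcal A_\xi^{K^p})$. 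However, the gaps you flag at the end are not loose ends — they are precisely the content of Fargues' Section 4.5. Three points deserve to be stated explicitly rather than deferred: (i) the $\Gamma_k$ must act freely on $\mathcal M^{\mathrm{an}}$ for the Cartan-Leray argument to apply, which holds only after shrinking $K^p$ so that the $\Gamma_k$ are torsion-free (a neatness hypothesis you never mention); (ii) Shapiro's lemma for the pair $(\Gamma_k,J)$ is not the familiar smooth-representation statement for open subgroups, since $\Gamma_k$ is discrete — one has to check that $\mathrm{Ind}_{\Gamma_k}^J=C^\infty(\Gamma_k\backslash J,-)$ is exact and preserves injectives in the category of smooth $J$-representations, and that the comparison between $\mathrm H^a(\Gamma_k,\mathrm{Hom}(V,W_\xi))$ and $\mathrm{Ext}^a_{\Gamma_k}(V,W_\xi)$ is valid for the non-admissible $J$-modules $V=\mathrm H_c^{\bullet}(\mathcal M^{\mathrm{an}})$; and (iii) the $W$-equivariance is obtained by Fargues through a carefully constructed equivariant topos tracking Rapoport-Zink's descent datum, not merely by observing that the inertia acts trivially via the smooth integral model. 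In short: as a summary of Fargues' proof your sketch is accurate, but as a self-contained proof it is incomplete in exactly the places that make Fargues' paper long, and the paper you are reading deliberately sidesteps all of this by treating the theorem as a black box.
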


\noindent We may take the limit $\varinjlim_{K^p}$ for all terms and obtain a $\mathbb G(\mathbb A_f^p) \times W$-equivariant spectral sequence. According to \cite{fargues} Lemme 4.4.12, we have $F_2^{a,b} = 0$ when $a > \theta_{\mathrm{max}}$ since $\theta_{\mathrm{max}}$ is also the semisimple rank of $J$. Since the Shimura variety $\mathrm S_{K^p}$ is smooth, the comparison theorem \cite{berk2} Corollary 3.7 of Berkovich gives an isomorphism
$$\mathrm{H}^{a+b}_c(\overline{S}_{K^p}^{\mathrm{ss}}, \overline{\mathcal L_{\xi}}) = \mathrm{H}^{a+b}(\overline{S}_{K^p}^{\mathrm{ss}}, \overline{\mathcal L_{\xi}}) \xrightarrow{\sim} \mathrm H^{a+b}(\widehat{\mathrm S}_{K^p}^{\mathrm{ss},\mathrm{an}}, \mathcal L_{\xi}^{\mathrm{an}}),$$
where first equality follows from the supersingular locus being a proper variety. Since $\dim \overline{S}_{K^p}^{\mathrm{ss}} = \theta_{\mathrm{max}}$ by \cite{RTW} Theorem 7.2, the cohomology $\mathrm H^{\bullet}(\widehat{\mathrm S}_{K^p}^{\mathrm{ss},\mathrm{an}}, \mathcal L_{\xi}^{\mathrm{an}})$ is concentrated in degrees $0$ to $2\theta_{\mathrm{max}}$.

\paragraph{}\label{FarguesSpectralSequenceBis} Let $\mathcal A(I)$ denote the set of all automorphic representations of $I$ counted with multiplicities, and let $\widecheck{\xi}$ be the contragredient of $\xi$. We also define 
$$\mathcal A_{\xi}(I) := \{\Pi \in \mathcal A(I) \,|\, \Pi_{\infty} = \widecheck{\xi}\}.$$ 
According to \cite{fargues} 4.6, we have an identification 
$$\mathcal A_{\xi}^{K_p} \simeq \bigoplus_{\Pi\in\mathcal A_{\xi}(I)} \Pi_p \otimes (\Pi^p)^{K_p}.$$
We deduce that
$$F_2^{a,b} := \varinjlim_{K^p} F_2^{a,b}(K^p) \simeq \bigoplus_{\Pi\in\mathcal A_{\xi}(I)} \mathrm{Ext}_{J}^a \left (\mathrm H_c^{2(n-1)-b}(\mathcal M^{\mathrm{an}}, \overline{\mathbb Q_{\ell}})(1-n), \Pi_p\right) \otimes \Pi^p.$$
The spectral sequence defined by the terms $F_2^{a,b}$ computes the cohomology of $\overline{S}^{\mathrm{ss}} := \varinjlim_{K^p} \overline{S}_{K^p}^{\mathrm{ss}}$.

\paragraph{}\label{FrobeniusActionOnHom} Let us focus on the Frobenius action on the $\mathrm{Ext}$ groups occuring in the spectral sequence. To this effect, we need the following lemma. For $\Pi \in \mathcal A_{\xi}(I)$, let $\omega_{\Pi}$ denote the central character. For any isomorphism $\iota:\overline{\mathbb Q_{\ell}} \simeq \mathbb C$, we define $|\cdot|_{\iota} := |\iota(\cdot)|$. 

\begin{lem}
We have $|\omega_{\Pi_p}(\pi^{-1}\cdot\mathrm{id})|_{\iota} = p^{w(\xi)/2}$.
\end{lem}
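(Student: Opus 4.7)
The plan is to determine the ``real part'' of the global Hecke character $\omega_\Pi$ and then evaluate it at $\pi^{-1}$ in the local component at $p$. Since $I$ is an inner form of $\mathbb G$, its center is $\mathrm{Z}(I) \simeq \mathrm{Res}_{\mathbb E/\mathbb Q}\mathbb G_m$, so $\omega_\Pi$ is a continuous character of the idele class group $\mathbb A_{\mathbb E}^\times / \mathbb E^\times$. Every such character has $\iota$-absolute value of the form $|\omega_\Pi|_\iota = |\cdot|_{\mathbb A_{\mathbb E}}^{\,s}$ for a unique $s \in \mathbb R$, because $\mathbb A_{\mathbb E}^\times / \mathbb E^\times$ decomposes as a product of a compact group and $\mathbb R_{>0}$ via the norm, and only the $\mathbb R_{>0}$ factor carries non-unitary characters. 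By compatibility of the global and local absolute values, this will yield
\[
|\omega_{\Pi_p}(\pi^{-1}\cdot\mathrm{id})|_\iota \;=\; |\pi^{-1}|_E^{\,s} \;=\; p^s,
\]
reducing the lemma to the identification $s = w(\xi)/2$.

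To compute $s$, I will use the infinity component $\Pi_\infty = \widecheck\xi$ together with the explicit description of $\xi$ recalled in \ref{ClassificationAlgebraicRepresentations}. Fix $z \in \mathbb E_\infty^\times = \mathbb C^\times$ and consider the central element $z\cdot\mathrm{id}$, whose similitude factor is $c(z\cdot\mathrm{id}) = z\overline z$. Over $\overline{\mathbb Q_\ell}$, the space $\mathbb V \otimes \overline{\mathbb Q_\ell}$ splits into two $n$-dimensional eigenspaces indexed by the two embeddings $e_1, e_2 : \mathbb E \hookrightarrow \overline{\mathbb Q_\ell}$, on which $z\cdot\mathrm{id}$ acts by $e_1(z)$ and $e_2(z)$ respectively, hence on $\mathbb V_0^{\otimes m(\xi)}$ by a monomial of total degree $-m(\xi)$ in $e_1(z)$ and $e_2(z)$. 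Since $\iota$ sends each $e_i(z)$ to a complex number of absolute value $|z|_{\mathbb C}$, combining with the factor $c^{t(\xi)}$ arising from $W_\xi \simeq c^{t(\xi)}\otimes\epsilon(\xi)(\mathbb V_0^{\otimes m(\xi)})$ yields
\[
|\omega_\xi(z\cdot\mathrm{id})|_\iota \;=\; |z|_{\mathbb C}^{\,2t(\xi)-m(\xi)} \;=\; |z|_{\mathbb C}^{-w(\xi)} \;=\; |z|_\infty^{-w(\xi)/2},
\]
where $|\cdot|_\infty = |\cdot|_{\mathbb C}^2$ is the standard adelic normalization at the complex place. Passing to the contragredient inverts this character, so $|\omega_{\Pi_\infty}(z\cdot\mathrm{id})|_\iota = |z|_\infty^{w(\xi)/2}$, forcing $s = w(\xi)/2$.

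Finally, evaluation at $z = \pi^{-1} \in E^\times \simeq \mathrm{Z}(J)$ uses $|\pi^{-1}|_E = p$ (since $\pi$ is a uniformizer of $E$ with residue field $\kappa(E) = \mathbb F_p$) and gives $|\omega_{\Pi_p}(\pi^{-1}\cdot\mathrm{id})|_\iota = p^{w(\xi)/2}$, as claimed. The main structural step is the uniformity assertion that the $\iota$-absolute value of the global Hecke character $\omega_\Pi$ is a single power of $|\cdot|_{\mathbb A_{\mathbb E}}$ valid simultaneously at every place; once this is in hand the remainder is a direct unraveling of the definition of $\xi$ at infinity together with the standard normalization of the absolute value at the ramified prime $\pi$.
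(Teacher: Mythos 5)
Your proof is correct and rests on the same underlying fact as the paper's: the central character $\omega_\Pi$ is a Hecke character of $\mathbb A_{\mathbb E}^\times/\mathbb E^\times$, so its $\iota$-absolute value at the $p$-adic place is determined by its behaviour at the archimedean place, where $\Pi_\infty = \widecheck\xi$ pins everything down in terms of $w(\xi)$.

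The presentation differs slightly from the paper's. You invoke the structural statement that $|\omega_\Pi|_\iota$ is a power $|\cdot|_{\mathbb A_{\mathbb E}}^{s}$ of the adelic norm (since the norm-one idele class group is compact), compute the exponent $s = w(\xi)/2$ from the archimedean component, and then evaluate the local factor at $\pi^{-1}$ directly using $|\pi^{-1}|_E = p$. The paper instead unfolds the product formula by hand: it first reduces $\pi^{-1}$ to the rational element $p^{-1}$ by the squaring relation $\pi^2 \in p\mathbb Z_p^\times$, uses triviality of $\omega_\Pi$ on $I(\mathbb Q)$ plus unitarity at every finite place $q \neq p$ (where $p^{-1}\cdot\mathrm{id}$ is a unit in $\mathrm Z(I(\mathbb Q_q))$), and lands on $|\omega_{\Pi_p}(p^{-1}\cdot\mathrm{id})|_\iota = |\omega_\xi(p^{-1}\cdot\mathrm{id})|_\iota = p^{w(\xi)}$, from which the lemma follows by taking a square root. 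Your version buys you a direct evaluation at $\pi^{-1}$ without the squaring detour, while the paper's version is a bit more elementary since it only evaluates algebraic characters at rational numbers and never has to discuss the uniform exponent $s$. Both are sound and come to the same conclusion.
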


\begin{proof}
We have 
$$|\omega_{\Pi_p}(\pi^{-1}\cdot\mathrm{id})|_{\iota}^2 = |\omega_{\Pi_p}(p^{-1}\cdot\mathrm{id})|_{\iota}.$$
Indeed, $\pi^2$ is equal to $p$ up to a unit in $\mathbb Z_{p}^{\times}$. The value of the central character $\omega_{\Pi_p}$ at this unit has complex modulus $1$ under any isomorphism $\iota:\overline{\mathbb Q_{\ell}} \simeq \mathbb C$. Since $I$ is the group of unitary similitudes of some $\mathbb E/\mathbb Q$-hermitian space, its center is isomorphic to $\mathbb E^{\times}\cdot\mathrm{id}$. In particular, the element $p^{-1}\cdot\mathrm{id}$ in $\mathrm Z(J)$ can be seen as the image of $p^{-1}\cdot\mathrm{id}$ in $\mathrm{Z(I(\mathbb Q))}$. We have $\omega_{\Pi}(p^{-1}\cdot\mathrm{id}) = 1$, and at every finite place $q$ different from $p$ we have $|\omega_{\Pi_q}(p^{-1}\cdot\mathrm{id})|_{\iota} = 1$, since $p^{-1}\cdot\mathrm{id}$ lies in the maximal compact subgroup of $\mathrm Z(I(\mathbb Q_q))$. Eventually, the fact that $\Pi_{\infty} = \widecheck{\xi}$ implies that
$$|\omega_{\Pi_p}(p^{-1}\cdot\mathrm{id})|_{\iota} = |\omega_{\widecheck{\xi}}(p^{-1}\cdot\mathrm{id})|_{\iota}^{-1} = |\omega_{\xi}(p^{-1}\cdot\mathrm{id})|_{\iota} = p^{w(\xi)},$$
the last equality being a consequence of $W_{\xi} \simeq c^{t(\xi)}\otimes\epsilon(\xi)(\mathbb V_0^{\otimes m(\xi)})$ (see \ref{ClassificationAlgebraicRepresentations}).
\end{proof}

\noindent Let us fix a square root $\pi_{\ell}$ of $p$ in $\overline{\mathbb Q_{\ell}}$. We define 
$$\delta_{\Pi_p} := \omega_{\Pi_p}(\pi^{-1}\cdot\mathrm{id})\pi_{\ell}^{-w(\xi)}.$$
The lemma implies that $|\delta_{\Pi_p}|_{\iota} = 1$ for any isomorphism $\iota:\overline{\mathbb Q_{\ell}} \simeq \mathbb C$.\\
By convention, the action of $\mathrm{Frob}$ on a space $\mathrm{Ext}_{J\text{-sm}}^a (\mathrm H_c^{2(n-1)-b}(\mathcal M^{\mathrm{an}}, \overline{\mathbb Q_{\ell}})(1-n), \Pi_p)$ is given by functoriality of $\mathrm{Ext}$ applied to $\mathrm{Frob}^{-1}$ acting on the cohomology of $\mathcal M^{\mathrm{an}}$. Recall that the action of $\mathrm{Frob}$ on the cohomology is the composition of $\tau$ and of $\pi\cdot\mathrm{id} \in J$. Let $P_{\bullet}$ be a projective resolution of $\mathrm H_c^{2(n-1)-b}(\mathcal M^{\mathrm{an}}, \overline{\mathbb Q_{\ell}})(1-n)$ in the category of smooth representations of $J$. Let $\mathcal T:P_{\bullet} \rightarrow P_{\bullet}$ be a lift of $\tau^{-1}$ as a morphism of chain complexes. For $a \geq 0$, the action of $\mathrm{Frob}$ on an element of $\mathrm{Ext}_{J\text{-sm}}^a (\mathrm H_c^{2(n-1)-b}(\mathcal M^{\mathrm{an}}, \overline{\mathbb Q_{\ell}})(1-n), \Pi_p)$ represented by a function $f:P_a\to \Pi_p$, is given by 
$$\mathrm{Frob}^*f(v) = f((\pi^{-1}\cdot\mathrm{id})\mathcal T_av) = \omega_{\Pi_p}(\pi^{-1}\cdot\mathrm{id})f(\mathcal T_av) = \delta_{\Pi_p}\pi_{\ell}^{w(\xi)}f(\mathcal T_av).$$
In particular, if $\tau$ acts like $x\cdot\mathrm{id}$ on the cohomology of $\mathcal M^{\mathrm{an}}$ for some $x\in \overline{\mathbb Q_{\ell}}^{\times}$, then $\mathrm{Frob}$ acts by multiplication by $\delta_{\Pi_p}\pi_{\ell}^{w(\xi)}x^{-1}p^{n-1}$ on the corresponding $\mathrm{Ext}$ groups.

\paragraph{}\label{H0Shimura} If $x \in \overline{\mathbb Q_{\ell}}^{\times}$, let $\overline{\mathbb Q_{\ell}}[x]$ denote the $1$-dimensional representation of $W$ where the inertia acts trivially and $\mathrm{Frob}$ acts like multiplication by the scalar $x$. Let $X^{\mathrm{un}}(J)$ denote the set of unramified characters of $J$. Looking at the diagonal $a+b=0$ in the spectral sequence, we obtain the following result.

\noindent \begin{prop}
There is a $(G(\mathbb A_f^p)\times W)$-equivariant isomorphism 
$$\mathrm{H}^{0}_c(\overline{S}^{\mathrm{ss}}, \overline{\mathcal L_{\xi}}) \simeq \bigoplus_{\substack{\Pi\in\mathcal A_{\xi}(I) \\ \Pi_p \in X^{\mathrm{un}}(J)}} \Pi^p \otimes \overline{\mathbb Q_{\ell}}[\delta_{\Pi_p}\pi_{\ell}^{w(\xi)}].$$
\end{prop}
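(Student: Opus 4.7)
The plan is to extract the diagonal $a+b=0$ from the Fargues spectral sequence $F_2^{a,b}\Rightarrow \mathrm H^{a+b}_c(\overline{S}^{\mathrm{ss}},\overline{\mathcal L_{\xi}})$ of \ref{FarguesSpectralSequenceBis} and show that only one term survives. First I would observe that $F_2^{a,b}=0$ as soon as $a<0$ (since $\mathrm{Ext}^a$ vanishes in negative degrees) or $b<0$ (since $\mathrm H_c^k(\mathcal M^{\mathrm{an}})=0$ for $k>2(n-1)$), so on the diagonal $a+b=0$ only $F_2^{0,0}$ can be non-zero. For the same reasons, the differentials $d_r:F_r^{-r,r-1}\to F_r^{0,0}$ and $d_r:F_r^{0,0}\to F_r^{r,1-r}$ vanish on every page $r\geq 2$, since their source or target lies in a quadrant where $F_r$ is zero. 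This produces a $(\mathbb G(\mathbb A_f^p)\times W)$-equivariant isomorphism $\mathrm H^0_c(\overline{S}^{\mathrm{ss}},\overline{\mathcal L_{\xi}})\simeq F_2^{0,0}$.

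The next step is to compute $F_2^{0,0}$. By \ref{HighestCohomologyGroup}, the top-degree cohomology group $\mathrm H_c^{2(n-1)}(\mathcal M^{\mathrm{an}})$ is isomorphic to $\mathrm{c-Ind}_{J^{\circ}}^J\,\mathbf 1$ with $\mathrm{Frob}$ acting as $p^{n-1}\cdot\mathrm{id}$. Frobenius reciprocity then yields
\[
\mathrm{Hom}_J\bigl(\mathrm{c-Ind}_{J^{\circ}}^J\,\mathbf 1,\Pi_p\bigr)\simeq (\Pi_p)^{J^{\circ}},
\]
which vanishes unless $\Pi_p$ is trivial on the subgroup $J^{\circ}$ generated by all compact open subgroups of $J$, i.e. unless $\Pi_p\in X^{\mathrm{un}}(J)$, and is one-dimensional in that case. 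Hence only unramified $\Pi_p$'s contribute to $F_2^{0,0}$, and each contributes exactly a copy of $\Pi^p$ tensored with a one-dimensional $W$-representation.

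It remains to determine the Frobenius scalar on each surviving line. Here I would invoke the formula of \ref{FrobeniusActionOnHom}: writing $\tau=(\pi^{-1}\cdot\mathrm{id})\cdot\mathrm{Frob}$ and choosing a lift $\mathcal T$ of $\tau^{-1}$ on a projective resolution $P_{\bullet}$ of $\mathrm H_c^{2(n-1)}(\mathcal M^{\mathrm{an}})(1-n)$, the action of $\mathrm{Frob}$ on a representative $f\colon P_0\to \Pi_p$ is given by $\mathrm{Frob}^*f(v)=\omega_{\Pi_p}(\pi^{-1}\cdot\mathrm{id})\,f(\mathcal T_0 v)=\delta_{\Pi_p}\pi_{\ell}^{w(\xi)}\,f(\mathcal T_0 v)$. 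Combining this with the value $p^{n-1}$ of $\mathrm{Frob}$ on $\mathrm H_c^{2(n-1)}(\mathcal M^{\mathrm{an}})$, the Tate twist by $(1-n)$, and the scalar $\omega_{\Pi_p}(\pi^{-1}\cdot\mathrm{id})=\delta_{\Pi_p}\pi_{\ell}^{w(\xi)}$ by which the central element $\pi^{-1}\cdot\mathrm{id}\in\mathrm Z(J)$ acts on $\Pi_p$, I would extract the Frobenius eigenvalue $\delta_{\Pi_p}\pi_{\ell}^{w(\xi)}$ on the Hom space.

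The main delicate point is this final step: because $\tau$ does not act as a scalar on $\mathrm{c-Ind}_{J^{\circ}}^J\,\mathbf 1$ (the central translation $\pi^{-1}\cdot\mathrm{id}$ shifts between connected components), one cannot apply the "scalar" specialization of \ref{FrobeniusActionOnHom} blindly. Instead, I would evaluate the Frobenius action on the explicit generator of $(\Pi_p)^{J^{\circ}}$ obtained from Frobenius reciprocity (the image of the indicator function of $J^{\circ}$), and check by a direct calculation that the contribution of $\pi^{-1}\cdot\mathrm{id}$ gets precisely absorbed into the central character of $\Pi_p$, leaving the expected scalar $\delta_{\Pi_p}\pi_{\ell}^{w(\xi)}$.
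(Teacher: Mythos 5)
Your proposal follows the paper's own line of argument exactly: collapse the Fargues spectral sequence at the diagonal $a+b=0$ to isolate $F_2^{0,0}$, plug in \ref{HighestCohomologyGroup}, invoke Frobenius reciprocity to identify the contributing $\Pi_p$ with the unramified characters of $J$ (via $(\Pi_p)^{J^{\circ}}\ne 0$ and $J/J^{\circ}\simeq\mathbb Z$), and read off the Frobenius scalar from \ref{FrobeniusActionOnHom}. This is precisely the paper's sketch.

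The only place you diverge is the caution in your final paragraph, and it rests on a misreading. You argue that $\tau$ cannot act by a scalar on $\mathrm{c-Ind}_{J^{\circ}}^J\mathbf 1$ because $\pi^{-1}\cdot\mathrm{id}$ shifts between the components $\mathcal M^{\mathrm{an}}_i$; but the full operator $\tau=\pi\alpha_{\mathrm{RZ}}^{-1}$ is, geometrically, $M\mapsto (\mathrm{id}\otimes\sigma)(M)$, which \emph{preserves} each $\mathcal M^{\mathrm{an}}_i$ (it is $\mathrm{Frob}$ alone that shifts $i\to i+1$, and in $\tau=(\pi^{-1}\cdot\mathrm{id},\mathrm{Frob})$ the two shifts cancel). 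Since each $\mathrm H_c^{2(n-1)}(\mathcal M^{\mathrm{an}}_i)$ is one-dimensional and the central element $\pi\cdot\mathrm{id}$ intertwines these spaces while commuting with $\tau$, the scalar is independent of $i$; so $\tau$ genuinely acts by $p^{n-1}\cdot\mathrm{id}$ on all of $\mathrm H_c^{2(n-1)}(\mathcal M^{\mathrm{an}})$, exactly as the paper's proof states, and the "scalar" case of \ref{FrobeniusActionOnHom} applies without further ado. (The wording of \ref{HighestCohomologyGroup}, which attributes the scalar $p^{n-1}$ to $\mathrm{Frob}$ rather than $\tau$, is what likely misled you; the paper's proof of the present proposition uses the $\tau$-version.) Your proposed direct computation on the generator of $(\Pi_p)^{J^{\circ}}$ would of course also confirm the eigenvalue $\delta_{\Pi_p}\pi_{\ell}^{w(\xi)}$, but it is an unnecessary detour once one observes that $\tau$ does act by a scalar.
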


\noindent The proof is identic to \cite{muller} 5.1.6 Proposition. It follows from the following facts. First, $F_2^{0,0}$ is the only non-zero term on the diagonal $a+b=0$ and all differentials connected to $F_k^{0,0}$ are zero for $k\geq 2$. Then, by \ref{HighestCohomologyGroup} we have $\mathrm H_c^{2(n-1)}(\mathcal M^{\mathrm{an}}) \simeq \mathrm{c-Ind}_{J^{\circ}}^J \,\mathbf 1$ with $\tau$ acting by multiplication by $p^{n-1}$. Eventually, since $J^{\circ}$ is normal in $J$, $J/J^{\circ} \simeq \mathbb Z$ and $J^{\circ}$ is generated by all the compact open subgroups of $J$, any smooth irreducible representation of $J$ having some non-zero $J^{\circ}$-fixed vectors is actually an unramified character of $J$.

\subsection{The cohomology when $n=2$ with $C$ split, when $n=3$ and when $n=4$ with $C$ non-split}

\paragraph{}\label{EigenvaluesOnH2} From now on we assume that $\theta_{\mathrm{max}} = 1$ so that $n=2$ with $C$ split, $n=3$ or $n=4$ with $C$ non-split. We will use our knowledge of the spectral sequence given by $E_1^{a,b}$ as detailed in section 3.2 in order to compute all the terms $F_2^{a,b}$, and as a consequence we obtain a description of the cohomology of the supersingular locus of the Shimura variety. All the arguments used here are exactly the same as in \cite{muller} Section 5.2.\\
Since $\theta_{\mathrm{max}} = 1$, we have $F^{a,b}_2 = 0$ for all $a\geq 2$. As a consequence, all differentials in the second and deeper pages of the sequence are zero, so that it already degenerates on the second page. Moreover, the supersingular locus $\overline{S}^{\mathrm{ss}}$ has dimension one, thus $F_2^{0,b} = 0$ for $b\geq 3$ and $F_2^{1,b} = 0$ for $b\geq 2$. \\
In Figure 4, we draw the second page $F_2$ and we write between brackets the \textit{complex modulus} of the possible eigenvalues of $\mathrm{Frob}$ on each term. as computed in \ref{FrobeniusActionOnHom}.

\begin{figure}[h]
\centering
\begin{tikzcd}
F_2^{0,2}[p^{1+w(\xi)/2},p^{w(\xi)/2}] & 0 \\
F_2^{0,1}[p^{w(\xi)/2}] & F_2^{1,1}[p^{w(\xi)/2}]\\
F_2^{0,0}[p^{w(\xi)/2}] & F_2^{1,0}[p^{w(\xi)/2}]
\end{tikzcd}
\caption{The second page $F_2$ with the complex modulus of possible eigenvalues of $\mathrm{Frob}$ on each term.}
\end{figure}

\begin{prop}
We have $F_2^{1,1} = 0$ and the eigenspaces of $\mathrm{Frob}$ on $F_2^{0,2}$ attached to any eigenvalue of complex modulus $p^{w(\xi)/2}$ are zero.
\end{prop}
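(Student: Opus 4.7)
The plan is to reduce both vanishing statements to the acyclicity in positive degrees of the complex formed by the top row of the spectral sequence from Section 3.2. For $\theta_{\mathrm{max}}=1$ only two rows of $E_1^{a,b}$ are non-zero: the top row $b=2(n-1)$ (on which $\tau$ acts as $p^{n-1}$) and the middle row $b=2n-4$ (on which $\tau$ acts as $p^{n-2}$). First I will track the contributions on the diagonals $a+b\in\{2n-3,2n-4\}$: this shows that $\mathrm H_c^{2n-3}(\mathcal M^{\mathrm{an}}) = E_\infty^{-1,2n-2}$ comes entirely from the top row, while $\mathrm H_c^{2n-4}(\mathcal M^{\mathrm{an}})$ splits as $E_\infty^{0,2n-4}\oplus E_\infty^{-2,2n-2}$ with $\tau$-eigenvalues $p^{n-2}$ and $p^{n-1}$ respectively; purity of Frobenius on the closed strata $S_0$ and $S_1$ (of dimension at most $1$) will guarantee semisimplicity of $\tau$ and the genuineness of this decomposition.

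Combining this observation with Paragraph~\ref{FrobeniusActionOnHom}, the $\mathrm{Frob}$-eigenspaces on $F_2^{0,2}$ of complex modulus $p^{w(\xi)/2}$ correspond precisely to $\mathrm{Hom}_J$ applied to the $\tau=p^{n-1}$ piece of $\mathrm H_c^{2n-4}(\mathcal M^{\mathrm{an}})$, while all of $F_2^{1,1}$ is $\mathrm{Ext}^1_J$ of the (purely $\tau=p^{n-1}$) cohomology $\mathrm H_c^{2n-3}(\mathcal M^{\mathrm{an}})$ with $\Pi_p$. It therefore suffices to prove that $E_\infty^{-1,2n-2}$ and $E_\infty^{-2,2n-2}$ both vanish.

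The third step is to consider the complex $D_\bullet$ along the top row, with $D_s:=E_1^{-s,2(n-1)}$ and the \v{C}ech differentials. By Proposition~\ref{TrivialNearbyCycles}, $\mathrm H_c^{2(n-1)}(U_\Lambda)\simeq\overline{\mathbb Q_\ell}$ for every $\Lambda\in\mathcal L$, so, forgetting the $J$-action, $D_\bullet$ identifies with the reduced simplicial chain complex (with coefficients in $\overline{\mathbb Q_\ell}$) of the nerve $\mathcal N$ of the cover $\{U_\Lambda\}_{\Lambda\in\mathcal L^{\mathrm{max}}}$; the vertices of $\mathcal N$ are the maximal-type vertex lattices and its simplices are the subsets $\gamma\subset\mathcal L^{\mathrm{max}}$ with $\Lambda(\gamma)\in\mathcal L$. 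The two homologies $H_1(\mathcal N,\overline{\mathbb Q_\ell})$ and $H_2(\mathcal N,\overline{\mathbb Q_\ell})$ are then precisely the objects to be shown to vanish.

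The hard step is the identification, at each level $i\in\mathbb Z$, of the connected component $\mathcal N_i$ of $\mathcal N$ with the nerve of the covering of the Bruhat-Tits building of $J$ at level $i$ by the stars of its type-$0$ vertices. Since $J$ has semisimple rank $\theta_{\mathrm{max}}=1$, the building at each level is a tree, and the combinatorics from Paragraph~\ref{CountingBTStrata} shows that any intersection of distinct stars reduces to at most a single type-$2$ vertex (a common neighbor of several type-$0$ vertices in the tree). Hence the star cover is good and the nerve lemma gives $\mathcal N_i\simeq \text{tree}$, which is contractible. This yields $H_s(\mathcal N,\overline{\mathbb Q_\ell})=0$ for every $s\geq 1$, completing the proof. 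The main technical delicacy will lie in the verification of the goodness of the star cover, which ultimately rests on the incidence relations of vertex lattices analyzed in Paragraph~\ref{CountingBTStrata}.
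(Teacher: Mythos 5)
Your proposal is correct in substance but takes a genuinely different route from the paper, so let me compare the two and flag one slip.

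The paper's proof stays entirely on the Shimura variety side. It uses the filtration of $\mathrm H_c^2(\overline{S}^{\mathrm{ss}}_{K^p},\overline{\mathcal L_{\xi}})$ coming from the spectral sequence, for which $F_2^{1,1}$ is a subspace and $F_2^{0,2}$ is the quotient, and then shows that \emph{every} Frobenius eigenvalue on $\mathrm H_c^2(\overline{S}^{\mathrm{ss}}_{K^p},\overline{\mathcal L_{\xi}})$ has complex modulus $p^{1+w(\xi)/2}$. This follows because the $\mathrm H_c^2$ of the supersingular locus is, via the Bruhat--Tits/Ekedahl--Oort stratification in paragraph \ref{BTStrataOnShimura}, a direct sum of $\mathrm H_c^2$ of the closed strata of dimension one, each of which is isomorphic to $\mathbb P^1$ (hence projective and smooth), so Deligne purity together with the purity of $\overline{\mathcal L_{\xi}}$ applies. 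Since any nonzero eigenvalue of $\mathrm{Frob}$ on $F_2^{1,1}$ would have modulus $p^{w(\xi)/2}$, both claims drop out at once. Your argument, by contrast, lives on the Rapoport--Zink side: you observe that only the top row $b=2(n-1)$ (with $\tau=p^{n-1}$) can feed the relevant terms, reduce to the vanishing of $E_2^{-1,2n-2}$ and $E_2^{-2,2n-2}$, and then identify the top-row complex with the simplicial chain complex of the nerve $\mathcal N$ of the \v{C}ech cover, which you want to show is contractible via a nerve-lemma argument on the tree. This works, and it even buys slightly more than the paper proves: it shows that $\mathrm H_c^{2n-3}(\mathcal M^{\mathrm{an}},\overline{\mathbb Q_{\ell}})=0$ outright, a purely local statement, rather than merely the weaker fact about $\mathrm{Ext}^1_J$ of it. The paper's route is more direct and avoids the combinatorics of the nerve entirely.

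One slip to correct in your last step: you say $\mathcal N_i$ is ``the nerve of the covering of the Bruhat--Tits building of $J$ at level $i$ by the stars of its type-$0$ vertices,'' but the nerve of the star cover of type-$0$ vertices has vertex set the type-$0$ lattices, while $\mathcal N_i$ has vertex set $\mathcal L_i^{\mathrm{max}}$ (the type-$2$ lattices). The correct identification is with the nerve of the cover of the tree $\mathcal B_i$ by the \emph{closed stars of its type-$2$ vertices}: this nerve has vertex set $\mathcal L_i^{\mathrm{max}}$, and a finite subset $\gamma\subset\mathcal L_i^{\mathrm{max}}$ of cardinality $\geq 2$ is a simplex if and only if all its members are adjacent to a common type-$0$ vertex $\Lambda_0$, i.e. $\gamma\subset N(\Lambda_0)$; this is precisely the condition $\Lambda(\gamma)\in\mathcal L_i$ by \cite{RTW} Theorem 6.10, so the two nerves coincide. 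Goodness of this cover is most cleanly seen via the Helly property for subtrees of a tree: each closed star is a subtree, and a finite intersection of subtrees of a tree is again a subtree, hence contractible or empty (there is no need to count anything beyond noting the intersections are connected). The nerve lemma then gives $\mathcal N_i\simeq\mathcal B_i$, which is a tree and thus contractible, yielding $H_s(\mathcal N_i,\overline{\mathbb Q_\ell})=0$ for $s\geq 1$ and so the required vanishing of $E_2^{-1,2n-2}$ and $E_2^{-2,2n-2}$.
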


\begin{proof}
By the machinery of spectral sequences, $F_2^{1,1}$ is a $\mathbb G(\mathbb A_f^p)\times W$-subspace of $\mathrm H_c^2(\overline{S}^{\mathrm{ss}}_{K^p},\overline{\mathcal L_{\xi}})$, and the quotient by $F_2^{1,1}$ is isomorphic to $F_2^{0,2}$. We prove that no eigenvalue of $\mathrm{Frob}$ on this $\mathrm H_c^2$ cohomology group has complex modulus $p^{w(\xi)/2}$, and the result readily follows.\\
Let $K^p \subset \mathbb G(\mathbb A_f^p)$ be small enough. Recall from \ref{BTStrataOnShimura} the definition of the Bruhat-Tits strata $\overline{\mathrm S}_{K^p,\Lambda,k}$ inside the supersingular locus $\overline{S}^{\mathrm{ss}}_{K^p}$. Each stratum $\overline{\mathrm S}_{K^p,\Lambda,k}$ is isomorphic to $\mathcal M_{\Lambda}$. For $\Lambda \in \mathcal L_i$, define 
$$\mathcal M_{\Lambda}^{\circ} := \mathcal M_{\Lambda} \setminus \bigcup_{\Lambda' \subsetneq \Lambda} \mathcal M_{\Lambda'},$$
where $\Lambda'$ runs over all vertex lattices of $\mathcal L_i$ strictly contained in $\Lambda$. By \cite{RTW} Theorem 6.10, each $\mathcal M_{\Lambda}^{\circ}$ is open dense in $\mathcal M_{\Lambda}$. Via the isomorphism $\mathcal M_{\Lambda} \xrightarrow{\sim} S_{\theta}$ where $t(\Lambda) = 2\theta$, we have $\mathcal M_{\Lambda}^{\circ} \xrightarrow{\sim} X_{I_{\theta}}(w_{\theta})$ with the notations of \ref{Stratification}. In particular, $\mathcal M_{\Lambda}^{\circ}$ is isomorphic to the Coxeter variety for $\mathrm{Sp}(2\theta,\mathbb F_p)$. Let $\overline{\mathrm S}_{K^p,\Lambda,k}^{\circ} \subset \overline{\mathrm S}_{K^p,\Lambda,k}$ be the scheme theoric image of $\mathcal M_{\Lambda}^{\circ}$ in the $k$-th copy of $\mathcal M_{\mathrm{red}}$ via the $p$-adic uniformization $(\Theta_{K^p})_s$ of \ref{BTStrataOnShimura}. We have a stratification 
$$\overline{S}^{\mathrm{ss}}_{K^p} = \overline{S}^{\mathrm{ss}}_{K^p}[0] \sqcup \overline{S}^{\mathrm{ss}}_{K^p}[1],$$
where for $i= 0,1$ the stratum $\overline{S}^{\mathrm{ss}}_{K^p}[i]$ is the finite disjoint union of the $\overline{\mathrm S}_{K^p,\Lambda,k}^{\circ}$ for various $k$ and $\Lambda$ of orbit type $t(\Lambda) = 2i$. The stratum $\overline{S}^{\mathrm{ss}}_{K^p}[0]$ is closed of dimension $0$, and the stratum $\overline{S}^{\mathrm{ss}}_{K^p}[1]$ is open dense of dimension $1$. Therefore, we have an isomorphism between the highest degree cohomology groups 
$$\mathrm H_c^2(\overline{S}^{\mathrm{ss}}_{K^p}, \overline{\mathcal L_{\xi}}) \simeq \mathrm H_c^2(\overline{S}^{\mathrm{ss}}_{K^p}[1], \overline{\mathcal L_{\xi}}).$$
Since $\overline{S}^{\mathrm{ss}}_{K^p}[1] = \bigsqcup_{t(\Lambda) = 2,k} \overline{\mathrm S}_{K^p,\Lambda,k}^{\circ}$ and each $\overline{\mathrm S}_{K^p,\Lambda,k}^{\circ}$ is open and closed in $\overline{S}^{\mathrm{ss}}_{K^p}[1]$, we have 
$$\mathrm H_c^2(\overline{S}^{\mathrm{ss}}_{K^p}[1], \overline{\mathcal L_{\xi}}) \simeq \bigoplus_{t(\Lambda) = 2,k} \mathrm H_c^2(\overline{\mathrm S}_{K^p,\Lambda,k}^{\circ},\overline{\mathcal L_{\xi}}) \simeq \bigoplus_{t(\Lambda) = 2,k} \mathrm H_c^2(\overline{\mathrm S}_{K^p,\Lambda,k},\overline{\mathcal L_{\xi}}),$$
where the last isomorphism follows from the stratification 
$$\overline{\mathrm S}_{K^p,\Lambda,k} = \overline{\mathrm S}_{K^p,\Lambda,k}^{\circ} \sqcup \bigsqcup_{\Lambda'\subsetneq \Lambda} \overline{\mathrm S}_{K^p,\Lambda',k}^{\circ},$$
with the first term being open dense of dimension $1$ and the second term being closed of dimension $0$. Since $\mathcal L_{\xi} \simeq \epsilon(\xi)\epsilon_{m(\xi)} \left( \mathrm{R}^{m(\xi)}({\pi_{K^p}^{m(\xi)}})_{*}\overline{\mathbb Q_{\ell}}(t(\xi))\right)$, the local system $\overline{\mathcal L_{\xi}}$ is pure of weight $w(\xi)$. Moreover the variety $\overline{\mathrm S}_{K^p,\Lambda,k}$ is projective and smooth for $\theta = 1$ (it is actually isomorphic to $\mathbb P^1$ by \ref{IsomorphismDLVariety} Proposition). Hence, all eigenvalues of the Frobenius on $H_c^2(\overline{\mathrm S}_{K^p,\Lambda,k},\overline{\mathcal L_{\xi}})$, and therefore on $\mathrm H_c^2(\overline{S}^{\mathrm{ss}}_{K^p},\overline{\mathcal L_{\xi}})$ as well, must have complex modulus $p^{1+t(\xi)/2}$.
\end{proof}

\paragraph{}\label{VanishingExt1} We will need the following 

\begin{lem}
For $i=1$ when $n=2$ with $C$ split, for all $1\leq i \leq p$ when $n=3$ and for all $1\leq i \leq p^2$ when $n=4$ with $C$ non-split, we have $\mathrm{Ext}_{J}^1(\mathrm{Ker}(\varphi_i),\pi) = \mathrm{Ext}_{J}^1(\mathrm{Im}(\varphi_i),\pi) = 0$ for any smooth representation $\pi$ of $J$.
\end{lem}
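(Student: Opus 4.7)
The plan is to realise both $\mathrm{Ker}(\varphi_i)$ and $\mathrm{Im}(\varphi_i)$ as cycles and boundaries of a long exact sequence of projective smooth $J$-representations whose augmentation target has projective dimension at most $1$, and then to conclude by dimension shifting. To this end, I would first augment the top row of the spectral sequence: by Frobenius reciprocity applied to $J_1 \subset J^\circ$ there is a canonical surjection $\epsilon : \mathrm{c-Ind}_{J_1}^J \mathbf{1} \twoheadrightarrow \mathrm{c-Ind}_{J^\circ}^J \mathbf{1}$; using Proposition \ref{HighestCohomologyGroup} together with the fact that $E_\infty^{0,2(n-1)} = \mathrm{Cok}(\varphi_1)$ is the only surviving contribution to $\mathrm H_c^{2(n-1)}(\mathcal M^{\mathrm{an}})$, one identifies $\mathrm{Ker}(\epsilon) = \mathrm{Im}(\varphi_1)$, producing a complex
\[
\cdots \to (\mathrm{c-Ind}_{J_0}^J \mathbf{1})^{k_{3,0}} \to (\mathrm{c-Ind}_{J_0}^J \mathbf{1})^{k_{2,0}} \xrightarrow{\varphi_1} \mathrm{c-Ind}_{J_1}^J \mathbf{1} \xrightarrow{\epsilon} \mathrm{c-Ind}_{J^\circ}^J \mathbf{1} \to 0.
\]
The crucial claim is that this complex is exact. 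By \ref{SpectralSequenceIsInduced} and the exactness of $\mathrm{c-Ind}_{J^\circ}^J$, it is enough to check exactness after restricting to a single connected component $\mathcal M_0^{\mathrm{an}}$, where the complex becomes the augmented simplicial chain complex of the nerve of the cover $\{U_\Lambda\}_{\Lambda \in \mathcal L_0^{\mathrm{max}}}$ with constant coefficients $\mathbf{1}$. Since $\theta_{\mathrm{max}} = 1$, the connected component of the Bruhat-Tits building of $J^\circ$ is a tree, and the nerve is homotopy equivalent to this tree — each vertex lattice $\Lambda_0$ of orbit type $0$ contributing a $(|N(\Lambda_0)|-1)$-simplex spanned by $N(\Lambda_0)$ that collapses onto the corresponding type-$0$ vertex — so the nerve is contractible and the augmented chain complex is exact.

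Each term $C_s$ of the complex (apart from the augmentation target) is projective in the category of smooth $J$-representations, being a direct sum of modules of the form $\mathrm{c-Ind}_K^J \mathbf{1}$ with $K$ a compact open subgroup of $J$: Frobenius reciprocity identifies $\mathrm{Hom}_J(\mathrm{c-Ind}_K^J \mathbf{1}, -)$ with the exact functor of $K$-invariants, so $\mathrm{Ext}_J^k(C_s, \pi) = 0$ for every $k \geq 1$ and every smooth $\pi$. Moreover, Shapiro's lemma gives $\mathrm{Ext}_J^k(\mathrm{c-Ind}_{J^\circ}^J \mathbf{1}, \pi) \simeq \mathrm H^k(J^\circ, \pi|_{J^\circ})$, which vanishes for $k \geq 2$ because the Bruhat-Tits tree provides a projective resolution of $\mathbf{1}$ in the category of smooth $J^\circ$-representations of length $\theta_{\mathrm{max}} = 1$.

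Setting $I_s := \mathrm{Im}(\varphi_s)$ and $K_s := \mathrm{Ker}(\varphi_s)$, the short exact sequence $0 \to I_1 \to \mathrm{c-Ind}_{J_1}^J \mathbf{1} \to \mathrm{c-Ind}_{J^\circ}^J \mathbf{1} \to 0$ together with the projectivity of the middle term yields $\mathrm{Ext}_J^k(I_1, \pi) \simeq \mathrm{Ext}_J^{k+1}(\mathrm{c-Ind}_{J^\circ}^J \mathbf{1}, \pi) = 0$ for all $k \geq 1$. Iterating the argument with the short exact sequences $0 \to K_s \to C_s \to I_s \to 0$ and invoking $K_s = I_{s+1}$ from exactness, one obtains $\mathrm{Ext}_J^1(K_s, \pi) = \mathrm{Ext}_J^1(I_s, \pi) = 0$ for all $s$. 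The main technical obstacle is the exactness claim above: it requires an explicit description of the Čech differentials and a careful verification — particularly at the transition between $C_1$ and $C_0$, where the decomposition of the complex according to the unique lattice $\Lambda(\gamma)$ of orbit type $0$ breaks down — that the assembled complex really is the augmented chain complex of the contractible nerve.
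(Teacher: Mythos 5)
Your dimension-shifting strategy is the right idea, but as set up it hinges on the exactness of the full \v{C}ech complex, i.e.\ the contractibility of the nerve of $\{U_\Lambda\}_{\Lambda\in\mathcal L_0^{\mathrm{max}}}$, which you rightly flag as the obstacle. The collapsing heuristic is not a proof: the type-$0$ lattices onto which you propose to collapse are not vertices of the nerve (whose vertex set consists only of lattices of maximal type), and the maximal simplices attached to distinct type-$0$ lattices share a vertex exactly when those lattices have a common maximal-type neighbour, so they cannot be collapsed independently. More importantly, this global exactness is strictly stronger than what the lemma requires.

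The ingredient you are missing is that the Bruhat--Tits tree bounds the cohomological dimension of $J^\circ$, not merely the projective dimension of $\mathbf 1$. Tensoring the length-one resolution of $\mathbf 1$ by simplicial chains on the tree with an arbitrary smooth $J^\circ$-representation $V$, and using the projection formula $\mathrm{c-Ind}_K^{J^\circ}\mathbf 1\otimes V\simeq\mathrm{c-Ind}_K^{J^\circ}(V|_K)$ (projective since $K$ is compact open), produces a length-one projective resolution of $V$; hence $\mathrm{Ext}^2_{J^\circ}\equiv 0$. Now by \ref{SpectralSequenceIsInduced}, $\mathrm{Ker}(\varphi_i)$ and $\mathrm{Im}(\varphi_i)$ are compactly induced from $J^\circ$-representations $\mathrm{Ker}(\varphi_i')$ and $\mathrm{Im}(\varphi_i')$, and since $\mathrm{c-Ind}_{J^\circ}^J$ is exact and preserves projectives, $\mathrm{Ext}^1_J$ of the former is computed by $\mathrm{Ext}^1_{J^\circ}$ of the latter. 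But $\mathrm{Im}(\varphi_i')$ and $\mathrm{Ker}(\varphi_i')$ are submodules of the projective $J^\circ$-modules $C_{i-1}'$ and $C_i'$ (the terms of $E_1'$), and the short exact sequences $0\to\mathrm{Im}(\varphi_i')\to C_{i-1}'\to C_{i-1}'/\mathrm{Im}(\varphi_i')\to 0$ and $0\to\mathrm{Ker}(\varphi_i')\to C_i'\to\mathrm{Im}(\varphi_i')\to 0$ give $\mathrm{Ext}^1_{J^\circ}(\mathrm{Im}(\varphi_i'),-)\simeq\mathrm{Ext}^2_{J^\circ}(C_{i-1}'/\mathrm{Im}(\varphi_i'),-)=0$ and $\mathrm{Ext}^1_{J^\circ}(\mathrm{Ker}(\varphi_i'),-)\simeq\mathrm{Ext}^2_{J^\circ}(\mathrm{Im}(\varphi_i'),-)=0$. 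Neither the exactness of the \v{C}ech complex nor the identification of $\mathrm H_c^{2(n-1)}(\mathcal M^{\mathrm{an}})$ is needed.
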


\noindent The proof is strictly identic to \cite{muller} 5.2.5 Proposition. It relies on the observation of \ref{SpectralSequenceIsInduced} that the spectral sequence $E$, computing the cohomology of the Rapoport-Zink space $\mathcal M^{\mathrm{an}}$, is induced from $J^{\circ}$.

\paragraph{}\label{CohomologySupersingular}The remaining of this section is devoted to proving the following theorem. We use the same notations as \ref{H0Shimura}. 

\begin{theo}
There are $G(\mathbb A_f^p) \times W$-equivariant isomorphisms
\begin{align*}
\mathrm{H}^{0}_c(\overline{S}^{\mathrm{ss}}, \overline{\mathcal L_{\xi}}) & \simeq \bigoplus_{\substack{\Pi\in\mathcal A_{\xi}(I) \\ \Pi_p \in X^{\mathrm{un}}(J)}} \Pi^p \otimes \overline{\mathbb Q_{\ell}}[\delta_{\Pi_p}\pi_{\ell}^{w(\xi)}], \\
\mathrm{H}^{2}_c(\overline{S}^{\mathrm{ss}}, \overline{\mathcal L_{\xi}}) & \simeq \bigoplus_{\substack{\Pi\in\mathcal A_{\xi}(I) \\ \Pi_p^{J_1}\not = 0}} \Pi^p \otimes \overline{\mathbb Q_{\ell}}[\delta_{\Pi_p}\pi_{\ell}^{w(\xi)+2}].
\end{align*}
Moreover, there exists a $G(\mathbb A_f^p)\times W$-subspace $V \subset \mathrm{H}^{1}_c(\overline{S}^{\mathrm{ss}}, \overline{\mathcal L_{\xi}})$ such that 
$$V \simeq \bigoplus_{\Pi\in\mathcal A_{\xi}(I)} d(\Pi_p)\Pi^p \otimes \overline{\mathbb Q_{\ell}}[\delta_{\Pi_p}\pi_{\ell}^{w(\xi)}],$$
and with quotient space isomorphic to 
$$\mathrm{H}^{1}_c(\overline{S}^{\mathrm{ss}}, \overline{\mathcal L_{\xi}})/V \simeq \bigoplus_{\substack{\Pi\in\mathcal A_{\xi}(I) \\ \Pi_p^{J_1}\not = 0\\ \dim(\Pi_p) > 1}} (\nu-1-d(\Pi_p))\Pi^p \otimes \overline{\mathbb Q_{\ell}}[\delta_{\Pi_p}\pi_{\ell}^{w(\xi)}] \oplus \bigoplus_{\substack{\Pi\in\mathcal A_{\xi}(I) \\ \Pi_p \in X^{\mathrm{un}}(J)}} (\nu-d(\Pi_p))\Pi^p \otimes \overline{\mathbb Q_{\ell}}[\delta_{\Pi_p}\pi_{\ell}^{w(\xi)}],$$
where $\nu \in \mathbb Z_{\geq 0}$ is a multiplicity given by 
$$
\nu = \begin{cases}
1 & \text{if } n = 2 \text{ and } C \text{ is split},\\
p & \text{if } n = 3,\\
p^2 & \text{if } n = 4 \text{ and } C \text{ is non-split},
\end{cases}
$$ 
and $d(\Pi_p) := \dim\mathrm{Ext}_{J}^1(\mathrm{c-Ind}_{J^{\circ}}^J\,\mathbf 1,\Pi_p)$.
\end{theo}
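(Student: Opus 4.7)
The plan is to analyze each surviving term of the Fargues spectral sequence $F_r^{a,b}$ on its degenerate second page. Since $\theta_{\mathrm{max}} = 1$ we have $F_2^{a,b} = 0$ for $a \geq 2$, and since $\dim \overline{S}^{\mathrm{ss}} = 1$ all terms on diagonals $a+b \geq 3$ vanish; combined with Proposition \ref{EigenvaluesOnH2}, only $F_2^{0,0}, F_2^{0,1}, F_2^{1,0}, F_2^{0,2}$ survive, yielding $\mathrm{H}^0_c \simeq F_2^{0,0}$, $\mathrm{H}^2_c \simeq F_2^{0,2}$, and a short exact sequence $0 \to F_2^{1,0} \to \mathrm{H}^1_c(\overline{S}^{\mathrm{ss}}) \to F_2^{0,1} \to 0$ which will furnish the subspace $V$ of the theorem.

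The three easy pieces are handled directly. For $F_2^{0,0}$ the argument of Proposition \ref{H0Shimura} applies verbatim. For $F_2^{1,0}$ we combine $\mathrm{H}_c^{2(n-1)}(\mathcal{M}^{\mathrm{an}}) \simeq \mathrm{c}\text{-}\mathrm{Ind}_{J^\circ}^J \mathbf{1}$ from Proposition \ref{HighestCohomologyGroup} with the definition of $d(\Pi_p)$ to obtain $V \simeq \bigoplus_{\Pi} d(\Pi_p) \Pi^p$. For $F_2^{0,2}$, Proposition \ref{EigenvaluesOnH2} forces only the eigenvalue of complex modulus $p^{1+w(\xi)/2}$ to contribute; by the transfer formula of \ref{FrobeniusActionOnHom} this corresponds to the $\tau = p^{n-2}$ eigenspace of $\mathrm{H}_c^{2n-4}(\mathcal{M}^{\mathrm{an}})$, which by inspection of Figures 2--3 is the isolated middle row of the spectral sequence $E$ and equals $\mathrm{c}\text{-}\mathrm{Ind}_{J_1}^J \mathbf{1}$; Frobenius reciprocity produces the $\Pi_p^{J_1}$ term. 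In each case the claimed Frobenius eigenvalue follows from substituting the relevant value of $\tau$ into \ref{FrobeniusActionOnHom}.

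The heart of the proof lies in computing $F_2^{0,1} = \bigoplus_{\Pi} \mathrm{Hom}_J(\mathrm{H}_c^{2n-3}(\mathcal{M}^{\mathrm{an}})(1-n), \Pi_p) \otimes \Pi^p$. The relevant $\tau$-eigenvalue on $\mathrm{H}_c^{2n-3}(\mathcal{M}^{\mathrm{an}})$ is $p^{n-1}$, which corresponds to the top row of the spectral sequence $E$ in Section 3.2. An Euler characteristic computation in the Grothendieck group of smooth $J$-representations yields, within the $\tau = p^{n-1}$ isotypic component,
$$[\mathrm{H}_c^{2(n-1)}(\mathcal{M}^{\mathrm{an}})] - [\mathrm{H}_c^{2n-3}(\mathcal{M}^{\mathrm{an}})] = \sum_{s \geq 1} (-1)^{s-1} [E_1^{-s+1,\, 2(n-1)}] = [\mathrm{c}\text{-}\mathrm{Ind}_{J_1}^J \mathbf{1}] - \nu [\mathrm{c}\text{-}\mathrm{Ind}_{J_0}^J \mathbf{1}],$$
where the identity $\sum_{s \geq 2} (-1)^{s} \binom{\#N(\Lambda_0)}{s} = \#N(\Lambda_0) - 1 = \nu$ (using the counts of \ref{CountingBTStrata}) is applied. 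Lemma \ref{VanishingExt1} guarantees that $\mathrm{Hom}_J(-, \Pi_p)$ is exact on all short exact sequences extracted from the Čech differentials $\varphi_i$, so this Grothendieck identity passes to dimensions, producing an explicit formula for $\dim \mathrm{Hom}_J(\mathrm{H}_c^{2n-3}(\mathcal{M}^{\mathrm{an}}), \Pi_p)$ in terms of $d(\Pi_p)$, $\dim \Pi_p^{J_0}$ and $\dim \Pi_p^{J_1}$.

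The main obstacle will be matching this dimension count to the three cases appearing in the theorem's description of $\mathrm{H}^1_c/V$. One must distinguish $\Pi_p$ an unramified character (where $\dim \Pi_p^{J_0} = \dim \Pi_p^{J_1} = 1$, giving multiplicity $\nu - d(\Pi_p)$), representations with $\Pi_p^{J_1} \neq 0$ and $\dim \Pi_p > 1$ (where the precise value of $\dim \Pi_p^{J_0}$ must be extracted from the parahoric restriction theory of the rank-one group $J$, producing the multiplicity $\nu - 1 - d(\Pi_p)$), and representations with $\Pi_p^{J_1} = 0$ (which do not contribute to $F_2^{0,1}$ since $\Pi_p^{J_0}$ also vanishes). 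Throughout, the Frobenius acts by $\delta_{\Pi_p}\pi_\ell^{w(\xi)}$ via \ref{FrobeniusActionOnHom}, yielding the claimed Weil-group structure and concluding the proof.
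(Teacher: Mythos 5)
Your decomposition $\mathrm{H}^0_c \simeq F_2^{0,0}$, $\mathrm{H}^2_c \simeq F_2^{0,2}$, $0 \to F_2^{1,0} \to \mathrm{H}^1_c \to F_2^{0,1} \to 0$ is exactly the paper's, and the handling of $F_2^{0,0}$, $F_2^{1,0}$ and $F_2^{0,2}$ is correct. The Euler-characteristic repackaging of $F_2^{0,1}$ is a genuinely slicker route than the paper's term-by-term extraction of short exact sequences from the differentials $\varphi_i$: instead of the inductive computation of $\mathrm{Hom}_J(\mathrm{Im}(\varphi_2),\Pi_p)$, you collapse the whole top row of $E_1$ into the alternating binomial sum $\#N(\Lambda_0) - 1 = \nu$. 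This is a real simplification of the bookkeeping if made rigorous.

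However, the step ``this Grothendieck identity passes to dimensions'' hides the two genuinely non-formal inputs, and the tool you cite for it is the wrong one. Lemma \ref{VanishingExt1} controls $\mathrm{Ext}^1_J$ of the subobjects $\mathrm{Ker}(\varphi_i)$ and $\mathrm{Im}(\varphi_i)$, but not of the subquotients $E_2^{-s,2(n-1)}$. To extract $\dim\mathrm{Hom}_J(\mathrm{H}_c^{2n-3}(\mathcal M^{\mathrm{an}}),\Pi_p)$ from the Grothendieck-group identity, the additive functional you actually need is $\sum_i(-1)^i\dim\mathrm{Ext}^i_J(-,\Pi_p)$, and truncating it at $\mathrm{Hom}$ requires: (i) $F_2^{1,1} = \mathrm{Ext}^1_J(\mathrm{H}_c^{2n-3}(\mathcal M^{\mathrm{an}}),\Pi_p) = 0$; and (ii) the vanishing of the complex-modulus-$p^{w(\xi)/2}$ eigenspace of $F_2^{0,2}$, i.e.\ $\mathrm{Hom}_J(\mathrm{H}_c^{2n-4}(\mathcal M^{\mathrm{an}})_{\tau=p^{n-1}},\Pi_p)=0$. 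Both of these are precisely the content of Proposition \ref{EigenvaluesOnH2}, which you invoke only for the $\mathrm{H}^2_c$ computation, not here; without them your left-hand side $[\mathrm{H}_c^{2(n-1)}] - [\mathrm{H}_c^{2n-3}]$ is missing terms. A second gap is the phrase ``parahoric restriction theory of the rank-one group $J$'': the multiplicity $\nu-1-d(\Pi_p)$ requires $\dim\Pi_p^{J_0}=1$ whenever $\Pi_p^{J_1}\neq 0$, and the key equivalence $\Pi_p^{J_0}\neq 0 \iff \Pi_p^{J_1}\neq 0$ is \emph{not} a local fact about rank-one groups -- the paper obtains it from an Aubert--Zelevinski duality argument that exploits the global hypothesis that both $\Pi_p$ and its dual appear as $p$-components of representations in $\mathcal A_\xi(I)$. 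This step must be supplied, or the third family (those with $\Pi_p^{J_1}=0$) does not drop out of your dimension count.
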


\noindent We note that the statement regarding $\mathrm H^0_c$ has already been proved in \ref{H0Shimura}. The integers $d(\Pi_p) \geq 0$ are finite, and all the multiplicities $\nu-1-d(\Pi_p)$ and $\nu-d(\Pi_p)$ occuring in the formula are non negative. In particular, if $n=2$ with $C$ split then $\nu-1 = 0$ so that for all $\Pi$ with $\Pi_p^{J_1} \not = 0$ and $\dim(\Pi_p)>1$, we have $d(\Pi_p) = 0$ and the corresponding component $\Pi_p$ does not actually occur in $\mathrm H_c^1(\overline{S}^{\mathrm{ss}}, \overline{\mathcal L_{\xi}})/V$. \\
For all $\Pi \in \mathcal A_{\xi}(I)$ we have $d(\Pi_p) \not = 0 \implies \Pi_p^{J_0} \not = 0 \text{ and } \Pi_p^{J_1} \not = 0$. In particular, if $\Pi_p$ is supercuspidal or if the central character of $\Pi_p$ is not unramified, then $d(\Pi_p) = 0$. All of these facts are byproducts of the proof below, but may also be proved by direct computation. We are not aware of a formula computing $d(\Pi_p)$ in general.

\begin{proof}We begin with the statement regarding $\mathrm H_c^2$. By \ref{EigenvaluesOnH2} we have 
$$\mathrm{H}^{2}_c(\overline{S}^{\mathrm{ss}}, \overline{\mathcal L_{\xi}}) \simeq F_2^{0,2} \simeq \bigoplus_{\Pi\in\mathcal A_{\xi}(I)} \mathrm{Hom}_{J} \left (E_2^{0,b}(1-n), \Pi_p\right) \otimes \Pi^p,$$
where $b=0$ if $n=2$ with $C$ split, $b=2$ if $n=3$ and $b=4$ if $n=4$ with $C$ non-split. In any case we have $E_2^{0,b} \simeq \mathrm{c-Ind}_{J_1}^J\,\mathbf 1$. Using Frobenius reciprocity we have 
$$F_2^{0,2} \simeq \bigoplus_{\Pi\in\mathcal A_{\xi}(I)} \mathrm{Hom}_J(\mathrm{c-Ind}_{J_1}^J \, \mathbf 1(1-n),\Pi_p)\otimes \Pi^{p} \simeq \bigoplus_{\Pi\in\mathcal A_{\xi}(I)} \mathrm{Hom}_{J_1}(\mathbf 1(1-n),(\Pi_p)_{|J_1})\otimes \Pi^{p}.$$
Since $J_1$ is a special maximal compact subgroup of $J$, we have $\dim(\pi^{J_1}) = 1$ for all smooth irreducible representations of $J$ such that $\pi^{J_1} \not = 0$. Therefore, we have 
$$\mathrm H_c^2(\overline{S}^{\mathrm{ss}}, \overline{\mathcal L_{\xi}}) \simeq \bigoplus_{\substack{\Pi\in\mathcal A_{\xi}(I) \\ \Pi_p^{J_1} \not = 0}} \Pi^p \otimes \overline{\mathbb Q_{\ell}}[\delta_{\Pi_p}\pi_{\ell}^{w(\xi)+2}]$$
as claimed, using \ref{FrobeniusActionOnHom} for the action of $\mathrm{Frob}$. \\

\noindent We now investigate the group $\mathrm H^1_c$. First we deal with the case $n=2$ with $C$ split. By the spectral sequence, there exists a $\mathbb G(\mathbb A_f^p)\times W$-subspace $V$ of this cohomology group such that 
$$V \simeq F_2^{1,0} \quad \text{and} \quad \mathrm{H}^{1}_c(\overline{S}^{\mathrm{ss}}, \overline{\mathcal L_{\xi}})/V \simeq F_2^{0,1}.$$
We have 
\begin{align*}
F_2^{1,0} & \simeq \bigoplus_{\Pi\in\mathcal A_{\xi}(I)} \mathrm{Ext}^1_{J} \left (\mathrm H_c^{2}(\mathcal M^{\mathrm{an}},\overline{\mathbb Q_{\ell}})(-1), \Pi_p\right) \otimes \Pi^p \\
& \simeq \bigoplus_{\Pi\in\mathcal A_{\xi}(I)} \mathrm{Ext}^1_{J} \left (\mathrm{c-Ind}_{J^{\circ}}^J\,\mathbf 1(-1), \Pi_p\right) \otimes \Pi^p\\
&\simeq \bigoplus_{\Pi\in\mathcal A_{\xi}(I)} d(\Pi_p)\Pi^p \otimes \overline{\mathbb Q_{\ell}}[\delta_{\Pi_p}\pi_{\ell}^{w(\xi)}],
\end{align*}
where the eigenvalues of $\mathrm{Frob}$ are given by \ref{FrobeniusActionOnHom}.\\
Let us now compute $F_2^{0,1} = \bigoplus_{\Pi\in\mathcal A_{\xi}(I)} \mathrm{Hom}_{J} \left (\mathrm H_c^{1}(\mathcal M^{\mathrm{an}},\overline{\mathbb Q_{\ell}})(-1), \Pi_p\right) \otimes \Pi^p$. Recall the map $\varphi_1$ appearing in Figure 2, section 3.2. By \ref{HighestCohomologyGroup}, we have a short exact sequence 
$$0 \rightarrow \mathrm{Im}(\varphi_1) \rightarrow \mathrm{c-Ind}_{J_1}^J \, \mathbf 1 \rightarrow \mathrm{c-Ind}_{J^{\circ}}^J \,\mathbf 1 \rightarrow 0.$$
If $K$ is a compact open subgroup of $J$ and if $\rho$ is any smooth representation of $K$, the compactly induced representation $\mathrm{c-Ind}_K^J\,\rho$ is projective in the category of smooth representations of $J$. In particular, we have $\mathrm{Ext}^1_J(\mathrm{c-Ind}_{J_1}^J \,\mathbf 1,\Pi_p) = 0$. Therefore we obtain the following exact sequence 
$$\hspace{-30pt} 0 \rightarrow \mathrm{Hom}_{J}(\mathrm{c-Ind}_{J^{\circ}}^J \,\mathbf 1,\Pi_p) \rightarrow \mathrm{Hom}_{J}(\mathrm{c-Ind}_{J_1}^J \, \mathbf 1,\Pi_p) \rightarrow \mathrm{Hom}_J(\mathrm{Im}(\varphi_1),\Pi_p) \rightarrow \mathrm{Ext}^1_J(\mathrm{c-Ind}_{J^{\circ}}^J \,\mathbf 1,\Pi_p) \rightarrow 0.$$
By Frobenius reciprocity, the left term is $\overline{\mathbb Q_{\ell}}$ if $\Pi_p^{J^{\circ}} \not = 0$ and $0$ otherwise. As we have already observed in \ref{H0Shimura} we have $\Pi_p^{J^{\circ}} \not = 0 \iff \Pi_p \in X^{\mathrm{un}}(J)$. The second term from the left is $\overline{\mathbb Q_{\ell}}$ if $\Pi_p^{J_1} \not = 0$ and $0$ otherwise. It follows that 
$$\mathrm{Hom}_{J} \left (\mathrm{Im}(\varphi_1), \Pi_p\right) \simeq 
\begin{cases}
\overline{\mathbb Q_{\ell}}^{d(\Pi_p)} & \text{if } \Pi_p \in X^{\mathrm{un}}(J) \text{ or } \Pi_p^{J_1} = 0,\\
\overline{\mathbb Q_{\ell}}^{d(\Pi_p)+1} & \text{if } \Pi_p \not \in X^{\mathrm{un}}(J) \text{ and } \Pi_p^{J_1} \not = 0,
\end{cases}$$
Consider the following short exact sequence
$$0 \rightarrow \mathrm{Ker}(\varphi_1) \rightarrow \mathrm{c-Ind}_{J_0}^J\,\mathbf 1 \rightarrow \mathrm{Im}(\varphi_1) \rightarrow 0.$$
By \ref{VanishingExt1} we have $\mathrm{Ext}^1_J(\mathrm{Im}(\varphi_1),\Pi_p) = 0$. Thus we obtain the following exact sequence
$$0 \rightarrow \mathrm{Hom}_J(\mathrm{Im}(\varphi_1),\Pi_p) \rightarrow \mathrm{Hom}_{J}(\mathrm{c-Ind}_{J_0}^J\,\mathbf 1,\Pi_p) \rightarrow \mathrm{Hom}(\mathrm{Ker}(\varphi_1),\Pi_p)\rightarrow 0.$$
Since $J_0$ is also a special maximal compact subgroup of $J$, the middle term is $\overline{\mathbb Q_{\ell}}$ if $\Pi_p^{J_0} \not = 0$ and $0$ otherwise. The injectivity of the left arrow implies that $d(\Pi_p) > 0 \implies \Pi_p^{J_0} \not = 0$, and that $(\Pi_p \not\in X^{\mathrm{un}}(J) \text{ and } \Pi_p^{J_1} \not = 0) \implies \Pi_p^{J_0} \not = 0$. Since unramified character are obviously trivial on $J_0$, we actually have $\forall \Pi \in \mathcal A_{\xi}(I), \Pi_p^{J_1} \implies \Pi_p^{J_0} \not = 0$. In fact, this is an equivalence for the following reason (we use the same arguments as in the proof of \cite{muller} 5.2.6 Theorem).\\
Let $\Pi\in \mathcal A_{\xi}(I)$ such that $\Pi_p^{J_0} \not = 0$. Assume towards a contradiction that $\Pi_p^{J_1} = 0$. Let $B$ be a minimal parabolic subgroup of $J$ with Levi complement $T$. Since $J_0$ is a special maximal compact subgroup of $J$, there exists an unramified character $\chi$ of $T$ such that $\Pi_p$ is the unique $J_0$-spherical composition factor of the normalized parabolic induction $\iota_B^J \chi$. Since $J_1$ is special as well, $\iota_B^J \chi$ also contains a unique composition factor which is $J_1$-spherical. By hypothesis, it can not be $\Pi_p$. Therefore, $\iota_B^J \chi$ must have composition length $2$, and the $J_1$-spherical composition factor is given by the Aubert-Zelevinski dual $D(\Pi_p)$. Now, $D(\Pi_p)$ is also the $p$-component of some automorphic representation $\Pi' \in \mathcal A_{\xi}(I)$. Since $(\Pi'_p)^{J_1} \not = 0$ we must have $(\Pi'_p)^{J_0} \not = 0$ by the direct implication, which is a contradiction by unicity of the $J_0$-spherical composition factor of $\iota_B^T \chi$.\\
Therefore, we have proved that $\forall \Pi\in\mathcal A_{\xi}(I), \Pi_p^{J_1} \not = 0 \iff \Pi_p^{J_0} \not = 0.$ In particular, $d(\Pi_p) > 0 \implies \Pi_p^{J_1} \not = 0$. Using this fact and the short exact sequence above, we obtain 
$$\mathrm{Hom}_{J} \left (\mathrm{Ker}(\varphi_1), \Pi_p\right) \simeq 
\begin{cases}
0 & \Pi_p^{J_1} = 0,\\
\overline{\mathbb Q_{\ell}}^{1 - d(\Pi_p)}   & \text{if } \Pi_p \in X^{\mathrm{un}}(J),\\
0 & \text{if } \Pi_p \not \in X^{\mathrm{un}}(J) \text{ and } \Pi_p^{J_1} \not = 0.
\end{cases}$$
By the shape of the spectral sequence in Figure 2, we have $\mathrm{Ker}(\varphi_1) \simeq \mathrm H^1_c(\mathcal M^{\mathrm{an}})$ with $\mathrm{Frob}$ acting via multiplication by $p$. Thus, $\mathrm{Hom}_{J} \left (\mathrm{Ker}(\varphi_1), \Pi_p\right)$ computes $F_2^{0,1}$ and the result follows.\\

\noindent We now assume that $n=3$ or that $n=4$ with $C$ non-split. Both cases are similar except that one must be careful to shift the degrees and eigenvalues of the Frobenius suitably. As in the case $n=2$ with $C$ split, there is a $\mathbb G(\mathbb A_f^p)\times W$-subspace $V$ of this cohomology group such that 
$$V \simeq F_2^{1,0} \quad \text{and} \quad \mathrm{H}^{1}_c(\overline{S}^{\mathrm{ss}}, \overline{\mathcal L_{\xi}})/V \simeq F_2^{0,1},$$
and we have 
$$F_2^{1,0} \simeq \bigoplus_{\Pi\in\mathcal A_{\xi}(I)} d(\Pi_p)\Pi^p \otimes \overline{\mathbb Q_{\ell}}[\delta_{\Pi_p}\pi_{\ell}^{w(\xi)}].$$
Let us now compute $F_2^{0,1}$. Recall the maps $\varphi_i$ from Figure 3, where $1\leq i \leq p$ when $n=3$ and $1\leq i \leq p^2$ when $n=4$ with $C$ non-split. By the same arguments than in the case $n=2$ with $C$ split, we have 
$$\mathrm{Hom}_{J} \left (\mathrm{Im}(\varphi_1), \Pi_p\right) \simeq 
\begin{cases}
\overline{\mathbb Q_{\ell}}^{d(\Pi_p)} & \text{if } \Pi_p \in X^{\mathrm{un}}(J) \text{ or } \Pi_p^{J_1} = 0,\\
\overline{\mathbb Q_{\ell}}^{d(\Pi_p)+1} & \text{if } \Pi_p \not \in X^{\mathrm{un}}(J) \text{ and } \Pi_p^{J_1} \not = 0,
\end{cases}$$
Next, consider the short exact sequence 
$$0 \rightarrow \mathrm{Ker}(\varphi_1) \rightarrow (\mathrm{c-Ind}_{J_0}^J\,\mathbf 1)^{k_{2,0}} \rightarrow \mathrm{Im}(\varphi_1) \rightarrow 0.$$
By \ref{VanishingExt1} we have $\mathrm{Ext}^1_J(\mathrm{Im}(\varphi_1),\Pi_p) = 0$. Thus we obtain the following exact sequence
$$0 \rightarrow \mathrm{Hom}_J(\mathrm{Im}(\varphi_1),\Pi_p) \rightarrow \mathrm{Hom}_{J_0}(\mathbf 1,(\Pi_p)_{|J_0})^{k_2,0} \rightarrow \mathrm{Hom}(\mathrm{Ker}(\varphi_1),\Pi_p)\rightarrow 0.$$
Exactly as in the case $n=2$ with $C$ split, one may deduce that $\forall \Pi\in \mathcal A_{\xi}(I), (\Pi_p)^{J_1}\not = 0 \iff (\Pi_p)^{J_0}\not = 0$, and $d(\Pi_p) > 0 \implies \Pi_p^{J_1}\not = 0$. Therefore, we have 
$$
\mathrm{Hom}_J(\mathrm{Ker}(\varphi_1),\Pi_p) \simeq \begin{cases}
0 & \text{if } \Pi_p^{J_1} = 0,\\
\overline{\mathbb Q_{\ell}}^{k_{2,0}} & \text{if } \Pi_p \in X^{\mathrm{un}}(J),\\
\overline{\mathbb Q_{\ell}}^{k_{2,0}-1} & \text{if } \Pi_p^{J_1} \not = 0 \text{ and } \Pi_p \not \in X^{\mathrm{un}}(J).
\end{cases}
$$
Let us write $b=4$ if $n=3$ and $b=6$ if $n=4$ with $C$ non-split. Since we have 
$$\mathrm{Hom}_J(E_2^{-a,b},\Pi_p) = 0$$
for every $a\geq 2$, using \ref{VanishingExt1} repeatedly, one may prove by induction that 
$$\mathrm{Hom}_J(\mathrm{Im}(\varphi_2)) \simeq 
\begin{cases}
\overline{\mathbb Q_{\ell}}^s & \text{if } \Pi_p^{J_1} \not = 0,\\
0 & \text{otherwise,}
\end{cases}$$
where $s$ is the alternate sum of the multiplicities $k_{i,0}$ for $i$ running from $3$ to $p+1$ if $n=3$, and to $p^2+1$ if $n=4$. Eventually, let us consider the short exact sequence
$$0 \to \mathrm{Im}(\varphi_2) \to \mathrm{Ker}(\varphi_1) \to E_2^{-1,b} \to 0.$$
By \ref{EigenvaluesOnH2} we know that $F_2^{1,1}=0$. This imply that $\mathrm{Ext}_J^1(E_2^{-1,b},\Pi_p) = 0$. We deduce the following short exact sequence 
$$0 \rightarrow \mathrm{Hom}_J(E_2^{-1,b},\Pi_p) \rightarrow \mathrm{Hom}_J(\mathrm{Ker}(\varphi_1),\Pi_p) \rightarrow \mathrm{Hom}_J(\mathrm{Im}(\varphi_2),\Pi_p)\rightarrow 0.$$
From this, we deduce that 
$$\mathrm{Hom}_{J}(E_2^{-1,b}, \Pi_p) \simeq 
\begin{cases}
0 & \text{if } \Pi_p^{J_1} = 0,\\
\overline{\mathbb Q_{\ell}}^{k_{2,0} - s - d(\Pi_p)} & \text{if } \Pi_p \in X^{\mathrm{un}}(J),\\
\overline{\mathbb Q_{\ell}}^{k_{2,0} - s - d(\Pi_p) - 1} & \text{if }  \Pi_p^{J_1} \not = 0 \text{ and } \Pi_p \not \in X^{\mathrm{un}}(J).
\end{cases}$$
Since $E_2^{-1,b} \simeq \mathrm H_c^{b-1}(\mathcal M^{\mathrm{an}})$, the term $F_2^{0,1}$ is computed by the $\mathrm{Hom}$ above. It remains to compute  
$$k_{2,0} - s = \begin{cases}
\sum_{i=2}^{p+1} (-1)^i {{p+1}\choose i} = p & \text{if }n=3,\\
\sum_{i=2}^{p^2+1} (-1)^i {{p^2+1}\choose i} = p^2 & \text{if }n=4 \text{ with } C \text{ non-split}.
\end{cases}$$
Lastly, we observe that $\forall \Pi \in \mathcal A_{\xi}(I), (\Pi_p \not \in X^{\mathrm{un}}(J) \text{ and } \Pi_p^{J_1} \not = 0) \iff (\dim(\Pi_p) > 1 \text{ and } \Pi_p^{J_1} \not = 0)$. Indeed, the reverse implication is obvious. The direct implication follows given that if $\Pi_p^{J_1} \not = 0$ then $\Pi_p^{J_0} \not = 0$. If furthermore $\Pi_p$ is a character, it is then trivial on the subgroup generated by all the conjugates of $J_0$ and of $J_1$. By general theory, this group is no other than $J^{\circ}$. Therefore $\Pi_p \in X^{\mathrm{un}}(J)$. Thus, we have proved that 
$$F_2^{0,1} \simeq \bigoplus_{\substack{\Pi\in\mathcal A_{\xi}(I) \\ \Pi_p^{J_1}\not = 0\\ \dim(\Pi_p) > 1}} (\nu-1-d(\Pi_p))\Pi^p \otimes \overline{\mathbb Q_{\ell}}[\delta_{\Pi_p}\pi_{\ell}^{w(\xi)}] \oplus \bigoplus_{\substack{\Pi\in\mathcal A_{\xi}(I) \\ \Pi_p \in X^{\mathrm{un}}(J)}} (\nu-d(\Pi_p))\Pi^p \otimes \overline{\mathbb Q_{\ell}}[\delta_{\Pi_p}\pi_{\ell}^{w(\xi)}],$$
with $\nu = p$ when $n=3$ and $\nu = p^2$ when $n=4$ with $C$ non-split. This concludes the proof.
\end{proof}

\phantomsection
\printbibliography[heading=bibintoc, title={Bibliography}]
\markboth{Bibliography}{Bibliography}

\end{document}